\documentclass[12pt,a4paper,twoside]{amsart}
\usepackage[utf8]{inputenc}
\usepackage[normalem]{ulem}
\usepackage[english]{babel}
\usepackage{amsfonts, amsmath, amssymb, amsthm, amscd}
\usepackage{upgreek}
\usepackage{anysize}
\marginsize{2cm}{2cm}{1.5cm}{1.5cm}
\usepackage{enumerate}
\usepackage{hyperref}
\hypersetup{
colorlinks   = true, %Colours links instead of ugly boxes
urlcolor     = blue, %Colour for external hyperlinks
linkcolor    = blue, %Colour of internal links
citecolor    = red %Colour of citations
}
 
\newcommand{\Href}[2]{\hyperref[#2]{#1~\ref{#2}}}
% \usepackage{anysize}
% \marginsize{1.8cm}{1.8cm}{1.5cm}{1.5cm}
%\usepackage{lineno}
%\linenumbers

\numberwithin{equation}{section}
\newtheorem{thm}{Theorem}[section]
\newtheorem{prop}[thm]{Proposition}
\newtheorem{lem}[thm]{Lemma}

\newtheorem{cor}[thm]{Corollary}

\theoremstyle{definition}
\newtheorem{dfn}[thm]{Definition}
\newtheorem{remark}[thm]{Remark}
\newtheorem*{basicassumptions*}{Basic Assumptions}

\newtheorem*{cosmeticassumptions*}{Auxiliary Assumptions}

\DeclareMathOperator{\dom}{dom}
\DeclareMathOperator{\supp}{supp}
\DeclareMathOperator{\conv}{conv}
\DeclareMathOperator{\logconc}{\operatorname{log-env}}

\newcommand{\Red}{\Re^d}
\newcommand{\Ze}{\mathbb Z}
\newcommand{\ball}[1]{\mathbf{B}^{#1}}
\newcommand{\norm}[1]{\left\|#1\right\|}
\newcommand{\enorm}[1]{\left|#1\right|}
\newcommand{\bd}[1]{\mathrm{bd}\left(#1\right)}
\newcommand{\cl}[1]{\mathrm{cl}\left(#1\right)}

\newcommand{\interior}{\operatorname{int}}
\newcommand{\iprod}[2]{\left\langle#1,#2\right\rangle}
\newcommand{\tr}[1]{\operatorname{tr}#1}

%Frechet normal cone.
\newcommand{\nfcone}[2]{N\!\left(#1, #2\right)}
\newcommand{\poscone}[1]{\operatorname{Pos}\! \parenth{#1}}

% numbers
\def\N{{\mathbb N}}
\def\R{{\mathbb R}}

\renewcommand{\Re}{\mathbb{R}}

% spaces
%ellipsoid
%\def\ellips{\mathcal{E}}%
%\def\ellipsd{\ellips^{d}}%
\newcommand{\Redp}{\Re^{d+1}}

%such that
\newcommand{\st}{\colon}

% cosmetics
\def\phi{\varphi}
\def\epsilon{\varepsilon}
\newcommand{\abs}[1]{\left\lvert#1\right\rvert}
\newcommand{\transpose}[1]{{#1}^{T}}%

% d-dimensional volume
%
%beta-function

\newcommand{\di}{\,\mathrm{d}}%integration d

%Bernardo's ellipsoid
%Bernardo's ellipsoid

% s-things
\newcommand{\upthing}[1]{\overline{#1}}%
\newcommand{\lifting}[1]{{{\operatorname{Lift}#1}}}%
\newcommand{\funpos}[1]{\mathcal{E}\! \left[ #1\right]}
\newcommand{\funppos}[1]{\mathcal{E}^{+}\! \left[ #1\right]}
\newcommand{\funposfc}[1]{\mathcal{E}_{f.c.}\! \left[ #1\right]}
\newcommand{\funpposfc}[1]{\mathcal{E}_{f.c.}^{+}\! \left[ #1\right]}

%\newcommand{\TODO}[1]{{\color{magenta}#1}}
%\newcommand{\noshow}[1]{}%For commenting out larger parts

%\newcommand{\irat}{\operatorname{I.rat}}%
%\newcommand{\sintrat}[2][s]{{\sthing[#1]{\irat}}{#2}}%
%\newcommand{\oirat}{\operatorname{I.or}}%
%\newcommand{\sointrat}[2][s]{{\sthing[#1]{\oirat}}{#2}}%
% \binoppenalty=\maxdimen
% \relpenalty=\maxdimen

%\newcommand{\osmeasure}[2][s]{\sthing[#1]{\nu}\!\left(#2\right)}%
%\newcommand{\ovolbs}[1][s]{\sthing[#1]{\lambda}_{n}}%
%\newcommand{\psiovolb}[1][\psi]{{\lambda_{d}[#1]}}%

\newcommand{\id}{\mathrm{Id}}
\newcommand{\epi}{\mathrm{epi}}
\newcommand{\essgraph}{\mathrm{ess}\ \mathrm{graph}\ }
\newcommand{\hausmetric}[2]{\delta_{H}\parenth{#1, #2}}%
\def\legendre{\mathcal{L}}%
\newcommand{\loglego}[1]{{#1}^\polar}
\def\polar{\circ}
\newcommand{\slogleg}[2][s]{
\legendre_{ #1} #2}% 

\def\epi{\operatorname{epi}}
% \binoppenalty=10000
% \relpenalty=10000

%%% COMMENTS AND SUCH
%\usepackage{color}
%\newcommand{\marrow}{\marginpar{$\longleftarrow$}}
%\newcommand{\grishasays}[1]{\textcolor{cyan}{\marrow \textrm{Grisha says:} %\textsf{#1}}}   
%\newcommand{\igorsays}[1]{\textcolor{magenta}{\marrow \textrm{Igor says:} \textsf{#1}}}
%\newcommand{\martonsays}[1]{\textcolor{cyan}{\marrow \textrm{M\'arton says:} \textsf{#1}}}

\providecommand{\parenth}[1]{\left(#1\right)}
\newcommand{\littleo}[1]{o \! \parenth{#1}}%
\newcommand{\MM}{\mathcal{M}}
\newcommand{\MMbar}{\mathcal{W}}

\newcommand{\sid}[1][d]{{\id}_{#1} \oplus s}%
\newcommand{\projond}{P_d}
\newcommand{\derivativeatzero}{\left.\frac{\di}{\di t}\right|_{t=0+}}%

%%%%%%%%% contact points, operators
%\newcommand{\contactop}[2]{C\!\parenth{#1, #2}}%
%
\newcommand{\contactpoint}[2]{{C}_{point}\!\parenth{#1, #2}}%
\newcommand{\contactopjohn}[2]{C_J \! \parenth{#1, #2}}%
\newcommand{\contactoplowner}[2]{C_L \! \parenth{#1, #2}}%

\newcommand{\contactsetnr}[2]{\mathcal{C} \! \parenth{#1, #2}}%
\newcommand{\contactset}[2]{\mathcal{C}_{red} \! \parenth{#1, #2}}%
\newcommand{\contactopsetjohn}[2]{\mathcal{A}_J \! \parenth{#1, #2}}%
\newcommand{\contactopsetlowner}[2]{\mathcal{A}_L \! \parenth{#1, #2}}%

\newcommand{\locstar}{star-like subset of }
\newcommand{\wlocstar}{star-like set with respect to~}
\newcommand{\gbase}{g_b}
%\newcommand{\polarcentered}[1][g]{{#1}^{\circ \mid_{ \to c}}}%

%\pagestyle{myheadings}
%\markright{\today\hfill}

%\usepackage[inline]{showlabels}
%\renewcommand{\showlabelsetlabel}[1]%
%{\showlabelfont \href{#1}{LABEL}}

\title[Functional John and L{\"o}wner conditions]{Functional John and L{\"o}wner conditions for pairs of log-concave functions}
\author[G. Ivanov and M. Nasz{\'o}di]{Grigory Ivanov\address{G.I.: 
Institute of Science and Technology Austria (ISTA), 
Klosterneuburg, Austria}
\email{grimivanov@gmail.com}
\and
M{\'a}rton Nasz{\'o}di\address{M.N.:
Alfr\'ed R\'enyi Inst. of Mathematics and
Dept. of Geometry, Lor\'and E\"otv\"os University, Budapest}
\email{marton.naszodi@math.elte.hu}
}
%\date{November 2018}

\thanks{
M.N. was supported by the J\'anos Bolyai Scholarship of the Hungarian Academy of Sciences as well as the National Research, Development and Innovation Fund (NRDI) grants K119670, and K131529, and the \'UNKP-22-5 New National Excellence Program of the Ministry for Innovation and Technology from the source of the NRDI.
}

\subjclass[2020]{52A23  (primary), 	52A40, 52A41}
\keywords{John ellipsoid, L\"owner ellipsoid, Logarithmically concave function, John decomposition of identity}

\begin{document}

\begin{abstract}
John's fundamental theorem characterizing the largest volume ellipsoid contained in a convex body $K$ in $\mathbb{R}^d$ has seen several generalizations and extensions. One direction, initiated by V. Milman is to replace ellipsoids by positions (affine images) of another body $L$. Another, more recent direction is to consider logarithmically concave functions on $\mathbb{R}^d$ instead of convex bodies: we designate some special, radially symmetric log-concave function $g$ as the analogue of the Euclidean ball, and want to find its largest integral position under the constraint that it is pointwise below some given log-concave function $f$.

We follow both directions simultaneously: we consider the functional question, and allow essentially any meaningful function to play the role of $g$ above. Our general theorems jointly extend known results in both directions.

The dual problem in the setting of convex bodies asks for the smallest volume ellipsoid, called \emph{L{\"o}wner's ellipsoid}, containing $K$. We consider the analogous problem for functions: we characterize the solutions of the optimization problem of finding a smallest integral position of some log-concave function $g$ under the constraint that it is pointwise above $f$. It turns out that in the functional setting, the relationship between the John and the L{\"o}wner problems is more intricate than it is in the setting of convex bodies.
\end{abstract}
\maketitle

\section{Introduction}

The largest volume ellipsoid contained in a convex body in $\Red$ and, in particular, John's result \cite{john} 
characterizing it, plays a fundamental role in convexity. The latter states that the 
origin-centered Euclidean unit ball is the largest volume ellipsoid contained 
in the convex body $K$ if and only if it is contained in $K$ and the contact 
points (that is, the intersection points of the unit sphere and the boundary of 
$K$) satisfy a certain algebraic condition.

As a natural generalization, one may fix two convex bodies $K$ and $L$ and 
solve the optimization problem of finding a largest volume affine image of $K$ 
contained in $L$. In this setting, one expects a John-type condition in terms of 
contact pairs defined as follows.
If $K\subseteq L\subset\Red$ are convex bodies, then a pair 
$(u,v)\in\Red\times\Red$ is called a \emph{contact pair}, if
$u$ belongs to the intersection of the boundaries of $K$ and $L,$
$v$ belongs to the intersection of the boundaries of the polar sets  $\loglego{K}$ and $\loglego{L},$ and $\iprod{u}{v}=1$. In other words, $v$ is 
an outer normal vector of a common support hyperplane of $K$ and $L$ at a common 
boundary point $u$, with a proper normalization.

V. Milman achieved the first results giving a condition of optimality in the 
above problem (unpublished, see Theorem~14.5 in \cite{TJ89}) followed by Lewis 
\cite{L79}, which were strengthened and extended by Giannopoulos, Perissinaki, 
and Tsolomitis \cite{GPT01} and then by Bastero and Romance \cite{BR02}. 
Finally,
Gordon, Litvak, Meyer, and Pajor \cite[Theorem~3.1]{GLMP04} proved the following.

\begin{thm}[Gordon, Litvak, Meyer, Pajor \cite{GLMP04}]\label{thm:GLMP}
	Let $K$ be a compact set containing the origin in the interior of its convex hull and $L$ be a convex body in $\Red$ with $K\subseteq L$ such that no 
affine image of $\conv{K}$ contained in $L$ is of larger volume than $\conv{K}$. Then there are contact pairs 
$\{(u_i,v_i)\st i=1,\ldots m\}$ of $K$ and $L$ with $m\leq d^2+d$ such that we have
	\begin{equation*}%\label{eq:H1}
	\sum_{i=1}^{m} c_i u_i\otimes v_i=\id_d, \text{ and }\;
	%\sum_{i=1}^{m} c_i u_i=
	\sum_{i=1}^{m} c_i v_i=0,
	\end{equation*}
 where $\id_{d}$ denotes the identity operator on $\Red$, and $u\otimes v$ denotes 
the linear operator $x\mapsto \iprod{v}{x}u$ for every $x\in\Red$.
\end{thm}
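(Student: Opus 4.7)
The plan is to treat the containment $T(\conv K) \subseteq L$ as a constrained optimization problem and extract the identity decomposition from a first-order necessary condition via a Farkas-type argument. Parametrize affine maps as $(A,b) \in \R^{d\times d} \times \R^d$ with $T_{A,b}(x) = Ax + b$; the feasible set is $\{(A,b) : T_{A,b}(\conv K) \subseteq L\}$ and we maximize $\log|\det A|$, the optimum being attained at $(\id_d, 0)$ by hypothesis. Since $0 \in \interior\conv K \subseteq \interior L$, the polar $L^\circ$ is a convex body, so the contact-pair set
\[
\mathcal{C}(K,L) = \bigl\{(u,v) : u \in \bd K \cap \bd L,\ v \in \bd{K^\circ} \cap \bd{L^\circ},\ \iprod{u}{v} = 1\bigr\}
\]
is a compact subset of $K \times L^\circ$.

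For a first-order perturbation $(\id_d + tA',\, tb')$, we have $\log\det(\id_d + tA') = t\,\tr(A') + O(t^2)$, so volume increase requires $\tr(A') > 0$. Simultaneously, at each contact pair $(u,v)$ the constraint $\iprod{v}{T_t(u)} \leq h_L(v) = 1$ is tight at $t=0$ and forces $\iprod{v}{A'u} + \iprod{b'}{v} \leq 0$ to first order. Optimality therefore rules out any $(A',b')$ satisfying both $\tr(A') > 0$ and the contact-pair inequalities. Using the Hilbert--Schmidt inner product on matrices, the contact constraint at $(u,v)$ reads $\iprod{(vu^\top,v)}{(A',b')} \leq 0$ while the objective functional is $\iprod{(\id_d, 0)}{(A',b')} = \tr(A')$. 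By a closed-cone separation argument --- closedness of
\[
C = \pos\bigl\{(vu^\top, v) : (u,v) \in \mathcal{C}(K,L)\bigr\} \subseteq \R^{d^2 + d}
\]
follows from compactness of $\mathcal{C}(K,L)$ --- this yields $(\id_d, 0) \in C$. Transposing the matrix coordinate, using $\id_d^\top = \id_d$ and $(vu^\top)^\top = u \otimes v$ in the paper's notation, rewrites the identity as $\sum c_i\, u_i \otimes v_i = \id_d$ and $\sum c_i\, v_i = 0$ with $c_i \geq 0$. Finally, Carath\'eodory's theorem applied in $\R^{d^2 + d}$ lets us retain at most $m \leq d^2 + d$ contact pairs.

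The main obstacle is genuinely justifying the first-order necessary condition: if $(\id_d, 0) \notin C$, strict separation from the closed cone yields $(A',b')$ with $\tr(A') > 0$ and, by compactness, uniform slack $\iprod{v}{A'u} + \iprod{b'}{v} \leq -\delta < 0$ on $\mathcal{C}(K,L)$; one then needs the actual (not merely formal) perturbation to satisfy $T_t(\conv K) \subseteq L$ for all small $t > 0$, thereby contradicting optimality. This is a standard but nontrivial compactness bookkeeping: on a neighborhood of $\mathcal{C}(K,L)$ parametrized by outer normals $v$, the slack $\delta$ absorbs the $O(t^2)$ remainder in $\iprod{v}{T_t(u)}$, while on its complement the strict inequality $h_K(v) < h_L(v)$ provides a uniform positive gap by compactness of the unit sphere. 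Once this technical step is in place, the cone duality and Carath\'eodory application are routine.
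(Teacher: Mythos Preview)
Your argument is correct: the separation of $(\id_d,0)$ from the closed cone generated by the contact operators, followed by Carath\'eodory in the trace-one hyperplane, is exactly the right mechanism, and your compactness bookkeeping for the perturbation step is sound (note that each generator $(vu^\top,v)$ has trace $\iprod{u}{v}=1$, so the cone is closed and the Carath\'eodory bound $d^2+d$ follows as you say).

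However, the paper does \emph{not} prove \Href{Theorem}{thm:GLMP} this way. It obtains it in one line as a specialization of the general functional result, \Href{Theorem}{thm:john_condition_general}, by taking $f=\chi_L$ and $g_b=\chi_{\operatorname{ext} K}$: then $\logconc{g_b}=\chi_{\conv K}$, positions of $g$ are affine images of $\conv K$, the integral is the volume, and the lifting/normal-cone machinery collapses to ordinary polar duality for convex bodies, so \eqref{eq:functional_glmp} becomes the GLMP identities. At the core both routes are the same Farkas/separation argument---your cone $C$ is precisely the projection of the paper's set $\contactopsetjohn{f}{g_b}$ onto the first two slots---but the paper packages all the technicalities you flag (closedness of the contact-operator set, the homotopy showing that strict separation produces an admissible perturbation) once and for all in Lemmas~\ref{lem:compact_contact_operator}, \ref{lem:separation_John_problem} and \Href{Theorem}{thm:admissiblelinearized_john}, at the cost of the lifting formalism. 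Your direct proof is more elementary and self-contained for this single statement; the paper's route exhibits GLMP as the convex-body endpoint of a much broader functional theory.
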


As a corollary, it is shown in \cite{GLMP04} that under the assumptions of 
\Href{Theorem}{thm:GLMP}, we have $L-z\subseteq -d(K-z)$ for an appropriately 
chosen point $z\in K$. The case when $-d$ cannot be 
replaced by a magnification factor of smaller absolute value was studied in 
\cite{Pal92, JN11}, see Gr\"unbaum's survey \cite{Gr63} for more on this 
question.

Our present goal is to extend \Href{Theorem}{thm:GLMP} to the setting of 
log-concave functions. As a natural generalization of the notion of 
affine images of convex bodies, we define the \emph{positions} of a function $g$ 
on $\Red$ as
\[\funpos{g}=\{\alpha g(Ax+a)\st A\in\Re^{d\times d} \text{ non-singular}, \alpha>0, a\in\Red\}.\]
We will say that a function $f_1$ on $\Red$ is \emph{below} another function $f_2$ on $\Red$ (or that $f_2$ is \emph{above} $f_1$)
and denote it as $f_1 \leq f_2,$ if $f_1$ is pointwise less than or equal to $f_2,$
that is, $f_1(x) \leq f_2(x)$ for all $x \in \Red.$

Fixing $s > 0$ and two functions $f, g  \colon \Red \to [0, \infty)$, we 
formulate the following optimization problem. 

\medskip \noindent
\textbf{The John $s$-problem:} Find
\begin{equation}\label{eq:john_problem_intro}
\max\limits_{h \in \funpos{g} }%\funpos{\logconc{g}} }
	\int_{\Red} h^s 
	\quad \text{subject to} \quad
	h \leq f.
\end{equation}

John's theorem on largest volume ellipsoids was extended by 
Alonso-Guti{\'e}rrez, Gonzales Merino, Jim{\'e}nez and Villa 
\cite{alonso2018john} to the setting of \emph{logarithmically concave} 
(or in short, \emph{log-concave}) functions, that is, those $\Red\to[0,\infty)$ 
functions whose logarithm is a concave function. They consider the John $1$-problem with  $g$ being the indicator function of the Euclidean unit ball $\ball{d}.$
A more general treatment was given in \cite{ivanov2022functional}, where the authors 
consider the John $s$-problem for some $ s > 0$ with $g$ being the ``height'' function of the upper hemisphere of the Euclidean ball $\ball{d+1},$ that is,  $g(x) = \sqrt{1 - \enorm{x}^2}$ for  $\enorm{x} \leq 1$ and $g(x)= 0$ otherwise. The authors obtained
 the first necessary and sufficient condition on maximizers in this problem analogous to the original John condition.  
 
 What conditions should $f$ and $g$ satisfy in order for the John $s$-problem to be meaningful? Following the path of analogy with \Href{Theorem}{thm:GLMP} seems easy at first. Instead of closed sets, we will work with upper semi-continuous functions, instead of volume, we will work with the integral, or the integral of the $s$ power of the function. In \Href{Theorem}{thm:GLMP}, $K$ and $L$ are compact, should we assume that the \emph{support} of $f$ and $g$ defined as 
\[
\supp f = \{x \in \Red \st f(x) > 0 \}
\]
is bounded? That would be too restrictive, as it would disqualify the Gaussian density as $f$. On the other hand, clearly, the class of those functions $g$, for which the family $\{h\in\funpos{g}\st h\leq f\}$ is not empty \emph{for any} log-concave function $f$ with positive integral, is the class of functions with bounded support. Thus, $g$ being of bounded support is a natural assumption.

We will call an upper semi-continuous  function of finite and positive integral a \emph{proper} function.
  
\begin{basicassumptions*}
\label{assumptions:basic}
 We say that a function $g  \colon \Red \to [0, + \infty)$ satisfies our
 \emph{Basic Assumptions}, if it has the following properties:
\begin{itemize}
\item $g$ is a proper log-concave function, and
\item $\supp g$ is bounded, and
\item the origin is in the interior of $\supp g$.
\end{itemize} 
\end{basicassumptions*}
\begin{cosmeticassumptions*}
\label{assumptions:intro} 
 We say that a function $g  \colon \Red \to \Re$ satisfies our
 \emph{Auxiliary  Assumptions} if it has the following properties:
\begin{itemize}
\item $g$ satisfies our Basic Assumptions, and
\item $g$ attains its maximum at the origin, and
\item $\ln g$ is differentiable on $\supp g$.
\end{itemize} 
\end{cosmeticassumptions*}

\begin{dfn}\label{defn:contacts_point}
For two functions $f,g\colon\Red\to\Re$, we call the set
		\[
		\contactpoint{f}{g} = 
		\big\{ u \in \cl{\supp f} \cap \cl{\supp g} \st  f(u)  = g(u)\big\}
		 \]
their \emph{set of contact points}.
\end{dfn}

%For a fixed $g$, we cannot expect \eqref{eq:john_problem_intro} to have a solution for every log-concave $f$, unless $g$ is of compact support. Similarly, we cannot expect \eqref{eq:lowner_problem_intro} to have a solution for every log-concave $f$, unless $g$ is sufficiently large at infinity -- we will specify it later.

We are ready to state our first main result. 

\begin{thm}[John's condition -- no zeros]\label{thm:john_intro}
Fix $s>0$. 
Let $f \colon\Red\to (0,+\infty)$ be a proper log-concave function taking only positive values. Let $g = e^{-\psi} \colon \Red \to [0, +\infty)$ be a function satisfying our Auxiliary Assumptions (see page~\pageref{assumptions:intro}) such that $g \leq f$.
Assume that $h=g$ is a maximizer in John $s$-problem 
\eqref{eq:john_problem_intro}.
Then there are contact
points $u_1, \dots, u_m \in \contactpoint{f}{g}$ 
and positive weights $c_1,\ldots,c_m$ such that
\begin{equation}\label{eq:functional_glmp_intro}
	\sum_{i=1}^{m} c_i \frac{{u}_i \otimes \nabla \psi(u_i)}{1 + \iprod{\nabla \psi(u_i)}{u_i}} = 
	\id_{d}, \quad 
	\sum_{i=1}^{m}  \frac{c_i}{1 + \iprod{\nabla \psi(u_i)}{u_i}} =  s
		\quad\text{and}\quad
		\sum_{i=1}^{m} c_i \frac{\nabla \psi(u_i)}{1 + \iprod{\nabla \psi(u_i)}{u_i}}=0.
\end{equation}
Moreover, if $g$ is \emph{radially symmetric}, then condition \eqref{eq:functional_glmp_intro} is also sufficient. That is,  if $g(x)=g_0(|x|)$ for some function $g_0:[0,\infty)\rightarrow[0,\infty)$, and there are contact
points $u_1, \dots, u_m \in \contactpoint{f}{g}$ 
and positive weights $c_1,\ldots,c_m$ satisfying \eqref{eq:functional_glmp_intro}, then 
$g$ is the unique maximizer in John $s$-problem 
\eqref{eq:john_problem_intro}.
\end{thm}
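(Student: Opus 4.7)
The plan is to derive the necessary condition via Lagrange multipliers in the finite-dimensional parameter space of positions, and to establish sufficiency (in the radially symmetric case) by a direct John-type argument.

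For necessity, I parameterize $\funpos{g}$ by triples $(A, a, t)$ with $A$ an invertible $d \times d$ matrix, $a \in \Red$, and $t \in \Re$, via $h(x) = e^t g(Ax+a)$. The log-objective $\log \int h^s$ equals $st - \log|\det A| + \log \int g^s$, whose differential at the identity position $(\id_d, 0, 0)$ in direction $(\delta A, \delta a, \delta t)$ is $s\,\delta t - \operatorname{tr}(\delta A)$. For each $x \in \Red$, the constraint $h(x) \leq f(x)$ rewrites as $t - \psi(Ax+a) + \log f(x) \leq 0$, which is tight at the identity position precisely when $x \in \contactpoint{f}{g}$, and whose differential at such a contact point $u$ is $\delta t - \iprod{\nabla\psi(u)}{\delta A \cdot u + \delta a}$.

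Since $g$ is optimal, no direction can have strictly positive objective differential while remaining non-positive on the constraint differential at every contact point. Applying a Farkas-type separation in the $(d^2 + d + 1)$-dimensional space of variations yields non-negative coefficients $\mu_u$ supported on $\contactpoint{f}{g}$ that express the objective differential as the corresponding weighted sum of active-constraint differentials. By Carathéodory's theorem the support may be taken finite, say at $u_1,\dots,u_m$ with weights $\mu_i > 0$, $m \leq d^2 + d + 1$. Matching the coefficients of $\delta t$, $\delta a$, and $\delta A$ produces
\[
s = \sum_{i=1}^m \mu_i, \qquad 0 = \sum_{i=1}^m \mu_i\, \nabla\psi(u_i), \qquad \id_d = \sum_{i=1}^m \mu_i\, u_i \otimes \nabla\psi(u_i).
\]
Setting $c_i := \mu_i\,(1 + \iprod{\nabla\psi(u_i)}{u_i})$ recovers the stated form. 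The factor is strictly positive: by convexity of $\psi$ with minimum $\psi(0) = 0$ (from the Auxiliary Assumptions), $\iprod{\nabla\psi(u)}{u} \geq \psi(u) - \psi(0) \geq 0$, so $c_i > 0$.

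For the converse in the radially symmetric case, write $\psi(x) = \psi_0(\enorm{x})$, so that $\nabla\psi(u) = (\psi_0'(\enorm{u})/\enorm{u})\, u$ is parallel to $u$. Setting $\lambda_i := \psi_0'(\enorm{u_i})/\enorm{u_i} \geq 0$, the identity condition becomes $\sum_i \mu_i \lambda_i\, u_i \otimes u_i = \id_d$, a classical John-type decomposition of the identity. For any candidate $h(x) = \alpha g(Ax+a) \leq f$, the contact inequality $h(u_i) \leq f(u_i) = g(u_i)$ rearranges to $\log\alpha \leq \psi_0(\enorm{Au_i + a}) - \psi_0(\enorm{u_i})$. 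The plan is to take the $\mu_i$-weighted average of these contact inequalities, exploit convexity of $\psi_0$ to pass to tangent estimates at $\enorm{u_i}$ aligned with $u_i$, and invoke an AM-GM / Hadamard-type bound on $|\det A|$ built from the John decomposition $\sum_i \mu_i \lambda_i\, u_i \otimes u_i = \id_d$. This should yield $s\log\alpha \leq \log|\det A|$, i.e., $\int h^s \leq \int g^s$. Uniqueness in the equality case follows from strict convexity of $\psi_0$ and the spanning property of the $u_i$ implied by the identity decomposition.

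The main obstacle is the last step of the sufficiency direction: passing from the pointwise contact inequalities to the global integral bound $\alpha^s \leq |\det A|$. Radial symmetry is essential here because it forces each operator $u_i \otimes \nabla\psi(u_i)$ to be symmetric and positive semidefinite, bringing the problem into the regime where a John-type AM-GM argument applies; without radial symmetry one should not expect the necessary condition to be sufficient.
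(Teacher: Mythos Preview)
Your necessity argument via Lagrange multipliers is essentially the paper's separation argument in different clothing: both amount to showing that $(\id_d \oplus s, 0)$ lies in the cone generated by the contact operators, and both need compactness of that set to pass from weak to strict separation. You gloss over this, but it does follow here because $f>0$ forces all contact points into a compact subset of $\supp g$ on which $\nabla\psi$ is bounded.

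The sufficiency plan, however, has a genuine gap. Your only input is the contact-point inequality
\[
\log\alpha \;\leq\; \psi_0\big(|Au_i+a|\big)-\psi_0\big(|u_i|\big),
\]
and you propose to weight-average and then ``pass to tangent estimates'' for $\psi_0$. But convexity of $\psi_0$ bounds $\psi_0(R)-\psi_0(r)$ from \emph{below} by $\psi_0'(r)(R-r)$, not from above, so the chain never closes. Concretely, in dimension $d=1$ with symmetric contact points $\pm u$ and $a=0$, the John conditions force $s\,\psi_0'(|u|)\,|u|=1$; then the function $A\mapsto s\big[\psi_0(A|u|)-\psi_0(|u|)\big]$ is convex and tangent to $A\mapsto\log A$ at $A=1$, hence lies \emph{above} $\log A$ for all $A>0$ --- exactly the opposite of what you need. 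The point is that the inequalities $h(u_i)\leq g(u_i)$ alone are too weak: they do not encode the constraint $h\leq f$ away from the $u_i$. A correct direct argument would have to use that $\nabla\psi(u_i)$ is also a subgradient of $-\log f$ at $u_i$ (which is true here, since $g\leq f$ with equality at $u_i$ and $g$ differentiable), giving the global bound $f(x)\leq g(u_i)\,e^{-\langle\nabla\psi(u_i),\,x-u_i\rangle}$, and then a Legendre-dual computation. The paper takes a different route entirely: an \emph{inner interpolation} lemma (a convex combination of two admissible positive positions of $g$ stays admissible, by log-concavity of $f$) together with Minkowski's determinant inequality shows that any better positive position would produce a separating hyperplane, contradicting \eqref{eq:functional_glmp_intro}; radial symmetry then reduces arbitrary positions to positive ones.

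Finally, your uniqueness claim invokes strict convexity of $\psi_0$, which is not among the hypotheses. The paper obtains uniqueness from the equality case of Minkowski's determinant inequality (forcing $A_1=A_2$) plus a separate geometric argument ruling out translated optima when $\supp g$ is bounded.
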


\Href{Theorem}{thm:john_intro} will be a corollary to our much more general 
\Href{Theorem}{thm:john_condition_general}. 

Let us elaborate on the conditions on the functions. 
First, the differentiability of $\psi$ is assumed for simplicity, in the general setting it will not be necessary as we will consider subgradients of $\psi$, and the 
Fr\'echet normal cones (see \Href{Definition}{def:nconus}) of the \emph{lifting} of $f$ and $g$, defined as 
\[
\lifting{f} =  \left\{ (x, y) \in \Redp \st x \in \cl{\supp f}, |y|\leq f(x) \right\}\subset\Redp.
\]

Second, as in the case of convex sets, the origin must be chosen in a certain way. 
The assumption that $g$ attains its maximum at the origin is artificial and is imposed for simplicity; as we will see, it implies that all the denominators in the equations in
\eqref{eq:functional_glmp_intro} are positive, which is the real key condition for our theorem to hold. In fact, we will show that any point from the interior of the support of $g$ sufficiently close to the maximum can be chosen as the origin.

Third, analogously to \Href{Theorem}{thm:GLMP}, where $K$ need not be convex, our $g$ need not be log-concave. We will have an analogue of the convex hull as well, the log-concave envelope, see \Href{Definition}{def:logenv}.

Finally, the assumption that $f$ takes only positive values is the trickiest one!  The issue is that there might be ``irregular'' contact points 
$u \in \cl{\supp f} \cap \cl{\supp g}$ with $f(u)=g(u)=0$ that require special attention, as we will see in \Href{Section}{sec:boundedcontactpairs}.
\Href{Theorem}{thm:john_intro} side steps this problem by its assumption that $f$ is nowhere zero.

To obtain a condition for optimality in John $s$-problem \eqref{eq:john_problem_intro} similar to the one in \Href{Theorem}{thm:GLMP}, we define contact pairs for functions through contacts of their liftings. Since liftings of log-concave functions are not 
convex in general, we need to take extra care defining normal vectors, which we 
will do in \Href{Section}{sec:notation}.
\begin{dfn}\label{defn:contactset_intro}
For two functions $f,g\colon\Red\to\Re$, their \emph{set of contact pairs} is defined as		\[
		\contactsetnr{f}{g} = 
		\big\{(\upthing{u},\upthing{v})\in\Redp\times\Redp \st 
		 \upthing{u}=(u, f(u)), u \in \cl{\supp {f}}\cap\cl{\supp {g}},\,  f(u)  = g(u),
		 \]
		 \[
		 \upthing{v} \in \nfcone{\lifting f}{\upthing{u}}\cap \nfcone{\lifting g}{\upthing{u}}, 
		 \, \iprod{\upthing{v}}{\upthing{u}}=1\big\},
		\]
where $\nfcone{A}{\upthing{u}}$ denotes the  Fr\'echet normal cone of the set 
$A\subset\Redp$ at a point $\upthing{u} \in \Redp,$ see 
\Href{Definition}{def:nconus}.
\end{dfn}

In the following theorem, no additional assumptions are imposed on $f$ except for being proper and log-concave. On the other hand, we require $g$ to be \emph{$q$-concave}, that is, $g^q$ is concave on its support.   
\begin{thm}[John's condition -- $q$-concave case]\label{thm:john_intro-concave}
Fix $s>0$. Let $f,g \colon\Red\to[0,+\infty)$ be two proper log-concave functions.
Let $g = e^{-\psi} \colon \Red \to [0, +\infty)$ be a function satisfying our Auxiliary Assumptions (see page~\pageref{assumptions:intro})  and such that $g \leq f.$ Additionally, let $g$ be $q$-concave for some $q > 0$.
Assume that $h=g$ is a maximizer in  John $s$-problem 
\eqref{eq:john_problem_intro}.
Then there are contact pairs 
$(\upthing{u}_1,\upthing{v}_1)$, $\dots$,  
$(\upthing{u}_m,\upthing{v}_m)$ $\in \contactsetnr{f}{g}$
and positive weights $c_1,\ldots,c_m$ such that
\begin{equation}\label{eq:functional_glmp_concave_intro}
	\sum_{i=1}^{m} c_i {u}_i \otimes {v}_i = 
	\id_{d}, \quad 
	\sum_{i=1}^{m} c_i f(u_i)\nu_i =  s
		\quad\text{and}\quad
		\sum_{i=1}^{m} c_i v_i=0,
\end{equation}
where $\upthing{u}_i=(u_i, f(u_i))$ and $\upthing{v}_i=(v_i,\nu_i)$.
Moreover, if $g$ is radially symmetric, then condition \eqref{eq:functional_glmp_concave_intro} is also sufficient. That is,  if $g$ is  radially symmetric, and  there are contact pairs 
$(\upthing{u}_1,\upthing{v}_1)$, $\dots$,  
$(\upthing{u}_m,\upthing{v}_m)$ $\in \contactsetnr{f}{g}$
and positive weights $c_1,\ldots,c_m$ satisfying \eqref{eq:functional_glmp_concave_intro}, then 
$g$ is the unique maximizer in John $s$-problem 
\eqref{eq:john_problem_intro}.
\end{thm}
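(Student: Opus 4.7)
The plan is to lift the functional John $s$-problem to a geometric John problem for convex bodies in~$\Redp$, apply Theorem~\ref{thm:GLMP} there, and decode the resulting $(d+1)$-dimensional decomposition of the identity block-by-block into the three equations in~\eqref{eq:functional_glmp_concave_intro}. The $q$-concavity of $g$ is precisely the hypothesis that guarantees that the lifted body is convex. Concretely, to each admissible $h$ I would associate an $s$-lifted set $\sellbody[s]{h}\subset\Redp$ designed so that (i) positions $h\in\funpos{g}$ correspond bijectively to images of $\sellbody[s]{g}$ under affine maps of $\Redp$ that preserve the horizontal hyperplane $\{y=0\}$, (ii) the pointwise inequality $h\le f$ is equivalent to the geometric inclusion $\sellbody[s]{h}\subseteq \sellbody[s]{f}$, and (iii) the objective $\int h^s$ is a strictly increasing function of $\vol{d+1}(\sellbody[s]{h})$; this last normalization is where the scalar $s$ in the middle equation ultimately comes from.

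With the lifting in hand, I would apply Theorem~\ref{thm:GLMP} to the pair $(\sellbody[s]{g},\sellbody[s]{f})$ in $\Redp$, with two caveats: the relevant affine maps form a subgroup of the full affine group (those fixing the horizontal direction), and $\sellbody[s]{f}$ may be unbounded when $f$ has unbounded support. The subgroup restriction is why the $(d+1)\times(d+1)$ identity decomposition decouples cleanly into the three desired equations rather than producing extra off-diagonal conditions; unboundedness is handled by truncating $f$ at large sublevel sets, applying the bounded GLMP, and passing to the limit using properness of $f$. By Definition~\ref{defn:contactset_intro}, contact pairs of the lifted bodies are precisely the contact pairs of $f$ and~$g$: $\upthing{u}=(u,f(u))$ lies on both $\lifting{f}$ and $\lifting{g}$ exactly when $u\in\contactpoint{f}{g}$, while $\upthing{v}=(v,\nu)$ is a common outer Fr\'echet normal with $\iprod{\upthing{v}}{\upthing{u}}=1$. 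Unpacking $\sum c_i\,\upthing{u}_i\otimes\upthing{v}_i=\id_{d+1}$ and $\sum c_i\upthing{v}_i=0$ block by block then yields the upper-left $d\times d$ relation $\sum c_i u_i\otimes v_i=\id_d$, the bottom-right scalar relation $\sum c_i f(u_i)\nu_i=s$ after the $s$-normalization, and the horizontal part $\sum c_i v_i=0$.

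For sufficiency under the additional assumption that $g$ is radially symmetric, I would run the argument in reverse: a decomposition as in~\eqref{eq:functional_glmp_concave_intro} can be assembled into a full $(d+1)$-dimensional GLMP-type identity decomposition for $\sellbody[s]{g}\subseteq\sellbody[s]{f}$, which, by the converse direction of the geometric theorem, certifies $\sellbody[s]{g}$ as the unique maximum-volume affine image of itself contained in $\sellbody[s]{f}$ within the allowed subgroup; radial symmetry is what ensures that this restricted optimum actually coincides with a genuine extremum and hence pulls back to the \emph{unique} maximizer $h=g$ in the John $s$-problem. The main obstacle I anticipate is engineering the $s$-lifting $\sellbody[s]{\cdot}$ so that (i)--(iii) hold simultaneously while keeping $\sellbody[s]{g}$ convex for \emph{every} $q$-concave $g$ and every $s>0$; the naive choice $\lifting{g}$ is convex only when $g$ is concave (i.e.\ $q\ge 1$), so for smaller $q$ one must work with a Ball-type construction of $\sellbody[s]{g}$ that ties $\vol{d+1}(\sellbody[s]{g})$ to $\int g^s$ and whose convexity follows from $q$-concavity. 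Once this lifting is in place, both directions reduce to Theorem~\ref{thm:GLMP} via essentially bookkeeping computations.
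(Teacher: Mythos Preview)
Your high-level picture---lift to $\Redp$, interpret the problem geometrically, obtain a GLMP-style decomposition, and decode it block by block---is the right intuition and is morally close to what the paper does. But there are two genuine gaps.

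First, Theorem~\ref{thm:GLMP} is stated for the \emph{full} affine group of $\Redp$, whereas positions in $\funpos{g}$ correspond only to maps of the form $(x,y)\mapsto(Ax+a,\alpha y)$, a proper subgroup. Optimality over this subgroup does not imply optimality over all affine maps, so Theorem~\ref{thm:GLMP} does not apply and no identity $\sum c_i\,\upthing u_i\otimes\upthing v_i=\id_{d+1}$ is ever produced for you to unpack. What you actually need is a subgroup version of GLMP, and proving that is not bookkeeping: it is the content of Theorem~\ref{thm:john_condition_general}, whose proof occupies Section~\ref{sec:john} and proceeds by a direct separation argument in the space $\MMbar$ of extended contact operators (Lemma~\ref{lem:separation_John_problem}, Theorem~\ref{thm:admissiblelinearized_john}). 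The paper never invokes Theorem~\ref{thm:GLMP}; it reproves the needed machinery from scratch for the restricted group of transformations.

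Second, your sufficiency argument appeals to a ``converse direction of the geometric theorem'', but Theorem~\ref{thm:GLMP} as stated gives only necessity. The paper obtains sufficiency separately (Theorem~\ref{thm:john_condition_general}\eqref{item:glmp-implies-global-maximum}) via Minkowski's determinant inequality and the inner-interpolation Lemma~\ref{lem:inner-function-interpolation}, and this only yields a global maximizer among \emph{positive} positions; radial symmetry is then used (Section~\ref{sec:radially_symmetric}) to identify $\funpos{g}=\funppos{g}$, with uniqueness supplied by Proposition~\ref{prop:existence_uniqueness_john}. A smaller point: your Ball-type $s$-lifting for $q<1$ is left vague. The paper's device is to replace $f,g$ by $f^q,g^q$ so that $\lifting{g^q}$ is genuinely convex, solve the John $s/q$-problem there (Corollary~\ref{cor:john_cond-q-concave}), and translate the resulting contact pairs back via Lemma~\ref{lem:equiv_john_cond_power}; this sidesteps any new lifting construction.
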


A dual construction to the largest volume ellipsoid contained in a convex body
is the smallest volume ellipsoid containing a body.  It is usually called the L\"owner ellipsoid. Notably, the necessary and sufficient conditions for the Euclidean unit ball to be this minimal ellipsoid coincide with the conditions in John's characterization of the largest volume ellipsoid. For historical precision, we remark that it was this formulation, now attributed to L\"owner, that John considered in the first place in \cite{john}. In the setting of convex sets, there is hardly any difference between the two problems --  $K$ has the largest volume among all its affine images inside $L$ if and only if $L$ has the smallest volume among all its affine images containing $K$. So, \Href{Theorem}{thm:GLMP} provides us with a necessary condition in this case as well.   However, it is not the case in the functional setting!  Let us formulate a dual functional problem and  explain the issue. 

\medskip \noindent
\textbf{The L\"owner $s$-problem:} Find
\begin{equation}
\label{eq:lowner_problem_intro}
\min\limits_{h \in \funpos{g} }
	\int_{\Red} h^s 
	\quad \text{subject to} \quad
	f \leq h.
\end{equation}

As in the case of the John $s$-problem,  the set of $h \in  \funpos{g}$ satisfying $f \leq h$ may be empty, for example, if $g$ is of compact support and the support of $f$ is the whole space $\Red$. Unlike in the case of the John $s$-problem, characterizing the class of those functions $g$, for which the family $\{h\in\funpos{g}\st h\geq f\}$ is not empty \emph{for any} proper log-concave function $f$, is not straight forward. Clearly, the support of $g$ needs to be $\Red$, but this condition alone is not sufficient. In order to find this class, we consider the polars of $f$ and $g$.

The \emph{log-conjugate} (or \emph{polar}) of a function $f\colon\Red \to [0, +\infty)$ is 
defined by
\[
\loglego{f}(y) = \inf\limits_{x \in \supp{f}} \frac{e^{-\iprod{x}{y}}}{f(x)},
\]
and is known to be a log-concave function, see \Href{Section}{sec:notation} for details. 

Since $h\geq f$ if and only if $\loglego{h}\leq\loglego{f}$, it follows that the class of those log-concave functions $g$, for which the family $\{h\in\funpos{g}\st h\geq f\}$ is not empty \emph{for any} proper log-concave function $f$, is the class of log-concave functions that are polar to functions with bounded support. 
 
The L\"owner $s$-problem was investigated by Li, Sch\"utt and Werner \cite{LSW21} and by Ivanov and Tsiutsiurupa \cite{ivanov2021functional} for certain special choices of $g$. We note that no John type condition of optimality was obtained. Our second main result provides it.

\begin{thm}[L\"owner's condition -- no zeros]\label{thm:lowner_intro}
Fix $s>0$. 
Let $f \colon\Red\to (0,+\infty)$ be a proper log-concave function such that 
$\loglego{f}$ takes only positive values.  
Let $g \colon \Red \to [0, +\infty)$ be a proper log-concave function such that $f \leq g$ and $\loglego{g}$ satisfies our Auxiliary Assumptions (see page~\pageref{assumptions:intro}).
Set $\loglego{g} = e^{-\psi}$, and assume that $h=g$ is a minimizer in L\"owner $s$-problem 
\eqref{eq:lowner_problem_intro}.
Then there are contact
points $u_1, \dots, u_m \in \contactpoint{\loglego{f}}{\loglego{g}}$ 
and positive weights $c_1,\ldots,c_m$ such that
\begin{equation}\label{eq:functional_glmp_lowner_intro}
	\sum_{i=1}^{m} c_i \frac{{u}_i \otimes \nabla \psi(u_i)}{1 + \iprod{\nabla \psi(u_i)}{u_i}} = 
	\id_{d}, \quad 
	\sum_{i=1}^{m}  \frac{c_i}{1 + \iprod{\nabla \psi(u_i)}{u_i}} =  s
		\quad\text{and}\quad
		\sum_{i=1}^{m} c_i  \frac{\loglego{g}(u_i) \cdot \nabla \psi(u_i)}{1 + \iprod{\nabla \psi(u_i)}{u_i}}=0.
\end{equation}
Moreover, if $g$ is radially symmetric, then condition 
\eqref{eq:functional_glmp_lowner_intro} is also sufficient. That is,  if $g$ is  radially symmetric, and  there are contact
points $u_1, \dots, u_m \in \contactpoint{\loglego{f}}{\loglego{g}}$ 
and positive weights $c_1,\ldots,c_m$ satisfying \eqref{eq:functional_glmp_lowner_intro}, then $g$ is a minimizer in L\"owner $s$-problem \eqref{eq:lowner_problem_intro}.
\end{thm}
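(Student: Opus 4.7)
My plan is to use polar duality to recast the L\"owner problem. Set $\tilde{g} = \loglego{g} = e^{-\psi}$ and $\tilde{f} = \loglego{f}$; polarity reverses $\leq$, so $f \leq g$ becomes $\tilde{g} \leq \tilde{f}$, and the constraint $f \leq h$ for $h(x) = \alpha\, g(Ax+a) \in \funpos{g}$ is equivalent to $\loglego{h} \leq \tilde{f}$, with the explicit formula $\loglego{h}(y) = \alpha^{-1}\, e^{\iprod{A^{-1}a}{y}}\, \tilde{g}(A^{-T}y)$. The active constraints at the optimum $h = g$ correspond precisely to polar contacts $u_i \in \contactpoint{\loglego{f}}{\loglego{g}}$, where $\tilde{f}(u_i) = \tilde{g}(u_i)$.

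Next I would perform a calculus of variations argument. A smooth family $h_t = \alpha_t\, g(A_t x + a_t)$ with $h_0 = g$ has objective $\int h_t^s = (\alpha_t^s / \abs{\det A_t}) \int g^s$, whose logarithmic derivative at $t=0$ is $s\dot{\alpha} - \tr{\dot{A}}$, independent of $\dot{a}$. Differentiating the polar constraint $\loglego{h_t}(u_i) \leq \tilde{g}(u_i)$ at each contact produces a linear inequality in $(\dot{\alpha}, \dot{A}, \dot{a})$ whose coefficients are naturally encoded through the Fr\'echet normal cone $\nfcone{\lifting{\tilde{g}}}{(u_i, \tilde{g}(u_i))}$, normalized by the factor $1 + \iprod{\nabla\psi(u_i)}{u_i}$ so that $\iprod{\upthing{v}_i}{\upthing{u}_i} = 1$ in the sense of \Href{Definition}{defn:contactset_intro}. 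Farkas' lemma then yields nonnegative weights $c_i$ satisfying the three balance equations of \eqref{eq:functional_glmp_lowner_intro}, obtained by matching the $\dot{\alpha}$, $\dot{A}$, and $\dot{a}$ coefficients on the two sides of the optimality identity.

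The distinctive factor $\loglego{g}(u_i)$ in the third equation, which distinguishes the L\"owner condition from \Href{Theorem}{thm:john_intro} applied to $(\tilde{f}, \tilde{g})$ and embodies the \emph{intricate} difference flagged in the introduction, comes from the exponential $e^{\iprod{A^{-1}a}{y}}$ in the dual position: the primal translation $a$ becomes a log-linear tilt in the polar, whose $\dot{a}$-derivative on the lifting $\lifting{\tilde{g}}$ couples to the last coordinate via the height $\tilde{g}(u_i) = \loglego{g}(u_i)$. For the converse in the radially symmetric case, I would argue that \eqref{eq:functional_glmp_lowner_intro} already forces first-order stationarity of $g$ in the infinite family $\funpos{g}$, and then use the $O(d)$-invariance of $g$ together with strict log-concavity along any radial one-parameter family to promote stationarity to a global minimum; any competing admissible position can be symmetrized over $O(d)$ without increasing $\int h^s$ or violating $h \geq f$, reducing the comparison to a one-dimensional problem. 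The main technical obstacle is the careful bookkeeping around the exponential factor in the dual parameterization and the correct normalizations in the Farkas step, which together ensure that the third balance condition emerges in the $\loglego{g}(u_i)$-weighted form dictated by the lifted geometry rather than in the bare form one would naively guess from a primal calculation.
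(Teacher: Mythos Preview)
Your high-level plan for necessity---pass to the polars, differentiate the one-parameter family of positions, and apply a separation/Farkas argument---is the paper's approach. The paper packages it as a corollary of the general \Href{Theorem}{thm:lowner_condition_general}, and then translates contact pairs into the $\nabla\psi$ form via \Href{Lemma}{lem:v_via_grad}. However, you skip the one step where the hypothesis ``$\loglego{f}$ takes only positive values'' is actually used. Farkas/separation only gives \emph{strict} inequalities (hence positive weights and a genuine admissible perturbation) once the set of contact operators is compact; the paper secures this via \Href{Lemma}{lem:sufficient_condition_locstar_and_bounded}, using that $\loglego{f}>0$ forces $\inf\{\loglego{g}(u): u\in\contactpoint{\loglego{f}}{\loglego{g}}\cap\supp\loglego{g}\}>0$ and hence $\contactset{\loglego{f}}{\loglego{g}}$ is bounded. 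Without this, normals at near-zero contact points can blow up and the separation step fails. You should state explicitly how ``no zeros'' enters.

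Your sufficiency argument for radially symmetric $g$ is genuinely different from the paper's and, as written, does not work. Symmetrizing a position $\alpha g(Ax+a)$ over $O(d)$ does not in general produce another element of $\funpos{g}$, so the ``reduce to a one-dimensional problem'' step has no target; and you invoke ``strict log-concavity along any radial one-parameter family'' that is nowhere assumed. The paper's route is much shorter: for radially symmetric $g$, polar decomposition $A=UP$ together with $g(Ux')=g(x')$ shows $\funpos{g}=\funppos{g}$, so the sufficiency clause of \Href{Theorem}{thm:lowner_condition_general} (which concludes global optimality among \emph{positive} positions) already gives optimality in the full L\"owner $s$-problem. Replace your symmetrization by this observation.
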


\begin{thm}[L\"owner's condition -- $q$-concave case]\label{thm:lowner_intro-concave}
Fix $s>0$. 
Let $f \colon\Red\to (0,+\infty)$ be a proper log-concave function.  
Let $g \colon \Red \to [0, +\infty)$ be a proper log-concave function such that $f \leq g$ and $\loglego{g}$ satisfies our Auxiliary Assumptions (see page~\pageref{assumptions:intro}).
Additionally, assume that $\loglego{g}$ is $q$-concave with some $q>0$. 
Assume also that $h=g$ is a minimizer in  L\"owner $s$-problem \eqref{eq:lowner_problem_intro}. 
Then there are contact pairs 
$(\upthing{u}_1,\upthing{v}_1), \dots,  
(\upthing{u}_m,\upthing{v}_m) \in \contactsetnr{\loglego{g}}{\loglego{f}}$
and positive weights $c_1,\ldots,c_m$ such that
\begin{equation}\label{eq:functional_glmp-lowner-conc_intro}
	\sum_{i=1}^{m} c_i {v}_i \otimes {u}_i = \id_{d}, \quad 
	\sum_{i=1}^{m} c_i \loglego{g}(u_i) \cdot \nu_i =  s
		\quad\text{and}\quad
		\sum_{i=1}^{m} c_i\loglego{g}(u_i) \cdot \nu_i u_i=0,
\end{equation} 
where $\upthing{u}_i=(u_i, \loglego{g}(u_i))$ and $\upthing{v}_i=(v_i,\nu_i)$.
Moreover, if $g$ is radially symmetric, then condition \eqref{eq:functional_glmp-lowner-conc_intro} is also sufficient. That is, if $g$ is  radially symmetric, and there are contact pairs 
$(\upthing{u}_1,\upthing{v}_1)$, $\dots$,  
$(\upthing{u}_m,\upthing{v}_m)$ $\in \contactsetnr{f}{g}$
and positive weights $c_1,\ldots,c_m$ satisfying \eqref{eq:functional_glmp-lowner-conc_intro}, then 
$g$ is a  maximizer in L\"owner $s$-problem 
\eqref{eq:lowner_problem_intro}.
\end{thm}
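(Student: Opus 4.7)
The plan is to mimic the variational argument behind \Href{Theorem}{thm:john_intro-concave} directly on the L\"owner problem, and then transport the resulting first-order conditions to the polar side by Legendre duality. I write $g=e^{-\psi^{*}}$, where $\psi^{*}:=-\log g$ is the Legendre conjugate of $\psi=-\log\loglego{g}$; both are convex, and $\psi$ attains its minimum at the origin by the Auxiliary Assumptions on $\loglego{g}$. Because $f$ is everywhere positive and $f\leq g$, every $w\in\contactpoint{f}{g}$ lies in the interior of $\supp g$ with $g(w)=f(w)>0$, so $\psi^{*}$ is finite and differentiable at $w$.

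\emph{Step 1 (Farkas on the primal side).} I would parametrize positions near $g$ by $(\alpha,A,a)\in\Re_{>0}\times GL(d)\times\Red$, $h_{t}(x)=\alpha(t)g(A(t)x+a(t))$, and compute at $t=0$
\begin{equation*}
\frac{\di}{\di t}\bigg|_{0}\log\int h_{t}^{s}=s\alpha'-\tr{A'},\qquad \frac{\di}{\di t}\bigg|_{0}h_{t}(w)=g(w)\bigl[\alpha'-\iprod{\nabla\psi^{*}(w)}{A'w+a'}\bigr].
\end{equation*}
Feasibility forces $\alpha'-\iprod{\nabla\psi^{*}(w)}{A'w+a'}\geq 0$ at every $w\in\contactpoint{f}{g}$, and minimality of $g$ forces $s\alpha'-\tr{A'}\geq 0$ on the resulting cone of admissible directions. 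A Farkas-type lemma in $\Re\times\Re^{d\times d}\times\Red$, together with Carath\'eodory's theorem, produces contact points $w_{1},\dots,w_{m}$ and multipliers $\tilde{c}_{i}>0$ with
\begin{equation*}
\sum_{i}\tilde{c}_{i}=s,\qquad \sum_{i}\tilde{c}_{i}\nabla\psi^{*}(w_{i})=0,\qquad \sum_{i}\tilde{c}_{i}\,w_{i}\otimes\nabla\psi^{*}(w_{i})=\id_{d}.
\end{equation*}

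\emph{Step 2 (pass to the polar side, then sufficiency).} Setting $u_{i}:=\nabla\psi^{*}(w_{i})$, the Legendre involution gives $w_{i}=\nabla\psi(u_{i})$, and a short tangency computation (using $f(w_{i})=g(w_{i})$ together with the common supporting hyperplane at $w_{i}$) shows $u_{i}\in\cl{\supp\loglego{g}}\cap\cl{\supp\loglego{f}}$ with $\loglego{g}(u_{i})=\loglego{f}(u_{i})$. Differentiability of $\loglego{g}$ makes the Fr\'echet normal to $\lifting{\loglego{g}}$ at $\upthing{u}_{i}=(u_{i},\loglego{g}(u_{i}))$ proportional to $(\loglego{g}(u_{i})\nabla\psi(u_{i}),1)$, and the normalization $\iprod{\upthing{v}_{i}}{\upthing{u}_{i}}=1$ is feasible because $1+\iprod{\nabla\psi(u_{i})}{u_{i}}>0$ (a consequence of $\psi$ attaining its minimum at the origin); this yields
\begin{equation*}
v_{i}=\frac{\nabla\psi(u_{i})}{1+\iprod{\nabla\psi(u_{i})}{u_{i}}},\qquad \nu_{i}=\frac{1}{\loglego{g}(u_{i})\bigl(1+\iprod{\nabla\psi(u_{i})}{u_{i}}\bigr)}.
\end{equation*}
The $q$-concavity of $\loglego{g}$ together with the tangency at $u_{i}$ forces $\upthing{v}_{i}$ to be a Fr\'echet normal to $\lifting{\loglego{f}}$ as well, so $(\upthing{u}_{i},\upthing{v}_{i})\in\contactsetnr{\loglego{g}}{\loglego{f}}$; setting $c_{i}:=\tilde{c}_{i}\bigl(1+\iprod{\nabla\psi(u_{i})}{u_{i}}\bigr)>0$ then converts the three identities of Step 1 verbatim into \eqref{eq:functional_glmp-lowner-conc_intro}. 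For sufficiency in the radially symmetric case, a competitor $h(x)=\alpha g(Ax+a)\geq f$ satisfies $\psi^{*}(Aw_{i}+a)\leq\psi^{*}(w_{i})+\log\alpha$; the convex subgradient inequality for $\psi^{*}$ gives $\iprod{\nabla\psi^{*}(w_{i})}{Aw_{i}+a-w_{i}}\leq\log\alpha$, and multiplying by $\tilde{c}_{i}$, summing, and invoking Step 1 produces $\tr{A}-d\leq s\log\alpha$. Radial symmetry of $g$ lets us replace $A$ by $(A^{T}A)^{1/2}$, after which $\log\det A\leq\tr{A}-d$, so $|\det A|\leq\alpha^{s}$ and $\int h^{s}\geq\int g^{s}$.

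\emph{Main obstacle.} The delicate point is the first half of Step 2: reconciling the primal-side Lagrange decomposition obtained from the Farkas lemma with the Fr\'echet-normal-cone description of contact pairs of the polars demanded by the theorem. In particular, showing that $\upthing{v}_{i}$ is normal to $\lifting{\loglego{f}}$ and not only to $\lifting{\loglego{g}}$ is where the $q$-concavity assumption on $\loglego{g}$ is really used, and care is required to keep every denominator $1+\iprod{\nabla\psi(u_{i})}{u_{i}}$ strictly positive, which is precisely where the placement of the origin imposed by the Auxiliary Assumptions enters.
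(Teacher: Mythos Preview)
Your route is genuinely different from the paper's, and the transport in Step~2 is nicer than you give it credit for: once you know $u_i=\nabla\psi^{*}(w_i)\in\partial(-\log f)(w_i)$ (which follows simply from $\phi:=-\log f\geq\psi^{*}$ with equality at $w_i$, hence $\phi(x)\geq\psi^{*}(x)\geq\psi^{*}(w_i)+\iprod{u_i}{x-w_i}=\phi(w_i)+\iprod{u_i}{x-w_i}$), Legendre duality gives $w_i\in\partial(-\log\loglego{f})(u_i)$ and $\loglego{f}(u_i)=\loglego{g}(u_i)$, and \Href{Lemma}{lem:v_via_grad} then shows that your $\upthing{v}_i$ lies in $\nfcone{\lifting{\loglego{f}}}{\upthing{u}_i}$ automatically. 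No $q$-concavity is used here; you have misplaced the hypothesis.

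The actual gap is in Step~1. Your Farkas argument silently needs two things: that $\psi^{*}=-\log g$ is differentiable at the contact points (the Auxiliary Assumptions give differentiability of $\psi=-\log\loglego{g}$, not of $\psi^{*}$; this is repairable with subgradients), and, more seriously, that strict first-order feasibility at all contact points actually produces a feasible perturbation $h_t\geq f$ for small $t$. The paper proves the analogue of this (\Href{Theorem}{thm:admissiblelinearized_lowner}) on the \emph{polar} side, where $\lifting{\loglego{g}}$ is compact; on your primal side $\supp g=\Red$, the contact set $\contactpoint{f}{g}$ can be unbounded, and the constraint functionals $(1,\,-w\otimes\nabla\psi^{*}(w),\,-\nabla\psi^{*}(w))$ need not generate a closed cone (the $w$-component is unbounded even though $\nabla\psi^{*}(w)\in\cl{\supp\loglego{g}}$ is bounded). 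This is precisely where $q$-concavity of $\loglego{g}$ is really needed --- not for the normal-cone membership you flag, but to supply compactness so that the separation argument goes through.

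For comparison, the paper bypasses the primal side entirely: it rewrites $f\leq g_t$ as $\loglego{g_t}\leq\loglego{f}$ via \Href{Lemma}{lem:polar_func_composed_affine}, runs the variational/separation argument directly on the polars where $\lifting{\loglego{g}}$ is compact (\Href{Theorem}{thm:lowner_condition_general}), and uses the $q$-concavity by passing to $q$-th powers first so that $\lifting{(\loglego{g})^{q}}$ is convex (\Href{Corollary}{cor:lowner_cond-q-concave}), then translates back (\Href{Lemma}{lem:equiv_lowner_cond_power}). Your sufficiency argument in the radially symmetric case is correct and pleasantly direct.
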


\Href{Theorems}{thm:lowner_intro} and \ref{thm:lowner_intro-concave} will be corollaries to our more general \Href{Theorem}{thm:lowner_condition_general}.

\begin{remark}[Duality and duality]
Observe that even though \Href{Theorem}{thm:lowner_intro} is phrased in 
terms of $\loglego{f}$ and $\loglego{g}$, it is not the same as 
\Href{Theorem}{thm:john_intro} for $\loglego{f}$ and 
$\loglego{g}$, even in the case where $s=1$. The reason is that we need to maximize/minimize a different 
functional -- the integral of the polar of the function instead of the integral 
of the function itself. In other words, the solution to L\"owner's problem is 
not the dual of the solution to John's problem. 
Moreover, comparing \eqref{eq:functional_glmp_lowner_intro} and \eqref{eq:functional_glmp_intro}, we see that the conditions are different. See more on this in \Href{Section}{sec:johnlownerequivalence}.

This is a major difference between our results and \Href{Theorem}{thm:GLMP}, 
since the latter has a self-dual form, \cite[Theorem~3.8]{GLMP04}. 
\end{remark}

\subsection{Structure of the paper}

In \Href{Section}{sec:notation}, we recall the basics of the theory of 
log-concave functions and polarity on functions.
Then, in \Href{Section}{sec:normalcones}, we discuss properties of normal cones 
of liftings of log-concave functions. These are rather technical facts, we 
suggest skipping the proofs on a first reading.
We state and prove our first main result, \Href{Theorem}{thm:john_condition_general}, the condition of 
optimality in John's problem in \Href{Section}{sec:john}, and 
our second main result, \Href{Theorem}{thm:lowner_condition_general}, the condition of optimality in the L\"owner's problem in \Href{Section}{sec:lowner}.

In \Href{Section}{sec:existence_uniqueness}, we show that the optima generally exist in both the John and the L\"owner problem, and discuss when uniqueness holds -- and when it does not.

\Href{Section}{sec:normalcone_subdifferential} describes the normal cone of the lifting of a log-concave function $e^{-\psi}$ in terms of the subddiferential of $\psi$. Then, in \Href{Section}{sec:boundedcontactpairs}, more readily applicable conditions on $g$ are shown that guarantee that the very technical conditions of \Href{Theorems}{thm:john_condition_general} and \ref{thm:lowner_condition_general} on $f$ and $g$ hold for essentially all meaningful choice of $f$.

In \Href{Sections}{sec:radially_symmetric} and \ref{sec:qconcave}, we present the preliminaries needed to prove the results of the Introduction on radially symmetric and $q$-concave functions.

\Href{Section}{sec:introproofs} contains the proofs of the results of the Introduction by combining the results of Sections \ref{sec:normalcone_subdifferential} through \ref{sec:qconcave} to show how our two main, general results \Href{Theorems}{thm:john_condition_general} and \ref{thm:lowner_condition_general} apply.

In \Href{Section}{sec:corollaries_and_discussion}, we note that \Href{Theorem}{thm:GLMP} follows from our results, and study the relationship of the John and L\"owner problems. Finally, we discuss what changes need to be made if affine positions of functions are replaced by linear positions, that is, when translations in $\Red$ are not allowed in the optimization problems.

\section{Basic notions}\label{sec:notation}

We use $\interior K$, $\bd K$, $\cl{K},$ and $\conv K$ to denote respectively the interior, boundary, closure and convex hull of a set $K$ in some Euclidean space, mostly $\Red$ or $\Redp$. 
We denote the Euclidean unit ball by $\ball{d}=\{x\in\Red\st |x|\leq 1\}$.
We will think of $\Red$ as the linear subspace of $\Redp$ spanned by the first $d$ elements of the standard basis. We denote the orthogonal projection from $\Redp$ to $\Red$ by $\projond$.
We use $e_{d+1}$ to denote the last vector of the standard basis of $\Redp.$

\subsection{Functions}
Let $f\colon\Red\to[0,\infty)$ be a function.
For $\alpha\in\Re$, we denote its $\alpha$ superlevel set by
\[
 [f\geq \alpha]=\{x\in\Red\st f(x)\geq\alpha\},
\]
and we use similar notations for level sets and sublevel sets of functions.
We denote the \emph{support} of $f$ by 
\[
\supp f = \{x \in \Red \st f(x) > 0 \}.
\]
The \emph{essential graph} and the \emph{lifting} of $f$ are the sets
\[
\essgraph f =  \left\{ (x, f(x)) \st x \in \cl{\supp f} \right\},\text{ and }
\]
\[
\lifting{f} =  \left\{ (x, y) \st x \in \cl{\supp f}, |y|\leq f(x) \right\}
\]
in $\Redp$.  

We call an upper semi-continuous function of finite and positive integral a \emph{proper} function. 
Note that for a proper log-concave function $f$, we have that $\lifting{f}$ is compact if and only if, $\cl{\supp f}$ is compact, which is equivalent to $f$ having bounded support.
A special class of log-concave functions is \emph{$q$-concave functions}\label{page:qconcave} for some $q>0$, which is those $f$ for which $f^q$ is  concave on its support. It is an exercise to show that if $f$ is $q$-concave and $0< r \leq q$, then $f$ is $r$-concave as well. 

The \emph{effective domain} of a convex function $\psi  \colon \Red \to \R \cup \{+ \infty\}$ is the set 
\[
\dom \psi = \left\{x \st  \psi(x) < + \infty\right\},
\]
the \emph{epigraph} of $\psi$ is
\[
\epi \psi = \left\{(x, \xi) \st x\in \dom \psi, \; \xi \in \R, \;    \xi \geq \psi(x) \right\}.
\]
Note that if $f=e^{-\psi}$ is a proper log-concave function, then $\psi$ is a lower semi-continuous convex function whose epigraph is closed, and
\begin{equation}\label{eq:suppequalsdom}
 \supp f =\dom \psi 
\end{equation}
are convex sets in $\Red$ with non-empty interior.

%We will state our results for a class that is wider than proper log-concave 
%functions, nevertheless, we will need their log-concave envelope.

\begin{dfn}\label{def:logenv}
For a function $g \colon \Red  \to [0, + \infty)$, its \emph{log-concave envelope}, $\logconc{g}$, is the minimal upper semi-continuous log-concave function $h$ satisfying $g \leq h$. 
\end{dfn}
We note that for any function $g$, the log-concave envelope $\logconc{g}$ is well-defined.
The epigraph of $-\ln(\logconc{g})$ is the closure of the convex hull of the epigraph of $-\ln(g)$ in $\Redp$.

%Already, in the classical setting of convex sets, one needs to chose the origin carefully to write a necessary condition like in \Href{Theorem}{thm:GLMP}. To write our conditions as in 
%\Href{Theorem}{thm:john_intro} and \Href{Theorem}{thm:lowner_intro},  we need to choose the origin in a certain way. By the phrase \emph{setting the origin to be a point $a$} we will understand that we will consider the shift of all functions by $a,$
%that is, we will consider $f(x-a)$ instead of $f(x).$ 
%\grishasays{need a more clear formulation}

\subsection{Positions, Minkowski's determinant inequality}
We will work with positions of functions that are the analogues of affine images of convex bodies.
To this end, we define the vector space
    \[
        \MMbar=\{(A\oplus\alpha, a)  \st A\in\Re^{d\times d}, \alpha\in\Re \text{ and } a\in\Red\},
    \]
and its subsets
    \[
        \MM=\{(A\oplus\alpha, a)\in\MMbar  \st A \text{ is non-singular, and } \alpha>0\},
    \]
and
    \[
        \MM^+=\{(A\oplus\alpha, a)\in\MM  \st A \text{ is positive definite}\}.
    \]
We will refer to elements of $\MMbar$ as  \emph{extended contact operators}, and
we will say that $A \oplus \alpha$ is the \emph{operator part} and 
$a$ is the \emph{translation part} of $(A \oplus \alpha, a) \in \MMbar.$ 
    
We denote by
\[\funpos{g}=\{\alpha g(Ax+a)\st (A\oplus\alpha, a)\in\MM\}\]
the \emph{positions} of a function $g$ on $\Red$, and by
\[\funppos{g}=\{\alpha g(Ax+a)\st (A\oplus\alpha, a)\in\MM^+\}.\]
the \emph{positive positions} of $g$.

We recall the additive and the multiplicative forms of the
\emph{Minkowski determinant  inequality}. Let $A$ and $B$ be 
positive definite matrices of order $d$. Then, for any $\lambda \in (0,1),$
\begin{equation}
\label{eq:minkowski_det_ineq}
\left(\det\left( \lambda A + (1 - \lambda)B\right)\right)^{1/d} \geq
\lambda \left(\det A\right)^{1/d} + 
(1 -\lambda)\left(\det B\right)^{1/d},
\end{equation}
with equality if and only if $A = cB$ for some $c > 0;$ and 
\begin{equation}
\label{eq:minkowski_det_multipl_ineq}
\det\left( \lambda A + (1 - \lambda)B\right) \geq
\left(\det A\right)^{\lambda} \cdot \left(\det B\right)^{1 -\lambda},
\end{equation}
with equality if and only if $A = B$.

\subsection{Polarity for sets and functions}\label{sec:polarity}
Recall that the polar of a set 
 $K$ in $\Red$ is given by
 \[
 {K}^\polar = \{y \in \Red \st \iprod{x}{y} \leq 1
  \quad \text{for all } \ x \in K \}.
 \]
For any cone $C$ with apex at the origin of a linear space $L$, we call its polar set the \emph{polar cone}. It is easy to see that
\[
C^\circ = \left\{p \in L^{\ast} \st \iprod{p}{a} \leq 0 \quad \text{ for all } a \in C\right\}.
\]

The classical \emph{convex conjugate} transform (or \emph{Legendre transform})
$\legendre$ is defined for a function $\phi: \Red \to \R\cup \{+\infty\}$ by
\[
\slogleg[]{\phi}(y) = \sup\limits_{x \in \Red} \{\iprod{x}{y} - \phi(x)\}.
\]
This notion yields the following duality mapping on the set of log-concave functions, justified in~\cite{artstein2007characterization, artstein2008concept, artstein2009concept}.
The \emph{log-conjugate} (or \emph{polar}) of a log-concave function $f = e^{-\psi} : \Red \to [0, +\infty)$, is defined by
\[
\loglego{f}(y) = e^{- (\slogleg[] \psi)(y)} = 
\inf\limits_{x \in \supp{f}} \frac{e^{-\iprod{x}{y}}}{f(x)}.
\]

\subsection{The normal cone}\label{sec:normalcone}

For a set $S$ in a linear space, we define its \emph{positive cone} 
(or \emph{cone hull}) by
\[
\poscone{S} = \left\{\lambda a \st \lambda > 0,\ a \in \conv S \right\}.
\]

\begin{dfn}\label{def:nconus}
The {\emph{Frech\'et normal cone}} (in short, the \emph{normal cone}) to a set 
$A \subset \Red$ at a point $a_0\in A$ is the set of vectors $v\in\Red$ such 
that for any  $\varepsilon > 0$, there is  $\delta>0$ such that
 $\iprod{v}{a - a_0}  \leq \epsilon {\enorm{a - a_0}} \text{ for all } 
 a\in A\cap \parenth{ \delta\ball{d} + a_0}$. In short,
\[
\nfcone{A}{a_0} = \left\{ v \in \Red \st 
\iprod{v}{a - a_0}  \leq \littleo{\enorm{a - a_0}} \text{ for all }a\in A \right\}.
\]
Clearly, $\nfcone{A}{a_0}$ is a closed convex cone in $\Red$.
\end{dfn}

It is easy to see that the normal cone to a convex set $K$ in $\Red$ at a boundary point $a_0\in\bd K$ coincides with the usual normal cone, that is,
\begin{equation}\label{eq:normal_cone_dual_to_poscone}
\nfcone{K}{a_0}=
\left\{ v \in \Red \st 
\iprod{v}{a - a_0}  \leq 0 \text{ for all } a\in K\right\}
 =  \parenth{\poscone{K - a_0}}^\polar. 
\end{equation}

Mostly, we will consider the normal cone of the lifting of an upper semi-continuous log-concave function $f$. In particular, we will show in the next section that $\nfcone{\lifting{f}}{\upthing{u}}$ is not empty at any 
$\upthing{u}\in\essgraph{f}$.

\subsection{Contact pairs}
An important technical step in our analysis is to consider most but not all contact pairs of $f$ and $g$.
\begin{dfn}\label{defn:contactset}
For two functions $f,g\colon\Red\to\Re$, we call the set
		\[\contactset{f}{g} = 
		\big\{(\upthing{u},\upthing{v})\in \contactsetnr{f}{g} \st f(u)\neq 0 \big\} \cup
		\big\{(\upthing{u},\upthing{v})\in \contactsetnr{f}{g} \st f(u)=g(u)=0, \upthing{v}=(v, 0) \big\}.
		\]
their \emph{reduced set of contact pairs}, where
		\[\contactsetnr{f}{g} = 
		\big\{(\upthing{u},\upthing{v})\in\Redp\times\Redp \st 
		 \upthing{u}=(u, f(u)), u \in \cl{\supp {f}}\cap\cl{\supp {g}},\,  f(u)  = g(u),
		 \]
		 \[
		 \upthing{v} \in \nfcone{\lifting f}{\upthing{u}}\cap \nfcone{\lifting g}{\upthing{u}}, 
		 \, \iprod{\upthing{v}}{\upthing{u}}=1\big\},
		\]
as given in \Href{Definition}{defn:contactset_intro}.
\end{dfn}

The difference between $\contactsetnr{f}{g}$
 and $\contactset{f}{g}$ is that in the latter, we exclude outer normals with non-zero last coordinate at  contact points at which both functions vanish.

Note that if $g\leq f$ and $u\in\Red$ is such that $f(u)=g(u)$, then 
$\nfcone{\lifting{f}}{\upthing{u}}\subseteq\nfcone{\lifting{g}}{\upthing{u}}$, 
where $\upthing{u}=(u,f(u))$.
Thus, if $g \leq f$, then one has
		\[
		\contactsetnr{f}{g} = 
		\big\{(\upthing{u},\upthing{v})\in\Redp\times\Redp \st 
		 \upthing{u}=(u, f(u)) \in \essgraph{f} \cap  \essgraph{g},
		 \]
		 \[
		 \upthing{v} \in \nfcone{\lifting f}{\upthing{u}}, \quad
		 \, \iprod{\upthing{v}}{\upthing{u}}=1\big\}.
		\]

Also, we will need to ensure that for any point $\upthing{u} \in \essgraph{f}$ and any 
$\upthing{v} \in \nfcone{\lifting{f}}{\upthing{u}},$ the angle between $\upthing{u}$ and $\upthing{v}$ is acute, that is, $\iprod{\upthing{u}}{\upthing{v}} >0$.
In the case of convex sets, this condition easily follows from the assumption that the origin is in the interior of the set. In our functional case, a bit more care is needed. 
\begin{dfn}\label{dfn:weaklustarlike}
Let  $f \colon \Red \to [0, + \infty)$ be a function. 
We say that a set ${U} \subset \Red$    is a \emph{\wlocstar}$f$, if 
for every $ u \in  U \cap \supp f$, we have $\iprod{(u,f(u))}{\upthing{v}} > 0$ for all non-zero 
$\upthing{v}\in\nfcone{\lifting{f}}{(u, f(u))}.$ 
\end{dfn}

Since the Fr\'echet normal cone is always closed, we immediately have the following.
\begin{lem}\label{lem:locstar_geom_meaning}
Let  $f \colon \Red \to [0, + \infty)$ be a proper log-concave function, and $u \in \supp {f}$. Set $\upthing{u}= (u, f(u))$, and assume that $\{u\}$ is a \wlocstar$f$. Then 
\[
\nfcone{\lifting{f}}{\upthing{u}} = \poscone{\upthing{v} \in \nfcone{\lifting{f}}{\upthing{u}} \st \iprod{\upthing{v}}{\upthing{u}} = 1}.
\]
\end{lem}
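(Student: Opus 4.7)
The plan is to verify the two cone-inclusions, with the nontrivial direction reducing to a simple rescaling made possible by the weakly locally star-like hypothesis.

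The inclusion $\supseteq$ is essentially automatic. Write $N := \nfcone{\lifting{f}}{\upthing{u}}$ and $N_1 := \{\upthing{v} \in N \st \iprod{\upthing{v}}{\upthing{u}} = 1\}$. Since $N$ is a closed convex cone (as noted after Definition~\ref{def:nconus}), it is stable under convex combinations and under multiplication by positive scalars. Hence $\conv N_1 \subseteq N$ and $\poscone{N_1} = \{\lambda a \st \lambda > 0, a \in \conv N_1\} \subseteq N$.

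For the inclusion $\subseteq$, I would first note that $N_1$ is itself convex: it is the intersection of the convex cone $N$ with the affine hyperplane $\{\upthing{v} \st \iprod{\upthing{v}}{\upthing{u}} = 1\}$. Consequently $\conv N_1 = N_1$, and $\poscone{N_1}$ consists exactly of the vectors $\lambda \upthing{w}$ with $\lambda > 0$ and $\upthing{w} \in N_1$. Now take an arbitrary nonzero $\upthing{v} \in N$. Because $\{u\}$ is a weakly locally star-like set with respect to $f$, Definition~\ref{dfn:weaklustarlike} gives $\lambda := \iprod{\upthing{v}}{\upthing{u}} > 0$. The rescaled vector $\upthing{w} := \upthing{v}/\lambda$ lies in $N$ (as $N$ is a cone) and satisfies $\iprod{\upthing{w}}{\upthing{u}} = 1$, so $\upthing{w} \in N_1$; hence $\upthing{v} = \lambda \upthing{w} \in \poscone{N_1}$, which settles the nontrivial inclusion for nonzero elements.

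The only subtle point is the status of the apex: under the strict convention $\lambda > 0$ in the definition of $\poscone$, the vector $0$ need not literally belong to $\poscone{N_1}$. This is a notational matter rather than a mathematical obstacle — either one reads the lemma as an equality of cones modulo the apex, or one observes that whenever $N \ne \{0\}$ the set $N_1$ is nonempty (by the rescaling just performed) and $0$ is a limit of elements of $\poscone{N_1}$, while when $N = \{0\}$ both sides collapse. The essential mechanism is the scaling step, which works precisely because the weakly locally star-like assumption prevents any nonzero normal vector from being orthogonal to (or making an obtuse angle with) $\upthing{u}$.
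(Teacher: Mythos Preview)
Your argument is correct and is essentially the same as the paper's, which simply says the lemma is immediate from the closedness of the Fr\'echet normal cone; you have just unpacked the rescaling step explicitly and noted the (harmless) apex discrepancy that the paper glosses over.
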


\section{The normal cone of the lifting}\label{sec:normalcones}
In this section, we collect several technical statements about normal cones, which will be used in 
the proofs of the main results The proofs of the statements are based on mostly standard methods of real and convex analyses. This section is self-contained, that is, no proof herein relies on any out-of-section statement.
It may be ideal to omit these proofs on a first reading.

We recall that the \emph{Hausdorff distance}  between two compact subsets  $K$ and $L$ of $\Red$
is defined by 
\[
\hausmetric{K}{L} = \inf \left\{\lambda > 0 \st K \subset L + \lambda \ball{d};
\quad   L \subset H + \lambda \ball{d} \right\}.
\]

\begin{prop}[Hemicontinuity of the normal cone for convex sets]\label{prop:normalconesemicont}
Let $\{K_i\}_{i \in \N}$ be a sequence of bounded convex sets in $\Red$  converging in the Hausdorff distance to a convex set $K$, and let $\{u_i\}_{i \in \N}$  be a sequence of points with $u_i \in {K_i}$ converging to a point $u\in \bd{K}$. 
Let  $\{v_i\}_{i \in \N}$ be a sequence of outer normals $v_i\in\nfcone{K_i}{u_i}$ converging to a unit vector $v\in\Red$.
Then $v\in\nfcone{K}{u}$.
\end{prop}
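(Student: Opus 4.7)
The plan is to reduce to the usual characterization of the normal cone of a convex set and then pass to the limit along approximating points. Concretely, by \eqref{eq:normal_cone_dual_to_poscone}, for a convex set $K$ one has
\[
v\in\nfcone{K}{u} \quad\Longleftrightarrow\quad \iprod{v}{a-u}\le 0 \ \text{for all }a\in K,
\]
so it suffices to fix an arbitrary $a\in K$ and prove $\iprod{v}{a-u}\le 0$.

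First I would use Hausdorff convergence to approximate $a$ from inside the $K_i$. Since $\hausmetric{K_i}{K}\to 0$, there exists a sequence $\lambda_i\to 0$ with $K\subset K_i+\lambda_i\ball{d}$, so for each $i$ we can pick $a_i\in K_i$ with $\enorm{a-a_i}\le\lambda_i$; in particular $a_i\to a$. Because $v_i\in\nfcone{K_i}{u_i}$ and $K_i$ is convex, the standard convex normal cone inequality gives
\[
\iprod{v_i}{a_i-u_i}\le 0 \quad \text{for every }i.
\]
Since $u_i\to u$, $a_i\to a$, and $v_i\to v$, passing to the limit in this scalar inequality yields $\iprod{v}{a-u}\le 0$. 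As $a\in K$ was arbitrary, $v\in\nfcone{K}{u}$.

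There is no real obstacle here; the only step worth highlighting is the approximation of $a\in K$ by points of $K_i$, which is precisely the content of one half of the Hausdorff-distance definition and is why we do not need the $K_i$ to be nested or to contain a common ball. The convexity of the limit set $K$ is also used only implicitly, in that \eqref{eq:normal_cone_dual_to_poscone} identifies the Fréchet normal cone with the classical convex one, so no separate argument about lower semicontinuity of the Fréchet cone is needed in this hemicontinuity statement.
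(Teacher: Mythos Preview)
Your proof is correct and, in fact, more direct than the paper's. You use the characterization \eqref{eq:normal_cone_dual_to_poscone} of the normal cone of a convex set and simply pass to the limit in the inequality $\iprod{v_i}{a_i-u_i}\le 0$ after approximating an arbitrary $a\in K$ by points $a_i\in K_i$; this is entirely elementary and self-contained.

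The paper instead invokes the metric projection $p_K$, using the identity $\nfcone{K}{u}=p_K^{-1}(u)-u$ and the joint continuity of $(K,x)\mapsto p_K(x)$ (cited from Schneider's book), and then argues by contradiction. Your argument avoids both the external reference and the contradiction, and makes transparent that nothing beyond convexity of the $K_i$ and one half of the Hausdorff-distance inclusion is used. The paper's route has the minor advantage of pointing to a general continuity principle that could be reused elsewhere, but for this proposition your approach is cleaner.
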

\begin{proof}
According to equation (2.3) of \cite{schneider2014convex}, 
\[
 \nfcone{K}{u}=p_K^{-1}(u)-u
\]
for any $u\in K$, where $p_K:\Red\to K$ is the \emph{metric projection} onto $K$, that is, $p_K(x)$ is the unique point of $K$ that is closest to $x$.
Furthermore, Lemma~1.8.11 of \cite{schneider2014convex}, and the discussion preceding it state that the mapping $(K,x)\mapsto p_K(x)$ is continuous in both arguments.

Suppose for a contradiction that $v$ is not in $\nfcone{K}{u}$, that is, 
$p_{K}(u + v)\neq u$. Then, by the continuity of $(K,x)\mapsto p_K(x)$ in the second variable, there is a neighborhood $U$ of $u$, and a neighborhood $V$ of $u + v$ such that $p_{K}(V)\cap U = \emptyset$.
In turn, by the continuity of $(K,x)\mapsto p_K(x)$ in the first variable, there is a neighborhood $U^{\prime}$ of $u$ inside $U$ such that $p_{K}(V)\cap U^{\prime}=\emptyset$ for all sufficiently large $i \in \N$. This clearly contradicts the assumptions.
\end{proof}

By the symmetry of $\lifting{f}$ about $\Red$ and the convexity of $\supp{f}$, one obtains the following.
\begin{lem}[The normal cone of $\lifting{f}$ at the boundary of $\supp{f}$]\label{lem:nfcone_at_zero}
Let $f \colon \Red \to [0, \infty)$ be an upper semi-continuous log-concave function
and let  $u \in \bd{\supp f}$. Set $\upthing{u} = (u, f(u))$. Then 
\[
\projond \parenth{\nfcone{\lifting{f}}{\upthing{u}}} = 
\nfcone{\lifting{f}}{\upthing{u}} \cap \Red = 
\nfcone{\supp{f}}{u},
\]
where the last normal cone is considered in $\Red$, and $\projond:\Redp\to\Red$ denotes the orthogonal projection onto the first $d$ coordinates.
\end{lem}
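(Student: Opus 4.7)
The plan is to use the reflective symmetry of the lifting across $\Red$, combined with the convexity of $\cl{\supp f}$. Concretely, $\lifting f$ is invariant under the reflection $\sigma\colon(x,y)\mapsto(x,-y)$; since $u\in\bd{\supp f}$, the lifted point $\upthing{u}=(u,f(u))$ sits on the axis $\Red$ of this reflection, so $\sigma$ fixes $\upthing{u}$. Consequently $\sigma$ maps the Fr\'echet normal cone $\nfcone{\lifting{f}}{\upthing{u}}$ to itself via $(v,\nu)\mapsto(v,-\nu)$. Combining this with the convexity of the Fr\'echet normal cone yields the first equality: for any $(v,\nu)$ in the cone, so is $(v,-\nu)$, hence $(v,0)=\tfrac12\bigl[(v,\nu)+(v,-\nu)\bigr]$ lies in $\nfcone{\lifting{f}}{\upthing{u}}\cap\Red$, which proves $\projond\parenth{\nfcone{\lifting{f}}{\upthing{u}}}\subseteq\nfcone{\lifting{f}}{\upthing{u}}\cap\Red$; the reverse inclusion is immediate since $\projond$ restricts to the identity on $\Red$.

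For the second equality $\nfcone{\lifting{f}}{\upthing{u}}\cap\Red=\nfcone{\supp f}{u}$, I would use that $\lifting{f}\cap(\Red\times\{0\})$ is exactly the image of $\cl{\supp f}$ under the standard inclusion $\Red\hookrightarrow\Redp$. Given $(v,0)\in\nfcone{\lifting{f}}{\upthing{u}}$, testing the Fr\'echet inequality at points $(a,0)\in\lifting f$ for $a\in\cl{\supp f}$ close to $u$ reduces to $\iprod{v}{a-u}\leq\littleo{\enorm{a-u}}$; by the convexity of $\cl{\supp f}$, this is equivalent to $v\in\nfcone{\supp f}{u}$ in the classical (convex-analytic) sense. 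For the reverse, any $v\in\nfcone{\supp f}{u}$ satisfies $\iprod{v}{a-u}\leq 0$ on $\cl{\supp f}$, and since every $(a,y)\in\lifting f$ has $a\in\cl{\supp f}$, we get $\iprod{(v,0)}{(a,y)-\upthing{u}}=\iprod{v}{a-u}\leq 0$, placing $(v,0)$ in $\nfcone{\lifting{f}}{\upthing{u}}$.

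The main delicate step is the first one: the symmetry reduction implicitly requires $f(u)=0$, so that $\upthing{u}$ lies on the reflection axis $\Red$. This is the natural situation at boundary points of $\supp f$ for the applications of this lemma; outside this case the simple averaging argument above would need to be replaced by a finer analysis of the Fr\'echet normals of $\lifting f$.
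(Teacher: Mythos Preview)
Your argument is exactly what the paper has in mind: its one-line justification reads ``By the symmetry of $\lifting{f}$ about $\Red$ and the convexity of $\supp{f}$,'' and you have correctly unpacked both ingredients. Under the hypothesis $f(u)=0$, your proof is complete.

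Your final caveat is not merely a delicacy, however---it is essential, and your suggestion that ``a finer analysis'' could handle the case $f(u)>0$ is too optimistic. The first equality is \emph{false} in that case. Take $d=1$ and $f(x)=(1-x)\chi_{[0,1)}(x)$; then $\lifting f$ is the triangle with vertices $(0,1),(0,-1),(1,0)$. At $u=0\in\bd{\supp f}$ one has $f(u)=1$, $\upthing{u}=(0,1)$, and $\nfcone{\lifting f}{\upthing{u}}=\poscone{\{(-1,0),(1,1)\}}$; its projection $\projond$ is all of $\Re$, while its intersection with $\Re$ is only $(-\infty,0]$. So the lemma as stated requires the extra hypothesis $f(u)=0$, which is indeed how it is used throughout the paper.

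A smaller gap: your argument for the inclusion $\nfcone{\lifting{f}}{\upthing{u}}\cap\Red\subseteq\nfcone{\supp f}{u}$ also silently assumes $f(u)=0$, since the test points $(a,0)$ are close to $\upthing u=(u,f(u))$ only then. This inclusion does hold for arbitrary $f(u)$, but to see it one should instead approach $\upthing u$ along points $\bigl(a_t,\min(f(a_t),f(u))\bigr)$ with $a_t=(1-t)u+ta$ for $a\in\supp f$, using log-concavity to bound the vertical gap $f(u)-f(a_t)$ by $O(t)$.
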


The following simple lemma describes locally the normal cone of $\lifting{f}$ in terms of the normal cone of $\epi(-\ln f)$.

\begin{lem}[The normal cone of $\lifting{f}$ in $\supp{f}$]\label{lem:nfcone}
Let $f= e^{-\psi}  \colon \Red \to [0, \infty)$ be an upper semi-continuous log-concave function. 
Fix a point $u \in \supp f$, a scalar $\nu \in \R$, and set $\upthing{u} = (u, f(u))$.
Then 
\begin{align}\label{eq:normalconecomaprison}
 \parenth{v,\nu}\in\nfcone{\epi\psi}{(u,\psi(u))} \text{ if and only if }
 \parenth{v,\frac{-\nu}{f(u)}}\in\nfcone{\lifting{f}}{(u,f(u))}.
 \end{align}
 Furthermore,
 \begin{equation}\label{eq:normalconelevelset}
 \projond\left(\nfcone{\lifting{f}}{(u,f(u))}\right)\subseteq \nfcone{[f\geq f(u)]}{u}, 
 \end{equation}
where $\projond:\Redp\to\Red$ denotes the orthogonal projection onto the first $d$ coordinates.
\end{lem}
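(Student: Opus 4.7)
My plan for \eqref{eq:normalconecomaprison} is to set up a local change of variables linking $\lifting f$ and $\epi \psi$, and then transport Fréchet normal cones through it by a direct first-order expansion; equation \eqref{eq:normalconelevelset} will then follow from a simple test-point argument. Since $u \in \supp f$, we have $f(u) > 0$ and $\psi(u) = -\ln f(u) < +\infty$, so on a small neighborhood of $(u, f(u))$ the upper piece of $\lifting f$ coincides with the hypograph $\{(x, y) : x \in \cl{\supp f},\, 0 < y \leq f(x)\}$ (the lower slab $y \leq -f(x)$ is bounded away from our base point). The map $\Phi(x, y) = (x, -\ln y)$ is then a $C^{1}$ diffeomorphism on the half-space $\{y > 0\}$ carrying this local piece of $\lifting f$ bijectively onto a neighborhood of $(u, \psi(u))$ in $\epi \psi$, because $\xi \geq \psi(x)$ if and only if $e^{-\xi} \leq f(x)$.

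To transfer the Fréchet normal cones through $\Phi$ I would compute directly rather than invoke any general theorem. Given $(v, \nu) \in \nfcone{\epi \psi}{(u, \psi(u))}$ and any $(x, y) \in \lifting f$ close to $(u, f(u))$, the point $(x, -\ln y)$ lies in $\epi \psi$; expanding
\[
-\ln y \;=\; -\ln f(u) \;-\; \frac{y - f(u)}{f(u)} \;+\; O\!\left((y - f(u))^2\right)
\]
and substituting into the Fréchet normal cone inequality at $(u, \psi(u))$, the bilinear part rearranges to $\iprod{(v, -\nu/f(u))}{(x, y) - (u, f(u))}$, while the quadratic remainder is of size $O(\abs{(x, y) - (u, f(u))}^{2})$. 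Since $\abs{(x, -\ln y) - (u, \psi(u))}$ and $\abs{(x, y) - (u, f(u))}$ are comparable on this neighborhood, the remainder is absorbed into the $o(\cdot)$ term and one reads off $(v, -\nu/f(u)) \in \nfcone{\lifting f}{(u, f(u))}$. The reverse implication is obtained symmetrically, parameterizing $\epi \psi$ near $(u, \psi(u))$ by $y = e^{-\xi}$ and expanding $e^{-\xi}$ around $\xi = \psi(u)$.

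For \eqref{eq:normalconelevelset}, I would argue directly: given $(v, w) \in \nfcone{\lifting f}{(u, f(u))}$ and any $x \in [f \geq f(u)]$, the horizontal lift $(x, f(u))$ lies in $\lifting f$ because $\abs{f(u)} \leq f(x)$. Substituting this test point into the Fréchet normal cone inequality, the last-coordinate term $w\cdot (f(u) - f(u))$ vanishes identically, leaving $\iprod{v}{x - u} \leq o(\abs{x - u})$, which is exactly the defining condition for $v \in \nfcone{[f \geq f(u)]}{u}$. The only delicate point in the whole argument is the bookkeeping of the $o$ and $O$ remainders when passing through $\Phi$; it works cleanly precisely because the hypothesis $u \in \supp f$, i.e., $f(u) > 0$, makes $\Phi$ bi-Lipschitz on the relevant neighborhood, so second-order terms in $y - f(u)$ are safely absorbed into $o(\abs{(x, y) - (u, f(u))})$.
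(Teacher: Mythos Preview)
Your argument is correct, but it takes a different route from the paper's. The paper exploits the convexity of $\epi\psi$ to replace the Fr\'echet normal cone condition there by the \emph{global} subgradient inequality $\psi(x)\geq\psi(u)+\iprod{u-x}{v/\nu}$ (after a case split on $\nu=0$ versus $\nu<0$), then exponentiates and uses $e^{t}=1+t+o(t)$ to land in the Fr\'echet condition for $\lifting f$; for the converse it applies $1+t\leq e^{t}$ and takes logarithms. Your approach is instead a purely local one: transport the Fr\'echet cone through the diffeomorphism $\Phi(x,y)=(x,-\ln y)$ by a first-order Taylor expansion, with the bi-Lipschitz property of $\Phi$ near $(u,f(u))$ absorbing the quadratic remainders. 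This avoids the case split and never uses the convexity of $\psi$, so in principle it proves a slightly more general statement; the price is the explicit bookkeeping of $o$ and $O$ terms, which the paper's global argument sidesteps. For \eqref{eq:normalconelevelset}, your test-point argument with the horizontal lift $(x,f(u))$ is exactly what the paper has in mind when it calls this ``a direct consequence of the definition.''
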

\begin{proof}
Clearly, $\nu$ cannot be positive. 
If $\nu=0$, then by \Href{Lemma}{lem:nfcone_at_zero},
the leftmost inclusion is equivalent to $v \in \nfcone{\dom \psi}{u}$,
and the rightmost is equivalent to $v \in \nfcone{\supp f}{u},$ which are 
equivalent by \eqref{eq:suppequalsdom}.

We thus assume that $\nu<0$. Since $\psi$ is a convex function, 
$\parenth{v,\nu}\in\nfcone{\epi\psi}{(u,\psi(u))}$ holds if and only if
\begin{equation*}%\label{eq:convexiprod}
 \psi(x)\geq\psi(u)+\iprod{u-x}{\frac{v}{\nu}} 
\end{equation*}
for all $x \in \R^d$, which yields by exponentiation
\[
 f(x)= e^{-\psi(x)} \leq f(u)e^{\iprod{x-u}{\frac{v}{\nu}}}=
 f(u)\left[1+\iprod{x-u}{\frac{v}{\nu}}\right]+\littleo{|x-u|}.
\]
The latter, by the definition of the normal cone, is equivalent to
\begin{equation}\label{eq:vnuinnfcone}
  \parenth{v,\frac{-\nu}{f(u)}}\in\nfcone{\lifting{f}}{(u,f(u))}, 
\end{equation}
completing the proof of one implication in \eqref{eq:normalconecomaprison}. 

For the other direction, assume \eqref{eq:vnuinnfcone}, that is,
\[
 f(x)\leq f(u)\left[1+\iprod{x-u}{\frac{v}{\nu}}\right]+\littleo{|x-u|}.
\]
Since $1+t<e^t$ for all $t$, we have $f(x)\leq 
f(u)e^{\iprod{x-u}{\frac{v}{\nu}}}+\littleo{|x-u|}=
f(u)e^{\iprod{x-u}{\frac{v}{\nu}}+\littleo{|x-u|}}$, where we used $f(u)>0$. By 
taking logarithm, we obtain 
$\psi(x)\geq\psi(u)+\iprod{u-x}{\frac{v}{\nu}}+\littleo{|x-u|}$, and hence, 
$\parenth{v,\nu}\in\nfcone{\epi\psi}{(u,\psi(u)}$.

Equation \eqref{eq:normalconelevelset} is  a direct consequence of the definition of the normal cone and can be easily shown to hold for any function without the assumption of log-concavity. The proof of \Href{Lemma}{lem:nfcone} is complete.
\end{proof}

Since the normal cone to a convex subset of $\R^d$ at 
any point of its boundary contains non-zero vectors, the two previous lemmas yield the following.
\begin{cor}[The normal cone is not empty]\label{cor:normalcone_nonempty}
Let $f \colon \Red \to [0, \infty)$ be an upper semi-continuous log-concave function.
The normal cone to $\lifting{f}$ at any point of 
$\essgraph{f}$ contains non-zero vectors.
\end{cor}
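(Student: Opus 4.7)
The plan is to pick an arbitrary point $\upthing{u}=(u, f(u))\in\essgraph{f}$ and split into two cases depending on whether $u$ lies on the boundary of $\supp{f}$ or in its interior, and in each case exhibit an explicit non-zero element of $\nfcone{\lifting{f}}{\upthing{u}}$ via one of the two preceding lemmas.

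First, suppose $u\in\bd(\supp{f})$. Since $\supp{f}=\dom(-\ln f)$ is a convex subset of $\Red$, the supporting hyperplane theorem (applied to the affine hull of $\supp{f}$ if $\supp{f}$ has empty interior in $\Red$, or to $\supp{f}$ itself otherwise) guarantees that $\nfcone{\supp{f}}{u}$ contains a non-zero vector $v$. By \Href{Lemma}{lem:nfcone_at_zero}, $\projond\parenth{\nfcone{\lifting{f}}{\upthing{u}}}=\nfcone{\supp{f}}{u}$, so there exists $\upthing{v}\in\nfcone{\lifting{f}}{\upthing{u}}$ with $\projond(\upthing{v})=v\neq 0$, and in particular $\upthing{v}\neq 0$.

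Next, suppose $u\in\interior(\supp{f})$, so $f(u)>0$ and $\psi(u)=-\ln f(u)\in\Re$. The point $(u,\psi(u))$ lies on the lower boundary of the convex set $\epi\psi\subset\Redp$; in particular, for every $t\geq 0$ we have $(u,\psi(u)+t)\in\epi\psi$, so that the vector $(0,-1)\in\Redp$ satisfies $\iprod{(0,-1)}{(x,\xi)-(u,\psi(u))}\leq 0$ for all $(x,\xi)\in\epi\psi$; hence $(0,-1)\in\nfcone{\epi\psi}{(u,\psi(u))}$. Applying \Href{Lemma}{lem:nfcone} with $v=0$ and $\nu=-1$, we obtain $\parenth{0,\tfrac{1}{f(u)}}\in\nfcone{\lifting{f}}{\upthing{u}}$, and this is a non-zero vector in $\Redp$.

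Combining the two cases, $\nfcone{\lifting{f}}{\upthing{u}}$ contains a non-zero vector for every $\upthing{u}\in\essgraph{f}$, proving the corollary. The argument is essentially routine once the two preceding lemmas are in place; the only mild subtlety is the dichotomy between boundary and interior points of $\supp{f}$, with the first handled via a horizontal normal pulled back from $\nfcone{\supp f}{u}\subset\Red$ and the second via the vertical normal $(0,-1)$ to $\epi\psi$ transported to $\lifting{f}$.
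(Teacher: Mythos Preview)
Your overall strategy---splitting into the cases $u\in\bd(\supp f)$ and $u\in\interior(\supp f)$ and invoking \Href{Lemma}{lem:nfcone_at_zero} and \Href{Lemma}{lem:nfcone} respectively---is exactly the approach the paper has in mind. Case~1 is fine.

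However, Case~2 contains a genuine error. You claim that $(0,-1)\in\nfcone{\epi\psi}{(u,\psi(u))}$, arguing that ``for every $t\geq 0$ we have $(u,\psi(u)+t)\in\epi\psi$, so that $\iprod{(0,-1)}{(x,\xi)-(u,\psi(u))}\leq 0$ for all $(x,\xi)\in\epi\psi$.'' This is a non-sequitur: the fact that the vertical upward ray lies in $\epi\psi$ only tells you that any normal $(v,\nu)$ must have $\nu\leq 0$; it does \emph{not} say that $(0,-1)$ itself is a normal. Indeed, for a generic point $(x,\xi)\in\epi\psi$ one has $\xi\geq\psi(x)$, not $\xi\geq\psi(u)$, so $\iprod{(0,-1)}{(x,\xi)-(u,\psi(u))}=\psi(u)-\xi$ need not be nonpositive. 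For a concrete counterexample, take $d=1$, $\psi(x)=x^2$, $u=1$: the normal cone at $(1,1)$ is the ray through $(2,-1)$, and $(0,-1)$ is not in it.

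The fix is easy and keeps your structure intact: since $u\in\interior(\dom\psi)$, the point $(u,\psi(u))$ is a boundary point of the convex set $\epi\psi$ (because $(u,\psi(u)-\varepsilon)\notin\epi\psi$), so by the supporting hyperplane theorem there is some non-zero $(v,\nu)\in\nfcone{\epi\psi}{(u,\psi(u))}$; equivalently, $\partial\psi(u)\neq\emptyset$ at an interior point of the domain, and any $p\in\partial\psi(u)$ gives $(p,-1)\in\nfcone{\epi\psi}{(u,\psi(u))}$. Then \Href{Lemma}{lem:nfcone} transports this to a non-zero element of $\nfcone{\lifting{f}}{\upthing{u}}$.
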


\begin{lem}[Regularity of the normal cone of $\lifting{f}$]\label{lem:nfconeregularity}
Let $f \colon \Red \to [0, \infty)$ be an upper semi-continuous log-concave function. 
Fix  $\epsilon>0$ and a point $u \in \bd{\supp f}$ with $f(u)=0$.
Then there is a $\delta>0$ such that for every 
${u}_1 \in \bd{\supp{f}} \cap \parenth{\delta\ball{d} +u}$ and every 
 $\parenth{v_1,\nu_1}\in\nfcone{\lifting{f}}{({u}_1, f(u_1))}$, there is a
 $v\in\nfcone{\cl{\supp f}}{u}$ with $\iprod{\frac{v_1}{\enorm{v_1}}}{\frac{v}{\enorm{v}}}>1-\epsilon$.
 \end{lem}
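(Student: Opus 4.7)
The plan is to argue by contradiction. Suppose the conclusion fails for some $\epsilon > 0$: then we can extract a sequence $u_n \to u$ in $\bd{\supp f}$ (with $u_n$ eventually within every chosen radius of $u$) together with vectors $(v_n, \nu_n) \in \nfcone{\lifting{f}}{(u_n, f(u_n))}$, $v_n \neq 0$, such that no non-zero $v \in \nfcone{\cl{\supp f}}{u}$ satisfies $\iprod{v_n/\enorm{v_n}}{v/\enorm{v}} > 1 - \epsilon$. After normalizing so $\enorm{v_n} = 1$ and passing to a convergent subsequence $v_n \to v_\infty$, a unit vector, it would suffice to show $v_\infty \in \nfcone{\cl{\supp f}}{u}$; choosing $v = v_\infty$ would then produce the absurdity $1 \leq 1 - \epsilon$.

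Since each $u_n$ lies in $\bd{\supp f}$, \Href{Lemma}{lem:nfcone_at_zero} applies at $u_n$ and yields
\[
v_n = \projond(v_n, \nu_n) \in \nfcone{\supp f}{u_n} = \nfcone{\cl{\supp f}}{u_n},
\]
the last equality using that a convex set and its closure have identical normal cones at any common boundary point. Thus each $v_n$ is a unit outer normal to the \emph{fixed} convex set $\cl{\supp f}$ at the boundary point $u_n \to u$. To apply the hemicontinuity result \Href{Proposition}{prop:normalconesemicont}, which requires \emph{bounded} convex sets, I would localize by fixing $r > 0$ and setting $\tilde K := \cl{\supp f} \cap \parenth{u + r \ball{d}}$. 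Three simple checks then remain: $v_n \in \nfcone{\tilde K}{u_n}$ (shrinking a convex set only enlarges its normal cones), $u_n, u \in \bd{\tilde K}$ (since they lie in $\bd{\supp f}$ and, for $u_n$ with $n$ large, in the interior of the ball), and $\nfcone{\tilde K}{u} = \nfcone{\cl{\supp f}}{u}$ (because $u$ is interior to the ball).

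Applying \Href{Proposition}{prop:normalconesemicont} to the constant (hence trivially Hausdorff-convergent) sequence $\tilde K_n \equiv \tilde K$ together with the convergent points $u_n \to u \in \bd{\tilde K}$ and unit outer normals $v_n \to v_\infty$ then delivers $v_\infty \in \nfcone{\tilde K}{u} = \nfcone{\cl{\supp f}}{u}$, closing the contradiction. The principal obstacle is conceptual rather than computational: one must recognize that the \emph{equality} furnished by \Href{Lemma}{lem:nfcone_at_zero} lets us treat the horizontal component $v_n$ as a normal to the fixed convex set $\cl{\supp f}$, which sidesteps the otherwise delicate Hausdorff convergence of the superlevel sets $[f \geq f(u_n)]$ to $\cl{\supp f}$ (which would require log-concavity to establish). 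Once this is noticed, the remaining steps are routine bookkeeping around the bounded-convex-set hypothesis of the hemicontinuity proposition.
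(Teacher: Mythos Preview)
Your argument is correct and takes a genuinely different---and cleaner---route than the paper's own proof. The paper does not invoke \Href{Lemma}{lem:nfcone_at_zero} at the nearby points $u_1$; instead it uses the weaker inclusion \eqref{eq:normalconelevelset} from \Href{Lemma}{lem:nfcone}, which only gives $v_1\in\nfcone{[f\geq f(u_1)]}{u_1}$. It then argues that as $u_1\to u$ one has $f(u_1)\to 0$ by upper semi-continuity, so the \emph{varying} superlevel sets $[f\geq f(u_1)]\cap(\ball{d}+u)$ Hausdorff-converge to $\cl{\supp f}\cap(\ball{d}+u)$, and \Href{Proposition}{prop:normalconesemicont} is applied to that non-constant sequence. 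Your observation that the hypothesis $u_1\in\bd{\supp f}$ already places you in the scope of \Href{Lemma}{lem:nfcone_at_zero} lets you read off $v_1\in\nfcone{\cl{\supp f}}{u_1}$ directly and reduce to a \emph{constant} sequence of convex sets, which---as you note yourself---sidesteps the superlevel-set convergence entirely. The paper's approach would generalize to points $u_1$ merely close to $u$ (not necessarily on $\bd{\supp f}$), but that extra generality is not needed for the stated lemma, so your route is the more economical one here.
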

\begin{proof}
%Consider the nested sequence of bounded convex sets in $\Red$ with non-empty interior: $K_i=\left[ f \geq 1/i \right] \cap \parenth{\ball{d} + u}$ for $i \in \N$. This sequence converges (say, in the Hausdorff distance) to $K_0=\supp f \cap \parenth{\ball{d} + u}$.
Observe that if 
$\parenth{v_1,\nu_1}\in\nfcone{\lifting{f}}{(u_1, f(u_1))}$ for some point 
$u_1\in\cl{\supp{f}}$, then $v_1\in\nfcone{[f\geq f(u_1)]}{u_1}$. Thus, if $\delta$ is sufficiently small, then $f(u_1)$ is close to zero, and hence, $[f\geq f(u_1)] \cap\parenth{\ball{d} + u}$ is close to $\cl{\supp{f}} \cap \parenth {\ball{d} + u}.$ 
By applying \Href{Proposition}{prop:normalconesemicont}, we complete the proof.
\end{proof}

\Href{Lemma}{lem:nfcone} gives a description of the normal cone of $\lifting{f}$ which yields the following 
local description of $\lifting{f}$ in terms of curves.

\begin{lem}[Local description of $\lifting{f}$ at a non-zero point in terms of a curve]\label{lem:curve_in_the_lifting}
Let $f= e^{-\psi}  \colon \Red \to [0, \infty)$ be an upper semi-continuous log-concave function. 
Let $u \in  {\supp f}$ and set $\upthing{u} = (u, f(u))$. 
%, and assume that 
%$\iprod{\upthing{v}}{\upthing{u}} > 0$ for all 
%$\upthing{v}\in\nfcone{\lifting{f}}{\upthing{u}}$ with $|\upthing{v}|=1$.
Let $\upthing{\xi}(t)  \colon [0,1] \to \R^{d+1}$ be a curve such that
$\upthing{\xi}(0) = (u, f(u))$ and the right derivative $\upthing{\xi}^{\prime}(0)$ at zero exists.
Consider the following statements:
\begin{enumerate}[(a)]
\item\label{item:curve_linearized_in_primal}
\[\upthing{\xi}^{\prime}(0) \in 
\interior \parenth{\nfcone{\lifting{f}}{\upthing{\xi}(0) }}^{\circ}.
\]
\item\label{item:curve_inclusion} There is a positive $\epsilon$ such that 
\[
\upthing{\xi}(t)\in \lifting{f}
\;\;\;\text{ for all } t \in [0, \epsilon].
\] 
\item\label{item:curve_linearized_in_primalweak} 
\[
\upthing{\xi}^{\prime}(0)\in 
 \parenth{\nfcone{\lifting{f}}{\upthing{\xi}(0) }}^{\circ}.
\]
\end{enumerate}
 Then \eqref{item:curve_linearized_in_primal} implies \eqref{item:curve_inclusion}, and \eqref{item:curve_inclusion} implies \eqref{item:curve_linearized_in_primalweak}.
\end{lem}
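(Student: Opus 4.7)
The plan is to handle the two implications separately: the second $(b)\Rightarrow(c)$ is essentially an unpacking of the Fréchet normal cone definition, while the first $(a)\Rightarrow(b)$ requires moving the analysis from $\lifting{f}$ (not convex) to $\epi\psi$ (convex).

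First I would prove $(b)\Rightarrow(c)$: given any $\upthing{v}\in\nfcone{\lifting{f}}{\upthing{\xi}(0)}$, substituting $a=\upthing{\xi}(t)\in\lifting{f}$ into the defining inequality of the Fréchet normal cone yields $\iprod{\upthing{v}}{\upthing{\xi}(t)-\upthing{\xi}(0)}\leq\littleo{|\upthing{\xi}(t)-\upthing{\xi}(0)|}$. Writing $\upthing{\xi}(t)-\upthing{\xi}(0)=t\upthing{\xi}^{\prime}(0)+\littleo{t}$, dividing by $t>0$, and letting $t\to0^{+}$ gives $\iprod{\upthing{v}}{\upthing{\xi}^{\prime}(0)}\leq 0$. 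Since $\upthing{v}$ was arbitrary, this is exactly $\upthing{\xi}^{\prime}(0)\in\parenth{\nfcone{\lifting{f}}{\upthing{\xi}(0)}}^{\circ}$.

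For $(a)\Rightarrow(b)$, I will exploit $y(0)=f(u)>0$: writing $\upthing{\xi}(t)=(x(t),y(t))$, the map $y\mapsto-\ln y$ is smooth near $y(0)$, so for small $t$ the condition $\upthing{\xi}(t)\in\lifting{f}$ is equivalent to $\eta(t)\in\epi\psi$, where $\eta(t)=(x(t),-\ln y(t))$ is differentiable at $0$ with $\eta^{\prime}(0)=(x^{\prime}(0),-y^{\prime}(0)/f(u))$. By \Href{Lemma}{lem:nfcone}, the symmetric diagonal linear map $L=\diag(\id_d,-1/f(u))$ sends $\nfcone{\epi\psi}{(u,\psi(u))}$ onto $\nfcone{\lifting{f}}{\upthing{\xi}(0)}$; since $L=L^{T}$ is a linear homeomorphism it maps the polar cone of the former to that of the latter and preserves interiors. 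Thus the hypothesis in (a) translates into $\eta^{\prime}(0)\in\interior\parenth{\nfcone{\epi\psi}{(u,\psi(u))}}^{\circ}$.

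The hard part will be the final convex-geometric step: for a closed convex set $C$, a boundary point $p$, and a curve $\eta$ with $\eta(0)=p$ and $\eta^{\prime}(0)$ in the interior of the polar of $\nfcone{C}{p}$ (which for convex $C$ equals the tangent cone at $p$), I must show $\eta(t)\in C$ for all sufficiently small $t>0$. I will do this by inscribing a full-dimensional simplex $\Delta$ in the tangent cone with $\eta^{\prime}(0)\in\interior\Delta$; for each vertex $w_i$ of $\Delta$, convexity of $C$ yields some $s_i>0$ with $p+sw_i\in C$ for all $s\in[0,s_i]$, and taking convex combinations gives $p+s\Delta\subseteq C$ for $s\in[0,\min_i s_i]$. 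Writing $\eta(t)=p+t\parenth{\eta^{\prime}(0)+\littleo{1}}$, the parenthesized coefficient lies in $\Delta$ for all sufficiently small $t$, hence $\eta(t)\in C$; applied to $C=\epi\psi$ this yields $\upthing{\xi}(t)\in\lifting{f}$ and completes the proof.
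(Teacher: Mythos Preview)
Your proof is correct. For $(a)\Rightarrow(b)$ you follow the same route as the paper: pass from $\lifting{f}$ to the convex set $\epi\psi$ via the curve $\eta(t)=(x(t),-\ln y(t))$ and the linear isomorphism of \Href{Lemma}{lem:nfcone}, then use convexity. The paper simply cites the convex case as standard; you actually supply the argument via an inscribed simplex, which is fine (just make sure to place $\Delta$ inside $\interior T$, so its vertices lie in $\poscone{\epi\psi-(u,\psi(u))}$ rather than merely in its closure, which is what the ray argument needs).

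Where you genuinely diverge from the paper is in $(b)\Rightarrow(c)$. The paper translates all three statements to $\epi\psi$ and handles both implications there, whereas you prove $(b)\Rightarrow(c)$ directly from the definition of the Fr\'echet normal cone by substituting $a=\upthing{\xi}(t)$. This is cleaner: it works for any set, not just $\lifting{f}$, and avoids invoking \Href{Lemma}{lem:nfcone} a second time. The paper's uniform translation has the aesthetic advantage of treating both implications symmetrically, but your asymmetric treatment is shorter and more transparent.
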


\begin{proof}
With the identification $\Redp=\Red\oplus\Re$, we split the coordinates of $\upthing{\xi}(t)$ as $\upthing{\xi}(t)= \parenth{\xi(t),\mu(t)}$ that is, $\xi\colon[0,1]\to\Red$ and $\mu\colon[0,1]\to\Re$ with $\xi(0)=u$ and $\mu(0)=f(u)$.

Clearly, if $\lifting{f}$ was a convex set in $\Redp$, then the statement would follow from basic properties of supporting hyperplanes to convex sets. We will use the fact that even though $\lifting{f}$ is not convex in general, but the epigraph of $\psi$ is. The proof relies on translating the question on the behavior of $\upthing{\xi}$ with respect to $\lifting{f}$ to a question concerning the behavior of the curve
 \[
  \upthing{\eta}(t)=\parenth{\xi(t),-\log \mu(t)}
 \]
with respect to $\epi{\psi}$.
 
Since $f$ is upper semi-continuous and log-concave, $\epi{\psi}$ is a closed convex set in $\Redp$ with nonempty interior. It follows that 
\begin{equation}\label{eq:curve_linearized_in_primal_eta}
   \upthing{\eta}^{\prime}(0)\in
   \interior \parenth{\nfcone{\epi\psi}{\upthing{\eta}(0) }}^{\circ}
\end{equation}
implies
\begin{equation}\label{eq:curve_inclusion_eta}
 \upthing{\eta}(t)\in\epi{\psi}\text{ for all }t\in[0,\varepsilon], \text{ with some }\varepsilon>0,
\end{equation}
which, in turn, implies
\begin{equation}\label{eq:curve_linearized_in_primalweak_eta}
   \upthing{\eta}^{\prime}(0)\in
  \parenth{\nfcone{\epi\psi}{\upthing{\eta}(0) }}^{\circ}.
\end{equation}

Clearly, statement \eqref{item:curve_inclusion} of the lemma is equivalent to 
\eqref{eq:curve_inclusion_eta}, thus, in order to prove the lemma, we need to 
show the equivalence of inclusions \eqref{item:curve_linearized_in_primal} and 
\eqref{eq:curve_linearized_in_primal_eta}, and the equivalence of inclusions 
\eqref{item:curve_linearized_in_primalweak} and 
\eqref{eq:curve_linearized_in_primalweak_eta}.

Using
$\upthing{\eta}^{\prime}(0)=\parenth{\xi^{\prime}(0), \frac{-\mu^{\prime}(0)}{f(u)}}$
and the definition of the polar cone, we have
\[
 \upthing{\eta}^{\prime}(0)\in
 \parenth{\nfcone{\epi{\psi}}{\upthing{\eta}(0) }}^{\circ} \Longleftrightarrow
\iprod{\parenth{\xi^{\prime}(0), \frac{-\mu^{\prime}(0)}{f(u)}}}{\parenth{v,\nu}}\leq 0
\text{ for all }\parenth{v,\nu}\in\nfcone{\epi{\psi}}{\upthing{\eta}(0)}.
\]
The latter, by \eqref{eq:normalconecomaprison}, is equivalent to
\[ 
\iprod{\parenth{\xi^{\prime}(0), \mu^{\prime}(0)}}{\parenth{v,\frac{-\nu}{f(u)}}}\leq 0
\text{ for all }\parenth{v,\frac{-\nu}{f(u)}}\in\nfcone{\lifting{f}}{\upthing{\xi}(0)},
\]
which, in turn, is equivalent to 
\[
 \upthing{\xi}^{\prime}(0)\in 
 \parenth{\nfcone{\lifting{f}}{\upthing{\xi}(0) }}^{\circ}.
\]

In summary, \eqref{eq:curve_linearized_in_primalweak_eta} is equivalent to statement \eqref{item:curve_linearized_in_primalweak} of the lemma.
The equivalence of \eqref{eq:curve_linearized_in_primal_eta} and
statement \eqref{item:curve_linearized_in_primal} is shown the same way.
\end{proof}

Since $\cl{\supp{f}}=\lifting{f}\cap\Red$ is a closed convex set in $\Red$, we have the following statement.
\begin{lem}[Local description of $\lifting{f}$ at a zero point in terms of a horizontal curve]\label{lem:flat_curve_in_the_lifting}
%\grishasays{I am completely against of mentioning any angles here. In our situation everything is flat.}
Let $f  \colon \Red \to [0, \infty)$ be an upper semi-continuous log-concave function. 
Let $u \in  \cl{\supp f}$  such that $f(u)= 0.$ Denote $\upthing{u} = (u, f(u))=(u,0)$.  
Let $\upthing{\xi}(t)  \colon [0,1] \to \Red$ be a curve such that
$\upthing{\xi}(0) = (u, 0)$ and the right derivative $\upthing{\xi}^{\prime}(0)$ at zero exists.
Consider the following statements:
\begin{enumerate}[(a)]
\item\label{item:flat_curve_linearized_in_primal}
\[\upthing{\xi}^{\prime}(0) \in 
\interior \parenth{\nfcone{\supp{f}}{\upthing{\xi}(0) }}^{\circ}.
\]
\item\label{item:flat_curve_inclusion} There is a positive $\epsilon$ such that 
\[
\upthing{\xi}(t)\in \lifting{f}
\;\;\;\text{ for all } t \in [0, \epsilon].
\] 
\item\label{item:flat_curve_linearized_in_primalweak} 
\[
\upthing{\xi}^{\prime}(0)\in 
 \parenth{\nfcone{\supp{f}}{\upthing{\xi}(0) }}^{\circ}.
\]
\end{enumerate}
Then the implications \eqref{item:curve_linearized_in_primal} $\Rightarrow$ \eqref{item:curve_inclusion} $\Rightarrow$ \eqref{item:curve_linearized_in_primalweak} hold, where polarity is meant in $\Red$.
\end{lem}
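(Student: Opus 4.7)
The plan is to reduce the lemma to a classical fact about tangent and normal cones of a closed convex subset of $\Red$. The key observation is that the curve is horizontal, and
\[
\lifting{f}\cap\parenth{\Red\times\{0\}}=\cl{\supp f}\times\{0\},
\]
so under the usual identification of $\Red$ with $\Red\times\{0\}$ inside $\Redp$, statement (b) is equivalent to $\upthing{\xi}(t)\in C:=\cl{\supp f}$ for all $t\in[0,\epsilon]$. Since $\supp f=\dom(-\ln f)$ is convex, $C$ is a closed convex subset of $\Red$, so the Fr\'echet normal cone $\nfcone{C}{u}$ coincides with the usual convex-analytic normal cone, and by equation~\eqref{eq:normal_cone_dual_to_poscone} its polar $T_u:=\parenth{\nfcone{C}{u}}^\polar$ is the classical tangent cone at $u$.

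For (a) $\Rightarrow$ (b), assume $\upthing{\xi}^{\prime}(0)\in\interior T_u$. A standard fact in convex analysis says that $\interior T_u$ is the positive hull of $\interior C - u$, so there exist $\lambda>0$, $w\in\Red$, and $\delta>0$ with $\upthing{\xi}^{\prime}(0)=\lambda w$ and $u+w+\delta\ball{d}\subset C$. Using $\upthing{\xi}(t)=u+t\upthing{\xi}^{\prime}(0)+\littleo{t}=u+\lambda tw+\littleo{t}$, for small $t>0$ one may write
\[
\upthing{\xi}(t)=(1-\lambda t)u+\lambda t\parenth{u+w+\eta(t)},
\]
where $\eta(t)=\littleo{t}/(\lambda t)\to 0$ as $t\to 0^+$. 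For $t$ sufficiently small, $\eta(t)\in\delta\ball{d}$, so the right-hand side is a convex combination of $u\in C$ and a point of $u+w+\delta\ball{d}\subset C$, and hence lies in $C$ by convexity.

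For (b) $\Rightarrow$ (c), if $\upthing{\xi}(t)\in C$ for $t\in[0,\epsilon]$, then for every $v\in\nfcone{C}{u}$ the defining property of the convex normal cone gives $\iprod{v}{\upthing{\xi}(t)-u}\leq 0$. Dividing by $t>0$ and letting $t\to 0^+$ yields $\iprod{v}{\upthing{\xi}^{\prime}(0)}\leq 0$, which is exactly (c). No serious obstacle is expected: once the reduction to the convex set $C$ is made, both implications follow from standard polar duality between the tangent and normal cones, exactly paralleling the convex-set analogue pointed to in the sentence preceding the lemma.
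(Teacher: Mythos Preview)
Your proposal is correct and follows exactly the approach the paper indicates: the paper's entire argument is the one-line observation preceding the lemma that $\cl{\supp f}=\lifting{f}\cap\Red$ is a closed convex set in $\Red$, leaving the rest as a standard exercise. You have simply made that exercise explicit---the reduction of (b) to membership in $C=\cl{\supp f}$ and the two implications via the tangent/normal cone duality---so there is nothing to add.
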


\section{John's problem}\label{sec:john}

Fix $s > 0$, and two functions $f, g  \colon \Red \to [0, \infty)$. 
In addition to John $s$-problem \eqref{eq:john_problem_intro}, we  will consider the following optimization problem. 
\medskip

\textbf{Positive position John $s$-problem for $f$ and $g$:} Find
\begin{equation}\label{eq:john_problem_pos}
\max\limits_{h \in \funppos{g} } %\funppos{\logconc{g}} }
	\int_{\Red} h^s 
	\quad \text{subject to} \quad
	h \leq f.
\end{equation}
\medskip 

We say that $g$ is a \emph{global maximizer} in the (Positive position) John $s$-problem, if 
for any $(A\oplus\alpha,a)$ in $\MM$ (resp., in $\MM^+$), we have that $\int_{\Red} h^s \leq \int_{\Red} g^s$ whenever $h\leq f$ and $h(x)=\alpha h(Ax+a)$. 
On the other hand, $g$ is a \emph{local maximizer} in the (Positive position) John $s$-problem, if there is a neighborhood $\mathcal U$ of $( \id_d\oplus 1, 0)$ in $\MM$ (resp., in $\MM^+$) such that for any $(A\oplus\alpha,a)\in\mathcal U$, we have that $\int_{\Red} h^s \leq \int_{\Red} g^s$ whenever $h\leq f$ and $h(x)=\alpha h(Ax+a)$.

We are ready to state our first main result, a general version of 
\Href{Theorem}{thm:john_intro}. The assumptions of the theorem are quite 
technical, they will be explained in 
\Href{Section}{sec:corollaries_and_discussion}, where we discuss natural 
situations when they hold.

\begin{thm}[John's condition]\label{thm:john_condition_general}
Fix $s > 0$. Let $f\colon\Red \to [0,+\infty)$ be a proper log-concave function,
and let $\gbase \colon \Red \to [0, \infty)$ be an upper semi-continuous function 
such that 
\begin{itemize}
\item $\logconc{\gbase}$ satisfies our Basic Assumptions (see page~\pageref{assumptions:basic}); 
\item the set of contact points $\contactpoint{f}{\gbase}$ is a \wlocstar ${f}$;
\item the reduced set of contact pairs $\contactset{f}{\gbase}$ is bounded. 
\end{itemize} 
Then setting $g =\logconc{\gbase}$, the following hold:
	\begin{enumerate}
		\item\label{item:local-maximum-implies-glmp}
			If $h=g$  is  a local maximizer in John $s$-problem \eqref{eq:john_problem_intro} for $f$ and $g$,
			then there exist contact pairs 
			$(\upthing{u}_1,\upthing{v}_1)$, $\dots$,  
			$(\upthing{u}_m,\upthing{v}_m)$ $\in \contactset{f}{\gbase}$
			and positive weights $c_1,\dots,c_m$ such that
		\begin{equation}\label{eq:functional_glmp}
		\sum_{i=1}^{m} c_i {u}_i \otimes {v}_i = 
		 \id_{d}, \quad 
		 \sum_{i=1}^{m} c_i f(u_i)\nu_i =  s
					\quad\text{and}\quad
					\sum_{i=1}^{m} c_i v_i=0,
				\end{equation}
        where $\upthing{u}_i=(u_i, f(u_i))$ and $\upthing{v}_i=(v_i,\nu_i)$.
		\item\label{item:glmp-implies-global-maximum}
			If there exist contact pairs and positive weights  satisfying  equation \eqref{eq:functional_glmp},
			then $g$ is a global maximizer in Positive position John $s$-problem \eqref{eq:john_problem_pos} for $f$ and $g$.
	\end{enumerate}
\end{thm}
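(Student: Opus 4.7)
The plan is to prove the two parts separately. Necessity (part \ref{item:local-maximum-implies-glmp}) will come from a first-order perturbation argument combined with Farkas' lemma, while sufficiency (part \ref{item:glmp-implies-global-maximum}) will come from evaluating the pointwise constraint $h\leq f$ at carefully chosen ``pullback'' points and invoking the elementary inequality $\ln\lambda\geq 1-1/\lambda$ for $\lambda>0$.

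For part \ref{item:local-maximum-implies-glmp}, I would parameterize positions near $(\id_d,1,0)$ by deformations $(\id_d+tB,\,1+t\beta,\,tb)\in\MM$ with $(B,\beta,b)\in\MMbar$. A direct computation gives
\[
\ln\int_{\Red} h_t^s \;=\; s\ln(1+t\beta)-\ln\det(\id_d+tB)+\mathrm{const} \;=\; t(s\beta-\tr B)+O(t^2).
\]
For each contact pair $(\upthing{u}_i,\upthing{v}_i)=((u_i,f(u_i)),(v_i,\nu_i))\in\contactset{f}{\gbase}$ with $f(u_i),\nu_i>0$, \Href{Lemma}{lem:nfcone} converts the inclusion $\upthing{v}_i\in\nfcone{\lifting{g}}{\upthing{u}_i}$ into the statement that $w_i:=v_i/(\nu_i f(u_i))$ is a subgradient of $\psi=-\ln g$ at $u_i$; expanding $h_t(u_i)\leq f(u_i)$ to first order then yields the linear constraint $\nu_i f(u_i)\,\beta-\iprod{v_i}{Bu_i+b}\leq 0$. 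Degenerate contact pairs (those with $\nu_i=0$, including all with $f(u_i)=0$ in the reduced set) produce the analogous horizontal constraint $-\iprod{v_i}{Bu_i+b}\leq 0$ via \Href{Lemma}{lem:nfcone_at_zero} and the \wlocstar ${f}$ hypothesis. The results of \Href{Section}{sec:normalcones}, in particular \Href{Lemma}{lem:curve_in_the_lifting}, confirm that these linear inequalities really cut out the tangent cone of the feasible set. Local optimality therefore forces $s\beta-\tr B\leq 0$ on this cone; the boundedness of $\contactset{f}{\gbase}$ together with \Href{Proposition}{prop:normalconesemicont} make the cone closed, so Farkas' lemma combined with Carath\'eodory's theorem in $\MMbar\cong\Re^{d^2+d+1}$ produces finitely many contact pairs and positive weights $c_i>0$ satisfying exactly \eqref{eq:functional_glmp}.

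For part \ref{item:glmp-implies-global-maximum}, let $h(x)=\alpha g(Ax+a)$ with $A$ positive definite and $h\leq f$. The crucial move is to test $h\leq f$ at the pullback point $x_i:=A^{-1}(u_i-a)$: since $h(x_i)=\alpha g(u_i)=\alpha f(u_i)$, this reads $\alpha f(u_i)\leq f(x_i)$. When $f(u_i),\nu_i>0$, taking logarithms and using that $w_i:=v_i/(\nu_i f(u_i))$ is also a subgradient of $-\ln f$ at $u_i$ (again via \Href{Lemma}{lem:nfcone}) gives
\[
\nu_i f(u_i)\,\ln\alpha \;\leq\; -\iprod{v_i}{A^{-1}(u_i-a)-u_i}.
\]
When $\nu_i=0$, the left side vanishes while the right side is nonnegative, because $x_i\in\cl{\supp f}$ (from $\cl{\supp h}\subseteq\cl{\supp f}$) and $v_i\in\nfcone{\cl{\supp f}}{u_i}$ by \Href{Lemma}{lem:nfcone_at_zero}. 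Multiplying by $c_i>0$, summing, and applying the three identities in \eqref{eq:functional_glmp} collapses everything to $s\ln\alpha\leq d-\tr(A^{-1})$. Finally, the elementary inequality $\ln\lambda\geq 1-1/\lambda$ for $\lambda>0$, summed over the eigenvalues of the positive definite matrix $A$, gives $\ln\det A\geq d-\tr(A^{-1})$. Combining the two bounds yields $\alpha^s\leq\det A$, hence $\int h^s = (\alpha^s/\det A)\int g^s\leq\int g^s$.

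The sufficiency direction is conceptually the cleanest, hinging on the pullback trick and the single scalar bound $\ln\lambda\geq 1-1/\lambda$. The hard part will be the necessity direction: translating pointwise feasibility $h_t\leq f$ into a clean linear cone uniformly across all three types of contact pairs (regular with $f(u_i),\nu_i>0$, boundary-support with $f(u_i)>0$ and $\nu_i=0$, and zero-value with $f(u_i)=0$), justifying that this linear cone really is the tangent cone of the feasible set (where log-concavity of $g$ and the lemmas of \Href{Section}{sec:normalcones} enter), and then pushing the Farkas / Carath\'eodory argument through despite the a priori infinite set of constraints --- which is precisely where the boundedness of $\contactset{f}{\gbase}$ and the \wlocstar ${f}$ hypothesis become indispensable.
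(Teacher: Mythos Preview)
Your necessity argument (part~\ref{item:local-maximum-implies-glmp}) is essentially the paper's proof recast in the language of Farkas' lemma rather than separation in $\MMbar$: the paper packages your linear constraints as the extended contact operators $\contactopjohn{\upthing{u}}{\upthing{v}}$ and separates their convex hull from $(\sid,0)$, which is formally equivalent to your Farkas step. The delicate work --- showing that the linearized constraints really coincide with the tangent cone of feasibility and that the set of constraints is compact --- is handled in the paper by \Href{Theorem}{thm:admissiblelinearized_john} and \Href{Lemma}{lem:compact_contact_operator}, which play exactly the role you assign to the lemmas of \Href{Section}{sec:normalcones} and the boundedness hypothesis.

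Your sufficiency argument (part~\ref{item:glmp-implies-global-maximum}), however, is genuinely different from the paper's and in fact more direct. The paper argues by contradiction: assuming a better positive position $g_1$ exists, it uses the inner-interpolation \Href{Lemma}{lem:inner-function-interpolation} to build a curve of admissible positions with increasing integral, reads off its derivative as a separating functional, and invokes \Href{Lemma}{lem:separation_John_problem} to contradict \eqref{eq:functional_glmp}. Your route bypasses the interpolation lemma entirely: evaluating $h\leq f$ at the pullback points $x_i=A^{-1}(u_i-a)$, feeding the subgradient inequality through \eqref{eq:functional_glmp}, and closing with the scalar estimate $\ln\lambda\geq 1-1/\lambda$ applied to the eigenvalues of $A$ yields $\alpha^s\leq\det A$ directly. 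This is correct as written (the case split on $\nu_i$ is handled properly, and $x_i\in\cl{\supp f}$ follows from $\cl{\supp h}\subseteq\cl{\supp f}$), and it mirrors the classical direct proof of sufficiency in John's theorem for convex bodies. The trade-off: the paper's approach reuses the same separation machinery in both directions and thus makes the necessary-and-sufficient structure more transparent, whereas your approach is shorter, more explicit, and yields a clean quantitative bound without any contradiction or curve-building.
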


We emphasize that in this result, we maximize the integral of the log-concave 
envelope of a given function $\gbase$, 
but we use the contact pairs of $f$ and $\gbase$. It allows us to consider the case when the integral of $\gbase$ is zero, for example, if $\gbase$ has a finite number of non-zero values.  A similar results for not necessarily log-concave function $\gbase$ immediately follows from \Href{Theorem}{thm:john_condition_general}, because a position of $g$ is below $f$ if and only if the corresponding position of $\gbase$ is below $f$ by the log-concavity of $f$.
\begin{cor}
 Let the functions $f,\gbase \colon\Red\to[0,+\infty)$ satisfy the assumptions of \Href{Theorem}{thm:john_condition_general}. In addition, let  the integral of $\gbase$ be positive. Then 
 	\begin{enumerate}
		\item 
			If $h=\gbase$  is  a local maximizer in  John $s$-problem \eqref{eq:john_problem_intro} for $f$ and $\gbase$,
			then there exist contact pairs 
			$(\upthing{u}_1,\upthing{v}_1), \dots,  
			(\upthing{u}_m,\upthing{v}_m) \in \contactset{f}{\gbase}$
			and positive weights $c_1,\ldots,c_m$ satisfying \eqref{eq:functional_glmp}.
		\item
			If there exist contact pairs and positive weights  satisfying  equation \eqref{eq:functional_glmp},
			then $h=\gbase$ is a global maximizer in  Positive position John $s$-problem \eqref{eq:john_problem_pos} for $f$ and $\gbase$.
	\end{enumerate}
\end{cor}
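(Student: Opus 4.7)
The plan is to reduce the corollary directly to \Href{Theorem}{thm:john_condition_general} by exploiting the fact that, although $\gbase$ need not be log-concave, its positions interact with $f$ in exactly the same way as the positions of $g=\logconc{\gbase}$. First I would observe that for every $(A \oplus \alpha, a) \in \MM$, the log-concave envelope of the position $x \mapsto \alpha \gbase(Ax + a)$ is exactly $x \mapsto \alpha g(Ax + a)$. Combined with the log-concavity and upper semi-continuity of $f$, this implies that
$\alpha \gbase(Ax + a) \leq f$ everywhere if and only if $\alpha g(Ax + a) \leq f$ everywhere: the ``if'' direction is immediate from $\gbase \leq g$, while the ``only if'' direction uses that the log-concave envelope of any function $\leq f$ is itself $\leq f$, since $f$ is an upper semi-continuous log-concave majorant. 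This identifies the feasible sets of the John $s$-problems for the pair $(f,\gbase)$ and the pair $(f,g)$, and similarly for the Positive position variants.

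Next I would compare the objective functions via a change of variables:
\[
\int_{\Red} \bigl(\alpha \gbase(Ax + a)\bigr)^s \di x \;=\; \frac{\alpha^s}{\abs{\det A}} \int_{\Red} \gbase^s,
\qquad
\int_{\Red} \bigl(\alpha g(Ax + a)\bigr)^s \di x \;=\; \frac{\alpha^s}{\abs{\det A}} \int_{\Red} g^s.
\]
Since $\gbase$ is upper semi-continuous and non-negative, $\supp \gbase$ is open, so the hypothesis $\int \gbase > 0$ forces $\int \gbase^s > 0$, and hence also $\int g^s > 0$ because $\gbase \leq g$. Thus the two objective functions in the (Positive position) John $s$-problem for $(f,\gbase)$ and for $(f,g)$ are positive constant multiples of each other as functions of $(A \oplus \alpha, a) \in \MM$ (respectively $\MM^+$). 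In particular, the identity position is a local (respectively, global) maximizer in the (Positive position) John $s$-problem for $(f,\gbase)$ if and only if it is a local (resp.\ global) maximizer for $(f,g)$.

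The conclusion then follows by applying \Href{Theorem}{thm:john_condition_general} to the pair $(f,g)$: part (1) of the theorem converts a local maximum at $g$ into the existence of contact pairs in $\contactset{f}{\gbase}$ and positive weights satisfying \eqref{eq:functional_glmp}, which is precisely the contact-pair data asserted in part (1) of the corollary. Conversely, the existence of such data yields, by part (2) of the theorem, a global maximum at $g$ in the Positive position John $s$-problem for $(f,g)$, and the equivalence established above transports this to a global maximum at $\gbase$ in the Positive position John $s$-problem for $(f,\gbase)$. I do not expect any serious obstacle here; the only step requiring attention is the translation between positions of $\gbase$ and positions of $g$ via the log-concave envelope, and the hypotheses on $\logconc{\gbase}$ and on the contact sets needed by \Href{Theorem}{thm:john_condition_general} are inherited verbatim from the corollary, so no further verification is required.
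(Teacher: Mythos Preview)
Your proposal is correct and takes essentially the same approach as the paper, which simply notes that ``a position of $g$ is below $f$ if and only if the corresponding position of $\gbase$ is below $f$ by the log-concavity of $f$'' and leaves the integral comparison implicit. One small slip: $\supp \gbase$ need not be open for upper semi-continuous $\gbase$ (consider the indicator of a closed ball), but this is immaterial since $\int \gbase > 0$ already forces $\gbase > 0$ on a set of positive measure, whence $\int \gbase^s > 0$.
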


In \Href{Section}{subsec:sufficient_conditions}, we will discuss conditions on $\gbase$ which guarantee that the conditions of \Href{Theorem}{thm:john_condition_general} on $f$ and $\gbase$ hold with \emph{any} proper log-concave function $f$.

\subsection{Strategy of the proof of 
\texorpdfstring{\Href{Theorem}{thm:john_condition_general}}{
Theorem~\ref{thm:john_condition_general}}}

\begin{dfn}\label{dfn:contactop}
For any $(\upthing{u},\upthing{v})\in\Redp\times\Redp$,  
we define the \emph{John-type extended contact operator} by
\[
\contactopjohn{\upthing{u}}{\upthing{v}} = \parenth{\parenth{u \otimes v} \oplus \mu\nu,v}\in \MMbar,
\]
where $\upthing{u}=(u,\mu)\in\Red\oplus\Re=\Redp$, and $\upthing{v}=(v,\nu)\in\Red\oplus\Re=\Redp$.

For two functions $f,g:\Red\to[0,\infty)$, we denote the reduced set of John-type extended contact operators by
 \begin{equation}\label{eq:johncontactoperators}
 \contactopsetjohn{f}{g}=
 \big\{\contactopjohn{\upthing{u}}{\upthing{v}} \st
 (\upthing{u},\upthing{v})\in \contactset{f}{g} \big\}\subset\MMbar.
 \end{equation}
\end{dfn}

We break up the proof of \Href{Theorem}{thm:john_condition_general} into several steps. 
First, we will use a geometric reformulation of the equations in 
\eqref{eq:functional_glmp}. It is easy to see that those equations encode the 
fact that the point $\parenth{\id_{d} \oplus s, 0} \in \MM$ is in the positive 
cone of the set 
$\contactopsetjohn{f}{\gbase}$ in $\MMbar$. It then follows that if no set of 
contact pairs satisfies \eqref{eq:functional_glmp}, then there is a linear 
hyperplane separating the point $\parenth{\id_{d} \oplus s, 0}$ and the set 
$\contactopsetjohn{f}{\gbase}$. 
 
We then aim at turning this separation of $\parenth{\id_{d} \oplus s, 0}$ and 
the set $\contactopsetjohn{f}{\gbase}$ into a perturbation of the function $g$ 
which is of greater integral than $g$, and is still pointwise below $f$. 
However, to obtain this perturbation of $g$, we need strong separation, which poses an 
important difficulty. To solve it, we need to show that 
$\contactopsetjohn{f}{\gbase}$ is compact in $\MMbar$. To that end, we will show 
that the boundedness of the set of contact pairs $\contactset{f}{\gbase}$ yields 
the compactness of $\contactopsetjohn{f}{\gbase}$. Note that we will not 
investigate the boundedness of $\contactset{f}{\gbase}$ itself in the current 
section, it will be addressed in \Href{Section}{sec:boundedcontactpairs}.
%Let us emphasize it here again: We will show that 
%$\contactopsetjohn{f}{\gbase}$ is closed in $\MMbar$, but we will not show 
%that the set of contact pairs $\contactset{f}{\gbase}$ is closed in 
%$\Redp\times\Redp$.

Thus, to prove \eqref{item:local-maximum-implies-glmp} of \Href{Theorem}{thm:john_condition_general}, we first write strong separation analytically: if no contact pairs yield \eqref{eq:functional_glmp}, then there is an $(H\oplus\gamma,h)\in\MMbar$ (a normal to the separating hyperplane) such that
\begin{equation}\label{eq:separation_explanatory_part}
	\iprod{\parenth{H \oplus \gamma,h}}
	{\parenth{\sid,0}} > 0
	\quad\text{and}\quad
	\iprod{\parenth{H \oplus \gamma,h}}
	{A_J} < 0
\end{equation}
for all $A_J \in \contactopsetjohn{f}{\gbase}$.

To obtain the needed perturbation of $g$, we construct a certain kind of 
``average'' of two functions below a given log-concave function $f$ so that the 
new function remains below $f$. This will be  a straight forward adjustment of averaging 
two positions of a set inside a given convex set. Using this averaging 
construction, we will construct a curve $\Gamma_t, t \in [0, \tau]$  in $\MM$ 
starting at $\parenth{\id_{d+1},0}$ such that its directional vector at zero is precisely the normal vector $ 
\parenth{H \oplus \gamma,h}$. Positions of $g$ correspond to the points of 
$\Gamma_t$ in a natural way: $\alpha g\!\parenth{A^{-1}(x-a)}$ corresponds to 
$\parenth{A \oplus \alpha, a} \in \MM$.  That is, the position corresponding to 
$\Gamma_0=\parenth{\id_{d+1}, 0}$ is $g$ itself, and $\Gamma_t, t \in [0,\tau]$ can be seen as a 
homotopy of $\lifting{g}$ for a  sufficiently small positive $\tau.$ 

The next step is to see what properties of the curve $\Gamma_t$, $t \in [0, \tau]$,
should possess in order to guarantee the inequalities in 
\eqref{eq:separation_explanatory_part}. It turns out that the leftmost 
inequality in \eqref{eq:separation_explanatory_part} essentially means that the 
integral of a position of $g$ corresponding to a point $\Gamma_t$ is  greater 
than the integral of $g$ itself for all sufficiently small  $t.$ 
\begin{remark}
We note that the comparison of integrals appears only at this step. So using our approach, one might maximize more sophisticated functionals than the $L_s$-norm of $g$. 
\end{remark}

Finally, we will show that the rightmost inequality in \eqref{eq:separation_explanatory_part} essentially means that  the position of $g$ corresponding to $\Gamma_t$ remains below $f$ for all sufficiently small $t$.

To put everything together, in the proof of necessary condition \eqref{item:local-maximum-implies-glmp}, we will assume that 
the point $\parenth{\id_{d} \oplus s, 0}$ is strongly separated from the set $\contactopsetjohn{f}{\gbase}$ in $\MMbar$. Then, we will construct a curve $\Gamma_{t}$ in $\MMbar$ using the normal vector of the separating hyperplane, and after that, we will show that it defines a homotopy of $\lifting{g}$ with the desired property. In the proof of sufficient condition \eqref{item:glmp-implies-global-maximum}, we will assume that $g$ is not the global maximizer, and we will construct a curve $\Gamma_t$ in $\MMbar$ using our averaging construction and then, we will show that the directional vector of $\Gamma_t$ is the normal vector of a hyperplane that separates $\parenth{\id_{d} \oplus s, 0}$ from $\contactopsetjohn{f}{\gbase}$ in $\MMbar$.

\subsection{Main components of the proof of \texorpdfstring{\Href{Theorem}{thm:john_condition_general}}{Theorem~\ref{thm:john_condition_general}}}
%\begin{lem}[Boundedness of the set of contact pairs]
%\label{lem:bounded_contact_pair}
% Let  $f\colon\Red\to[0,+\infty)$ be a proper log-concave function,
% and  $\gbase \colon\Red\to[0,+\infty)$ be  an upper semi-continuous function such that
% $\logconc{\gbase}$  satisfies weak assumptions \ref{assumptions:basic}. Then  if
% $\contactpoint{f}{\gbase}$ is a {\locstar} $f,$ then 
%  $\contactset{f}{\gbase}$ is bounded. 
%\end{lem}
%\begin{proof}
%By the assumption of \ref{assumptions:basic}, 
%the origin is in the interior of $\supp{g}.$ 
%Since the latter is convex, we see that the set of contact pairs 
%$((u,0),\upthing{v}) \in \contactset{f}{\gbase}$ is bounded in $\Redp \times \Redp$. 
%Since the set of contact points $\contactpoint{f}{\gbase}$ is {\locstar} $f,$
%we conclude that the whole set $\contactset{f}{\gbase}$ is bounded. 
%\end{proof}
By a routine compactness argument and by \Href{Corollary}{cor:normalcone_nonempty}, on has the following. 
\begin{lem}
 Let  functions $f,\gbase\colon\Red\to[0,+\infty)$ satisfy the assumptions of \Href{Theorem}{thm:john_condition_general}. Assume  $h=g$  is  a local maximizer in John $s$-problem \eqref{eq:john_problem_intro}.  Then the sets $\contactset{f}{\gbase}$ and $\contactpoint{f}{\gbase}$ are non-empty.
\end{lem}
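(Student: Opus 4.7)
The plan is to argue by contradiction that $\contactpoint{f}{\gbase}$ is non-empty, and then construct a contact pair from any contact point to conclude $\contactset{f}{\gbase}$ is non-empty as well.

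Suppose $\contactpoint{f}{\gbase} = \emptyset$. Since $\gbase \leq g \leq f$, this forces $\gbase(u) < f(u)$ strictly for every $u \in \cl{\supp \gbase}$; moreover no $u \in \cl{\supp \gbase}$ satisfies $f(u) = 0$, since otherwise $\gbase(u) = 0 = f(u)$ would be a zero contact point. A compactness argument on $\cl{\supp \gbase}$---combining the upper semicontinuity of $\gbase$ with the continuity of the log-concave function $f$ on the interior of its support and the positivity of $f$ on $\cl{\supp \gbase}$---then yields $\alpha > 1$ with $\alpha \gbase \leq f$ pointwise on $\Red$. Since log-concave envelopes commute with positive scalar multiplication and are monotone, $\alpha g = \alpha \logconc{\gbase} = \logconc{\alpha \gbase} \leq \logconc{f} = f$; hence $h = \alpha g \in \funpos{g}$ corresponds to the point $(\id_d \oplus \alpha, 0) \in \MM$, which lies in any $\MM$-neighborhood of $(\id_d \oplus 1, 0)$ for $\alpha$ sufficiently close to $1$, and satisfies $\int h^s = \alpha^s \int g^s > \int g^s$, contradicting the local maximality of $g$.

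With $\contactpoint{f}{\gbase} \neq \emptyset$ established, I would pick $u \in \contactpoint{f}{\gbase}$ and set $\upthing{u} = (u, f(u))$. If $f(u) > 0$, Corollary~\ref{cor:normalcone_nonempty} supplies a non-zero $\upthing{w} \in \nfcone{\lifting{f}}{\upthing{u}}$; the domination $\gbase \leq f$ together with $\gbase(u) = f(u)$ gives $\nfcone{\lifting{f}}{\upthing{u}} \subseteq \nfcone{\lifting{\gbase}}{\upthing{u}}$, so $\upthing{w}$ also lies in the latter. The hypothesis that $\contactpoint{f}{\gbase}$ is a \wlocstar $f$ then yields $\iprod{\upthing{w}}{\upthing{u}} > 0$, and $\upthing{v} = \upthing{w}/\iprod{\upthing{w}}{\upthing{u}}$ produces a pair $(\upthing{u}, \upthing{v}) \in \contactset{f}{\gbase}$. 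If instead $f(u) = \gbase(u) = 0$, then $u \in \bd{\supp f}$ with $u \neq 0$ (since the origin lies in $\interior \supp g \subseteq \interior \supp f$); Lemma~\ref{lem:nfcone_at_zero} yields a non-zero $v \in \nfcone{\supp f}{u} \subseteq \nfcone{\supp \gbase}{u}$, and since the origin lies in $\interior \supp f$ we have $\iprod{v}{u} > 0$, so normalizing $v' = v/\iprod{v}{u}$ produces a pair $(\upthing{u}, (v', 0)) \in \contactset{f}{\gbase}$ belonging to the second clause of its definition.

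\textbf{Main obstacle.} The subtle step is the compactness argument producing the uniform multiplicative gap $\alpha \gbase \leq f$ from the pointwise strict inequality $\gbase < f$: the difference $f - \gbase$ is not lower semicontinuous in general, so one cannot directly invoke ``strict inequality on a compact set gives a uniform gap''. The workaround is to combine the upper semicontinuity of $\gbase$ with the continuity of log-concave $f$ on the interior of its support, together with the positivity of $f$ on $\cl{\supp \gbase}$ ensured by the absence of zero contact points, so that $f - \gbase$ is lower semicontinuous on the relevant set and its minimum there is strictly positive.
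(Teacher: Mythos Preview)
Your overall strategy matches the paper's: the paper only says ``by a routine compactness argument and by \Href{Corollary}{cor:normalcone_nonempty}'', and you correctly flesh this out as (i) no contact point implies a uniform multiplicative gap $\alpha g_b\leq f$, hence $\alpha g\leq f$ via the log-concave envelope, contradicting local maximality; and (ii) any contact point yields a contact pair using the non-emptiness of the normal cone together with the \wlocstar hypothesis. Part (ii) is correct, including your handling of the zero case via \Href{Lemma}{lem:nfcone_at_zero} (note that you do not need $\nfcone{\supp f}{u}\subseteq\nfcone{\supp g_b}{u}$, which is awkward since $\supp g_b$ need not be convex; the inclusion $\lifting{g_b}\subseteq\lifting{f}$ already gives $\nfcone{\lifting f}{\upthing u}\subseteq\nfcone{\lifting g_b}{\upthing u}$ directly).

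There is, however, a genuine gap in your compactness step. You argue that positivity of $f$ on $\cl{\supp g_b}$ and continuity of $f$ on $\interior\supp f$ make $f-g_b$ lower semicontinuous on $\cl{\supp g_b}$; but positivity only gives $\cl{\supp g_b}\subseteq\supp f$, not $\cl{\supp g_b}\subseteq\interior\supp f$. The support of a proper log-concave function need not be open: a point $u\in\bd\supp f$ can have $f(u)>0$, and $f$ can fail to be lower semicontinuous there (take for instance $-\ln f(x,y)=x^2/y + y^2$ on $\{y>0\}$, extended by lower semicontinuity, and approach the boundary along $(1/n,1/n^3)$). In that scenario one may have $g_b(u_n)/f(u_n)\to 1$ with $f(u_n)\to 0$ while $g_b(u)<f(u)$ strictly, so no uniform $\alpha>1$ exists. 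To close the gap you would need either to argue that under the standing hypotheses $\cl{\supp g_b}\cap\bd\supp f$ must contain a zero of $f$ (hence a contact point), or to replace the purely vertical scaling by a perturbation that first pulls $\cl{\supp g_b}$ into $\interior\supp f$ --- e.g.\ precompose with a contraction towards the origin, which lies in $\interior\supp g\subseteq\interior\supp f$, so that $f$ becomes continuous on the shrunken support and your argument applies.
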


Let us show that the boundedness of the set of contact pairs yields the compactness of the set of contact operators.
\begin{lem}[Compactness of the set of contact operators]\label{lem:compact_contact_operator}
 Let  functions $f,\gbase\colon\Red\to[0,+\infty)$ satisfy the assumptions of \Href{Theorem}{thm:john_condition_general}. Then $\contactopsetjohn{f}{\gbase}$ is a compact subset of $\MMbar$.
\end{lem}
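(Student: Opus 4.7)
The map $\phi \colon \Redp \times \Redp \to \MMbar$ defined by $\phi\bigl((u,\mu), (v,\nu)\bigr) = (u \otimes v \oplus \mu\nu, v)$ is polynomial in its entries, hence continuous. Since $\contactopsetjohn{f}{\gbase} = \phi(\contactset{f}{\gbase})$ by definition \eqref{eq:johncontactoperators} and $\contactset{f}{\gbase}$ is bounded by hypothesis, $\contactopsetjohn{f}{\gbase}$ is a bounded subset of $\MMbar$. The substance of the proof is establishing closedness.

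The plan is a sequential argument. Take $A_n \to A^*$ in $\contactopsetjohn{f}{\gbase}$, lift to preimages $(\upthing{u}_n, \upthing{v}_n) \in \contactset{f}{\gbase}$, and use boundedness to pass to a subsequence converging to some $(\upthing{u}_0, \upthing{v}_0)$; write $\upthing{u}_0 = (u_0, \mu_0)$ with $\mu_0 = \lim f(u_n)$, and $\upthing{v}_0 = (v_0, \nu_0)$. Continuity of $\phi$ gives $A^* = (u_0 \otimes v_0 \oplus \mu_0 \nu_0, v_0)$, and passing to the limit in $\iprod{\upthing{v}_n}{\upthing{u}_n} = 1$ yields $\iprod{v_0}{u_0} + \mu_0 \nu_0 = 1$. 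I then exhibit a legitimate contact pair $(\upthing{u}^*, \upthing{v}^*) \in \contactset{f}{\gbase}$ satisfying $\phi(\upthing{u}^*, \upthing{v}^*) = A^*$, splitting into two cases. When $\mu_0 > 0$, I transport the normal cone inclusion for $\upthing{v}_n$ to the convex epigraphs $\epi(-\ln f)$ and $\epi(-\ln g)$ via \Href{Lemma}{lem:nfcone}, apply the closed-graph property for the normal cone of a fixed convex set (a consequence of \Href{Proposition}{prop:normalconesemicont} applied to a constant sequence of sets) to pass to the limit, and then use the weakly locally star hypothesis to rule out $\mu_0 < f(u_0)$; this makes $(\upthing{u}_0, \upthing{v}_0)$ itself the desired contact pair. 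When $\mu_0 = 0$, I take the candidate $\upthing{u}^* = (u_0, 0)$ and $\upthing{v}^* = (v_0, 0)$, observing that $\phi(\upthing{u}^*, \upthing{v}^*) = (u_0 \otimes v_0 \oplus 0, v_0) = A^*$ automatically because $\mu_0 \nu_0 = 0$; I verify it is a contact pair of the zero-value type by invoking \Href{Lemma}{lem:nfcone_at_zero} to identify horizontal normals to $\lifting{f}$ at zero-value boundary points with normals to $\cl{\supp f}$, and \Href{Lemma}{lem:nfconeregularity} to force the directions of $\upthing{v}_n$ to be asymptotically horizontal, giving $\nu_0 = 0$ and $v_0 \in \nfcone{\cl{\supp f}}{u_0}$.

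The main obstacle is the possible discontinuity of $f$ and $g$ at boundary points of their supports, where $\mu_0 = \lim f(u_n)$ may strictly underestimate $f(u_0)$. In that situation the naive subsequential limit $(\upthing{u}_0, \upthing{v}_0)$ may fail the requirement $\upthing{u}_0 = (u_0, f(u_0))$ built into the definition of a contact pair, so $A^*$ must be realized through a different preimage. The weakly locally star hypothesis on $\contactpoint{f}{\gbase}$ is the precise ingredient that rules out such pathological limits, either by forcing $\mu_0 = f(u_0)$ in the non-zero case or by allowing a zero-value contact pair to take over in the limit.
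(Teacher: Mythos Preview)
Your plan mirrors the paper's proof: boundedness via the continuous map $\phi$, closedness by a subsequential argument with the same $\mu_0>0$ versus $\mu_0=0$ case split, invoking the same supporting results (\Href{Lemma}{lem:nfcone} and \Href{Proposition}{prop:normalconesemicont} in the positive case; \Href{Lemma}{lem:nfcone_at_zero} and \Href{Lemma}{lem:nfconeregularity} in the zero case).

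One correction in the $\mu_0=0$ case: \Href{Lemma}{lem:nfconeregularity} does \emph{not} force the $\upthing{v}_n$ to be asymptotically horizontal---it says nothing about $\nu_n$, only that the horizontal component $v_n$ is close in direction to some element of $\nfcone{\cl{\supp f}}{u_0}$. That is exactly what you need (and what the paper uses), since you already observed $\mu_0\nu_0=0$ and chose $\upthing{v}^*=(v_0,0)$; the separate claim $\nu_0=0$ is neither provable from that lemma nor required. Also note that the paper does not use the weakly star-like hypothesis anywhere in this lemma: it simply asserts, by upper semi-continuity, that the limit $(u,\mu)$ lies on $\essgraph f\cap\essgraph \gbase$, so your invocation of that hypothesis in the $\mu_0>0$ case is an extra precaution absent from the original argument.
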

%We note that \Href{Lemma}{lem:compact_contact_operator} states the compactness of the set of extended contact operators, but does not state the compactness of the set $\contactset{f}{g}$ of contact pairs. In fact, we have not been able to prove the latter -- fortunately, we do not need it.

\begin{proof}
%The inequality $\gbase \leq f$ implies $g \leq f$, where $g=\logconc{\gbase}$.
%Since the normal cone to a subset at a point contains the normal cone to the whole set at the same point, we see 
%\begin{equation}\label{eq:contact_inclusion_logconc_contact}
%\contactopsetjohn{f}{\gbase} \subset \contactopsetjohn{\gbase}{g}.
%\end{equation}
%Since $\lifting{g}$ is \locstar{} and compact, we see that there is  $\delta > 0$ such that $\iprod{\upthing{u}}{\frac{\upthing{v}}{\enorm{\upthing{v}}}} > \delta$ for all 
%$(\upthing{u}, \upthing{v}) \in \contactset{\gbase}{g}.$ 
%That is, the set of $\enorm{\upthing{v}}$ is bounded. 
%Hence,  the set on the right in \eqref{eq:contact_inclusion_logconc_contact} is bounded.
%By \Href{Lemma}{lem:bounded_contact_pair}, the set $\contactset{f}{\gbase}$ is bounded. 

The definition of $\contactopsetjohn{f}{\gbase}$ and the boundedness of
$\contactset{f}{\gbase}$ imply that $\contactopsetjohn{f}{\gbase}$ is bounded.

Let us show that $\contactopsetjohn{f}{\gbase}$ is closed. 
Consider a sequence $\{(\upthing{u}_i, \upthing{v}_i)\} \subset \contactset{f}{\gbase}$, where $\upthing{u}_i = (u_i, \mu_i)$ and $\upthing{v}_i = (v_i, \nu_i),$ such that $\contactopjohn{\upthing{u}_i}{\upthing{v}_i}$ is convergent.
Since $\contactset{f}{\gbase}$ is bounded, we may assume by passing to a subsequence that 
$\lim\limits_{i \to \infty} \upthing{u}_i = \upthing{u} = (u, \mu)$ and 
$\lim\limits_{i \to \infty} \upthing{v}_i = \upthing{v} = \upthing{v} = (v, \nu)$.
By upper semi-continuity, $(u, \mu) \in \essgraph{f} \cap \essgraph{\gbase}.$

If $\mu > 0$, then the  convex function $-\ln f$ is finite  in some open neighborhood of $u$ in its effective domain.  Using \Href{Lemma}{lem:nfcone} and then applying
\Href{Proposition}{prop:normalconesemicont} to the compact convex set 
$K = \{(x, h) \st -\ln f(x) \leq h \leq  - \ln f(u) +1 \}$  yield
\[
\upthing{v} \in \nfcone{\lifting{f}}{\upthing{u}}.
\]

Now, consider the case $\mu = 0$.
Since the set $\{\nu_i \st i\in\Ze^+\}$ is bounded and 
$\lim\limits_{i \to \infty} \mu_i = 0$, we have
\[
\lim\limits_{i \to \infty} \mu_i  \nu_i = 0 = \mu\nu.
\]
Hence, $\iprod{\upthing{v}}{\upthing{u}} = \iprod{v}{u}.$ 
By this and by \Href{Lemma}{lem:nfcone_at_zero}, it suffices to show that $v \in \nfcone{\supp f}{u} \subset \Red$. 

By the assumptions of \Href{Theorem}{thm:john_condition_general}, the origin is in the interior of $\supp{f}$. Consider the sequence of compact convex  sets 
$\left[ f \geq f(u_i) \right] \cap 2\enorm{u}\ball{d}.$  This sequence converges to $\cl{\supp{f}} \cap (2\enorm{u}\ball{d})$. By \eqref{eq:normalconelevelset},
 $v_i \in \nfcone{\left[ f \geq f(u_i) \right] \cap 2\enorm{u}\ball{d}}{u_i}.$ Thus, using \Href{Lemma} {lem:nfconeregularity}, we conclude  $\lim\limits_{i \to \infty} v_i= v$ 
 belongs to $\nfcone{\supp f}{u}.$  
  Consequently, $\contactopsetjohn{f}{\gbase}$ is a closed bounded set of the finite-dimensional vector space $\MMbar$ and hence, it is compact. 
\end{proof}

Next, we reformulate equation \eqref{eq:functional_glmp} in terms of separation of a closed convex set from a point in the finite dimensional real vector space $\MMbar$.

\begin{lem}[Separation of operators]\label{lem:separation_John_problem}
 Let the functions $f,\gbase \colon\Red\to[0,+\infty)$ satisfy the assumptions of \Href{Theorem}{thm:john_condition_general}. Then the following assertions are equivalent:
\begin{enumerate}
\item\label{item:nocontacts}
There are no contact pairs of $f$ and $\gbase$ and positive weights  satisfying  equation \eqref{eq:functional_glmp}.
\item\label{item:strictseparation}
There exists $ \parenth{H \oplus \gamma, h}\in\MMbar$ such that
		\begin{equation}\label{eq:john_s-concave-stict-separation}
			\iprod{\parenth{H \oplus \gamma,h}}
			{\parenth{\sid,0}} > 0
			\quad\text{and}\quad
			\iprod{\parenth{H \oplus \gamma,h}}
			{\contactopjohn{\upthing{u}}{\upthing{v}}} < 0
		\end{equation}
		for all $(\upthing{u},\upthing{v})\in \contactset{f}{\gbase}$.
\item\label{item:weakseparation}
There exists $ \parenth{H \oplus \gamma, h}\in\MMbar$ such that
		\begin{equation}\label{eq:john_s-concave-nonstrict-separation}
			\iprod{\parenth{H \oplus \gamma,h}}
			{\parenth{\sid,0}} > 0
			\quad\text{and}\quad
			\iprod{\parenth{H \oplus \gamma,h}}
			{\contactopjohn{\upthing{u}}{\upthing{v}}} \leq 0
		\end{equation}
		for all $(\upthing{u},\upthing{v})\in \contactset{f}{\gbase}$.
\end{enumerate}
\end{lem}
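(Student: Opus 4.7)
The first step is to translate the existence of contact pairs and positive weights satisfying \eqref{eq:functional_glmp} into a single cone-membership condition. Using \Href{Definition}{dfn:contactop}, with $\upthing{u}_i=(u_i,f(u_i))$ and $\upthing{v}_i=(v_i,\nu_i)$, one computes
\[
\sum_{i=1}^{m} c_i \contactopjohn{\upthing{u}_i}{\upthing{v}_i}=
\Bigl(\sum_{i=1}^m c_i\, u_i\otimes v_i \;\oplus\; \sum_{i=1}^m c_i f(u_i)\nu_i,\;\; \sum_{i=1}^m c_i v_i\Bigr),
\]
so the three equations in \eqref{eq:functional_glmp} amount precisely to the single identity $\sum c_i \contactopjohn{\upthing{u}_i}{\upthing{v}_i}=\parenth{\sid,0}$. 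Hence assertion \eqref{item:nocontacts} is equivalent to the geometric statement $\parenth{\sid,0}\notin\pos\contactopsetjohn{f}{\gbase}$, which is what I will actually separate.

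The implication \eqref{item:strictseparation}$\Rightarrow$\eqref{item:weakseparation} is trivial. For \eqref{item:weakseparation}$\Rightarrow$\eqref{item:nocontacts} I argue by contrapositive: if \eqref{eq:functional_glmp} had a positively weighted solution, then pairing the identity $\sum c_i\contactopjohn{\upthing{u}_i}{\upthing{v}_i}=\parenth{\sid,0}$ with the weakly separating functional $\parenth{H\oplus\gamma,h}$ from \eqref{item:weakseparation} would give
$0<\iprod{\parenth{H\oplus\gamma,h}}{\parenth{\sid,0}}=\sum c_i\iprod{\parenth{H\oplus\gamma,h}}{\contactopjohn{\upthing{u}_i}{\upthing{v}_i}}\leq 0$, a contradiction.

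For the main implication \eqref{item:nocontacts}$\Rightarrow$\eqref{item:strictseparation}, I combine \Href{Lemma}{lem:compact_contact_operator} with Hahn-Banach. First, the compact set $\contactopsetjohn{f}{\gbase}$ avoids the origin of $\MMbar$: if $\contactopjohn{\upthing{u}}{\upthing{v}}=(u\otimes v\oplus f(u)\nu,\,v)=0$, then $v=0$ and $f(u)\nu=0$, whence $\iprod{\upthing{v}}{\upthing{u}}=\iprod{v}{u}+f(u)\nu=0\neq 1$, contradicting the definition of $\contactsetnr{f}{g}$. Compactness together with $0\notin\contactopsetjohn{f}{\gbase}$ ensures that $\pos\contactopsetjohn{f}{\gbase}$ is a closed convex cone in the finite-dimensional space $\MMbar$; since $\parenth{\sid,0}$ lies outside it by assumption, closed-set separation produces a functional $\parenth{H\oplus\gamma,h}\in\MMbar$ with $\iprod{\parenth{H\oplus\gamma,h}}{\parenth{\sid,0}}>0$ and $\iprod{\parenth{H\oplus\gamma,h}}{A_J}\leq 0$ for every $A_J\in\pos\contactopsetjohn{f}{\gbase}$, already establishing \eqref{item:weakseparation}.

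The main obstacle is upgrading this weak inequality to the strict one in \eqref{eq:john_s-concave-stict-separation} on the entire base $\contactopsetjohn{f}{\gbase}$. The key auxiliary claim is that $\pos\contactopsetjohn{f}{\gbase}$ is \emph{pointed}: any vanishing relation $\sum d_i\contactopjohn{\upthing{u}_i}{\upthing{v}_i}=0$ with $d_i>0$ would force both $\sum d_i v_i=0$ and $\sum d_i\, u_i\otimes v_i=0$, forcing an antiparallel structure on the normals $\upthing{v}_i$ that turns out to be incompatible with the normalisation $\iprod{\upthing{v}_i}{\upthing{u}_i}=1$ (a short case analysis on whether $f(u_i)=0$, using the precise form of the reduced contact set $\contactset{f}{\gbase}$). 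Once pointedness is in hand, the polar cone $(\pos\contactopsetjohn{f}{\gbase})^\circ$ has nonempty interior, and every functional in this interior is strictly negative on $\pos\contactopsetjohn{f}{\gbase}\setminus\{0\}$, hence strictly negative on the base. A sufficiently small perturbation of $\parenth{H\oplus\gamma,h}$ into this interior preserves the strict inequality at $\parenth{\sid,0}$ and delivers \eqref{item:strictseparation}.
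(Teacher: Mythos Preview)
Your overall architecture is sound, but the crucial ``pointedness'' step is not actually proved, and the sketch you give (``antiparallel structure on the normals\ldots short case analysis on whether $f(u_i)=0$'') does not lead anywhere. There is also a hidden issue one step earlier: compactness of $\contactopsetjohn{f}{\gbase}$ together with $0\notin\contactopsetjohn{f}{\gbase}$ does \emph{not} by itself guarantee that $\pos\contactopsetjohn{f}{\gbase}$ is closed; for that you need $0\notin\conv\contactopsetjohn{f}{\gbase}$.

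Both gaps are filled by one observation you are missing, and which is exactly the device the paper uses: for every $(\upthing{u},\upthing{v})\in\contactset{f}{\gbase}$ one has
\[
\tr\bigl(u\otimes v\bigr)+f(u)\,\nu=\iprod{v}{u}+f(u)\,\nu=\iprod{\upthing{v}}{\upthing{u}}=1,
\]
so the ``operator part trace'' functional equals $1$ on every contact operator. This immediately gives $0\notin\conv\contactopsetjohn{f}{\gbase}$ (hence the cone is closed) and rules out any nontrivial relation $\sum d_i\,\contactopjohn{\upthing{u}_i}{\upthing{v}_i}=0$ with $d_i>0$, since the trace of the left-hand side is $\sum d_i>0$. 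With this, your route goes through.

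The paper exploits the same identity more directly and thereby avoids cones altogether: since both $\contactopsetjohn{f}{\gbase}$ and $\tfrac{1}{d+s}\parenth{\sid,0}$ lie in the affine hyperplane $\{\tr\mathcal{A}=1\}$, assertion~\eqref{item:nocontacts} becomes $\tfrac{1}{d+s}\parenth{\sid,0}\notin\conv\contactopsetjohn{f}{\gbase}$, and strict separation of a point from a compact convex set in that hyperplane extends at once to a \emph{linear} separating hyperplane in $\MMbar$ (subtract the appropriate multiple of the trace functional). This yields \eqref{item:strictseparation} in one stroke, without any discussion of pointedness or perturbation into the interior of a polar cone.
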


\begin{proof}[Proof of \Href{Lemma}{lem:separation_John_problem}]
For any $(\upthing{u},\upthing{v})\in\contactset{f}{\gbase}$, we have $\iprod{\upthing{u}}{\upthing{v}}=1$ thus, $\tr{\parenth{u\otimes v}}+f(u)\nu=1$. Taking trace in the first equation in \eqref{eq:functional_glmp} and adding it to the second equation therein yields that $\sum_{i=1}^m c_i=d+s$. 
It follows that assertion \eqref{item:nocontacts} of the lemma is equivalent to the assertion that $\frac{1}{d+s}\parenth{\sid,0}$ is not in the convex hull of
the set $\contactopsetjohn{f}{\gbase}$ of John-type extended contact operators defined by \eqref{eq:johncontactoperators}.
By \Href{Lemma}{lem:compact_contact_operator}, $\contactopsetjohn{f}{\gbase}$ is compact. 
Note that both the set $\contactopsetjohn{f}{\gbase}$ and the point  $\frac{1}{d+s}\parenth{\sid,0}$ belong to the affine hyperplane  
$
\left\{
(\mathcal{A},a) \in \MMbar \st \tr{\mathcal{A}} = 1
\right\}.
$
Thus, the statements in \Href{Lemma}{lem:separation_John_problem} are reduced to the separation of a point from a compact subset $\contactopsetjohn{f}{\gbase}$ by a linear hyperplane in the finite-dimensional real vector space $\MMbar$, a basic notion in convexity.
\end{proof}

Second, the following lemma allows us to interpolate between two functions below a given one in such a way that the new function remains  below the given one.
\begin{lem}[Inner interpolation of functions]\label{lem:inner-function-interpolation}
	Let $f \colon \Red \to [0, +\infty)$ be a log-concave function and  $g \colon \Red \to [0, +\infty)$ be a function.
	Let $\alpha_1,\alpha_2>0$, $A_1, A_2$ be non-singular matrices of order $d,$
	and  $a_1, a_2\in\Red$
	be such that
		\[
			\alpha_1 g(A_1^{-1} (x - a_1)) \leq f(x)
			\quad \text{and}\quad
			\alpha_2 g(A_2^{-1} (x - a_2)) \leq f(x).
		\]
	for all $x\in\Red$.
	Let $\beta_1,\beta_2>0$ be such that $\beta_1+\beta_2=1$.
	Define
		\[
			\alpha = \alpha_1^{\beta_1} \alpha_2^{\beta_2},
			\quad
			A = \beta_1 A_1 + \beta_2 A_2,
			\quad\text{and}\quad
			a = \beta_1 a_1 + \beta_2 a_2.
		\]
	Assume that $A$ is non-singular.	
	Then
		\begin{equation}\label{eq:inner-function-interpolation-1}
			\alpha g(A^{-1} (x - a)) \leq f(x).
		\end{equation}
	If $A_1$ and $A_2$ are positive definite, and $g$ is integrable, then we also have
		\begin{equation}\label{eq:inner-function-interpolation-2}
			\int_{\Red} \alpha g(A^{-1} (x - a)) \di x \geq
				\left(\int_{\Red}\alpha_1 g(A_1^{-1} (x - a_1) ) \di x\right)^{\beta_1}
				\left(\int_{\Red}\alpha_2 g(A_2^{-1} (x - a_2) ) \di x\right)^{\beta_2}
		\end{equation}
with equality if and only if $A_1 = A_2.$
\end{lem}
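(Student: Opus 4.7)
The plan is to reduce the pointwise inequality \eqref{eq:inner-function-interpolation-1} to the log-concavity of $f$, and to reduce the integral inequality \eqref{eq:inner-function-interpolation-2} to the multiplicative Minkowski determinant inequality \eqref{eq:minkowski_det_multipl_ineq} via a linear change of variables.

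For \eqref{eq:inner-function-interpolation-1}, fix $x \in \Red$ and set $y = A^{-1}(x-a)$. The key observation is that, because $A = \beta_1 A_1 + \beta_2 A_2$ and $a = \beta_1 a_1 + \beta_2 a_2$, if we define $x_i = A_i y + a_i$ for $i=1,2$, then
\[
x = Ay + a = \beta_1 (A_1 y + a_1) + \beta_2(A_2 y + a_2) = \beta_1 x_1 + \beta_2 x_2,
\quad\text{and}\quad
A^{-1}(x-a) = A_1^{-1}(x_1-a_1) = A_2^{-1}(x_2-a_2).
\]
Hence $g(A^{-1}(x-a))$ equals $g(A_i^{-1}(x_i-a_i))$ for both $i$, so it equals both its own $\beta_1$-power and its own $\beta_2$-power combined. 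Splitting $\alpha = \alpha_1^{\beta_1} \alpha_2^{\beta_2}$ accordingly, we rewrite
\[
\alpha g(A^{-1}(x-a)) = \bigl(\alpha_1 g(A_1^{-1}(x_1-a_1))\bigr)^{\beta_1} \bigl(\alpha_2 g(A_2^{-1}(x_2-a_2))\bigr)^{\beta_2} \leq f(x_1)^{\beta_1} f(x_2)^{\beta_2} \leq f(\beta_1 x_1 + \beta_2 x_2) = f(x),
\]
where the second inequality uses log-concavity of $f$. This gives \eqref{eq:inner-function-interpolation-1}.

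For \eqref{eq:inner-function-interpolation-2}, by the linear change of variables $y = A_i^{-1}(x-a_i)$ (respectively $y = A^{-1}(x-a)$), and using positive-definiteness so $\det A_i = \abs{\det A_i} > 0$ and $\det A > 0$ (the latter from \eqref{eq:minkowski_det_ineq} applied to $A_1,A_2$), one has
\[
\int_{\Red} \alpha_i g(A_i^{-1}(x-a_i)) \di x = \alpha_i \det(A_i) \int_{\Red} g, \qquad \int_{\Red} \alpha g(A^{-1}(x-a)) \di x = \alpha \det(A) \int_{\Red} g.
\]
Since $\alpha = \alpha_1^{\beta_1}\alpha_2^{\beta_2}$, the desired inequality collapses to
\[
\det(\beta_1 A_1 + \beta_2 A_2) \geq (\det A_1)^{\beta_1}(\det A_2)^{\beta_2},
\]
which is precisely \eqref{eq:minkowski_det_multipl_ineq}, with equality iff $A_1 = A_2$. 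This completes the plan.

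The main obstacle is essentially bookkeeping: the only nontrivial input for the pointwise part is log-concavity of $f$ combined with the affine decomposition $x = \beta_1 x_1 + \beta_2 x_2$, and the only nontrivial input for the integral part is Minkowski's multiplicative determinant inequality, already cited in the preliminaries. No delicate analysis is needed.
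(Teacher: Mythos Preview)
Your proof is correct and follows essentially the same approach as the paper's: for \eqref{eq:inner-function-interpolation-1} you decompose $x$ as the convex combination $\beta_1 x_1 + \beta_2 x_2$ of points at which the two hypotheses apply with common argument $y$, then use log-concavity of $f$; for \eqref{eq:inner-function-interpolation-2} you reduce via the change of variables to the multiplicative Minkowski determinant inequality \eqref{eq:minkowski_det_multipl_ineq}. The only cosmetic difference is that the paper fixes $x$ and proves $\alpha g(A^{-1}x)\le f(x+a)$ before substituting, whereas you build in the translation from the start.
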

\begin{proof}
Fix $x \in \Red$ and define
\[
x_1 =  A_1 A^{-1} x, \quad x_2 =  A_2 A^{-1} x. 
\]
By assumption of the lemma, 
\begin{equation}\label{eq:inner_interp_height}
f(x_1 + a_1) \ge  \alpha_1 g \! \parenth{A^{-1}x}
\quad \text{and} \quad 
f(x_2 + a_2) \ge  \alpha_2 g \! \parenth{A^{-1}x}.
\end{equation}
By our definitions, $ \beta_1 (x_1 + a_1) + \beta_2(x_2 + a_2)  = x + a.$
Therefore,  by the log-concavity of $f$,
\[
f(x + a)    \ge f^{\beta_1}( x_1 + a_1) f^{\beta_2}( x_2 + a_2),
 \]
which, by \eqref{eq:inner_interp_height}, yields
\[
{f(x + a)}  \ge 
   \alpha g \! \parenth{A^{-1}x}.
\]
Inequality \eqref{eq:inner-function-interpolation-1} follows.
Inequality \eqref{eq:inner-function-interpolation-2} immediately follows from  Minkowski's determinant inequality \eqref{eq:minkowski_det_multipl_ineq}.
\end{proof}

Next, we observe that the leftmost inequalities in  \eqref{eq:john_s-concave-stict-separation} and \eqref{eq:john_s-concave-nonstrict-separation} compare the integrals of $g$ and a perturbation of $g$ defined by
 \begin{equation}\label{eq:perturbationdef}
g_t (x) =  \alpha_t g \! \parenth{A^{-1}_t(x - a_t)}, \text{ for } x\in\Red.  
 \end{equation}

\begin{lem}[Integral of a perturbation of $g$]\label{lem:john_separation_integral}
Fix $s > 0$, and let $g \colon \Red \to [0, +\infty)$ be a function such that $g^s$ is of finite positive integral.
	 Let $\Gamma(t) = (A_t \oplus \alpha_t, a_t),$ $ t \in [0,1]$ be a  curve in $\mathcal{M}$ with $(A_0 \oplus \alpha_0, a_0)=(\id_{d+1},0)$, and assume that the right derivative of $\Gamma$ at $t=0$ is of the form $(H\oplus\gamma,h)$.  Define  the perturbation ${g}_t$ of $g$ by \eqref{eq:perturbationdef}. Consider the following statements:
 \begin{enumerate}[(a)]
 \item\label{item:int_admissible_linearized_in_primal} 
\[			
 	\iprod{\parenth{H \oplus \gamma,h}}{\parenth{\sid,0}} >  0;
\] 
 \item\label{item:int_gammaadmissible}  	
 		\[
				\int {g}_t^s>\int g^s 
			\]
		for all $t \in (0, \tau]$ and  some $\tau > 0;$	
 \item\label{item:int_admissible_linearized_in_primalweak} 
\[		
 	\iprod{\parenth{H \oplus \gamma,h}}
			{\parenth{\sid,0}} \geq  0.
\] 
 \end{enumerate}
  Then \eqref{item:int_admissible_linearized_in_primal} implies \eqref{item:int_gammaadmissible}, and \eqref{item:gammaadmissible} implies \eqref{item:int_admissible_linearized_in_primalweak}.
\end{lem}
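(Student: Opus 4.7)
The plan is to compute $\int g_t^s$ explicitly and then differentiate at $t = 0^+$. By the change of variables $y = A_t^{-1}(x - a_t)$, one has
\[
 \int_{\Red} g_t^s = \alpha_t^s \abs{\det A_t} \int_{\Red} g^s.
\]
Since $\int g^s$ is a fixed positive constant and the translation $a_t$ drops out of the substitution, the whole question reduces to studying the scalar function $\phi(t) = \alpha_t^s \abs{\det A_t}$ on a right neighborhood of $0$, with $\phi(0) = 1$.

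Next, I would compute the right derivative $\phi'(0^+)$. From $\alpha_0 = 1$ and $\alpha'(0^+) = \gamma$, and from $A_0 = \id_d$ together with $A'(0^+) = H$, Jacobi's formula specialized at the identity gives $\derivativeatzero \det A_t = \tr H$; moreover $\abs{\det A_t} = \det A_t$ for small $t$ because $A_t$ is non-singular (being a curve in $\MM$). Hence, by the product rule,
\[
 \phi'(0^+) = s\gamma + \tr H = \iprod{\parenth{H\oplus\gamma, h}}{\parenth{\sid, 0}},
\]
the last equality using the standard inner product on $\MMbar$, namely the trace pairing on the operator block together with the Euclidean pairing on the translation part (the translation component $h$ is paired against $0$ and thus contributes nothing).

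With this identity in hand, both implications are immediate. If \eqref{item:int_admissible_linearized_in_primal} holds, then $\phi'(0^+) > 0$, so $\phi(t) > \phi(0) = 1$ for all sufficiently small $t > 0$, which, after multiplying by $\int g^s > 0$, gives \eqref{item:int_gammaadmissible}. Conversely, if \eqref{item:int_gammaadmissible} holds, then $\phi(t) > \phi(0)$ throughout $(0,\tau]$, which forces $\phi'(0^+) \geq 0$ and hence \eqref{item:int_admissible_linearized_in_primalweak}.

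I do not foresee any genuine obstacle here: the lemma is essentially a first-order computation in disguise. The only points meriting a moment's care are the identification of the inner product on $\MMbar$ and the observation that $\det A_t$ is positive near $t=0$ (so that we may drop the absolute value before differentiating), both of which are routine.
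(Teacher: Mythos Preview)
Your argument is correct and matches the paper's own proof essentially line for line: the paper also reduces to $\int g_t^s = \alpha_t^s \det A_t \int g^s$ and then expands to first order in $t$ to obtain the coefficient $s\gamma + \tr H = \iprod{(H\oplus\gamma,h)}{(\sid,0)}$. The only cosmetic difference is that the paper writes the expansion directly as $(1 + t\gamma + o(t))^s(1 + t\tr H + o(t)) = 1 + t(s\gamma + \tr H) + o(t)$ rather than invoking Jacobi's formula and the product rule explicitly.
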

\begin{proof}
One has
\[
	\int_{\Red} {g}_t^s =  
	 \alpha^{s} \det A_t \int_{\Red} g^s = 
	\parenth{1 + t \gamma + \littleo{t}}^{s} \parenth{1 +  t\tr{H} + \littleo{t} } \int_{\Red} g^s=
\]
\[
=\parenth{1 + t \parenth{s \gamma + \tr{H}} + \littleo{t} } \int_{\Red} g^s
=\parenth{1 + t \iprod{\parenth{H \oplus \gamma,h}}
			{\parenth{\sid,0}} + \littleo{t} } \int_{\Red} g^s,
\]
and the statement follows.
\end{proof}

The following lemma is an exercise in compactness.
\begin{lem}[Homothopy of a compact set]\label{lem:most_general_homotopy_inclusion}
Let $F$ be a closed set in $\Red$ and $K \subset F$ be a non-empty compact set.
Let $\mathcal{H} \colon K \times [0, \tau] \to \Red$ be a homotopy between $K$ and some set $K_\tau$ such that for every $u \in \bd F \cap K$ and some positive $\epsilon_u,$ 
the curve $\mathcal{H} (u, t), t \in [0, \epsilon_u]$ belongs to $F.$ Then  there is positive $\epsilon$ such that
$\mathcal{H} (K, t) \subset F$ for all $t \in [0, \epsilon].$ 
\end{lem}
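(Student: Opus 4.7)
My plan is to proceed by contradiction combined with a sequential-compactness argument. Assume no such $\epsilon > 0$ exists; then for each $n \in \N$ I can pick $u_n \in K$ and $t_n \in (0, 1/n]$ with $\mathcal{H}(u_n, t_n) \notin F$. Compactness of $K$ lets me pass to a subsequence along which $u_n \to u$ for some $u \in K$, and continuity of $\mathcal{H}$ at $(u, 0)$ forces $\mathcal{H}(u_n, t_n) \to \mathcal{H}(u, 0) = u \in K \subset F$. Since $F$ is closed while every $\mathcal{H}(u_n, t_n)$ lies in $F^c$, the limit must satisfy $u \in \bd F \cap K$.

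Next I would invoke the hypothesis to produce $\epsilon_u > 0$ with $\mathcal{H}(u, [0, \epsilon_u]) \subset F$ and promote it to a product-neighborhood statement: an open neighborhood $V$ of $u$ in $K$ and $\delta \in (0, \epsilon_u]$ with $\mathcal{H}(V \times [0, \delta]) \subset F$. For large $n$, $(u_n, t_n)$ lies in $V \times [0, \delta]$, contradicting $\mathcal{H}(u_n, t_n) \notin F$. At interior points $v \in K \cap \interior F$ the analogous product neighborhood is immediate from continuity of $\mathcal{H}$ combined with the existence of a Euclidean ball around $v$ contained in $F$. Equivalently, one can run a direct finite-cover argument: use compactness of $K$ to cover it by finitely many such product neighborhoods $V_i \times [0, \delta_i]$ and take $\epsilon = \min_i \delta_i$.

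The main obstacle is precisely the boundary upgrade -- passing from the one-dimensional curve hypothesis $\mathcal{H}(u, [0, \epsilon_u]) \subset F$ at a single $u \in \bd F \cap K$ to the two-dimensional product $V \times [0, \delta]$ mapped into $F$. In the general statement, this upgrade is delicate because the curve $\mathcal{H}(u, \cdot)$ could a priori hug $\bd F$; in the intended applications (cf.\ \Href{Lemma}{lem:curve_in_the_lifting}), however, the hypothesis arises from a tangent vector lying in the interior of the polar of the Fr\'echet normal cone, which forces $\mathcal{H}(u, t)$ to enter $\interior F$ for $t > 0$ at a quantitative rate. Together with uniform continuity of $\mathcal{H}$ on the compact product $K \times [0, \tau]$, this makes the upgrade a routine continuity exercise and closes the contradiction.
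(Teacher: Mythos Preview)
Your approach---contradiction via sequential compactness, or equivalently a finite cover---is exactly what the paper does: it considers $t \mapsto \operatorname{dist}(\mathcal{H}(u,t), F)$, observes it vanishes on some $[0, \epsilon_u]$ for each $u \in K$, and then writes ``the compactness of $K$ yields the assertion of the lemma.'' You are right that the entire content lies in the ``boundary upgrade'' from the single-curve hypothesis to a product neighborhood $V \times [0, \delta]$ mapped into $F$, and the paper glosses over this step just as you do; it does not supply the upgrade either.

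The gap you flag is real: the lemma is false as stated. Take $F = \{(x,y)\in\Re^2 : y \leq 0\}$, $K = [-1,1] \times \{0\}$, and $\mathcal{H}((x,0), t) = \bigl(x,\; x^2 \max(0, t - |x|)\bigr)$. Then $\bd F \cap K = K$; the curve through $(x,0)$ stays in $F$ for $t \in [0, |x|]$ (and for all $t$ when $x = 0$), yet for every $\epsilon \in (0,1]$ one has $\mathcal{H}((\epsilon/2, 0), \epsilon) = (\epsilon/2, \epsilon^3/8) \notin F$. So neither your argument nor the paper's can be completed at this level of generality. Your diagnosis of the remedy is the correct one: in the only place the lemma is invoked (\Href{Theorem}{thm:admissiblelinearized_john}), the boundary hypothesis actually comes from the strict condition of \Href{Lemma}{lem:curve_in_the_lifting}\eqref{item:curve_linearized_in_primal}, which (through the convexity of $\epi\psi$) forces $\mathcal{H}(u, t) \in \interior F$ for small $t > 0$. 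With that strengthening of the hypothesis, joint continuity of $\mathcal{H}$ on the compact $K \times [0,\tau]$ does give the product-neighborhood, and then the compactness argument closes.
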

\begin{proof}
Fix any  $u \in K$, and consider the continuous function
$t\mapsto\operatorname{dist}(\mathcal{H} (u, t), F)$. It is zero at $t=0$. Clearly, if $u$ belongs to the interior of $F$, then this function is zero on a proper interval $t \in [0, \epsilon_u]$. By the assumption of the lemma, the same holds if $u \in \bd F \cap K$. 
The compactness of $K$ yields the assertion of the lemma.
\end{proof}

Finally, we show that the rightmost inequalities in \eqref{eq:john_s-concave-stict-separation} and \eqref{eq:john_s-concave-nonstrict-separation} encode that a certain perturbation of $g$ is pointwise below $f$.

\begin{thm}[Characterization of admissible perturbations]\label{thm:admissiblelinearized_john}
 Let the functions $f,g\colon\Red\to[0,+\infty)$ satisfy the assumptions of \Href{Theorem}{thm:john_condition_general}, and let $\Gamma(t) = (A_t \oplus \alpha_t, a_t), t \in [0,1]$ be a curve in $\mathcal{M}$ with $(A_0 \oplus \alpha_0, a_0)=(\id_{d+1},0)$, and assume that the right derivative of $\Gamma$ at $t=0$ is of the form $(H\oplus\gamma,h)$.  Define  the perturbation ${g}_t$ of ${g}$ by \eqref{eq:perturbationdef}.
 Consider the following statements:
 \begin{enumerate}[(a)]
  \item\label{item:admissible_linearized_in_primal}
$			\iprod{\parenth{H \oplus \gamma,h}}
			{\contactopjohn{\upthing{u}}{\upthing{v}}} < 0
$
		for all $(\upthing{u},\upthing{v})\in \contactset{f}{\gbase}$.
  \item\label{item:gammaadmissible} 
  There is $\epsilon > 0$ such that ${g}_t  \leq {f}$ for all $t\in[0,\epsilon]$.
  \item\label{item:admissible_linearized_in_primalweak}
$			\iprod{\parenth{H \oplus \gamma,h}}
			{\contactopjohn{\upthing{u}}{\upthing{v}}} \leq 0
$
		for all $(\upthing{u},\upthing{v})\in \contactset{f}{\gbase}$.
 \end{enumerate}
Then \eqref{item:admissible_linearized_in_primal} implies \eqref{item:gammaadmissible}, and \eqref{item:gammaadmissible} implies \eqref{item:admissible_linearized_in_primalweak}.
\end{thm}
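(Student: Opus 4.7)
For each contact pair $(\upthing{u},\upthing{v})=((u,\mu),(v,\nu))\in\contactset{f}{\gbase}$, the orbit of $\upthing{u}$ under the perturbation is $\upthing{\xi}(t)=(A_tu+a_t,\alpha_t\mu)$, so $\upthing{\xi}(0)=\upthing{u}$ and $\upthing{\xi}'(0)=(Hu+h,\gamma\mu)$. Expanding the contact operator yields the key identity
\[
\iprod{(H\oplus\gamma,h)}{\contactopjohn{\upthing{u}}{\upthing{v}}}=\iprod{Hu+h}{v}+\gamma\mu\nu=\iprod{\upthing{\xi}'(0)}{\upthing{v}}.
\]
So (a) and (c) are, respectively, strict and non-strict polar inequalities between $\upthing{\xi}'(0)$ and the contact normals at $\upthing{u}$. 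The plan is to convert these polar inequalities into the statement ``$\upthing{\xi}(t)\in\lifting f$ for small $t>0$'' using \Href{Lemma}{lem:curve_in_the_lifting} for positive contact points and \Href{Lemma}{lem:flat_curve_in_the_lifting} for zero contact points, and, for the forward direction, to assemble the pointwise information into $g_t\le f$ via \Href{Lemma}{lem:most_general_homotopy_inclusion}.

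\textbf{Step 2: (a)$\Rightarrow$(b).} Fix $u\in\contactpoint{f}{\gbase}$. If $\mu=f(u)>0$, the star-like hypothesis together with \Href{Lemma}{lem:locstar_geom_meaning} presents $\nfcone{\lifting f}{\upthing{u}}$ as the positive hull of its slice $S=\{\upthing{v}\in\nfcone{\lifting f}{\upthing{u}}\st\iprod{\upthing{u}}{\upthing{v}}=1\}$; since $\iprod{\upthing{u}}{\cdot}>0$ on the entire cone minus the origin, $S$ is compact and the cone is pointed. The pointwise strict negativity in (a) on $S$ therefore upgrades by compactness to $\upthing{\xi}'(0)\in\interior(\nfcone{\lifting f}{\upthing{u}})^\circ$, and \Href{Lemma}{lem:curve_in_the_lifting} gives $\upthing{\xi}(t)\in\lifting f$ for small $t$. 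If $\mu=0$, the reduced contact set restricts $\upthing{v}$ to $(v,0)$ with $v\in\nfcone{\supp f}{u}\subset\Red$ by \Href{Lemma}{lem:nfcone_at_zero}; and because $0\in\interior\supp g\subseteq\interior\supp f$, the relation $\iprod{u}{v}>0$ holds on this cone minus the origin, so the normalizing slice is compact and the same argument yields $Hu+h\in\interior(\nfcone{\supp f}{u})^\circ$. The horizontal curve $\upthing{\xi}(t)=(A_tu+a_t,0)$ thus lies in $\lifting f$ for small $t$ by \Href{Lemma}{lem:flat_curve_in_the_lifting}. Now invoke \Href{Lemma}{lem:most_general_homotopy_inclusion} with $K=\essgraph\gbase$ (compact, since $\gbase$ is upper semi-continuous, bounded above by the proper log-concave $g$, and $\supp\gbase\subseteq\supp g$ is bounded), $F=\lifting f$, and the homotopy $((u,\gbase(u)),t)\mapsto(A_tu+a_t,\alpha_t\gbase(u))$; its boundary points in $F$ are exactly $\{(u,\gbase(u))\st u\in\contactpoint{f}{\gbase}\}$, which is handled by the two cases above. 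This gives $\gbase_t\le f$ for small $t$; since the log-concave envelope commutes with positive affine positions and $f$ is log-concave, $g_t=\logconc{\gbase_t}\le f$, proving (b).

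\textbf{Step 3: (b)$\Rightarrow$(c) and main obstacle.} Conversely, if (b) holds, then for each $(\upthing{u},\upthing{v})\in\contactset{f}{\gbase}$ the curve $\upthing{\xi}(t)$ lies in $\essgraph g_t\subseteq\lifting f$ for small $t$. The non-strict direction of \Href{Lemma}{lem:curve_in_the_lifting} (when $\mu>0$) or \Href{Lemma}{lem:flat_curve_in_the_lifting} (when $\mu=0$) then places $\upthing{\xi}'(0)$ in $(\nfcone{\lifting f}{\upthing{u}})^\circ$, which by Step 1 is precisely (c). The delicate points I anticipate are (i) the $\gbase$-vs-$g$ asymmetry---the hypothesis (a) only sees contact pairs of $\gbase$ while (b) is a statement about $g=\logconc{\gbase}$, forcing the homotopy to be run on $\essgraph\gbase$ and the conclusion transferred to $g_t$ via log-concavity of $f$ only at the end; and (ii) the zero-contact case, where \Href{Definition}{dfn:weaklustarlike} does not \emph{a priori} cover $u\notin\supp f$, so the compactness of the relevant slice must be derived from $0\in\interior\supp f$ together with \Href{Lemma}{lem:nfcone_at_zero}, and \Href{Lemma}{lem:flat_curve_in_the_lifting} must be substituted for \Href{Lemma}{lem:curve_in_the_lifting} to handle the resulting horizontal curve.
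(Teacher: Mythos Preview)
Your proposal is correct and follows essentially the same approach as the paper's own proof: you compute the key identity $\iprod{(H\oplus\gamma,h)}{\contactopjohn{\upthing{u}}{\upthing{v}}}=\iprod{\upthing{\xi}'(0)}{\upthing{v}}$, split into the cases $\mu>0$ and $\mu=0$ using \Href{Lemmas}{lem:curve_in_the_lifting} and~\ref{lem:flat_curve_in_the_lifting} respectively, run the homotopy on $\essgraph\gbase$ via \Href{Lemma}{lem:most_general_homotopy_inclusion}, and transfer to $g_t$ through the log-concavity of $f$. Your explicit identification of the two delicate points (the $\gbase$-vs-$g$ asymmetry and the zero-contact handling via $0\in\interior\supp f$) matches precisely where the paper invests its care.
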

\begin{proof}
By the log-concavity of $f$, a position of $g$ is below $f$ if and only if the corresponding position of $\gbase$ is below $f.$ Hence, it suffices to consider the perturbation of $\gbase$ given by 
\[
\tilde{g}_t (x) =  \alpha_t \gbase \! \parenth{A^{-1}_t(x - a_t)}, \text{ for } x\in\Red.
\]

Since our sets are symmetric about $\Red$, it suffices to consider the sets $\essgraph{\tilde{g}_t}$. 
Define the homothopy
$\mathcal{H} \colon$ $\parenth{\essgraph{\gbase}} \times [0, 1] \to \R^{d+1}$ by
\[
\mathcal{H}(\upthing{y},t)=\parenth{A_t y + a_t,\   \alpha_t \gbase(y)}
\]
for all $y \in \cl{\supp \gbase}$ and $\upthing{y} = (y, \gbase(y)).$
That is,
\[
\mathcal{H}\! \parenth{\essgraph{\gbase},t} = \essgraph{\tilde{g}_t}.
\]

Consider an arbitrary  $u \in \cl{\supp \gbase}$ and 
$\upthing{v}=(v,\nu)\in\Red \times \Re$. Set $\upthing{u} = (u, \gbase(u))$.
One has
\[
 \mathcal{H}^\prime:=\derivativeatzero\mathcal{H}(\upthing{u}, t)=\parenth{ Hu + h,\ \gamma \gbase(u)},
\]
and thus, by \Href{Definition}{dfn:contactop},
\begin{equation}\label{eq:iprodvhiprodtensor}
\iprod{\upthing{v}}{\mathcal{H}^\prime}=\iprod{v}{Hu + h} +  
\gamma \nu \gbase(u)= 
\iprod{\big(H\oplus\gamma,h\big)}{\contactopjohn{\upthing{u}}{\upthing{v}}}.
\end{equation}

Consider the case  $\gbase(u)>0$.
We recall the assumption of \Href{Theorem}{thm:john_condition_general} according to which 
$\contactpoint{f}{g_b}$ is a \wlocstar $f$. 
%This ensures that at every contact point $\upthing{u}\in \lifting{\gbase}\cap\bd{\lifting{f}}$ with $f(u) > 0$, every outer normal direction to $\lifting{f}$ has an acute angle with $\upthing{u}$, and hence, it can be represented by a vector $\upthing{v}$ such that $\iprod{\upthing{u}}{\upthing{v}}=1$, which yields $(\upthing{u},\upthing{v})\in\contactset{\lifting{\gbase}}{f}$.
Using \Href{Lemma}{lem:locstar_geom_meaning} in identity \eqref{eq:iprodvhiprodtensor}, 
we see that for a fixed $\upthing{u} \in  \essgraph{f}$ with non-zero last coordinate, assertion \eqref{item:admissible_linearized_in_primal} of the theorem  is equivalent to
\[
\mathcal{H}^\prime \in \interior \parenth{\nfcone{\lifting{f}}{\upthing{u}}}^{\circ}.
\]
Similarly, assertion \eqref{item:admissible_linearized_in_primalweak} of \Href{Theorem}{thm:admissiblelinearized_john} is equivalent to
\[
\mathcal{H}^\prime \in \parenth{\nfcone{\lifting{f}}{\upthing{u}}}^{\circ}.
\]

Consider the case  $\gbase(u)=0$.
Since $\supp \logconc{g}$ is bounded and contains the origin in its interior by the assumption of \Href{Theorem}{thm:john_condition_general},  the origin is contained in the interior of $\supp f.$  This ensures that at every contact point $\upthing{u}\in \lifting{\gbase}\cap\bd{\lifting{f}}$ with $f(u) = 0$, every outer normal direction to 
$\supp{f}$ has an acute angle with $\upthing{u} = (u,0) \in \Red$, and hence, it can be represented by a vector $\upthing{v}= (v,0)\in \R^d$ such that $\iprod{\upthing{u}}{\upthing{v}}=1$, which yields $(\upthing{u},\upthing{v})\in\contactset{\lifting{\gbase}}{f}$.

It follows  from identity \eqref{eq:iprodvhiprodtensor}  that for a fixed $\upthing{u}$ whose  last coordinate is zero, assertion \eqref{item:admissible_linearized_in_primal} of the theorem  is equivalent to
\[
\mathcal{H}^\prime \in \interior \parenth{\nfcone{\supp{f}}{\upthing{u}}}^{\circ},
\]
where all the sets and the polarity are meant in $\Red.$
Similarly, assertion \eqref{item:admissible_linearized_in_primalweak} of \Href{Theorem}{thm:admissiblelinearized_john} is equivalent to
\[
\mathcal{H}^\prime \in \parenth{\nfcone{\supp{f}}{\upthing{u}}}^{\circ}.
\]

Consequently, the implication \eqref{item:curve_inclusion} $\Rightarrow$ \eqref{item:curve_linearized_in_primalweak} of \Href{Lemma}{lem:curve_in_the_lifting} and the implication \eqref{item:flat_curve_inclusion} $\Rightarrow$ \eqref{item:flat_curve_linearized_in_primalweak} of \Href{Lemma}{lem:flat_curve_in_the_lifting} yield \eqref{item:gammaadmissible} $\Rightarrow$ \eqref{item:admissible_linearized_in_primalweak} in the theorem.

 To obtain the implication \eqref{item:admissible_linearized_in_primal} $\Rightarrow$ \eqref{item:gammaadmissible} of \Href{Theorem}{thm:admissiblelinearized_john}, we will use  \Href{Lemma}{lem:most_general_homotopy_inclusion} with the roles $K=\essgraph{\gbase}$ and $F=\lifting{f}.$  We need to check that the curve
 $\xi(t) = \mathcal{H}\parenth{\upthing{u},t}$ remains in $\lifting{f}$ for 
 an arbitrary fixed $\upthing{u} \in \essgraph{f} \cap \essgraph{g_b}$ and all sufficiently small  $t.$ 
The implication \eqref{item:curve_linearized_in_primal} $\Rightarrow$ \eqref{item:curve_inclusion} of \Href{Lemma}{lem:curve_in_the_lifting}  
and the implication \eqref{item:flat_curve_linearized_in_primal} $\Rightarrow$ \eqref{item:flat_curve_inclusion} of \Href{Lemma}{lem:flat_curve_in_the_lifting} yield this property of the curve $\xi(t)$ in the corresponding cases. 
This completes the proof of \Href{Theorem}{thm:admissiblelinearized_john}.
\end{proof}
\begin{remark}
\Href{Theorem}{thm:admissiblelinearized_john}.
is the only place in the proof of \Href{Theorem}{thm:john_condition_general} at which we 
use the assumption that $\contactpoint{f}{\gbase}$ is a \wlocstar$f$. 
\end{remark}
\subsection{Proof of Theorem~\ref{thm:john_condition_general}}
We start with assertion \eqref{item:local-maximum-implies-glmp} of the theorem.
Assume  that there are no contact pairs and positive weights  satisfying  equation~\eqref{eq:functional_glmp}.
Then, by \Href{Lemma}{lem:separation_John_problem}, there exists $ \parenth{H \oplus \gamma, h} \in \MMbar$ such that
\begin{equation}\label{eq:john_s-concave-separation}
    \iprod{\parenth{H \oplus \gamma,h}}
    {\parenth{\sid,0}} > 0
    \quad\text{and}\quad
    \iprod{\parenth{H \oplus \gamma,h}}
    {\contactopjohn{\upthing{u}}{\upthing{v}}} < 0
\end{equation}
for all $\parenth{\upthing{u},\upthing{v}} \in \contactset{f}{\gbase}$.

The matrix 
\(
\id_{d} + t {H}
\)
 is non-singular for all $t \in [0, \tau]$ for some positive $\tau$.  
We use $\parenth{H \oplus \gamma,h}$ to obtain a perturbation of $g$ within the 
class $\funpos{g}$ which is pointwise below $f$ but is of larger integral than  
$g$. Set
\[
    g_t(x)=(1+ t\gamma) g\! \parenth{\parenth{\id_d + t H}^{-1}(x-th)}.
\]
Clearly,  $g_t \in \funpos{g}$  
for all $t \in [0, \tau]$. 
By  implication \eqref{item:int_admissible_linearized_in_primal} $\Rightarrow$  \eqref{item:int_gammaadmissible} of \Href{Lemma}{lem:john_separation_integral}, 
    \[
        \int g_t^s > \int g^s
    \]
for all $t \in (0, \tau]$ with some $\tau > 0.$
Using the implication \eqref{item:admissible_linearized_in_primal} $\Rightarrow$  \eqref{item:gammaadmissible} of  \Href{Theorem}{thm:admissiblelinearized_john},
one obtains that $g_t  \leq f$ for all sufficiently small $t$. 
Thus, $g$ is not a local maximizer in the John $s$-problem \eqref{eq:john_problem_intro}, completing the proof of assertion~(\ref{item:local-maximum-implies-glmp}) of \Href{Theorem}{thm:john_condition_general}.

We proceed with assertion~(\ref{item:glmp-implies-global-maximum}) of the theorem.
Assume that $h=g$ is not a global maximizer in  Positive position John $s$-problem \eqref{eq:john_problem_pos}.
That is,  there exist a positive definite matrix $A$, $\gamma \in \R$ and $h \in \Red$  such that 
the function $g_1$ defined by
\[
g_1(x) = e^{\gamma} g\parenth{A^{-1}(x-h)}, \text{ for } x\in\Red
\]
satisfies $g_1 \leq  f$ and $\int_{\Red} g_1^s > \int_{\Red} g^s$. We may assume a bit more by applying a slight contraction on 
$\lifting{g_1}$ (which is a transformation within $\funppos{g}$): 
$\lifting{g_1}\subset\interior\parenth{\lifting{f}}$. We will use $g_1$ to define a perturbation $g_t$ of $g$ (a curve in $\MM$) that is below $f$ with larger integral, and then, taking the derivative of that perturbation at the starting point $t=0$, we will obtain  $\parenth{H\oplus\gamma,h}\in\MMbar$ that separates $\parenth{\sid, 0}$ from the set $\contactopsetjohn{f}{g}$, which by \Href{Lemma}{lem:separation_John_problem} will yield the assertion.

Set $H = A - \id_d$.
We begin by showing that 
\begin{equation}\label{eq:john_suff_integral_separation}
\iprod{\parenth{H \oplus \gamma, h}}{\parenth{\sid, 0}} > 0.
\end{equation}

Indeed, since $\lifting{g_1}$ is a  compact subset of $\interior\parenth{\lifting{f}}$,  there is a $\delta>0$ such that $A - \delta \id_d$ is positive definite and the function $\tilde{g}_1$  defined by 
\[
\tilde g_1(x) = e^{\gamma} g_1\parenth{\parenth{A - \delta \id_d}^{-1}(x-h)},
\]
satisfies the relations $\tilde{g}_1\leq f$ and $\int_{\Red} g^s < \int_{\Red} \tilde{g}_1^s$.

\[
\tilde{g}_t (x) =  e^{t\gamma} g\parenth{\parenth{\id_d + t\parenth{H - \delta \id_d}}^{-1}(x - th)} \in 
\funppos{g}.
\] 		
By \Href{Lemma}{lem:inner-function-interpolation}, 
$\tilde{g}_t \leq f$ for all $t \in [0,1],$ 	and
\[
\int_{\Red} {\tilde{g}_t^s} \geq
    \parenth{ \int_{\Red} {g}^s}^{1-t}
        \parenth{\int_{\Red} \tilde{g}_1^s}^{t}
    \geq \int_{\Red} {g^s}.
\]
Using implication \eqref{item:int_gammaadmissible} $\Rightarrow$ \eqref{item:int_admissible_linearized_in_primalweak} of \Href{Lemma}{lem:john_separation_integral}, we have
\[
 0 \leq \iprod{\parenth{\parenth{H - \delta \id_d} \oplus \gamma, h}}{\parenth{\sid, 0}}  = 
\iprod{\parenth{H  \oplus \gamma, h}}{\parenth{\sid, 0}} - \delta d, 
\]
yielding
\begin{equation*}
\iprod{\parenth{H \oplus \gamma, h}}{\parenth{\sid, 0}} \geq \delta d > 0,
\end{equation*}
and \eqref{eq:john_suff_integral_separation} follows.

For $t \in [0,1)$, define 
\[
{g}_t (x) =  e^{t\gamma} g\parenth{\parenth{\id_d + t H}^{-1}(x - th)}.
\] 		
Clearly, ${g}_t  \in \funppos{g}$ for all $t \in [0,1)$. 
By the choice of $g_1$ and by \Href{Lemma}{lem:inner-function-interpolation}, one sees that ${g}_t \leq f$ for all $t \in [0,1].$
The implication \eqref{item:gammaadmissible} $\Rightarrow$  \eqref{item:admissible_linearized_in_primalweak} in \Href{Theorem}{thm:admissiblelinearized_john} implies that
$\iprod{\parenth{H \oplus \gamma,h}}
			{\contactopjohn{\upthing{u}}{\upthing{v}}}$ $\leq 0$
		for all $\parenth{\upthing{u},\upthing{v}} \in \contactset{f}{\gbase}$.
Combining this with inequality \eqref{eq:john_suff_integral_separation},
\Href{Lemma}{lem:separation_John_problem} yields assertion \eqref{item:glmp-implies-global-maximum} of \Href{Theorem}{thm:john_condition_general}, whose proof is thus complete.

\section{L\"owner problem}\label{sec:lowner}
Fix $s > 0$, and two functions $f, g  \colon \Red \to [0, \infty)$. 

In addition to L\"owner $s$-problem \eqref{eq:lowner_problem_intro},
we will consider the following optimization problem. 
\medskip

\textbf{Positive position L\"owner $s$-problem for $f$ and $g$:} Find
\begin{equation}
\label{eq:lowner_problem_pos}
\min\limits_{h \in \funppos{g} }
	\int_{\Red} h^s 
	\quad \text{subject to} \quad
	f \leq h.
\end{equation}

We define local and global minimizers to these problems in the same way as maximizers to the two John $s$-problems introduced in \Href{Section}{sec:john}.

The next theorem, our main, most general result concerning the L\"owner $s$-problem provides a condition of optimality in terms of the polars of the functions -- for the definition, see \Href{Section}{sec:polarity}.

\begin{thm}[L\"owner's condition]\label{thm:lowner_condition_general}
Fix $s>0.$ Let $f\colon\Red\to[0,+\infty)$ be a proper function.
and let $g \colon \Red \to [0, \infty)$ be a proper log-concave function with $f \leq g.$
Assume that 
 \begin{itemize}
\item $\loglego{g}$ satisfies our Basic Assumptions (see page~\pageref{assumptions:basic}); 
\item the set of contact points $\contactpoint{\loglego{f}}{\loglego{g}}$ is a \wlocstar$\loglego{f}$;
\item the set $\contactset{\loglego{f}}{\loglego{g}}$ is bounded. 
\end{itemize} 
Then the following hold.
	\begin{enumerate}
		\item\label{item:lowner-local-minimum-implies-glmp}
			If $h=g$ is a local minimizer in  L\"owner $s$-problem \eqref{eq:lowner_problem_intro} for $f$ and $g$,
			then there exist contact pairs 
			$(\upthing{u}_1 ,\upthing{v}_1), \dots,  
			(\upthing{u}_m ,\upthing{v}_m) \in \contactset{\loglego{f}}{\loglego{g}}$
			and positive weights $c_1,\ldots,c_m$ such that
		\begin{equation}\label{eq:functional_glmp-lowner}
		\sum_{i=1}^{m} c_i {v}_i \otimes {u}_i = 
		 \id_{d}, \quad 
		 \sum_{i=1}^{m} c_i \loglego{g}(u_i ) \cdot \nu_i =  s
					\quad\text{and}\quad
					\sum_{i=1}^{m} c_i\loglego{g}(u_i ) \cdot \nu_i u_i =0,
        \end{equation} 
        where $\upthing{u}_i=(u_i, \loglego{g}(u_i ))$ and 
        $\upthing{v}_i=(v_i,\nu_i) \in \nfcone{\lifting{\loglego{g}}}{\upthing{u}_i}$.
		\item\label{item:glmp-lowner-implies-global-minimum}
			If there exist contact pairs and positive weights satisfying equation \eqref{eq:functional_glmp-lowner},
			then $h=g$ is a global maximizer in Positive position L\"owner $s$-problem \eqref{eq:lowner_problem_pos} for $f$ and $g$.
	\end{enumerate}
\end{thm}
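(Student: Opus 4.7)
The plan is to follow the proof structure of \Href{Theorem}{thm:john_condition_general} with the separation argument carried out on the polar side (liftings of $\loglego{f}$ and $\loglego{g}$) and the perturbation acting on primal positions $g_t\in\funpos{g}$. I would introduce the L\"owner-type extended contact operator
\[
\contactoplowner{\upthing{u}}{\upthing{v}}\;=\;\bigl((v\otimes u)\oplus \loglego{g}(u)\nu,\ \loglego{g}(u)\nu\,u\bigr)\in\MMbar
\]
for $(\upthing{u},\upthing{v})\in\contactset{\loglego{f}}{\loglego{g}}$ with $\upthing{u}=(u,\loglego{g}(u))$ and $\upthing{v}=(v,\nu)$; equation \eqref{eq:functional_glmp-lowner} then reads exactly $(\id_d\oplus s,0)\in\pos\contactopsetlowner{\loglego{f}}{\loglego{g}}$. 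A direct adaptation of \Href{Lemma}{lem:compact_contact_operator}, relying on the boundedness of $\contactset{\loglego{f}}{\loglego{g}}$ and the star-likeness hypothesis to control normal cones at both positive- and zero-height contact points, gives that $\contactopsetlowner{\loglego{f}}{\loglego{g}}$ is compact. The L\"owner analogue of \Href{Lemma}{lem:separation_John_problem} then follows: either \eqref{eq:functional_glmp-lowner} holds, or there exists $(H\oplus\gamma,h)\in\MMbar$ with $\iprod{(H\oplus\gamma,h)}{(\id_d\oplus s,0)}<0$ and $\iprod{(H\oplus\gamma,h)}{\contactoplowner{\upthing{u}}{\upthing{v}}}>0$ for every contact pair.

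For the necessary direction \eqref{item:lowner-local-minimum-implies-glmp}, I would realize the separator above as the right derivative at $t=0$ of a curve $\Gamma(t)=(A_t\oplus\alpha_t,a_t)$ in $\MM$, and take $g_t(x)=\alpha_t g(A_t^{-1}(x-a_t))$. The integral calculation of \Href{Lemma}{lem:john_separation_integral} gives $\derivativeatzero\int g_t^s=(\tr H+s\gamma)\int g^s<0$. The constraint $f\leq g_t$ is equivalent to $\loglego{g_t}\leq\loglego{f}$; from the direct computation
\[
\loglego{g_t}(y)=\alpha_t^{-1}e^{-\iprod{a_t}{y}}\loglego{g}(A_t^T y),
\]
parametrizing $\essgraph{\loglego{g_t}}$ via $y_0\mapsto A_t^{-T}y_0$ from $\essgraph{\loglego{g}}$ yields a homotopy whose right derivative at $(y_0,\loglego{g}(y_0))$ is $(-H^T y_0,\;-(\gamma+\iprod{h}{y_0})\loglego{g}(y_0))$. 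Pairing this tangent with an outer normal $\upthing{v}=(v,\nu)$ at a contact point $\upthing{u}$ produces exactly $-\iprod{(H\oplus\gamma,h)}{\contactoplowner{\upthing{u}}{\upthing{v}}}$, which the second separation inequality renders strictly negative, placing the tangent in the interior of $\parenth{\nfcone{\lifting{\loglego{f}}}{\upthing{u}}}^{\circ}$. Combining \Href{Lemmas}{lem:curve_in_the_lifting} and \ref{lem:flat_curve_in_the_lifting} pointwise with \Href{Lemma}{lem:most_general_homotopy_inclusion} then gives $\lifting{\loglego{g_t}}\subset\lifting{\loglego{f}}$ for all small $t>0$, i.e.\ $f\leq g_t$, contradicting the local minimality of $g$.

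For the sufficient direction \eqref{item:glmp-lowner-implies-global-minimum}, assume $g$ is not a global minimizer in the positive-position problem, pick $g_1\in\funppos{g}$ with $f\leq g_1$ and $\int g_1^s<\int g^s$, and slightly enlarge $g_1$ (within $\funppos{g}$) to arrange $\lifting{f}\subset\interior\lifting{g_1}$. The replacement for \Href{Lemma}{lem:inner-function-interpolation} uses the log-concavity of $g$ (rather than of $f$, as in the John case): given $f\leq\alpha_i g(A_i^{-1}(\cdot-a_i))$ for $i=1,2$, the \emph{harmonic} interpolation
\[
A^{-1}=\beta_1 A_1^{-1}+\beta_2 A_2^{-1},\quad A^{-1}a=\beta_1 A_1^{-1}a_1+\beta_2 A_2^{-1}a_2,\quad \alpha=\alpha_1^{\beta_1}\alpha_2^{\beta_2}
\]
satisfies $A^{-1}(x-a)=\beta_1 A_1^{-1}(x-a_1)+\beta_2 A_2^{-1}(x-a_2)$, so convexity of $-\ln g$ at this identity yields $f\leq\alpha g(A^{-1}(x-a))$, while Minkowski's determinant inequality applied to $A_1^{-1},A_2^{-1}$ gives $\int g_t^s\leq(\int g_0^s)^{1-t}(\int g_1^s)^t$. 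Applying this interpolation along the segment from $(\id_{d+1},0)$ to the parameters of $g_1$ produces a curve $\Gamma(t)\in\MM^+$ whose right derivative $(H\oplus\gamma,h)$ satisfies both separation inequalities---one from the integral bound, the other from the implication \eqref{item:gammaadmissible}$\Rightarrow$\eqref{item:admissible_linearized_in_primalweak} of the L\"owner analogue of \Href{Theorem}{thm:admissiblelinearized_john}---contradicting the absence of contact pairs satisfying \eqref{eq:functional_glmp-lowner}.

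The principal technical obstacle is the derivative calculation for $\loglego{g_t}$: since polarity does not map positions of $g$ to positions of $\loglego{g}$, the induced homotopy of $\lifting{\loglego{g_t}}$ couples the transposed matrix $A_t^T$ with the extra exponential factor $e^{-\iprod{a_t}{y}}$, and this joint transpose-and-exponential structure is precisely what forces the ordering $v\otimes u$ (rather than $u\otimes v$) and the coefficients $\loglego{g}(u)\nu$ and $\loglego{g}(u)\nu u$ in the definition of $\contactoplowner{\upthing{u}}{\upthing{v}}$. A secondary subtlety is the treatment of zero-height contact points, handled by restricting to outer normals $(v,0)\in\R^d$ through the reduced contact set $\contactset{\loglego{f}}{\loglego{g}}$ and invoking \Href{Lemma}{lem:flat_curve_in_the_lifting} in place of \Href{Lemma}{lem:curve_in_the_lifting}, exactly parallel to the end of the proof of \Href{Theorem}{thm:admissiblelinearized_john}.
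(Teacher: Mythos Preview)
Your approach is essentially the paper's, and the argument is correct. The one substantive difference is the parametrization you choose for the perturbation: you keep the John-style form $g_t(x)=\alpha_t\,g(A_t^{-1}(x-a_t))$ and then track $A_t^{-T}$ on the polar side, whereas the paper deliberately switches to
\[
g_t(x)=\frac{1}{\alpha_t}\,g\!\bigl(A_t^{T}x+a_t\bigr),
\]
so that (via \Href{Lemma}{lem:polar_func_composed_affine}) the induced homotopy on $\essgraph{\loglego{g}}$ becomes simply $(y,\loglego{g}(y))\mapsto\bigl(A_t y,\ \alpha_t\,\loglego{g}(y)\,e^{\langle y,a_t\rangle}\bigr)$, with derivative $(Hu,\ \loglego{g}(u)(\gamma+\langle u,h\rangle))$. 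This is exactly the same computation as yours up to the substitution $H\leftrightarrow -H^{T}$, $\gamma\leftrightarrow-\gamma$, $h\leftrightarrow-h$; the paper's choice avoids carrying the inverse-transpose around and makes the identity \eqref{eq:iprodvhiprodtensor_lowner} line up directly with $\contactoplowner{\upthing u}{\upthing v}$ without a sign. Similarly, your ``harmonic'' outer interpolation $A^{-1}=\beta_1A_1^{-1}+\beta_2A_2^{-1}$ is precisely the paper's \Href{Lemma}{lem:outer-interpolation} (linear interpolation of $A$ in the form $g(Ax+a)$) after the change of variables $A_i\mapsto A_i^{-1}$. So nothing is genuinely different; the paper's parametrization just keeps the bookkeeping cleaner. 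One small point: in your sufficient direction the ``slightly enlarge $g_1$'' step gives strict containment for the admissibility half, but to obtain the \emph{strict} inequality $\tr H+s\gamma<0$ you still need a separate small scalar perturbation (the paper uses $e^{-\gamma+\delta}$), since the Minkowski-based integral bound alone only yields $\le 0$ at first order.
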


%\begin{remark}
% Observe that even though \Href{Theorem}{thm:lowner_condition_general} is phrased in 
%terms of $\loglego{f}$ and $\loglego{g}$, it is not the same as 
%\Href{Theorem}{thm:john_condition_general} for $\loglego{f}$ and 
%$\loglego{g}$. The reason is that we need to maximize/minimize a different 
%functional -- the integral of the polar of the function instead of the integral 
%of the function itself. In other words, the solution to L\"owner's problem is 
%not the dual of the solution to John's problem. Moreover, comparing \eqref{eq:functional_glmp-lowner} and \eqref{eq:functional_glmp}, we see that the conditions are different. We will discuss the case when both \Href{Theorem}{thm:lowner_condition_general} and \Href{Theorem}{thm:john_condition_general}  are applicable in \Href{Section}{sec:corollaries_and_discussion}. In particular, we will show that in this case, the corresponding conditions are equivalent. 
%\end{remark}

%\subsection{Proof strategy}
The proof is similar to that of \Href{Theorem}{thm:john_condition_general} with an essential additional idea. Instead of studying the inequality $f \leq g_t$ for a perturbation $g_t$ of $g$, we will study the equivalent inequality $\loglego{f} \geq \loglego{g_t}$. 

\subsection{Main components of the proof of \texorpdfstring{\Href{Theorem}{thm:lowner_condition_general}}{Theorem~\ref{thm:lowner_condition_general}}}
First, the following lemma, the dual to 
\Href{Lemma}{lem:inner-function-interpolation}, allows us to interpolate between 
two functions above a given one in such a way that the new function remains 
above the given one.

\begin{lem}[Outer interpolation of functions]\label{lem:outer-interpolation}
	Let $f \colon \Red \to [0, +\infty)$ be a  function and  $g \colon \Red \to [0, +\infty)$ be a log-concave function.
	Let $\alpha_1,\alpha_2>0$, $A_1, A_2$ be non-singular matrices of order $d,$
	and  $a_1, a_2\in\Red$
	be such that
		\[
			f(x) \leq \alpha_1 g(A_1x +a_1)
			\quad \text{and}\quad
			f(x) \leq \alpha_2 g(A_2x +a_2)
		\]
	for all $x\in\Red$.
	Let $\beta_1,\beta_2>0$ be such that $\beta_1+\beta_2=1$.
	Define
		\[
			\alpha = \alpha_1^{\beta_1} \alpha_2^{\beta_2},
			\quad
			A = \beta_1 A_1 + \beta_2 A_2,
			\quad\text{and}\quad
			a = \beta_1 a_1 + \beta_2 a_2.
		\]
	Assume that $A$ is non-singular.	
	Then
		\begin{equation}\label{eq:outer-interpolation-1}
			f(x) \leq \alpha g(Ax + a).
		\end{equation}
	If $A_1$ and $A_2$ are positive definite and $g$ is integrable, then also
		\begin{equation}\label{eq:outer-interpolation-2}
			\int_{\Red} \alpha g(Ax+a) \di x \leq
				\left(\int_{\Red} \alpha_1 g(A_1x + a_1) \di x\right)^{\beta_1}
				\left(\int_{\Red} \alpha_2 g(A_2x + a_2) \di x\right)^{\beta_2}
		\end{equation}
with equality if and only if $A_1 = A_2.$
\end{lem}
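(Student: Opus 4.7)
The plan is to mirror, almost verbatim, the strategy used in \Href{Lemma}{lem:inner-function-interpolation}, exploiting the log-concavity of $g$ directly (rather than of $f$) and a change of variables instead of applying $A_i^{-1}$.

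For the pointwise inequality \eqref{eq:outer-interpolation-1}, I would fix $x\in\Red$ and set $y_i = A_i x + a_i$ for $i=1,2$. By our definitions, $\beta_1 y_1 + \beta_2 y_2 = Ax + a$. The hypotheses give $f(x) \leq \alpha_i g(y_i)$. Now apply log-concavity of $g$ at the convex combination:
\[
g(Ax+a) \;=\; g(\beta_1 y_1 + \beta_2 y_2) \;\geq\; g(y_1)^{\beta_1} g(y_2)^{\beta_2},
\]
so that, multiplying by $\alpha = \alpha_1^{\beta_1}\alpha_2^{\beta_2}$,
\[
\alpha g(Ax+a) \;\geq\; \parenth{\alpha_1 g(y_1)}^{\beta_1}\parenth{\alpha_2 g(y_2)}^{\beta_2} \;\geq\; f(x)^{\beta_1}f(x)^{\beta_2} \;=\; f(x).
\]
This yields \eqref{eq:outer-interpolation-1}.

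For the integral inequality \eqref{eq:outer-interpolation-2}, I would use a linear change of variables. Since $A_1,A_2,A$ are non-singular, we have
\[
\int_{\Red} \alpha_i g(A_i x + a_i) \di x \;=\; \frac{\alpha_i}{\det A_i}\int_{\Red} g(y)\di y, \qquad \int_{\Red} \alpha g(Ax+a)\di x \;=\; \frac{\alpha}{\det A}\int_{\Red} g(y)\di y,
\]
where the determinants are positive since $A_1, A_2$ (and hence $A$, by the Minkowski determinant inequality) are positive definite. Substituting $\alpha = \alpha_1^{\beta_1}\alpha_2^{\beta_2}$, the desired inequality reduces to
\[
\det A \;\geq\; (\det A_1)^{\beta_1}(\det A_2)^{\beta_2},
\]
which is precisely the multiplicative form \eqref{eq:minkowski_det_multipl_ineq} of Minkowski's determinant inequality for $A = \beta_1 A_1 + \beta_2 A_2$, with equality if and only if $A_1 = A_2$.

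There is no real obstacle here; the argument is the polar analogue of the proof of \Href{Lemma}{lem:inner-function-interpolation}, with the only notable swap being that the log-concavity hypothesis now lives on the enclosing function $g$ rather than on the enclosed function, and the Jacobian of the change of variables appears as $1/\det A_i$ rather than $\det A_i$, which is what makes the direction of the Minkowski inequality match the claimed direction of the integral inequality.
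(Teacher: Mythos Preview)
Your proposal is correct and follows essentially the same approach as the paper: the pointwise bound uses the identity $\beta_1(A_1x+a_1)+\beta_2(A_2x+a_2)=Ax+a$ together with the log-concavity of $g$, and the integral inequality reduces via the change of variables to the multiplicative Minkowski determinant inequality \eqref{eq:minkowski_det_multipl_ineq}. The only cosmetic difference is that the paper starts from $f(x)=f(x)^{\beta_1}f(x)^{\beta_2}$ and bounds upward, whereas you start from $\alpha g(Ax+a)$ and bound downward; the content is identical.
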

\begin{proof}
By the assumption of the lemma,
\[
f(x)  = f^{\beta_1}(x) \cdot f^{\beta_2}(x) \leq 
\alpha_1^{\beta_1} g^{\beta_1}(A_1 x + a_1) \cdot 
\alpha_2^{\beta_2} g^{\beta_2}(A_2 x + a_2) = 
\alpha g^{\beta_1}(A_1 x + a_1) \cdot 
 g^{\beta_2}(A_2 x + a_2)
\]
By our definitions, $ \beta_1 (A_1 x + a_1) + \beta_2(A_2 x + a_2)  = Ax + a.$
Thus,  the log-concavity of $g$ yields inequality \eqref{eq:outer-interpolation-1}.

Inequality \eqref{eq:outer-interpolation-2} immediately follows from  Minkowski's determinant inequality \eqref{eq:minkowski_det_multipl_ineq} and its equality condition.
\end{proof}

\begin{dfn}\label{dfn:contactoplowner}
For any $(\upthing{u},\upthing{v})\in\Redp\times\Redp$,  
we define the \emph{L\"owner-type extended contact operator} by
\[
\contactoplowner{\upthing{u}}{\upthing{v}} = \parenth{\parenth{v \otimes u} \oplus \mu\nu, \mu\nu u}\in \MMbar,
\]
where $\upthing{u}=(u,\mu)\in\Redp$, and $\upthing{v}=(v,\nu)\in\Redp$.

For two functions $f,g:\Red\to[0,\infty)$, we denote the set of L\"owner-type extended contact operators by
 \begin{equation}\label{eq:lownercontactoperators}
 \contactopsetlowner{f}{g}=
 \big\{\contactoplowner{\upthing{u}}{\upthing{v}} \st
 (\upthing{u},\upthing{v})\in \contactset{f}{g} \big\}\subset\MMbar.
 \end{equation}
\end{dfn}

\begin{lem}[Compactness of the set of contact operators]\label{lem:compact_contact_operatorLowner}
 Let  functions $f,g\colon\Red\to[0,+\infty)$ satisfy the assumptions of \Href{Theorem}{thm:lowner_condition_general}. Then 
 $\contactopsetlowner{\loglego{f}}{\loglego{g}}$ is a compact subset of $\MMbar$.
\end{lem}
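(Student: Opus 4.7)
The plan is to mirror, step by step, the proof of \Href{Lemma}{lem:compact_contact_operator}, adjusting for the different form of the L\"owner-type contact operator and for the fact that the roles of $f$ and $g$ are played by $\loglego{f}$ and $\loglego{g}$ respectively.

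First I would establish boundedness: the map $(\upthing{u},\upthing{v})\mapsto\contactoplowner{\upthing{u}}{\upthing{v}}$ is continuous, and by the hypothesis of \Href{Theorem}{thm:lowner_condition_general}, $\contactset{\loglego{f}}{\loglego{g}}$ is bounded, so its continuous image $\contactopsetlowner{\loglego{f}}{\loglego{g}}$ is bounded in $\MMbar$. Since $\MMbar$ is finite dimensional, it remains to show the set is closed. To that end, pick any sequence $(\upthing{u}_i,\upthing{v}_i)\in\contactset{\loglego{f}}{\loglego{g}}$, with $\upthing{u}_i=(u_i,\mu_i)$, $\upthing{v}_i=(v_i,\nu_i)$, for which $\contactoplowner{\upthing{u}_i}{\upthing{v}_i}$ converges in $\MMbar$. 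Using the boundedness of $\contactset{\loglego{f}}{\loglego{g}}$, pass to a subsequence so that $\upthing{u}_i\to\upthing{u}=(u,\mu)$ and $\upthing{v}_i\to\upthing{v}=(v,\nu)$. Upper semi-continuity of $\loglego{f}$ and $\loglego{g}$ together with $\loglego{f}\geq\loglego{g}$ (coming from $f\leq g$) and the equalities $\loglego{f}(u_i)=\loglego{g}(u_i)=\mu_i$ then imply $u\in\cl{\supp\loglego{f}}\cap\cl{\supp\loglego{g}}$ and $\loglego{f}(u)=\loglego{g}(u)=\mu$. By continuity of the inner product, $\iprod{\upthing{u}}{\upthing{v}}=1$.

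I would then split into two cases. If $\mu>0$, then $\psi=-\ln\loglego{f}$ is finite on a neighborhood of $u$ in its effective domain. Using \Href{Lemma}{lem:nfcone}, I translate the statement $\upthing{v}_i\in\nfcone{\lifting{\loglego{f}}}{\upthing{u}_i}$ into an equivalent statement about the normal cone of $\epi\psi$, which is convex. Applying \Href{Proposition}{prop:normalconesemicont} to the sequence of compact convex slabs $\{(x,h)\st\psi(x)\leq h\leq\psi(u)+1\}$ yields $\upthing{v}\in\nfcone{\lifting{\loglego{f}}}{\upthing{u}}$. Since $\loglego{f}\geq\loglego{g}$ and the two functions agree at $u$, one automatically has $\upthing{v}\in\nfcone{\lifting{\loglego{g}}}{\upthing{u}}$ as well, so $(\upthing{u},\upthing{v})\in\contactset{\loglego{f}}{\loglego{g}}$ (note $\mu>0$ puts it into the first part of the definition of $\contactset$), and the limit of $\contactoplowner{\upthing{u}_i}{\upthing{v}_i}$ is simply $\contactoplowner{\upthing{u}}{\upthing{v}}$, which lies in $\contactopsetlowner{\loglego{f}}{\loglego{g}}$.

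The main obstacle, as in the John version, is the case $\mu=0$, where the limit pair $(\upthing{u},\upthing{v})$ might \emph{not} itself satisfy the reduced condition $\nu=0$ that membership in $\contactset$ would require. The key observation is that the operator $\contactoplowner{\upthing{u}}{\upthing{v}}=((v\otimes u)\oplus\mu\nu,\mu\nu u)$ only sees $\mu\nu$, not $\nu$ individually, and $\mu_i\nu_i\to 0$ since the sequence $\nu_i$ is bounded and $\mu_i\to 0$. Thus the limit operator equals $\contactoplowner{(u,0)}{(v,0)}$, and it suffices to show $((u,0),(v,0))\in\contactset{\loglego{f}}{\loglego{g}}$. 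For this, the Basic Assumptions on $\loglego{g}$ place the origin in the interior of $\supp\loglego{g}$, and $\loglego{f}\geq\loglego{g}$ then places the origin in the interior of $\supp\loglego{f}$ too. Applying \Href{Lemma}{lem:nfcone} gives $v_i\in\nfcone{[\loglego{f}\geq\loglego{f}(u_i)]}{u_i}$; the sublevel sets $[\loglego{f}\geq\loglego{f}(u_i)]\cap 2\enorm{u}\ball{d}$ converge in Hausdorff distance to $\cl{\supp\loglego{f}}\cap 2\enorm{u}\ball{d}$ as $\loglego{f}(u_i)\to 0$, so \Href{Lemma}{lem:nfconeregularity} (applied with $\epsilon$ arbitrarily small) gives $v\in\nfcone{\cl{\supp\loglego{f}}}{u}$, and \Href{Lemma}{lem:nfcone_at_zero} lifts this to $(v,0)\in\nfcone{\lifting{\loglego{f}}}{(u,0)}$. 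The same argument works for $\loglego{g}$. Finally, $\iprod{(v,0)}{(u,0)}=\iprod{v}{u}=\lim\iprod{\upthing{v}_i}{\upthing{u}_i}-\lim\mu_i\nu_i=1$, so $((u,0),(v,0))$ lies in the ``zero-value'' part of the definition of $\contactset{\loglego{f}}{\loglego{g}}$. Both cases thus exhibit the limit operator as a member of $\contactopsetlowner{\loglego{f}}{\loglego{g}}$, proving closedness and therefore compactness.
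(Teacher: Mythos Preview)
Your proposal is correct and follows precisely the approach the paper intends: the paper omits the proof of \Href{Lemma}{lem:compact_contact_operatorLowner} with the remark that it is essentially identical to the proof of \Href{Lemma}{lem:compact_contact_operator}, and your argument is exactly that transcription, with the necessary substitution of $\loglego{f},\loglego{g}$ for $f,g$ and the observation that the L\"owner-type operator $\contactoplowner{\upthing{u}}{\upthing{v}}=((v\otimes u)\oplus\mu\nu,\mu\nu u)$ depends on the last coordinates only through the product $\mu\nu$, so that $\mu_i\to 0$ with bounded $\nu_i$ again collapses the limit operator to one coming from a horizontal normal.
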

We omit the proof as it is essentially the same as the proof of \Href{Lemma}{lem:compact_contact_operator}.

%We note that \Href{Lemma}{lem:compact_contact_operator} states the compactness of the set of extended contact operators, but does not state the compactness of the set $\contactset{f}{g}$ of contact pairs. In fact, we have not been able to prove the latter -- fortunately, we do not need it.

Next, we reformulate equation \eqref{eq:functional_glmp-lowner}  in terms of separation of a closed convex set from a point in the finite-dimensional real vector space $\MMbar$. The proof is identical to the proof of \Href{Lemma}{lem:separation_John_problem}, we omit it.

\begin{lem}[Separation of operators]\label{lem:separation_Lowner_problem}
%Assume that the origin is an admissible center for $f$ or for $g$. Then 
For any two upper semi-continuous functions $f,g\colon\Red\to[0,+\infty)$, the following assertions are equivalent:
\begin{enumerate}
\item  There are no contact pairs of $f$ and $g$ and positive weights satisfying equation \eqref{eq:functional_glmp-lowner}.
\item There exists $ \parenth{H \oplus \gamma, h}\in\MMbar$ such that
\begin{equation}\label{eq:lowner-stict-separation}
    \iprod{\parenth{H \oplus \gamma,h}}
    {\parenth{\sid,0}} > 0
    \quad\text{and}\quad
\iprod{\big(H\oplus\gamma,h\big)}{\contactoplowner{\upthing{u} }{\upthing{v}}}
< 0
\end{equation}
for all $\parenth{\upthing{u}, \upthing{v}} \in \contactset{\loglego{f}}{\loglego{g}}$.
\item There exists $ \parenth{H \oplus \gamma, h}\in\MMbar$ such that
\begin{equation}\label{eq:lowner-nonstrict-separation}
    \iprod{\parenth{H \oplus \gamma,h}}
    {\parenth{\sid,0}} > 0
    \quad\text{and}\quad
\iprod{\big(H\oplus\gamma,h\big)}{\contactoplowner{\upthing{u} }{\upthing{v}}}
 \leq 0
\end{equation}
for all $\parenth{\upthing{u}, \upthing{v}} \in \contactset{\loglego{f}}{\loglego{g}}$.
\end{enumerate}
\end{lem}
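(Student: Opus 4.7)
The plan is to reduce the statement to a standard finite-dimensional hyperplane separation in the vector space $\MMbar$, mirroring the argument used in \Href{Lemma}{lem:separation_John_problem}. First I would perform a bookkeeping calculation: directly expanding the definition of $\contactoplowner{\upthing{u}_i}{\upthing{v}_i}$ shows that the system \eqref{eq:functional_glmp-lowner} is equivalent to the single identity $\sum_{i=1}^m c_i \contactoplowner{\upthing{u}_i}{\upthing{v}_i}=\parenth{\sid,0}$. Consequently, assertion (1) is equivalent to the statement that $\parenth{\sid,0}$ does not lie in the positive cone of $\contactopsetlowner{\loglego{f}}{\loglego{g}}$.

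Next I would pass from positive cones to convex hulls by a normalization. Every contact pair satisfies $\iprod{\upthing{u}}{\upthing{v}}=\iprod{u}{v}+\loglego{g}(u)\nu=1$, and since $\tr{(v\otimes u)}=\iprod{u}{v}$, the operator part of every $\contactoplowner{\upthing{u}}{\upthing{v}}$ has trace one. Taking traces in the first identity of \eqref{eq:functional_glmp-lowner} and adding the second yields $\sum c_i = d+s$, so assertion (1) is equivalent to $\frac{1}{d+s}\parenth{\sid,0}\notin\conv\contactopsetlowner{\loglego{f}}{\loglego{g}}$, with both the point and the operator set lying inside the affine hyperplane $\left\{(\mathcal{A},a)\in\MMbar \st \tr{\mathcal{A}}=1\right\}$.

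Finally, I would invoke \Href{Lemma}{lem:compact_contact_operatorLowner}, which supplies the only non-routine input: the set $\contactopsetlowner{\loglego{f}}{\loglego{g}}$ is compact, so its convex hull is a closed convex set. The implication (1)$\Rightarrow$(2) then follows from the strict separating hyperplane theorem applied to a point disjoint from a compact convex set in $\MMbar$; the resulting separating linear functional is represented by some $\parenth{H\oplus\gamma,h}\in\MMbar$, and after subtracting a suitable multiple of the trace functional $(\mathcal{A},a)\mapsto\tr{\mathcal{A}}$ (which equals $1$ on both sides of the separation) one obtains the form \eqref{eq:lowner-stict-separation}. The implication (2)$\Rightarrow$(3) is immediate, while (3)$\Rightarrow$(1) is a one-line contradiction: given a solution to \eqref{eq:functional_glmp-lowner}, applying the separating functional to $\sum c_i \contactoplowner{\upthing{u}_i}{\upthing{v}_i}=\parenth{\sid,0}$ would force $0<\iprod{\parenth{H\oplus\gamma,h}}{\parenth{\sid,0}}\le 0$. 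There is no substantive obstacle in any step — the real work is absorbed into \Href{Lemma}{lem:compact_contact_operatorLowner} — which is precisely why the authors describe the proof as identical to that of \Href{Lemma}{lem:separation_John_problem}.
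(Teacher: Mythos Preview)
Your proposal is correct and follows essentially the same route as the paper: normalize via the trace identity $\sum c_i=d+s$ to recast assertion (1) as $\frac{1}{d+s}\parenth{\sid,0}\notin\conv\contactopsetlowner{\loglego{f}}{\loglego{g}}$ inside the affine hyperplane $\{\tr\mathcal{A}=1\}$, invoke the compactness from \Href{Lemma}{lem:compact_contact_operatorLowner}, and then apply standard hyperplane separation. One small caveat: \Href{Lemma}{lem:compact_contact_operatorLowner} requires the hypotheses of \Href{Theorem}{thm:lowner_condition_general}, so the strict-separation direction (1)$\Rightarrow$(2) implicitly uses those assumptions even though the lemma's statement reads ``for any two upper semi-continuous functions'' --- this is a slight looseness in the paper's formulation rather than in your argument.
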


Next, we observe that the leftmost inequalities in  \eqref{eq:lowner-stict-separation} and \eqref{eq:lowner-nonstrict-separation} compare the integrals of $g$ and a perturbation of $g$ defined by
 \begin{equation}\label{eq:perturbationdefLowner}
g_t (x) =  \frac{1}{\alpha_t} g \! \parenth{\transpose{A_t}x  + a_t },\;\text{ for }x\in\Red. 
 \end{equation}

\begin{lem}[Integral of a perturbation of $g$]\label{lem:lowner_separation_integral}
Fix $s > 0$, and let $g \colon \Red \to [0, +\infty)$ be a function such that $g^s$ is of finite positive integral.
	 Let $\Gamma(t) = (A_t \oplus \alpha_t, a_t),$ $ t \in [0,1]$ be a  curve in $\mathcal{M}$ with $(A_0 \oplus \alpha_0, a_0)=(\id_{d+1},0)$, and assume that the right derivative of $\Gamma$ at $t=0$ is of the form $(H\oplus\gamma,h)$.  Define the perturbation $g_t$ of $g$ by \eqref{eq:perturbationdefLowner}. Consider the following statements:
 \begin{enumerate}[(a)]
 \item\label{item:int_lowner_admissible_linearized_in_primal}
  \[\iprod{\parenth{H \oplus \gamma,h}}{\parenth{\sid,0}} > 0;\] 
  \item\label{item:int_lowner_gammaadmissible}  	
 \[\int {g}_t^s  <  \int g^s\]
 for all $t \in (0, \tau]$ and  some $\tau > 0.$	
 \item\label{item:int_lowner_admissible_linearized_in_primalweak} 
 \[\iprod{\parenth{H \oplus \gamma,h}}{\parenth{\sid,0}} \geq  0.\] 
 \end{enumerate}
  Then \eqref{item:int_admissible_linearized_in_primal} implies \eqref{item:int_gammaadmissible}, and \eqref{item:gammaadmissible} implies \eqref{item:int_admissible_linearized_in_primalweak}.
\end{lem}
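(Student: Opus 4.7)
The plan is to compute $\int_{\Red} g_t^s$ explicitly via a change of variables and expand to first order in $t$, at which point both implications fall out directly. This lemma is the precise dual analogue of \Href{Lemma}{lem:john_separation_integral}; the only structural difference is that the L\"owner perturbation \eqref{eq:perturbationdefLowner} uses $\transpose{A_t}$ on the domain and $1/\alpha_t$ as the outer scalar. After the change of variables these two differences conspire to produce the same first-order coefficient as in the John case but with an opposite sign, which is exactly what is needed so that minimizing, rather than maximizing, the integral corresponds to $\iprod{(H\oplus\gamma,h)}{(\sid,0)}>0$.

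First I would substitute $y = \transpose{A_t}x + a_t$. Since $A_t$ is nonsingular for all sufficiently small $t$ (recall $A_0=\id_d$), this is a valid $C^{\infty}$ change of variables with Jacobian $\det\transpose{A_t}=\det A_t>0$, so
\[
\int_{\Red} g_t^s = \frac{1}{\alpha_t^s} \int_{\Red} g^s\!\parenth{\transpose{A_t} x + a_t} \di x = \frac{1}{\alpha_t^s \det A_t} \int_{\Red} g^s.
\]
Next I would use the hypothesis that the right derivative of $\Gamma$ at $0$ is $(H\oplus\gamma,h)$ with $\Gamma(0)=(\id_{d+1},0)$ to expand $\alpha_t = 1 + t\gamma + \littleo{t}$ and $A_t = \id_d + tH + \littleo{t}$, which gives $\alpha_t^s = 1 + ts\gamma + \littleo{t}$ and $\det A_t = 1 + t\tr{H} + \littleo{t}$. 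Multiplying and inverting,
\[
\int_{\Red} g_t^s = \parenth{1 - t\parenth{s\gamma + \tr{H}} + \littleo{t}} \int_{\Red} g^s = \parenth{1 - t\iprod{\parenth{H\oplus\gamma,h}}{\parenth{\sid,0}} + \littleo{t}} \int_{\Red} g^s.
\]

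Both implications are then immediate from this expansion, using $0 < \int_{\Red} g^s < \infty$. If the inner product in (a) is strictly positive, the first-order coefficient in $t$ is strictly negative, and hence $\int g_t^s < \int g^s$ for all $t$ in some $(0,\tau]$, yielding (b). Conversely, if (b) holds on some interval $(0,\tau]$, then the first-order coefficient cannot be positive, i.e.\ $-\iprod{(H\oplus\gamma,h)}{(\sid,0)} \leq 0$, which is exactly statement (c). There is essentially no obstacle: the only things to verify are that $\det A_t>0$ for small $t$ (so that the absolute value in the Jacobian may be dropped) and that the perturbation form in \eqref{eq:perturbationdefLowner} is precisely the one that makes the derivative of $\int g_t^s$ pair with $(\sid,0)$ with the correct sign for a minimization problem.
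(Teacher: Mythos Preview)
Your proof is correct and essentially identical to the paper's own argument: both compute $\int_{\Red} g_t^s = \alpha_t^{-s}(\det A_t)^{-1}\int_{\Red} g^s$ via the change of variables $y=\transpose{A_t}x+a_t$, expand to first order in $t$ to obtain the coefficient $-(s\gamma+\tr H)=-\iprod{(H\oplus\gamma,h)}{(\sid,0)}$, and read off the two implications. Your added remarks about the sign flip relative to the John case are accurate and helpful but not required.
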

\begin{proof}
One has
\[
	\int_{\Red} {g}_t^s =  
	 \alpha_t^{-s} \det \transpose{\parenth{A_t^{-1}}} \int_{\Red} g^s = 
	\parenth{1 + t  \gamma + \littleo{t}}^{-s} \parenth{1 - t\tr{H} + \littleo{t} } \int_{\Red} g^s=
\]
\[
=\parenth{1 - t \parenth{s \gamma + \tr{H}} + \littleo{t} } \int_{\Red} g^s
=\parenth{1 - t \iprod{\parenth{H \oplus \gamma,h}}
			{\parenth{\sid,0}} + \littleo{t} } \int_{\Red} g^s.
\]
The result follows.
\end{proof}

\begin{lem}[Polar of a transformed function]\label{lem:polar_func_composed_affine}
Let $f \colon \Red \to [0, + \infty)$ be a proper log-concave function,
$A$ be a non-singular matrix of order $d$, $\alpha > 0$ and $a \in \Red$.
Set $\tilde{f}(x) = \alpha f(Ax +a)$.
Then
\[
\loglego{\tilde{f}}(y) = 
\frac{\loglego{f}(\transpose{\parenth{A^{-1}}}y)}{\alpha}  \cdot
e^{\iprod{\transpose{\parenth{A^{-1}}}y}{\ \! a}}. 
\] 
\end{lem}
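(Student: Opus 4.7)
The plan is to verify this identity by unwinding the definition of the log-conjugate and performing a change of variables; no machinery beyond the definition of $\loglego{\cdot}$ is required.

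First, I would write out
\[
\loglego{\tilde{f}}(y) \;=\; \inf_{x \in \supp{\tilde{f}}} \frac{e^{-\iprod{x}{y}}}{\tilde{f}(x)} \;=\; \frac{1}{\alpha}\inf_{x \in \supp{\tilde{f}}} \frac{e^{-\iprod{x}{y}}}{f(Ax+a)},
\]
using that $\alpha > 0$ is a constant that pulls out of the infimum. Since $A$ is non-singular, the affine map $x \mapsto Ax+a$ is a bijection of $\Red$, and it restricts to a bijection between $\supp{\tilde{f}}$ and $\supp{f}$; this justifies substituting $z = Ax + a$, i.e.\ $x = A^{-1}(z-a)$, and replacing the infimum over $\supp{\tilde{f}}$ by the infimum over $z \in \supp{f}$.

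Next, I would compute the inner product in the exponent using the transpose identity $\iprod{A^{-1}w}{y} = \iprod{w}{\transpose{(A^{-1})}y}$, so that
\[
-\iprod{x}{y} \;=\; -\iprod{A^{-1}(z-a)}{y} \;=\; -\iprod{z}{\transpose{(A^{-1})}y} + \iprod{a}{\transpose{(A^{-1})}y}.
\]
The second term is independent of $z$, so it pulls out of the infimum as an exponential factor. What remains inside the infimum is
\[
\inf_{z \in \supp{f}} \frac{e^{-\iprod{z}{\transpose{(A^{-1})}y}}}{f(z)} \;=\; \loglego{f}\!\bigl(\transpose{(A^{-1})}y\bigr),
\]
directly by the definition of the log-conjugate. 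Combining the three factors (the $1/\alpha$, the exponential in $a$, and the log-conjugate of $f$ evaluated at $\transpose{(A^{-1})}y$) yields the claimed identity.

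There is no substantive obstacle; the only place to be careful is matching supports under the change of variables and keeping track of the transpose when passing $A^{-1}$ across the inner product. Log-concavity of $f$ is not actually used in the proof itself, only to know that $\loglego{f}$ is a well-defined log-concave function with the stated infimum formula, as recalled in Section~\ref{sec:polarity}.
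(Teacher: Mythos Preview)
Your proof is correct and follows essentially the same approach as the paper: unfold the definition of the log-conjugate, pull out the constant $\alpha$, perform the affine change of variables $z = Ax + a$ (using that $A$ is non-singular so supports correspond), and use the transpose identity to recognize the remaining infimum as $\loglego{f}(\transpose{(A^{-1})}y)$. The only cosmetic difference is that the paper first rewrites $\iprod{x}{y} = \iprod{Ax}{\transpose{(A^{-1})}y}$ and then adds and subtracts $a$ in the exponent before substituting, whereas you substitute first; the computations are identical.
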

\begin{proof}
\[
\loglego{\tilde{f}}(y) = 
\inf\limits_{x \in \supp \tilde{f}} \frac{e^{-\iprod{x}{y}}}{\tilde{f}(x)} =
\frac{1}{\alpha } \inf\limits_{x \in \supp \tilde{f}} \frac{e^{-\iprod{x}{y}}}{{f}(Ax +a)} =
\frac{1}{\alpha } 
\inf\limits_{x \in \supp \tilde{f}}
\frac{e^{-\iprod{Ax}{\ \transpose{\parenth{A^{-1}}}y}}}{{f}(Ax +a)}=
\]
\[ 
\frac{1}{\alpha } 
\inf\limits_{x \in \supp \tilde{f}}
\frac{e^{-\iprod{Ax + a}{\ \transpose{\parenth{A^{-1}}}y}}}{{f}(Ax +a)} 
e^{\iprod{a}{\ \transpose{\parenth{A^{-1}}}y}}=
 \frac{e^{\iprod{a}{\ \transpose{\parenth{A^{-1}}}y}}}{\alpha } 
\inf\limits_{z \in \supp {f}}
\frac{e^{-\iprod{z}{\ \transpose{\parenth{A^{-1}}}y}}}{{f}(z)} = 
\] 
\[
\frac{\loglego{f}(\transpose{\parenth{A^{-1}}}y)}{\alpha}  \cdot
e^{\iprod{\transpose{\parenth{A^{-1}}}y}{\ \! a}}.
\]
\end{proof}

Using \Href{Lemma}{lem:polar_func_composed_affine} and the compactness of $\lifting{\loglego{g}}$  together with 
 \Href{Corollary}{cor:normalcone_nonempty}, we obtain the following. 
\begin{lem}
 Let  functions $f,g\colon\Red\to[0,+\infty)$ satisfy the assumptions of \Href{Theorem}{thm:lowner_condition_general}. Assume  $h=g$  is  a local minimizer in L\"owner $s$-problem \eqref{eq:lowner_problem_intro}.  Then the sets $\contactset{\loglego{f}}{\loglego{g}}$ and $\contactpoint{\loglego{f}}{\loglego{g}}$ are non-empty.
\end{lem}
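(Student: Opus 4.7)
The plan is to argue by contrapositive: assume $\contactpoint{\loglego{f}}{\loglego{g}}$ is empty, and produce a perturbation of $g$ with strictly smaller integral still satisfying the L\"owner constraint, contradicting local minimality. Since $f \leq g$ gives $\loglego{g} \leq \loglego{f}$, the no-contact hypothesis will force $\loglego{g}(u) < \loglego{f}(u)$ strictly for every $u \in \cl{\supp\loglego{g}}$; in particular $\loglego{f} > 0$ on this compact set (otherwise a point where both vanish would itself be a contact). The compact lifting $\lifting{\loglego{g}}$ (bounded support, by the Basic Assumptions) will then sit strictly inside the closed lifting $\lifting{\loglego{f}}$, and I will run a compactness argument to produce a uniform multiplicative gap: an $\alpha > 1$ with $\alpha \loglego{g}(u) \leq \loglego{f}(u)$ for all $u \in \Red$.

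Granting this gap, I set $c := 1/\alpha \in (0,1)$ and consider $g_c(x) := c g(x) \in \funpos{g}$. By \Href{Lemma}{lem:polar_func_composed_affine} applied with $A = \id_d$ and $a = 0$, one has $\loglego{g_c} = \loglego{g}/c = \alpha \loglego{g} \leq \loglego{f}$, which is equivalent to $g_c \geq f$; and $\int_{\Red} g_c^s = c^s \int_{\Red} g^s < \int_{\Red} g^s$. As $c \to 1^{-}$, the function $g_c$ lies in every neighborhood of $g$ in $\funpos{g}$, contradicting local minimality and forcing $\contactpoint{\loglego{f}}{\loglego{g}} \neq \emptyset$. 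To upgrade a contact point to a contact pair, I fix $u^* \in \contactpoint{\loglego{f}}{\loglego{g}}$ and set $\upthing{u^*} = (u^*, \loglego{f}(u^*)) \in \essgraph{\loglego{f}}$. \Href{Corollary}{cor:normalcone_nonempty} then provides a nonzero $\upthing{w} \in \nfcone{\lifting{\loglego{f}}}{\upthing{u^*}}$; when $\loglego{f}(u^*) > 0$, the star-like hypothesis on $\contactpoint{\loglego{f}}{\loglego{g}}$ yields $\iprod{\upthing{w}}{\upthing{u^*}} > 0$, while when $\loglego{f}(u^*) = \loglego{g}(u^*) = 0$, I instead pick a horizontal representative $\upthing{w} = (w, 0)$ with $w \in \nfcone{\supp\loglego{f}}{u^*}$ and exploit that the origin lies in $\interior \supp\loglego{f}$ (by the Basic Assumptions on $\loglego{g}$ together with the containment $\supp\loglego{g} \subseteq \supp\loglego{f}$) to get $\iprod{w}{u^*} > 0$. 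Rescaling, $\upthing{v} := \upthing{w}/\iprod{\upthing{w}}{\upthing{u^*}}$ yields $(\upthing{u^*}, \upthing{v}) \in \contactset{\loglego{f}}{\loglego{g}}$.

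The hard part will be establishing the uniform gap $\alpha > 1$: since $\loglego{g}$ and $\loglego{f}$ are only upper semi-continuous, the infimum of the ratio $\loglego{f}/\loglego{g}$ over $\cl{\supp\loglego{g}}$ need not be attained, nor strictly greater than $1$ a priori. My plan is to argue at the level of liftings, using compactness of $\lifting{\loglego{g}}$ and closedness of $\lifting{\loglego{f}}$ (the latter follows from upper semi-continuity of $\loglego{f}$, in the manner of the analogous arguments in \Href{Section}{sec:normalcones}): were no such $\alpha$ to exist, one could extract a sequence $u_n \in \cl{\supp\loglego{g}}$ with $\loglego{f}(u_n)/\loglego{g}(u_n) \to 1$, and passing to a convergent subsequence, the upper semi-continuity of both functions together with the log-concavity structure would force a contact point at the limit, contradicting the hypothesis.
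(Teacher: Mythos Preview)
Your proposal is correct and follows essentially the same route as the paper's (very terse) proof, which merely points to \Href{Lemma}{lem:polar_func_composed_affine}, the compactness of $\lifting{\loglego{g}}$, and \Href{Corollary}{cor:normalcone_nonempty}. Your argument fleshes out what the paper leaves implicit: the contrapositive via the scalar perturbation $g_c=cg$, the reduction to a uniform multiplicative gap $\alpha\loglego{g}\le\loglego{f}$, and the upgrade from a contact point to a contact pair using the normal-cone nonemptiness and the star-like hypothesis. This is exactly the ``routine compactness argument'' the paper invokes for the analogous John-side lemma.

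One caveat worth recording: the step you flag as the hard part genuinely needs care. Your sketch extracts $u_n\to u_0\in\cl{\supp\loglego g}$ with $\loglego f(u_n)/\loglego g(u_n)\to 1$ and appeals to upper semi-continuity plus log-concavity. This is fine when $u_0\in\interior\supp\loglego f$, since then $\loglego f$ is continuous at $u_0$ and the limit forces $\loglego f(u_0)=\loglego g(u_0)$. The delicate case is $u_0\in\bd\supp\loglego f$ with $\loglego f(u_0)>0$: a convex lower-semicontinuous $\psi=-\ln\loglego f$ can in principle blow up along sequences in its domain approaching such a boundary point (think of the classical $\psi(x,y)=x^2/y$ phenomenon), so one cannot conclude $\loglego f(u_n)\to\loglego f(u_0)$ directly. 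To close this, observe that since there is no contact, $\loglego f>0$ on the compact convex set $K=\cl{\supp\loglego g}$; hence $K\subset\dom\psi$, and one can bound $\psi$ on $K$ by writing each $u\in K$ as a convex combination along the segment from an interior point $z_0\in\interior K$ through $u$ (on which $\psi$ is finite and convex, hence continuous up to the endpoint by lower semi-continuity). This yields $\inf_K\loglego f>0$, after which a second compactness pass on the superlevel set $[\loglego g\ge\varepsilon]$ (compact, with both functions continuous there) gives the uniform gap. The paper does not spell this out either; your plan is sound, but the boundary case deserves the extra sentence.
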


Finally, we show that the rightmost inequalities \eqref{eq:lowner-stict-separation} and \eqref{eq:lowner-nonstrict-separation} encode that a certain perturbation of $g$ is pointwise above $f$.

\begin{thm}[Characterization of admissible perturbations]\label{thm:admissiblelinearized_lowner}
 Let the functions $f,g\colon\Red\to[0,+\infty)$ satisfy the assumptions of \Href{Theorem}{thm:lowner_condition_general}, and let $\Gamma(t) = (A_t \oplus \alpha_t, a_t), t \in [0,1]$ be a  curve in $\mathcal{M}$ with $(A_0 \oplus \alpha_0, a_0)=(\id_{d+1},0)$, and assume that the right derivative of $\Gamma$ at $t=0$ is of the form $(H\oplus\gamma,h)$.  Define the perturbation $g_t$ of $g$ by \eqref{eq:perturbationdefLowner}. Consider the following statements:
 \begin{enumerate}[(a)]
  \item\label{item:lowner_admissible_linearized_in_primal}
$\iprod{\big(H\oplus\gamma,h\big)}{\contactoplowner{\upthing{u}}{\upthing{v}}}
< 0$
for all $\parenth{\upthing{u}, \upthing{v}} \in \contactset{\loglego{f}}{\loglego{g}}$.
  \item\label{item:lowner_admissible} 
  There is $\epsilon > 0$ such that $f \leq {g}_t$ for all $t\in[0,\epsilon]$.
  \item\label{item:lowner_admissible_linearized_in_primalweak}
$\iprod{\big(H\oplus\gamma,h\big)}{\contactoplowner{\upthing{u}}{\upthing{v}}}
\leq 0$
for all $\parenth{\upthing{u}, \upthing{v}} \in \contactset{\loglego{f}}{\loglego{g}}$.
 \end{enumerate}
Then \eqref{item:admissible_linearized_in_primal} implies \eqref{item:gammaadmissible}, and \eqref{item:gammaadmissible} implies \eqref{item:admissible_linearized_in_primalweak}.
\end{thm}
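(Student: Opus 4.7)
The strategy is to run the polar analogue of the proof of \Href{Theorem}{thm:admissiblelinearized_john}. Since both $f$ and $g_t$ are log-concave, the constraint $f \leq g_t$ is equivalent to $\loglego{g_t} \leq \loglego{f}$, and, exploiting the symmetry of liftings about $\Red$, to the inclusion $\lifting{\loglego{g_t}} \subseteq \lifting{\loglego{f}}$. The plan is therefore to construct a homotopy between $\essgraph{\loglego{g}}$ and $\essgraph{\loglego{g_t}}$, verify that its velocity at $t=0$ pairs with contact normals in the same way that $(H \oplus \gamma, h)$ pairs with $\contactoplowner{\upthing{u}}{\upthing{v}}$, and then invoke the same local-to-global curve machinery of \Href{Section}{sec:normalcones} that was used in the John case.

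Applying \Href{Lemma}{lem:polar_func_composed_affine} to $g_t(x) = \alpha_t^{-1} g(\transpose{A_t} x + a_t)$ gives the identity
\[
\loglego{g_t}(A_t z) = \alpha_t \, e^{\iprod{z}{a_t}} \loglego{g}(z),
\]
so the map
\[
\mathcal{H}\bigl((z,\loglego{g}(z)), t\bigr) = \bigl(A_t z,\ \alpha_t e^{\iprod{z}{a_t}} \loglego{g}(z)\bigr)
\]
sends $\essgraph{\loglego{g}}$ onto $\essgraph{\loglego{g_t}}$ and is the identity at $t = 0$. A direct computation yields
\[
\mathcal{H}' := \derivativeatzero \mathcal{H}\bigl((u,\mu), t\bigr) = \bigl(Hu,\ \mu(\gamma + \iprod{h}{u})\bigr), \quad \mu = \loglego{g}(u),
\]
and pairing with $\upthing{v} = (v,\nu)$ produces the bridge identity
\[
\iprod{\upthing{v}}{\mathcal{H}'} = \iprod{(H \oplus \gamma, h)}{\contactoplowner{\upthing{u}}{\upthing{v}}},
\]
the L\"owner counterpart of \eqref{eq:iprodvhiprodtensor}.

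With this identity in place, the remainder of the proof closely follows the argument of \Href{Theorem}{thm:admissiblelinearized_john} after substituting $(\loglego{f}, \loglego{g})$ for $(f, g_b)$ and $\contactoplowner$ for $\contactopjohn$. At contact points $u$ with $\loglego{g}(u) > 0$, the \wlocstar$\loglego{f}$ hypothesis on $\contactpoint{\loglego{f}}{\loglego{g}}$ together with \Href{Lemma}{lem:locstar_geom_meaning} let us re-express the separation inequality over the entire Fr\'echet normal cone via its normalized cross-section, and \Href{Lemma}{lem:curve_in_the_lifting} delivers both implications locally. At contact points with $\loglego{g}(u) = 0$, the inclusion $\supp \loglego{g} \subseteq \supp \loglego{f}$, a consequence of $\loglego{g} \leq \loglego{f}$, places the origin in the interior of $\supp \loglego{f}$; \Href{Lemma}{lem:nfcone_at_zero} then confines the relevant normals to $\Red$, and \Href{Lemma}{lem:flat_curve_in_the_lifting} replaces \Href{Lemma}{lem:curve_in_the_lifting}. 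Finally, \Href{Lemma}{lem:most_general_homotopy_inclusion} applied with $K = \essgraph{\loglego{g}}$ (compact by our Basic Assumptions on $\loglego{g}$) and $F = \lifting{\loglego{f}}$ glues the pointwise local inclusions into the global inclusion $\lifting{\loglego{g_t}} \subset \lifting{\loglego{f}}$ required by \eqref{item:lowner_admissible}. The main difficulty, exactly as in the John case, lies in the zero-height contact points: there the normal-cone structure degenerates, and only the \wlocstar$\loglego{f}$ hypothesis keeps every nonzero normal in $\nfcone{\lifting{\loglego{f}}}{\upthing{u}}$ normalizable to an element of $\contactset{\loglego{f}}{\loglego{g}}$, so that the stated separation condition captures every admissible direction.
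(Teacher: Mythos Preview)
Your proof is correct and follows essentially the same route as the paper: pass to polars via \Href{Lemma}{lem:polar_func_composed_affine}, build the homotopy $\mathcal{H}((z,\loglego{g}(z)),t)=(A_tz,\alpha_t e^{\iprod{z}{a_t}}\loglego{g}(z))$ of $\essgraph{\loglego{g}}$, verify the bridge identity $\iprod{\upthing{v}}{\mathcal{H}'}=\iprod{(H\oplus\gamma,h)}{\contactoplowner{\upthing{u}}{\upthing{v}}}$, split into the cases $\loglego{g}(u)>0$ and $\loglego{g}(u)=0$, and finish with Lemmas~\ref{lem:curve_in_the_lifting}, \ref{lem:flat_curve_in_the_lifting}, and \ref{lem:most_general_homotopy_inclusion}. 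One small expository slip: in your closing sentence you attribute the normalizability of normals at \emph{zero-height} contact points to the \wlocstar$\loglego{f}$ hypothesis, but there it is the origin lying in the interior of $\supp\loglego{f}$ (inherited from the Basic Assumptions on $\loglego{g}$) that does the work; the star-like assumption is what you use at points with $\loglego{g}(u)>0$.
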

\begin{proof}
The relation $f \leq {g}_t$ is equivalent to $\loglego{g_t} \leq \loglego{f},$ 
which we will proceed to work with. Since $\lifting{\loglego{f}}$ and $\lifting{\loglego{g}}$ are symmetric about $\Red$, it suffices to consider 
$ \essgraph{\loglego{g_t}}.$ 
Define the homothopy $\mathcal{H} \colon 
\essgraph{\loglego{g}} \times [0, 1] \to \R^{d+1}$ by  
\[
\mathcal{H}(\upthing{y},t)=\parenth{{A_t} y ,\  \loglego{g_t} \! \parenth{{A_t}y}}. 
\]
for all $y \in \cl{\supp \loglego{g}}$ and $\upthing{y} = (y, \loglego{g}(y)).$
By \Href{Lemma}{lem:polar_func_composed_affine},
\[
\loglego{g_t} ({A_t} y) = 
\alpha_t \loglego{g} \! \parenth{{A_t^{-1}}A_t y} 
e^{\iprod{A_t^{-1} {A_t} y}{\ a_t}} = 
\alpha_t \loglego{g} (y) e^{\iprod{y}{a_t}}
\]
and
$
  {{A_t}} \parenth{\supp \loglego{g}}  = \supp \loglego{{g}_t}.
$ 
That is,
\[
\mathcal{H}\! \parenth{\essgraph{\loglego{g}},t} = \essgraph{\loglego{g_t}}.
\]

Consider an arbitrary  $u \in \cl{\supp \loglego{g}}$ and 
$\upthing{v}=(v,\nu)\in\Red \times \R$. Set $\upthing{u} = (u, \loglego{g}(u))$.
One has
\[
 \mathcal{H}^\prime:=\derivativeatzero\mathcal{H}(\upthing{u} , t)=
 \parenth{ Hu ,\ \loglego{g}(u) \parenth{\gamma + \iprod{u}{h}}},
\]
and thus, by \Href{Definition}{dfn:contactoplowner},
\begin{equation}\label{eq:iprodvhiprodtensor_lowner}
\iprod{\upthing{v}}{\mathcal{H}^\prime}=\iprod{v}{H u }
 +  \nu \loglego{g}(u) \gamma +  \nu \loglego{g}(u)  \iprod{u}{h}
=\iprod{\big(H\oplus\gamma,h\big)}{\contactoplowner{\upthing{u}}{\upthing{v}}}.
\end{equation}

Consider the case  $\loglego{g}(u)>0$. 
We recall the assumption of \Href{Theorem}{thm:lowner_condition_general} according to which 
 $\contactpoint{\loglego{f}}{\loglego{g}}$ is a \wlocstar$\loglego{f}$.
Using \Href{Lemma}{lem:locstar_geom_meaning} in identity \eqref{eq:iprodvhiprodtensor_lowner}, 
we see that for a fixed $\upthing{u} \in  \essgraph{\loglego{f}}$ with non-zero last coordinate, assertion \eqref{item:lowner_admissible_linearized_in_primal} of the theorem  is equivalent to
\[
\mathcal{H}^\prime \in \interior \parenth{\nfcone{{\lifting{\loglego{f}}}}{\upthing{u} }}^{\circ}.
\]
Similarly, assertion \eqref{item:admissible_linearized_in_primalweak} of \Href{Theorem}{thm:admissiblelinearized_john} is equivalent to
\[
\mathcal{H}^\prime \in\parenth{\nfcone{{\lifting{\loglego{f}}}}{\upthing{u} }}^{\circ}.
\]

Consider the case  $\loglego{g}(u)=0$.
Since $\supp \loglego{g}$ is bounded and contains the origin in its interior by the assumption of \Href{Theorem}{thm:lowner_condition_general},  the origin is contained in the interior of $\supp \loglego{f}$.  This ensures that at every contact point $\upthing{u}\in \essgraph{\loglego{g}}\cap \essgraph{\loglego{f}}$ with $\loglego{f}(u) = 0$, every outer normal direction to 
$\supp{\loglego{f}}$ has an acute angle with $\upthing{u} = (u,0) \in \Red$, and hence, it can be represented by a vector $\upthing{v}= (v,0)\in \R^d$ such that 
$\iprod{\upthing{u}}{\upthing{v}}=1$, which yields 
$(\upthing{u},\upthing{v})\in\contactset{\loglego{g}}{\loglego{f}}$.

It follows  from identity \eqref{eq:iprodvhiprodtensor}  that for a fixed $\upthing{u}$ whose  last coordinate is zero, assertion \eqref{item:admissible_linearized_in_primal} of the theorem  is equivalent to
\[
\mathcal{H}^\prime \in \interior \parenth{\nfcone{\supp{\loglego{f}}}{\upthing{u}}}^{\circ},
\]
where all the sets and the polarity are meant in $\Red.$
Similarly, assertion \eqref{item:admissible_linearized_in_primalweak} of \Href{Theorem}{thm:admissiblelinearized_john} is equivalent to
\[
\mathcal{H}^\prime \in \parenth{\nfcone{\supp{\loglego{f}}}{\upthing{u}}}^{\circ}.
\]

The rest of the proof of \Href{Theorem}{thm:admissiblelinearized_lowner} is identical to the end of the proof of \Href{Theorem}{thm:admissiblelinearized_john}, and so we omit it.
\end{proof}
\begin{remark}
\Href{Theorem}{thm:admissiblelinearized_lowner}.
is the only place in the proof of \Href{Theorem}{thm:lowner_condition_general} at which we 
use the assumption that $\contactpoint{\loglego{f}}{\loglego{g}}$ is a \wlocstar$\loglego{f}$. 
\end{remark}
	
\subsection{Proof of Theorem~\ref{thm:lowner_condition_general}}
We start with assertion \eqref{item:lowner-local-minimum-implies-glmp} of the theorem.
Assume that there are no contact pairs and positive weights satisfying equation~\eqref{eq:functional_glmp-lowner}.
Then, by \Href{Lemma}{lem:separation_Lowner_problem}, there exists $\parenth{H \oplus \gamma, h} \in \MMbar$ such that
\begin{equation}\label{eq:lowner-concave-separation}
    \iprod{\parenth{H \oplus \gamma,h}}
    {\parenth{\sid,0}} > 0
    \quad\text{and}\quad
    \iprod{\parenth{H \oplus \gamma,h}}
    {\contactoplowner{\upthing{u} + c }{\upthing{v}}} < 0
\end{equation}
for all $\parenth{\upthing{u}, \upthing{v}}\in \contactset{\loglego{f}}
{\loglego{g}}$.

The matrix 
\(
\id_{d} - t {H}
\)
is non-singular and $1 + t \gamma  > 0$ for all $t \in [0, \tau]$ for some positive $\tau.$ 
 Set
\( A_t = {{\parenth{\id_{d} - t {H}}^{-1}}}\) and
		\[
			g_t(x)=\frac{1}{1 +  t \gamma} \ g\! \parenth{\transpose{A_t} x  + t h}.
		\]
Clearly,  $g_t \in \funpos{g}$ for all $t \in [0, \tau].$ 
By  implication \eqref{item:int_lowner_admissible_linearized_in_primal} $\Rightarrow$  \eqref{item:int_lowner_gammaadmissible} of \Href{Lemma}{lem:lowner_separation_integral}, 
\[
    \int g_t^s < \int g^s
\]
for all $t \in (0, \tau]$ and  some $\tau > 0$.

Using the implication \eqref{item:lowner_admissible_linearized_in_primal} $\Rightarrow$  \eqref{item:lowner_admissible} of  \Href{Theorem}{thm:admissiblelinearized_lowner},
one gets that $f \leq g_t$ for all sufficiently small $t$. 
Thus, $g$ is not a local minimizer in  L\"owner $s$-problem \eqref{eq:lowner_problem_intro},
completing the proof of assertion \eqref{item:lowner-local-minimum-implies-glmp} of \Href{Theorem}{thm:lowner_condition_general}.

We proceed with assertion \eqref{item:glmp-implies-global-maximum} of the theorem.
Assume that $g$ is not a global minimizer in  Positive position L\"owner $s$-problem  \eqref{eq:lowner_problem_pos}. That is, there exist a positive definite matrix $A$,
$\gamma \in \R, h \in \Red,$  such that 
the function $g_1$ defined by
\[
g_1(x) = e^{-\gamma} g\! \parenth{A x + h}\;\text{ for }\; x\in\Red
\]
satisfies $f \leq  g_1$ and $\int_{\Red} g_1^s < \int_{\Red} g^s$.
We will use $g_1$ to define a perturbation $g_t$ of $g$ (a curve in $\MM$) that is above $f$ with smaller integral, and then, taking the derivative of that perturbation at the starting point $t=0$, we will find $\parenth{H\oplus\gamma,h}\in\MMbar$ that separates $\parenth{\sid, 0}$ from the set $\contactopsetlowner{\loglego{f}}{\loglego{g}}$. This, by \Href{Lemma}{lem:separation_Lowner_problem}, will yield the assertion.

Define $H = A - \id_d.$ 
 We begin by showing that 
\begin{equation}\label{eq:lowner_suff_integral_separation}
\iprod{\parenth{H \oplus \gamma, h}}{\parenth{\sid, 0}} > 0.
\end{equation}
For any  $\delta > 0$,  the function $\tilde{g}$ defined by 
\[
\tilde{g}(x) = e^{-\gamma + \delta} g\parenth{Ax + h}
\]
satisfies the relation $f \leq \tilde{g}$. Moreover, for a sufficiently small $\delta$, we have also
$\int_{\Red} \tilde{g}^s  < \int_{\Red} g^s$.

Define
\[
\tilde{g}_t (x) =  e^{-t(\gamma - \delta)}
g\parenth{\parenth{\id_d + t H} x + th} \in 
\funppos{g}.
\] 		
By \Href{Lemma}{lem:outer-interpolation}, 
$f \leq \tilde{g}_t$ for all $t \in [0,1],$ 	and
\[
				\int_{\Red} {\tilde{g}_t^s} \leq
					\parenth{ \int_{\Red} {g}^s}^{1-t}
					 \parenth{\int_{\Red} \tilde{g}_1^s}^{t}
					\leq \int_{\Red} {g^s}.
\]
Using implication \eqref{item:int_lowner_gammaadmissible} $\Rightarrow$ \eqref{item:int_lowner_admissible_linearized_in_primalweak}    of \Href{Lemma}{lem:lowner_separation_integral}, we have
\[
 0 \leq \iprod{\parenth{H  \oplus (\gamma - \delta), z}}{\parenth{\sid, 0}}  = 
\iprod{\parenth{H  \oplus \gamma, h}}{\parenth{\sid, 0}} - \delta s. 
\]
We conclude that 
\begin{equation*}
\iprod{\parenth{H \oplus \gamma, h}}{\parenth{\sid, 0}} \geq \delta s > 0,
\end{equation*}
and \eqref{eq:lowner_suff_integral_separation} follows.

For $t \in [0,1),$ define 
\[
{g}_t (x) =  e^{t\gamma} g\parenth{\parenth{\id_d + t H}x + th}.
\] 		
That is, $g_0 = g,$  and ${g}_t  \in 
\funppos{g}$ for all $t \in [0,1).$
By the choice of $g_1$ and by \Href{Lemma}{lem:outer-interpolation}, one sees that $f \leq {g}_t$ for all $t \in [0,1].$
The implication \eqref{item:lowner_admissible} $\Rightarrow$  \eqref{item:lowner_admissible_linearized_in_primalweak} in  \Href{Theorem}{thm:admissiblelinearized_lowner} implies that
$\iprod{\parenth{H \oplus \gamma,h}}
{\contactoplowner{\upthing{u}+c}{\upthing{v}}} \leq 0
$
for all $ \parenth{\upthing{u}, \upthing{v}} \in \contactset{\loglego{f}}{\loglego{g}}$.
Combining this with inequality \eqref{eq:lowner_suff_integral_separation},
\Href{Lemma}{lem:separation_Lowner_problem} yields assertion \eqref{item:glmp-lowner-implies-global-minimum} of \Href{Theorem}{thm:lowner_condition_general}, completing the proof of \Href{Theorem}{thm:lowner_condition_general}.

\section{Existence and uniqueness of solutions}\label{sec:existence_uniqueness}

For any  proper log-concave function $f \colon \Red\to[0,\infty),$
there exists a positive constant $C$ such that the 
integral of the proper log-concave function $f$ over any line is
at most $C$. Indeed, it follows, from the existence of constants
 $\Theta, \nu > 0$ depending only on $f$ that satisfy 
\begin{equation}\label{eq:proper_log-concave_bound}
	f(x) \leq \Theta e^{-\nu \enorm{x}} 
\end{equation} 
{ for all } $x\in\Red,$
see \cite[Lemma~2.2.1] 
{brazitikos2014geometry}.
We will use the notation
\[
C_f=\sup_{\ell} \int_{\ell} f,
\]
where the supremum is taken over all lines $\ell$ in $\Red$.

The following technical fact, essentially a rephrasing of Lemma~3.2 of 
\cite{ivanov2022functional}, will allow us to use compactness arguments in finding the optima in the John and L\"owner $s$-problems.

\begin{lem}[Boundedness of the admissible set]\label{lem:boundedness}
For any  proper log-concave function $f \colon \Red\to[0,\infty)$ and any $\delta>0$, 
there exist  $\vartheta,\rho,\rho_1>0$ with the following property. If  
for
a  proper  log-concave function 
$g \colon \Red \to [0,\infty)$ with $g \leq g(0) =1$  and 
$(A\oplus\alpha,a)\in\MM$, the function
$w \colon \Red \to [0,\infty)$ given by 
\[
w(x) = \alpha g\parenth{A^{-1}(x-a)}
\]
satisfies $w \leq f$  and 
$ \int_{\Red} w  \geq\delta$, then the following inequalities hold:
\begin{equation}\label{eq:john_alpha_a_universal_bound}
 \vartheta\leq\alpha \leq \norm{f}
\quad \text{and} \quad 
 \enorm{a}  \leq \rho,
\end{equation}
and
\begin{equation}
\label{eq:john_comparison_operator}
 \frac{\rho_1}{ \int_{\Red} {g}} 
\parenth{\frac{C_{f}}{ C_{g}}}^{1-d} \leq
 \frac{1}{\norm{A^{-1}}} \leq \norm{A} \leq 
\frac{C_{f}}{\vartheta C_{g}}.
\end{equation}
where $C_{g}$ is the maximum of the integral of the restriction  of 
$g$ to a line in $\Red$.
\end{lem}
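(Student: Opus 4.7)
The plan is to derive the four inequalities in sequence by short comparison-with-$f$ arguments, the key observation being that the normalization $g\le g(0)=1$ gives $w\le\alpha$ everywhere and $w(a)=\alpha$.

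\emph{Bounds on $\alpha$.} The upper bound is immediate: $\alpha=w(a)\le f(a)\le\norm{f}$. For the lower bound, apply the exponential decay estimate \eqref{eq:proper_log-concave_bound} to pick $R_0=R_0(f,\delta)$ with $\int_{\Red\setminus R_0\ball{d}} f \le \delta/2$. Then $w\le f$ and $\int_{\Red} w\ge\delta$ together force $\int_{R_0\ball{d}} w\ge\delta/2$, and $w\le\alpha$ pointwise gives
\[
\frac{\delta}{2}\le \alpha\,\vol{d}(R_0\ball{d}),
\]
hence $\alpha\ge\vartheta$ with $\vartheta:=\delta/(2\vol{d}(R_0\ball{d}))$ depending only on $f$ and $\delta$. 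The bound on $\enorm{a}$ is then immediate: applying \eqref{eq:proper_log-concave_bound} at $a$ gives $\vartheta\le\alpha\le f(a)\le\Theta e^{-\nu\enorm{a}}$, so $\enorm{a}\le\rho:=\nu^{-1}\ln(\Theta/\vartheta)$.

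\emph{Upper bound on $\norm{A}$.} Fix a unit vector $u\in\Red$ and integrate $w$ along the line $a+\R u$. Setting $v=A^{-1}u\neq 0$ and rescaling by $s=t\enorm{v}$ along the direction $v/\enorm{v}$ yields
\[
\int_{\R} w(a+tu)\,\di t=\alpha\int_{\R} g(tv)\,\di t=\frac{\alpha}{\enorm{A^{-1}u}}\int_{\R} g\!\parenth{s\,v/\enorm{v}}\,\di s\le \frac{\alpha\,C_g}{\enorm{A^{-1}u}},
\]
while the left-hand side is $\le C_f$ since $w\le f$. Hence $\enorm{A^{-1}u}\ge\alpha C_g/C_f$ for every unit $u$, equivalently $\norm{A}\le C_f/(\alpha C_g)\le C_f/(\vartheta C_g)$, which is the claimed upper bound.

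\emph{Lower bound on $1/\norm{A^{-1}}=\sigma_{\min}(A)$.} Integrating $w$ over $\Red$ gives $\delta\le\int_{\Red} w=\alpha\,\enorm{\det A}\int_{\Red} g$, so $\enorm{\det A}\ge \delta/(\alpha\int g)$. Combining with the elementary inequality $\enorm{\det A}\le\sigma_{\min}(A)\norm{A}^{d-1}$ and the bound on $\norm{A}$ yields
\[
\sigma_{\min}(A)\ge\frac{\enorm{\det A}}{\norm{A}^{d-1}}\ge\frac{\delta\,\alpha^{d-2}}{\int g}\parenth{\frac{C_f}{C_g}}^{1-d}.
\]
One then bounds $\alpha^{d-2}$ below by a constant depending only on $f$ and $\delta$: use $\alpha\ge\vartheta$ when $d\ge 2$ and $\alpha\le\norm{f}$ when $d=1$. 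The main obstacle, conceptually, is producing the lower bound $\alpha\ge\vartheta$: without the a priori pointwise estimate $w\le\alpha$ furnished by the normalization $g(0)=\max g=1$, the mass constraint $\int w\ge\delta$ could in principle be met by a very low, wide function, and every subsequent bound would fail to have a $g$-independent constant.
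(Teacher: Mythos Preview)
Your overall strategy matches the paper's, and the bounds on $\alpha$ and $\enorm{a}$ are correct (your explicit use of the exponential decay \eqref{eq:proper_log-concave_bound} is a clean variant of the paper's argument via $\int\min\{f,\vartheta\}\to 0$ and the level set $[f\ge\vartheta]$). The gap is in the upper bound on $\norm{A}$. From the identity
\[
\int_\R w(a+tu)\,\di t=\frac{\alpha}{\enorm{A^{-1}u}}\int_\R g\!\parenth{s\,\tfrac{v}{\enorm{v}}}\di s
\]
you bound the $g$-integral \emph{from above} by $C_g$, obtaining $\int_\R w(a+tu)\,\di t\le\alpha C_g/\enorm{A^{-1}u}$. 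Combined with $\int_\R w(a+tu)\,\di t\le C_f$, you now have two upper bounds on the same number, from which no relation between $\alpha C_g/\enorm{A^{-1}u}$ and $C_f$ follows; the asserted conclusion $\enorm{A^{-1}u}\ge\alpha C_g/C_f$ is a non sequitur. To extract a lower bound on $\enorm{A^{-1}u}$ from $C_f\ge\int_\R w(a+tu)\,\di t$ you need a \emph{lower} bound on the $g$-integral, and $C_g$ is by definition the supremum.

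The paper's own proof is equally terse at this step (it asserts $C_w=\alpha\norm{A}C_g$ without justification). In either argument the fix is to replace $C_g$ here by the infimum $c_g=\inf_{\enorm{e}=1}\int_\R g(te)\,\di t$ over lines through the origin, which is positive since $g$ is proper log-concave with $g(0)=1$; then $C_f\ge\alpha c_g/\enorm{A^{-1}u}$ for every unit $u$, whence $\norm{A}\le C_f/(\vartheta c_g)$. For the application to existence of optima only some finite $g$-dependent bound on $\norm{A}$ is needed, and this suffices. Your lower bound on $\sigma_{\min}(A)$ inherits the same correction, since it rests on the $\norm{A}$ bound.
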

\begin{proof}
The proof is a minor modification of the proof of a particular case of Lemma~3.2 of \cite{ivanov2022functional}.  
Obviously,  $\alpha \leq \norm{f}.$
To bound $\alpha$ from below, we fix $\vartheta$ with
$\alpha \leq \vartheta.$ Then 
$w \leq \vartheta$, and thus,
\[
 \int_{\Red} w \leq
\int_{\Red} \min\{f(x), \vartheta\} \di x.
\]
Since $f$ is a non-negative function of finite integral, the last expression is 
less than $\delta$ if $\vartheta$ is sufficiently small. Thus, the leftmost 
inequality in  \eqref{eq:john_alpha_a_universal_bound} holds. 
Since $w(a)  = \alpha,$ we conclude that
$a \in [f \geq \vartheta]$ completing the proof of    
\eqref{eq:john_alpha_a_universal_bound}.

We proceed with inequality~\eqref{eq:john_comparison_operator}.
Clearly, $C_f \geq C_w = \alpha \norm{A} C_{g}\geq \vartheta \norm{A} C_{g}$.
 Thus, the rightmost relation in 
\eqref{eq:john_comparison_operator} holds.

By the assumption, we have
\[
  \delta \leq \int_{\Red} w  = \alpha  \abs{\det A}  \cdot
 \int_{\Red} g .
\]
Let $\beta$ be the smallest singular value of $A$.
By the previous inequality and 
since $\alpha \in [\vartheta, \norm{f}],$ we have
\[
0 < \frac{\delta}{\norm{f}} \frac{1}{ \int_{\Red} g} \leq 
\abs{\det A} \leq \beta \cdot \norm{A}^{d-1}.
\]
By the rightmost relation in \eqref{eq:john_comparison_operator}, the existence of 
$\rho_1$ follows.
\end{proof}

\subsection{Existence and uniqueness in the (Positive) John \texorpdfstring{$s$}{s}-problem}

\begin{prop}\label{prop:existence_uniqueness_john}
Let $f, g \colon \Red\to[0,\infty)$ be two proper log-concave functions such that 
$g \leq f.$
Then  John $s$-problem \eqref{eq:john_problem_intro}
and Positive John $s$-problem \eqref{eq:john_problem_pos} have solutions.
Moreover, if $g$ is of bounded support, then the solution to  {Positive John $s$-problem} \eqref{eq:john_problem_pos} is unique.
\end{prop}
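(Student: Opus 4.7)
The plan is to split the argument into existence via a compactness-plus-semi-continuity argument based on \Href{Lemma}{lem:boundedness}, and uniqueness via the strict equality case of the multiplicative Minkowski determinant inequality \eqref{eq:minkowski_det_multipl_ineq} packaged inside \Href{Lemma}{lem:inner-function-interpolation}.

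For existence, I would first normalize $g$ by a preliminary position so that $g(0)=\sup g=1$; this reduction is harmless and places $g$ inside the scope of \Href{Lemma}{lem:boundedness}. Pick a maximizing sequence $h_n(x)=\alpha_n g(A_n^{-1}(x-a_n))\le f(x)$ in $\funpos{g}$. Since $g$ itself is feasible and proper, the supremum is at least $\int g^s>0$, so after discarding finitely many terms I may assume $\int h_n^s\ge\tfrac12\int g^s>0$. Applying \Href{Lemma}{lem:boundedness} to the proper log-concave functions $h_n^s$ (positions of $g^s$ with operator part $A_n\oplus\alpha_n^s$) dominated by the proper log-concave $f^s$ yields uniform bounds on $\alpha_n$, on $\norm{a_n}$, and on $\norm{A_n}$ and $\norm{A_n^{-1}}$. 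Thus $\{(A_n\oplus\alpha_n, a_n)\}$ sits in a compact subset of $\MM$, and I extract a subsequential limit $(A\oplus\alpha,a)\in\MM$ with $A$ non-singular and $\alpha>0$. Setting $h(x)=\alpha g(A^{-1}(x-a))$, the upper semi-continuity of $g$ together with the interior continuity of log-concave functions gives $h_n\to h$ on $\interior\supp h$ and hence $h\le f$ almost everywhere, which upgrades to $h\le f$ everywhere by the upper semi-continuity of $h$ and $f$. Reverse Fatou's lemma with the integrable majorant $f^s$ then gives $\int h^s\ge\limsup_n\int h_n^s$, so $h$ attains the supremum. The Positive position variant is identical, noting that positive definiteness is preserved in the limit among non-singular matrices.

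For uniqueness in the Positive position problem with $\supp g$ bounded, I would argue by contradiction: suppose $h_1\neq h_2$ are two maximizers with parameters $(A_i,\alpha_i, a_i)\in\MM^+$. Applying \Href{Lemma}{lem:inner-function-interpolation} with $\beta_1=\beta_2=\tfrac12$ produces an admissible $h^*\in\funppos{g}$ with $\int(h^*)^s\ge\bigl(\int h_1^s\int h_2^s\bigr)^{1/2}=\int h_1^s$; since $h^*$ cannot exceed the optimum, equality must hold, and the strict case of \eqref{eq:minkowski_det_multipl_ineq} forces $A_1=A_2=A$. The identity $\int h_i^s=\alpha_i^s\det A\int g^s$ then forces $\alpha_1=\alpha_2=\alpha$. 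Finally, if $a_1\neq a_2$ were possible, the entire segment $a_t=(1-t)a_1+ta_2$ would give a family of feasible maximizers by the same lemma; applying \Href{Theorem}{thm:john_condition_general} to any $h_t$ would produce a John-type decomposition of the identity whose contact points lie on the translating boundary of $a_t + A\,\supp g$, and the bounded support of $g$ together with the rigidity of the John condition would force the translation to be trivial.

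The substantive obstacle is precisely the final translation step: strict Minkowski cleanly pins down $A$ and thereby $\alpha$, but the objective $\int h^s$ is insensitive to the translation $a$. Extracting $a_1=a_2$ therefore requires combining the boundedness of $\supp g$ with the contact-pair rigidity delivered by \Href{Theorem}{thm:john_condition_general}; without bounded support there is no reason the optimal translation should be determined, which is why the uniqueness clause is restricted to that case.
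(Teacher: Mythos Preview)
Your existence argument and the first half of your uniqueness argument (pinning down $A_1=A_2$ and then $\alpha_1=\alpha_2$ via the equality case of \eqref{eq:minkowski_det_multipl_ineq} through \Href{Lemma}{lem:inner-function-interpolation}) are correct and match the paper's approach.

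The translation step, however, is a genuine gap. Invoking \Href{Theorem}{thm:john_condition_general} is problematic on two counts. First, that theorem carries hypotheses (that $\contactpoint{f}{\gbase}$ is a \wlocstar $f$ and that $\contactset{f}{\gbase}$ is bounded) which are \emph{not} assumed in the present proposition; you would be proving a weaker statement. Second, and more seriously, ``rigidity of the John condition'' is not an argument: the necessary condition \eqref{eq:functional_glmp} does not by itself preclude a one-parameter family of translates all satisfying it, and you have not explained what contradiction arises.

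The paper's resolution is elementary and avoids \Href{Theorem}{thm:john_condition_general} entirely. Once $A_1=A_2$ and $\alpha_1=\alpha_2$, the liftings of $h_1$ and $h_2$ are translates of each other by some non-zero $w\in\Red$. By log-concavity of $f$, the entire ``channel'' $\lifting{h_1}+[0,2w]$ lies in $\lifting{f}$. Now take the midpoint translate $h_{1.5}$, let $z$ be a point where it attains its maximum, and apply the linear map $S_\varepsilon$ that scales by $1+\varepsilon$ in the direction of $w$ about $z$. Because $\supp g$ is bounded, every non-empty superlevel set $[h_{1.5}\ge\Theta]$ is compact and convex, and for small $\varepsilon$ the inclusion $S_\varepsilon([h_{1.5}\ge\Theta]-z)\subset[h_1\ge\Theta]+[0,2w]$ holds uniformly in $\Theta$. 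Hence $S_\varepsilon(\lifting{h_{1.5}}-z)\subset\lifting{f}$, and this is the lifting of a position of $g$ with strictly larger integral (since $\det S_\varepsilon>1$), contradicting optimality. This is exactly where bounded support is used: it guarantees the level sets are compact so that a uniform $\varepsilon$ works.
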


We note that  as was shown in \cite{ivanov2022functional}, the solution 
to the {Positive John  $s$-problem} is not necessarily unique without assumption on the boundedness of the support. For example, it is not necessarily unique for the standard  Gaussian density. 

\begin{proof}[Proof of \Href{Proposition}{prop:existence_uniqueness_john}]
The existence of the solutions follows from \Href{Lemma}{lem:boundedness} and a routine compactness argument. 
So, we only need to show the uniqueness of the solution  to the {Positive John $s$-problem} \eqref{eq:john_problem_pos}.
Let $A_1$ and $A_2$ be rank $d$ positive definite matrices, $a_1, a_2 \in \Red$, and
$\alpha_1, \alpha_2>0$ be such that the functions 
\[
h_1(x) = \alpha_1 {g}\! \parenth{A_{1}^{-1}(x - a_1)}
\quad \text{and} \quad 
h_2(x) = \alpha_2 {g}\! \parenth{A_{2}^{-1}(x - a_2)}
\]
are the solutions to Positive John $s$-problem \eqref{eq:john_problem_pos}.
In particular, the integrals of the $s$ power of these functions are equal. 
By \Href{Lemma}{lem:inner-function-interpolation}, $A_1 = A_2$.
Hence, $\alpha_1 = \alpha_2$ as well. 
That is, the liftings of $h_1$ and $h_2$ are translates of each other. The log-concavity of $f$ implies that the set $\lifting{h_1} + [0,2w]$ with non-zero $w \in \Red$ is contained in 
$\lifting{f}$.  
We claim that that there is a position $h$ of ${g}$ below $f$ such that $\int h^s> \int h_1^s$. 
Indeed, consider the function $h_{1.5}(x) = h_1 (x  - w)$. Clearly, 
$\lifting{h_{1.5}} \subset \lifting{h_1} + [0,2w] \subset \lifting{f}$.
Let $h_{1.5}$ attain its maximum at $z$. It means that $z$ belongs to all non-empty level-sets of $h_{1.5}$. These level sets are compact convex sets, since $h_{1.5}$ is a log-concave function of compact support.  Let $S_{\epsilon}$ be the linear transformation that scales $\Red$ in the direction of $w$ by the factor $1 + \epsilon$. Then, for a sufficiently small positive $\epsilon$,  the inclusion 
\[
S_{\epsilon} \parenth{\left[ h \geq \Theta\right] - z} \subset \left[ h_1 \geq \Theta\right] + [0, 2w]
\] 
holds for all $\Theta \in (0, \norm{h}]$.
That is,  $S_{\epsilon} \parenth{\lifting{h_{1.5}} - z} \subset \lifting{f}$.
However, the set on the left-hand side of the last inclusion is the lifting of some position $h$ of ${g}$, completing the proof of \Href{Proposition}{prop:existence_uniqueness_john}.
\end{proof}

\subsection{Existence  in the (Positive) L\"owner \texorpdfstring{$s$}{s}-problem}

By a routine limiting argument, \Href{Lemma}{lem:boundedness} yields the following.
%\begin{cor}\label{cor:existence_john}
%Let $f \colon \Red\to[0,\infty)$ be a proper log-concave function and $g\colon \Red\to[0,\infty)$ be a \reasonable function  such that
%${g} \leq f.$ There are  solutions to  \Href{John $s$-problem}{eq:john_problem_intro}
%and to \Href{Positive John $s$-problem}{eq:john_problem_pos}.
%\end{cor}
\begin{prop}\label{prop:existence_lowner}
Let $f, g \colon \Red\to[0,\infty)$ be proper log-concave functions such that 
$f \leq g$. Then there are solutions to  L\"owner $s$-problem \eqref{eq:lowner_problem_intro} and to  {Positive L\"owner $s$-problem} \eqref{eq:lowner_problem_pos}.
\end{prop}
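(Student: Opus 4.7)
The plan is to argue by extracting a compact minimizing sequence, in close parallel with the existence half of Proposition~\ref{prop:existence_uniqueness_john}. Take $h_n(x) = \alpha_n g(A_n x + a_n)$ in $\funpos{g}$ (respectively, in $\funppos{g}$) such that $f \leq h_n$ and $\int h_n^s \to \mu := \inf$. Since $h = g$ is itself feasible (because $f \leq g$), we already know $\mu \leq \int g^s$, and the integral comparison $f^s \leq h_n^s$ pointwise gives $\mu \geq \int f^s > 0$, so $\mu$ is strictly between $0$ and $\infty$. The aim is to use Lemma~\ref{lem:boundedness} to pin down $\alpha_n$, $a_n$ and the singular values of $A_n$ in a compact set, then to pass to a subsequential limit.

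The trick for invoking Lemma~\ref{lem:boundedness} is to swap the roles of $f$ and $g$. Indeed, substituting $y = A_n x + a_n$ rewrites the constraint $f \leq h_n$ as
\[
\tilde{w}_n(y) := \tfrac{1}{\alpha_n}\, f\!\parenth{A_n^{-1}(y - a_n)} \,\leq\, g(y),
\]
so that $\tilde{w}_n$ is a position of $f$ lying below $g$, with integral $\int \tilde{w}_n = (|\det A_n|/\alpha_n)\,\int f$. After normalizing $f$ (translate so that $f$ attains its maximum at the origin, then rescale to make this maximum equal to $1$), Lemma~\ref{lem:boundedness}, applied with $g$ in the role of the outer log-concave function and (the normalization of) $f$ in the role of the positioned function, yields uniform bounds on $\alpha_n,\,|a_n|,\,\|A_n\|,\,\|A_n^{-1}\|$ as soon as one has a lower bound $\int \tilde{w}_n \geq \delta > 0$. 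Equivalently, what one needs is a uniform upper bound $\int h_n \leq \int g \cdot \int f / \delta$.

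This upper bound on $\int h_n$ is the main obstacle, and is what makes the "routine" label slightly optimistic. It should be extracted from the two constraints $\int h_n^s \to \mu \leq \int g^s$ and $h_n \geq f$: the first gives $\alpha_n^s \leq (\mu/\int g^s + o(1))|\det A_n|$, and integrating the second gives $\alpha_n |\det A_n|^{-1}\int g \geq \int f$. Combining these two yields, for $s \geq 1$, the direct bound $\alpha_n^{s-1} \leq \int g / \int f$, hence $\alpha_n$ is uniformly bounded and the required control on $\int h_n$ follows. For $s < 1$, one has to work harder, either by interpolating an intermediate test function via Lemma~\ref{lem:outer-interpolation} or by invoking the exponential decay \eqref{eq:proper_log-concave_bound} of proper log-concave functions to rule out the possibility that the mass of $h_n$ escapes to infinity while $\int h_n^s$ stays bounded.

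Once the parameters are confined to a compact subset of $\MM$ (respectively $\MM^+$), pass to a subsequence so that $(A_n, \alpha_n, a_n) \to (A_*, \alpha_*, a_*)$ with $A_*$ still nonsingular (respectively still positive definite). Let $h_* \in \funpos{g}$ (respectively $\funppos{g}$) be the corresponding position. Upper semi-continuity of $g$ gives $h_*(x) \geq \limsup h_n(x)$ at every $x$ and pointwise convergence on a residual set, so the pointwise inequality $f \leq h_*$ is inherited from $f \leq h_n$. Finally, comparing $\int h_*^s$ to $\mu$ via Fatou on one side and the continuity of $(A,\alpha,a)\mapsto \int \alpha^s g(A\cdot + a)^s = \alpha^s|\det A|^{-1}\int g^s$ on the other yields $\int h_*^s = \mu$, so $h_*$ is the desired minimizer in both problem \eqref{eq:lowner_problem_intro} and problem \eqref{eq:lowner_problem_pos}.
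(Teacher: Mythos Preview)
Your strategy --- swap the roles of $f$ and $g$ so that Lemma~\ref{lem:boundedness} applies to a position of $f$ lying below $g$, then pass to a subsequential limit --- is the natural one and is presumably what the paper's one-line ``routine limiting argument'' is pointing at. The limit step at the end is fine.

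However, you have made the compactness step harder than it is, and for $s<1$ you leave a genuine gap (you yourself flag it as the ``main obstacle'' and only gesture at possible fixes). The issue is self-inflicted: you apply Lemma~\ref{lem:boundedness} to the \emph{first} powers $f,g,\tilde{w}_n$ and then struggle to relate the resulting lower bound $\int\tilde{w}_n\geq\delta$ to the available control on $\int h_n^s$. The clean way out is to observe that the L\"owner $s$-problem for $(f,g)$ is literally the L\"owner $1$-problem for $(f^s,g^s)$: if $f,g$ are proper log-concave then so are $f^s,g^s$, and $h\in\funpos{g}$ with $f\leq h$ if and only if $h^s\in\funpos{g^s}$ with $f^s\leq h^s$. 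Running your swap with $s$-th powers throughout, the inner position is
\[
\tilde{w}_n^{(s)}(y)=\frac{1}{\alpha_n^s}\,f^s\!\parenth{A_n^{-1}(y-a_n)}\leq g^s(y),
\qquad
\int \tilde{w}_n^{(s)}=\frac{|\det A_n|}{\alpha_n^s}\int f^s=\frac{\int f^s\int g^s}{\int h_n^s}\geq\frac{\int f^s\int g^s}{\int g^s+1}
\]
for all large $n$, and Lemma~\ref{lem:boundedness} applied to $g^s$ (outer) and the normalization of $f^s$ (inner) directly yields uniform two-sided bounds on $\alpha_n^s$, $|a_n|$, $\|A_n\|$, $\|A_n^{-1}\|$. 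No case distinction on $s$ is needed, and the paragraph invoking Lemma~\ref{lem:outer-interpolation} or the decay bound can be deleted.
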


We note that as was shown in \cite{ivanov2021functional}, the solution 
to {Positive L\"owner $s$-problem} is not necessary unique even for a radially symmetric function $g$.

\section{The normal cone and the subdifferential}\label{sec:normalcone_subdifferential}

This section contains properties of the normal cone of log-concave functions, that will be needed in the next section, where we discuss when the rather technical conditions of Theorems~\ref{thm:john_condition_general} and \ref{thm:lowner_condition_general} hold.

\subsection{Subdifferential}
We recall several definitions and properties about the subddiferential. 

A vector $p$ is said to be a \emph{subgradient} of a  function 
$\psi  \colon \R^d \to \R \cup \{+ \infty\}$ at the point $x$ if
\begin{equation*}%\label{eq:subgrad_ineq}
\psi(y) \geq \psi(x) + \iprod{p}{y-x}
\end{equation*}
for all $y \in \R^d$.
The set of all subgradients of $\psi$ at $x$ is called the \emph{subdifferential}
of $\psi$ at $x$ and is denoted by $\partial \psi(x)$. By definition, we have
\begin{lem}[Subdifferential and normals of the epigraph]\label{lem:subdif_via_ncone}
Let $\psi$ be a convex function on $\R^d.$ Then $p \in \partial \psi(x)$ if and only if
$(p, -1) \in \nfcone{\epi \psi}{(x, \psi(x))}.$
\end{lem}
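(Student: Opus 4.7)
The statement is a standard characterization of subgradients via supporting hyperplanes to the epigraph, but using the Fr\'echet normal cone as defined in the paper. The key simplifying observation is that since $\psi$ is convex, $\epi \psi$ is a convex set, so by equation \eqref{eq:normal_cone_dual_to_poscone} the Fr\'echet normal cone coincides with the classical normal cone: $(p,-1) \in \nfcone{\epi \psi}{(x,\psi(x))}$ precisely when
\[
\iprod{(p,-1)}{(y,\xi) - (x,\psi(x))} \leq 0
\]
for every $(y,\xi) \in \epi \psi$. The plan is to turn both directions of the equivalence into this single inequality.

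For the ``only if'' direction, I would start from $p \in \partial \psi(x)$, i.e.\ $\psi(y) \geq \psi(x) + \iprod{p}{y-x}$ for all $y \in \R^d$. Given an arbitrary $(y,\xi) \in \epi \psi$, I have $\xi \geq \psi(y) \geq \psi(x) + \iprod{p}{y-x}$, which rearranges to $\iprod{p}{y-x} - (\xi - \psi(x)) \leq 0$; this is precisely the inequality above, so $(p,-1) \in \nfcone{\epi\psi}{(x,\psi(x))}$.

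For the ``if'' direction, assume $(p,-1) \in \nfcone{\epi\psi}{(x,\psi(x))}$. For each $y \in \dom \psi$, the point $(y,\psi(y))$ lies in $\epi \psi$, so plugging it into the defining inequality gives $\iprod{p}{y-x} - (\psi(y) - \psi(x)) \leq 0$, i.e.\ $\psi(y) \geq \psi(x) + \iprod{p}{y-x}$. For $y \notin \dom\psi$ the inequality is trivial since $\psi(y) = +\infty$. Hence $p \in \partial \psi(x)$.

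There is no real obstacle here: the only subtlety is making sure we are entitled to replace the Fr\'echet normal cone by the convex normal cone, which is immediate from the convexity of $\epi \psi$ together with \eqref{eq:normal_cone_dual_to_poscone}.
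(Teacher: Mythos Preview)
Your proof is correct and is essentially what the paper has in mind: the paper simply introduces the lemma with the phrase ``By definition, we have'' and gives no further argument. Your write-up just spells out that definition-chasing explicitly, using the convexity of $\epi\psi$ together with \eqref{eq:normal_cone_dual_to_poscone} to reduce the Fr\'echet normal cone to the classical one.
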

 The following lemma is a basic property of the subdifferential (see \cite[Theorem 23.5]{rockafellar1970convex}) relating it to the Legendre dual.
\begin{lem}[Subdifferential and Legendre dual]\label{lem:subdif_basics}
 Let $\psi: \R^d \to \R \cup \{+\infty\}$ be a lower semi-continuous convex function. 
 Then the following three conditions on vectors $u, p \in \R^d$ are equivalent
 \begin{enumerate}
\item  $p   \in \partial \psi(u);$
\item  $u \in \partial\slogleg[] \psi(p);$
\item  
$ 
\psi(u) + \slogleg[]\psi(p) = \iprod{p}{u}.
$
\end{enumerate}   
\end{lem}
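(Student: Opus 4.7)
The plan is to establish the cyclic chain of implications (1) $\Rightarrow$ (3) $\Rightarrow$ (1) and then (2) $\Leftrightarrow$ (3) by exploiting the symmetry of (3) under the involution $\psi \leftrightarrow \slogleg[]\psi$. The only non-trivial ingredient beyond the definitions is the Fenchel--Moreau theorem, which for a lower semi-continuous convex function $\psi$ on $\R^d$ asserts $\slogleg[]\slogleg[]\psi = \psi$.

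First I would prove (1) $\Leftrightarrow$ (3). If $p \in \partial \psi(u)$, then by definition $\psi(y) \geq \psi(u) + \iprod{p}{y-u}$ for every $y$, which rearranges to $\iprod{p}{y} - \psi(y) \leq \iprod{p}{u} - \psi(u)$. Taking the supremum over $y$ on the left gives $\slogleg[]\psi(p) \leq \iprod{p}{u} - \psi(u)$, while the reverse inequality $\slogleg[]\psi(p) \geq \iprod{p}{u} - \psi(u)$ is immediate from the definition of the supremum; hence equality holds and (3) follows. Conversely, if (3) holds, then for every $y$ one has $\iprod{p}{y} - \psi(y) \leq \slogleg[]\psi(p) = \iprod{p}{u} - \psi(u)$, which rearranges to the subgradient inequality, giving $p \in \partial \psi(u)$.

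Next I would deduce (2) $\Leftrightarrow$ (3) by applying the equivalence just established to the function $\slogleg[]\psi$ in place of $\psi$, with the roles of $u$ and $p$ swapped. Specifically, $u \in \partial \slogleg[]\psi(p)$ is equivalent (by the case just proved, applied to $\slogleg[]\psi$) to the identity $\slogleg[]\psi(p) + \slogleg[]\slogleg[]\psi(u) = \iprod{u}{p}$. Here is where lower semi-continuity and convexity of $\psi$ enter: the Fenchel--Moreau theorem gives $\slogleg[]\slogleg[]\psi(u) = \psi(u)$, so this identity reduces precisely to condition (3).

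The main (and only real) obstacle is justifying the use of the Fenchel--Moreau theorem, but this is a standard result for proper lower semi-continuous convex functions and is exactly what the hypothesis on $\psi$ is designed to allow. The rest is bookkeeping with the defining inequalities of the subdifferential and the supremum in the definition of $\slogleg[]\psi$.
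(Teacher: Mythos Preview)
Your proof is correct and follows the standard argument; indeed, this is essentially how the result is proved in Rockafellar's \emph{Convex Analysis}. The paper does not give its own proof of this lemma---it simply cites \cite[Theorem~23.5]{rockafellar1970convex}---so there is nothing to compare against. One small remark: the Fenchel--Moreau theorem you invoke requires $\psi$ to be \emph{proper} (not identically $+\infty$), which the lemma does not state explicitly, but this is the only interesting case and is implicit throughout the paper.
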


Also, we will use the following well-known fact.
\begin{lem}\label{lem:subdif_lipchitz}
 Let $\psi: \R^d \to \R \cup \{+\infty\}$ be a lower semi-continuous convex function containing $\delta \ball{d}$ in  the interior of its effective domain. 
 Then there is a constant $L$ such that $\enorm{p} \leq L$ for all 
 $p \in \partial \psi(u)$ and $u \in \delta \ball{d}.$
\end{lem}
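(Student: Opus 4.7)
The plan is to reduce the lemma to the classical fact that a convex function is locally bounded on the interior of its effective domain, then convert that bound into a subgradient bound via the defining subgradient inequality. The main obstacle is essentially cosmetic: one has to pick a slightly larger ball than $\delta\ball{d}$ on which to obtain the bound on $\psi$, and there is a constant $L$ depending on $\psi$ and on this enlarged ball.

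First, since $\delta\ball{d}$ is compact and contained in $\interior\parenth{\dom\psi}$, I would choose $\delta' > \delta$ such that $\delta'\ball{d} \subset \interior\parenth{\dom\psi}$ as well; such a $\delta'$ exists by compactness of $\delta\ball{d}$ and the openness of $\interior\parenth{\dom\psi}$. Next I would invoke the standard fact that a lower semi-continuous convex function is continuous on the interior of its effective domain (so in particular continuous on the open set $\interior\parenth{\dom\psi}$); hence $\psi$ attains a finite maximum $M$ and a finite minimum $m$ on the compact set $\delta'\ball{d}$.

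Now fix $u \in \delta\ball{d}$ and $p \in \partial\psi(u)$. If $p=0$ there is nothing to prove, otherwise set
\[
y = u + (\delta' - \delta)\,\frac{p}{\enorm{p}}.
\]
Then $\enorm{y} \leq \enorm{u} + (\delta' - \delta) \leq \delta'$, so $y \in \delta'\ball{d}$ and the subgradient inequality gives
\[
\psi(y) \geq \psi(u) + \iprod{p}{y-u} = \psi(u) + (\delta' - \delta)\enorm{p}.
\]
Rearranging yields
\[
(\delta' - \delta)\enorm{p} \leq \psi(y) - \psi(u) \leq M - m,
\]
so $\enorm{p} \leq L := \tfrac{M-m}{\delta' - \delta}$, which is independent of the choice of $u \in \delta\ball{d}$ and $p \in \partial\psi(u)$. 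This completes the proof.
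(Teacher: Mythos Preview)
Your proof is correct and is the standard argument. The paper itself states this lemma as a ``well-known fact'' and gives no proof, so there is nothing to compare against; your argument is exactly the kind of justification one would expect for this result.
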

 
 \subsection{Explicit formula for contact pairs}
 \label{subsec:normal_cone_via_subdiff}
 
 As an immediate  consequence of \Href{Lemma}{lem:nfcone}, we obtain
\begin{lem}[Horizontal normals of $\lifting{f}$]\label{lem:ncone_flat_normal}
Let $f \colon \Red \to [0, +\infty)$ be an upper semi-continuous log-concave function containing the origin in the interior of its support. 
Let $\upthing{u} = (u, f(u)) \in \essgraph{f}$ and $\upthing{v} = (v, 0) \in \nfcone{\lifting{f}}{\upthing{u}}$ with $\iprod{\upthing{u}}{\upthing{v}} = 1.$ 
Then   
\[
 \upthing{v} =  \frac{(p,0)}{\iprod{p}{u}} 
    \quad \text{and} \quad  \iprod{p}{u} > 0
 \]
 for some non-zero $p \in \nfcone{\supp f}{u}.$
\end{lem}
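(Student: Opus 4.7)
The plan is to reduce the claim directly to the two structural results about the normal cone of $\lifting{f}$ already established in the section, namely \Href{Lemma}{lem:nfcone_at_zero} and \Href{Lemma}{lem:nfcone}. The key observation is that the natural candidate for $p$ is simply $v$ itself: once we verify that $v\in\nfcone{\supp f}{u}$ is nonzero, the normalization $\iprod{\upthing u}{\upthing v}=\iprod{u}{v}=1$ automatically delivers the required form $\upthing v=(p,0)/\iprod{p}{u}$ and the positivity $\iprod{p}{u}>0$.

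First I would split into two cases depending on the vertical coordinate of $\upthing u$. If $f(u)=0$, then $u\in\bd \supp f$ and the first equality of \Href{Lemma}{lem:nfcone_at_zero} immediately gives $v=\projond(\upthing v)\in\nfcone{\supp f}{u}$. If $f(u)>0$, apply \Href{Lemma}{lem:nfcone} with $\nu=0$ and $\psi=-\ln f$: the equivalence in \eqref{eq:normalconecomaprison} shows that $(v,0)\in\nfcone{\epi\psi}{(u,\psi(u))}$, which by the characterization of normals to an epigraph at a horizontal direction means $v\in\nfcone{\dom\psi}{u}=\nfcone{\supp f}{u}$, using \eqref{eq:suppequalsdom}.

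Having located $v$ inside $\nfcone{\supp f}{u}$ in both cases, I would set $p:=v$ and conclude as follows. The hypothesis $\iprod{\upthing u}{\upthing v}=1$ reads $\iprod{u}{v}=1$, so $v\neq 0$ (hence $p$ is nonzero) and $\iprod{p}{u}=1>0$, whence $\upthing v=(v,0)=(p,0)/\iprod{p}{u}$ as required. Strict positivity of $\iprod{p}{u}$ is consistent with the assumption that the origin lies in the interior of $\supp f$: for any nonzero outer normal $p\in\nfcone{\supp f}{u}$ and any small $\epsilon>0$ we have $\epsilon p/\enorm{p}\in\supp f$, so $\iprod{p}{u}\geq\epsilon\enorm{p}>0$, which is a sanity check rather than a step needed for the argument.

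There is no genuine obstacle here; the entire content of the lemma is bookkeeping on top of \Href{Lemmas}{lem:nfcone_at_zero} and~\ref{lem:nfcone}. The only subtlety worth a sentence in the write-up is the implicit observation that when $u\in\interior\supp f$ and $f(u)>0$, the normal cone $\nfcone{\supp f}{u}$ is trivial, so the hypothesis $\iprod{\upthing u}{\upthing v}=1$ forces $u\in\bd\supp f$; this is precisely why the two cases above exhaust the situation and never yield $v=0$.
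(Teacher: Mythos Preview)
Your proof is correct and follows essentially the same approach as the paper, which records this lemma as ``an immediate consequence of \Href{Lemma}{lem:nfcone}'' without further argument. Your write-up is in fact more careful: you explicitly separate the cases $f(u)=0$ and $f(u)>0$, invoking \Href{Lemma}{lem:nfcone_at_zero} in the former and \Href{Lemma}{lem:nfcone} in the latter, and you note that the hypothesis $\iprod{\upthing u}{\upthing v}=1$ forces $u\in\bd\supp f$ whenever $f(u)>0$.
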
 
Lemmas~\ref{lem:nfcone} and \ref{lem:subdif_via_ncone},
yield the following.
\begin{lem}[Non-horizontal normals of $\lifting{f}$]
\label{lem:v_via_grad}
Let $\psi \colon \Red \to \R \cup \{+\infty\}$ be a convex function  containing $u$ 
in  its domain.  Set $f = e^{-\psi}$ and $\upthing{u} = (u, f(u))$. Let 
$\upthing{v}= (v, \nu)$ with $\nu \neq 0.$
The following assertion are equivalent:
\begin{enumerate}
\item $\upthing{v} \in \nfcone{\lifting{f}}{\upthing{u}}$  and
 $\iprod{\upthing{u}}{\upthing{v}} =1 $
\item   \[
 \upthing{v} =  
  \frac{\parenth{p, \frac{1}{f(u)} }}{1 + \iprod{p}{u}} 
  \quad \text{and} \quad 1 + \iprod{p}{u} > 0
 \]
for some  $p \in \partial \psi(u).$ 
\end{enumerate}    
\end{lem}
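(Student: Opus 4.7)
The plan is to translate the statement into the language of the epigraph of $\psi$ via the correspondence established in Lemma~\ref{lem:nfcone}, then apply Lemma~\ref{lem:subdif_via_ncone} to recognize subgradients, and finally enforce the normalization $\iprod{\upthing{u}}{\upthing{v}} = 1$ algebraically. Since the statement is an ``if and only if'', I will handle both directions in parallel by tracking how each side transforms.

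First, I would start from $\upthing{v} = (v, \nu)$ with $\nu \neq 0$ and $\upthing{v} \in \nfcone{\lifting{f}}{\upthing{u}}$. Invoking \eqref{eq:normalconecomaprison} of Lemma~\ref{lem:nfcone}, this is equivalent to $(v, -\nu f(u)) \in \nfcone{\epi\psi}{(u, \psi(u))}$. Because $\lifting{f}$ is symmetric about $\Re^d$ and because the proof of Lemma~\ref{lem:nfcone} already notes that the last coordinate of a normal to $\epi\psi$ cannot be positive, the sign discussion forces $\nu > 0$, and hence the scalar $-\nu f(u)$ is strictly negative. I can therefore normalize the epigraph normal by dividing by $\nu f(u) > 0$ to obtain the vector $(p, -1)$, where $p := v/(\nu f(u))$. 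By Lemma~\ref{lem:subdif_via_ncone} this is equivalent to $p \in \partial \psi(u)$, and by homogeneity of the normal cone the original pair is recovered by $v = \nu f(u)\, p$.

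Next I would impose the normalization. Writing $\iprod{\upthing{u}}{\upthing{v}} = \iprod{u}{v} + f(u)\nu = 1$ and substituting $v = \nu f(u)\, p$ gives
\[
\nu f(u)\bigl(1 + \iprod{p}{u}\bigr) = 1.
\]
Since $\nu > 0$ and $f(u) > 0$, this forces $1 + \iprod{p}{u} > 0$ and determines
\[
\nu = \frac{1}{f(u)\bigl(1+\iprod{p}{u}\bigr)}, \qquad v = \frac{p}{1+\iprod{p}{u}},
\]
which is exactly the claimed formula $\upthing{v} = \bigl(p,\tfrac{1}{f(u)}\bigr)/(1+\iprod{p}{u})$.

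For the converse, given $p \in \partial \psi(u)$ with $1 + \iprod{p}{u} > 0$ and $\upthing{v}$ defined by the displayed formula, I would run the same computation in reverse: Lemma~\ref{lem:subdif_via_ncone} gives $(p,-1) \in \nfcone{\epi\psi}{(u,\psi(u))}$, scaling by the positive factor $1/(f(u)(1+\iprod{p}{u}))$ preserves membership in the normal cone, Lemma~\ref{lem:nfcone} transfers the result to $\nfcone{\lifting{f}}{\upthing{u}}$, and a direct computation shows $\iprod{\upthing u}{\upthing v} = 1$. I do not anticipate any real obstacle here; the only subtle point is the sign bookkeeping in the translation between the normal cones of $\epi\psi$ and $\lifting f$, which is already done once in Lemma~\ref{lem:nfcone} and merely needs to be combined cleanly with the homogeneity of the normal cone.
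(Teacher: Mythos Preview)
Your proposal is correct and is exactly the argument the paper has in mind: the paper simply states that the lemma follows from Lemmas~\ref{lem:nfcone} and \ref{lem:subdif_via_ncone}, and your write-up is precisely the routine unpacking of that claim, including the sign discussion forcing $\nu>0$ and the algebra fixing the normalization.
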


We conclude the following.

\begin{lem}\label{lem:starlike_criteria}
Let $g\colon \Red \to [0, + \infty)$ be a proper log-concave function containing the origin in the interior of support. Then, a set $U \subset \Red$ is a \wlocstar$g$ if and only if the inequality $\iprod{p}{u} > -1$ holds for all $u \in U \cap \supp g$ and
 $p \in  \partial(-\ln g)(u).$
\end{lem}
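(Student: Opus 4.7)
Write $\psi = -\ln g$, so that $g = e^{-\psi}$ and $\supp g = \dom \psi$. The plan is to translate the definition of weakly star-like (a statement about \emph{all} nonzero outer normals to $\lifting g$ at $(u,g(u))$) into the subgradient language via the two cases isolated in Lemmas~\ref{lem:ncone_flat_normal} and \ref{lem:v_via_grad}: horizontal normals $(v,0)$ correspond to $\nfcone{\supp g}{u}$, whereas non-horizontal normals $(v,\nu)$ with $\nu\neq 0$ correspond, up to positive rescaling, to subgradients $p \in \partial\psi(u)$ through $(v,\nu) = \lambda\bigl(p, 1/g(u)\bigr)$ for some $\lambda>0$.

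For the forward direction, suppose $U$ is weakly star-like with respect to $g$ and fix $u \in U \cap \supp g$ and $p \in \partial\psi(u)$. By \Href{Lemma}{lem:subdif_via_ncone} we have $(p,-1) \in \nfcone{\epi\psi}{(u,\psi(u))}$, so \Href{Lemma}{lem:nfcone} gives the nonzero normal $\upthing{v} = (p, 1/g(u)) \in \nfcone{\lifting g}{(u,g(u))}$. Evaluating the weak star-like condition on $\upthing{v}$ yields
\[
0 < \iprod{(u,g(u))}{(p, 1/g(u))} = \iprod{p}{u} + 1,
\]
which is exactly $\iprod{p}{u} > -1$.

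For the converse, fix $u \in U \cap \supp g$ and a nonzero $\upthing{v} = (v,\nu) \in \nfcone{\lifting g}{(u,g(u))}$; I must show $\iprod{u}{v} + g(u)\nu > 0$. By \Href{Lemma}{lem:nfcone}, $(v,-g(u)\nu) \in \nfcone{\epi\psi}{(u,\psi(u))}$, so $\nu \geq 0$. If $\nu > 0$, then $p := v/(g(u)\nu)$ lies in $\partial\psi(u)$ by \Href{Lemma}{lem:subdif_via_ncone}, and the hypothesis $\iprod{p}{u} > -1$ multiplied by $g(u)\nu>0$ rearranges to $\iprod{u}{v} + g(u)\nu > 0$. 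If $\nu = 0$, then by \Href{Lemma}{lem:nfcone_at_zero} we have $v \in \nfcone{\supp g}{u}$, and $v \neq 0$; since the origin lies in the interior of the convex set $\supp g$, a small ball $\varepsilon \ball{d}$ is contained in $\supp g$, so choosing $x = \varepsilon v/|v| \in \supp g$ in the defining inequality $\iprod{v}{x-u} \leq 0$ for outer normals gives $\iprod{v}{u} \geq \varepsilon |v| > 0$, as required.

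The step requiring the most care is the case $\nu = 0$: here the subgradient hypothesis is vacuous (no $p$ is produced), and positivity of $\iprod{(u,g(u))}{(v,0)} = \iprod{u}{v}$ must instead be extracted from the assumption that the origin is in the interior of $\supp g$. This is precisely why that assumption is built into the statement of the lemma.
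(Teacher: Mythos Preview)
Your proof is correct and follows essentially the same route as the paper: both arguments split the nonzero normals $\upthing{v}=(v,\nu)$ into the non-horizontal case $\nu>0$, where the correspondence with subgradients (Lemmas~\ref{lem:nfcone} and~\ref{lem:subdif_via_ncone}) yields $\iprod{\upthing{u}}{\upthing{v}}=\lambda(1+\iprod{p}{u})$ for some $\lambda>0$ and $p\in\partial\psi(u)$, and the horizontal case $\nu=0$, where positivity of $\iprod{u}{v}$ comes from the origin lying in the interior of $\supp g$. One small remark: your citation of \Href{Lemma}{lem:nfcone_at_zero} in the $\nu=0$ case presupposes $u\in\bd(\supp g)$; this is indeed forced (a nonzero horizontal normal cannot occur at an interior point, as \Href{Lemma}{lem:nfcone} shows $(v,0)\in\nfcone{\epi\psi}{(u,\psi(u))}$ and hence $v\in\nfcone{\dom\psi}{u}=\{0\}$ there), and the paper states this explicitly.
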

\begin{proof}
Fix $u \in U,$ denote $\upthing{u} = (u, g(u)),$ and let $\upthing{v}= \parenth{v, \frac{\nu}{g(u)}} \in \nfcone{\lifting{g}}{\upthing{u}}.$ If $\nu \neq 0.$
Lemmas~\ref{lem:nfcone} and \ref{lem:subdif_via_ncone},
yield  that $\nu  > 0$ and  
$v = \nu p$ for some  $p \in \partial \psi(u).$ 
Hence, $\iprod{\upthing{v}}{\upthing{u}} = \nu (1 + \iprod{p}{u}).$
If $\nu = 0,$ then $u$ is a point of the boundary of $\supp g.$ By 
\Href{Lemma}{lem:nfcone} and since the origin is in the interior of the convex set $\supp g,$ the  inequality $\iprod{\upthing{v}}{\upthing{u}} = \iprod{v}{u} > 0$ holds.
\end{proof}

\begin{remark}
Let $g = e^{-\psi} \colon \Red \to [0, + \infty)$ be an upper semi-continuous function, 
and let $u$ be a point on the boundary of $\supp g$ at which $\psi$ is not sub-differentiable.  It is not hard to show that $\nfcone{\lifting{g}}{(u, g(u))} = \nfcone{\supp g }{u}$ holds in this case. 
\end{remark}

\section{Properties of the set of contact pairs}\label{sec:boundedcontactpairs}

The main topic of the section is the question of when the set of contact pairs 
$\contactset{f}{g}$ is bounded in $\Redp \times \Redp$, as this is a crucial 
condition in Theorems~\ref{thm:john_condition_general} and 
\ref{thm:lowner_condition_general}.

Assume that $g \leq f$. Then $\contactset{f}{g} \subseteq \contactset{g}{g}$ and hence,
it suffices to impose conditions on $g$ to guarantee the boundedness of $\contactset{f}{g}$. 

\subsection{A difficulty: flat zeros}

In order to explain the difficulty of guaranteeing that the contact set is bounded, we first return to the setting of convex sets. 
Assume that $K$ is a compact convex subset of $\Red$ containing the origin in the interior. 
Then  for any $u \in \bd{K}$, the set 
\[
N_u =\{v \st \iprod{u}{v} = 1 \quad \text{and} \quad v \in \nfcone{K}{u}\}
\]
 is a closed subset of the boundary of ${K}^{\polar}$.
 Hence, the set of contact pairs
  \[
\mathcal{C} = \{(u, v) \st u \in \bd{K}, \; v \in N_u\} 
\]
is a closed subset of the compact set $\bd{K}\cap\bd{{K}^{\polar}}$, and hence, is compact. 

Next, consider the epigraph of a convex function $\psi$ with bounded domain.
It will be an unbounded convex set in $\Redp$. So, when we turn to the lifting of 
the corresponding log-concave function $g=e^{-\psi}$, we may find ill-behaved points. Namely, at any 
point $u \in \cl{\dom \psi} \setminus 
\dom \psi$, the normal cone of the epigraph of $\psi$ is undefined at $(u, 
\psi(u))$, since there is no such point, as $\psi(u)=\infty$. However, the 
normal cone to the lifting of $g$ at $(u, g(u)) = (u, 0)$ 
is well defined and non-empty by \Href{Corollary}{cor:normalcone_nonempty}. Most importantly, $\nfcone{\lifting{g}}{(u, 0)}$ 
may contain $e_{d+1}$. It is this particular case that requires additional care.  

\begin{dfn}\label{def:flatzero}
 For a log-concave function $g\colon\Red\to\Re$, we call a point $u\in\cl{\supp 
g}$ a \emph{flat zero}, if $g(u)=0$ and $e_{d+1}\in\nfcone{\lifting{g}}{(u, 0)}$.
\end{dfn}

To see why flat zeros pose a difficulty, assume that $g$ and $u$ are as in the definition above, and $f$ is a function with $g\leq f$ such that $f=g$ on some neighborhood of $u$. Let $v\in\Red$ be an outer normal vector of the support hyperplane of $\supp f$ at $u$ with $\iprod{u}{v}=1$. 
Then $\big((u,0),(v,\nu)\big)$ is in $\contactsetnr{f}{g}$ for all $\nu\in\Re$, and hence, $\contactset{f}{g}$ is not bounded.

%In other words, a proper log-concave function $g$ might be ``flat'' at the neighborhood of its zero 
%$u\in\cl{\supp g}$. 

Consider the following examples, $g_1,g_2 \colon \Red \to [0, +\infty)$.
\[
g_1(u) = 
 \begin{cases}
\parenth{1 -  \enorm{u}^2}^2,& 
\text{ if } \enorm{u} < 1\\
0,&\text{ otherwise}.
 \end{cases}
\]
\[
g_2(u) = 
 \begin{cases}
{e^{-\frac{1}{1 - \enorm{u}}}},& 
\text{ if } \enorm{u} < 1\\
0,&\text{ otherwise}.
\end{cases}
\]

Clearly, both $g_1$ and $g_2$ are proper log-concave functions of bounded support.
It is easy to see that the normal cone of both of them is 
$\nfcone{\lifting{g_i}}{(u,0)} =\{(\alpha u,\nu) \st \alpha \geq 0, \nu \in \R \}$ for any unit vector
$u \in \Red$. That is, the sets $\contactsetnr{g_1}{g_1}$ and 
$\contactsetnr{g_2}{g_2}$ are unbounded, and every unit vector is a flat zero. Even though for any fixed $\upthing{u}=(u,0)$ with $\enorm{u}=1$, the set $\{(\upthing{u},\upthing{v})\in\contactset{g_i}{g_i}\}$ is bounded, since only horizontal vectors $\upthing{v}$ are present in it, but if $\enorm{u}<1$ is close to 1, then $\{(\upthing{u},\upthing{v})\in\contactset{g_i}{g_i}\}$ becomes arbitrarily large. Hence $\contactset{g_i}{g_i}$ is not bounded.

Yet, there is a major difference between these two functions. The function $g_1$ was studied in \cite{ivanov2022functional} and the conditions of optimality equivalent to that of \Href{Theorem}{thm:john_condition_general} were obtained therein for an arbitrary proper log-concave function $f$. It was possible, since $g_1$ is $q$-concave with $q=1/4$ (see page~\pageref{page:qconcave} for the definition), and hence -- as we will see --, its flat zeros can be removed by taking the $q$-th power. On the other hand, $g_2$ is not $q$-concave for any $q>0$.

\subsection{Sufficient conditions}\label{subsec:sufficient_conditions}

In this subsection, we show that if $g(0)$ is close to $\min g$, then the conditions in our main theorems (\Href{Theorems}{thm:john_condition_general} and \ref{thm:lowner_condition_general}) hold essentially with any $f$.

\begin{lem}\label{lem:sufficient_condition_locstar_and_bounded}
Let $g = e^{-\psi} \colon \Red  \to [0, +\infty)$ be a function satisfying our Basic Assumptions (see page~\pageref{assumptions:basic}). 
Denote the minimum of $\psi$ by $m$. Assume that  the inequality $\psi(0) < m+1$ holds.
Then $\supp g$ is a \wlocstar$g$. Additionally, let $U$ be a  subset of $\Red$  such that
$\inf\limits_{x \in \ \!\! U \ \! \cap \ \! \supp g } g(x) > 0,$ then 
then the 
set
$
C = \big\{
((u, f(u)), \upthing{v}) \in \contactset{g}{g} \st u \in U 
\big\}
$ is bounded. 
\end{lem}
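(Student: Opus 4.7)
The plan is to prove the two assertions in turn, both via the explicit formulas for contact vectors in terms of subgradients given by \Href{Lemma}{lem:ncone_flat_normal} and \Href{Lemma}{lem:v_via_grad}, together with the subgradient inequality for $\psi$.

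For the first assertion, \Href{Lemma}{lem:starlike_criteria} reduces the task to showing $\iprod{p}{u} > -1$ for every $u \in \supp g$ and every $p \in \partial \psi(u)$. Plugging $y = 0$ into the defining inequality $\psi(y) \geq \psi(u) + \iprod{p}{y - u}$ gives $\iprod{p}{u} \geq \psi(u) - \psi(0) \geq m - \psi(0)$, which is strictly greater than $-1$ by the hypothesis $\psi(0) < m+1$.

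For the second assertion, the $u$-component of any element of $C$ is bounded since $\supp g$ is bounded, and the height $g(u)$ is bounded above by the maximum of $g$; what remains is to bound $\enorm{\upthing{v}}$ uniformly. Using the Basic Assumptions, fix $\delta > 0$ with $\delta \ball{d} \subset \interior \supp g$, and set $M' = \sup_{\delta \ball{d}} \psi < \infty$ (finite by continuity of a convex function on the interior of its domain). If $\upthing{v} = (v,0)$ is horizontal, then \Href{Lemma}{lem:ncone_flat_normal} writes $\upthing{v} = (p,0)/\iprod{p}{u}$ for some non-zero $p \in \nfcone{\supp g}{u}$; testing the defining inequality of the normal cone at the point $y = \delta p/\enorm{p} \in \supp g$ yields $\iprod{p}{u} \geq \delta \enorm{p}$, so $\enorm{\upthing{v}} \leq 1/\delta$. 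If $\upthing{v}$ is non-horizontal, then by the definition of $\contactset{g}{g}$ we automatically have $u \in \supp g$, and \Href{Lemma}{lem:v_via_grad} writes $\upthing{v} = (p, 1/g(u))/(1 + \iprod{p}{u})$ with $p \in \partial \psi(u)$. The hypothesis $\inf_{U \cap \supp g} g > 0$ bounds $1/g(u)$, while testing the subgradient inequality at $y = \delta p/\enorm{p}$ gives $\iprod{p}{u} \geq \delta \enorm{p} + m - M'$; combined with the strictly positive lower bound $1 + \iprod{p}{u} \geq 1 + m - \psi(0) > 0$ coming from Part~1, this yields a uniform upper bound on $\enorm{p}/(1 + \iprod{p}{u})$, both when $\enorm{p}$ is large (the denominator grows linearly in $\enorm{p}$) and when $\enorm{p}$ is bounded (the denominator is bounded below by a positive constant).

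The main obstacle is that at boundary points of $\supp g$ where $g$ is still positive, the subdifferential of $\psi$ is unbounded, so one cannot directly invoke the Lipschitz bound of \Href{Lemma}{lem:subdif_lipchitz}. The resolution is the two-test-point trick: the subgradient inequality at $y = 0$ provides the strictly positive lower bound on $1 + \iprod{p}{u}$, while testing again at $y = \delta p/\enorm{p}$ extracts enough linear growth of $\iprod{p}{u}$ in $\enorm{p}$ to cancel the numerator in the ratio defining $\upthing{v}$.
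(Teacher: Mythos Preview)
Your proof is correct. The first assertion is handled exactly as in the paper: both arguments plug $y=0$ into the subgradient inequality to get $\iprod{p}{u}\ge m-\psi(0)>-1$; you simply invoke the packaged criterion of \Href{Lemma}{lem:starlike_criteria}, whereas the paper repeats that computation directly.

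For the boundedness assertion your route differs from the paper's. The paper first bounds the vertical component $\nu$ via \eqref{eq:admissible_center}, then splits into two regions: near the origin it invokes the Lipschitz bound on subgradients (\Href{Lemma}{lem:subdif_lipchitz}) to control $\enorm{v}$, and away from the origin it uses that $v$ lies in the normal cone of the superlevel set $[g\ge g(u)]$ (identity \eqref{eq:normalconelevelset}) to bound $\enorm{v}$. Your argument is more uniform: a single test of the subgradient inequality at $y=\delta p/\enorm{p}$ gives $\iprod{p}{u}\ge\delta\enorm{p}+m-M'$, so the denominator $1+\iprod{p}{u}$ grows linearly in $\enorm{p}$ while also staying above the fixed positive constant $1+m-\psi(0)$, which bounds the whole ratio without any regional split. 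This is a bit more elementary and avoids appealing to \Href{Lemma}{lem:subdif_lipchitz}. One small omission: when $p=0$ the test point $\delta p/\enorm{p}$ is undefined, but then $\upthing{v}=(0,1/g(u))$ has norm at most $1/L$, so this case is trivial.
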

\begin{proof}
 Fix $\upthing{u}=(u,g(u)) \in \essgraph g$ and let 
$\upthing{v} = (v,\nu) \in \nfcone{\lifting{g}}{(u,g(u))} \setminus \{0\}.$ 

Consider the case when $\nu=0$. By \Href{Lemma}{lem:nfcone}, $u$ belongs to the boundary of $\supp{g}$ and $v \in \nfcone{\supp{g}}{u} \subset \Red$. 
Since  $\supp g$ is a convex set containing the origin in its interior, 
there are positive constants $\delta_0$ and $\delta_1$  independent of $u$ and $\upthing{v}$ such that $\enorm{\upthing{v}} < \delta_0$ and  $\iprod{\upthing{v}}{\upthing{u}} > \delta_1$.

Now, assume that $\nu \neq 0$.
By \Href{Lemma}{lem:nfcone} and the convexity of $\psi$, we have that $\nu > 0$ and 
\[
 \iprod{v}{u}  \geq {\nu}{g(u)}\parenth{ \psi(u) - \psi (0)} \geq
 {\nu}{g(u)} \parenth{ m - \psi (0)}.
\]
Hence,
\begin{equation}\label{eq:admissible_center}
\iprod{\upthing{v}}{\upthing{u}} =  \iprod{v}{u} + \nu g(u) \geq
{\nu}{g(u)} \parenth{ m - \psi (0) + 1} > 0.
\end{equation}
We conclude that $\supp g$ is a \wlocstar$g$.

Set
$L= \inf\limits_{x \in U \cap \supp g} g(x).$ 
Now, assume that $\iprod{\upthing{v}}{\upthing{u}} = 1.$
Then \eqref{eq:admissible_center} yields
\[
\nu  \leq \frac{1}{\parenth{ m - \psi (0) + 1}L }
\]
for any $( (u, f(u)), \upthing{v}) \in \contactset{g}{g}$ such  that $u \in U$. 
Thus, $\nu$ cannot be too large.
\Href{Lemma}{lem:subdif_lipchitz}  and \Href{Lemma}{lem:nfcone} imply that the set 
of contact pairs
$
((u,f(u)), \upthing{v}) \in \contactset{g}{g},
$
 where $u$ is in a sufficiently small neighborhood $V$  of the origin, is bounded. 
 Using convexity again and  identity \eqref{eq:normalconelevelset} of \Href{Lemma}{lem:nfcone}, we see that  
 $\enorm{v}$ cannot be too large outside $V,$ and thus, the proof of the lemma is complete.

\end{proof}

As a direct consequence of this result, we obtain the following.
\begin{cor}\label{cor:suffiient_cond_starlike+bounded}
Let $\gbase \colon \Red \to [0, \infty)$ be an upper semi-continuous function such that 
$\logconc{\gbase}$ satisfies  our Basic Assumptions (see page~\pageref{assumptions:basic}). 
%Additionally,  assume $\supp \logconc{\gbase}$ is compact. 
Denote the minimum of $-\ln \logconc{\gbase}$  by $m.$ Assume  
the inequality 
\[
-\ln \logconc{\gbase}(0) < m +1 
\]
holds. 
Then for any proper log-concave function $f \colon \Red \to [0, +\infty),$ 
the set $\contactpoint{f}{\gbase}$ is a \wlocstar$f$.
Moreover, if the infimum of $\gbase$ taken over the set $\contactpoint{f}{\gbase} \cap \supp \gbase$ is strictly greater than zero, then the set
$\contactset{f}{\gbase}$ is bounded. 
\end{cor}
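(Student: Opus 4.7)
The plan is to apply \Href{Lemma}{lem:sufficient_condition_locstar_and_bounded} directly to $g := \logconc{\gbase}$ and transfer its conclusions back to the pair $(f, \gbase)$. The hypothesis of the corollary is exactly that of the lemma with $g$ in the role of the lemma's $g$, so both the assertion that $\supp g$ is a \wlocstar$g$ and the boundedness of slices of $\contactset{g}{g}$ over sets on which $g$ is bounded away from zero are immediately available.

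The transfer rests on one observation: at any contact point $u \in \contactpoint{f}{\gbase} \cap \supp f$, the sandwich $\gbase \leq g \leq f$ (which holds in the setting where the corollary is used, since $g = \logconc{\gbase}$ is the smallest upper semi-continuous log-concave majorant of $\gbase$ and $f$ is itself such a majorant) collapses to equality: from $f(u) = \gbase(u)$ and $\gbase(u) \leq g(u) \leq f(u)$ we obtain $\gbase(u) = g(u) = f(u)$. Hence $u$ is also a contact point of $f$ and $g$, and the inclusion $\lifting{g} \subseteq \lifting{f}$ together with the fact that both liftings touch at $(u,f(u)) = (u,g(u))$ yields
\[
\nfcone{\lifting{f}}{(u,f(u))} \subseteq \nfcone{\lifting{g}}{(u,g(u))}.
\]
The defining inequality of the star-like property for $g$, given by the lemma, then transfers verbatim along this inclusion to the analogous inequality for $f$, which proves the first assertion.

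For the boundedness of $\contactset{f}{\gbase}$ under the additional inf-positivity hypothesis, I would split the reduced contact set according to whether $f(u) > 0$ or $f(u) = 0$. Pairs with $u \in \supp \gbase$ are embedded, by the normal-cone inclusion above, into the slice of $\contactset{g}{g}$ over $U := \contactpoint{f}{\gbase} \cap \supp \gbase$; on $U$ we have $g = \gbase$, so the hypothesis $\inf_U \gbase > 0$ gives $\inf_{U \cap \supp g} g > 0$, and the second half of \Href{Lemma}{lem:sufficient_condition_locstar_and_bounded} bounds this portion. For the flat-zero pairs (with $f(u) = \gbase(u) = 0$, which the reduced set forces to have $\upthing{v} = (v, 0)$), the points $u$ lie in the bounded set $\cl{\supp \gbase}$, and the presence of a ball $\delta \ball{d} \subseteq \supp g \subseteq \supp f$ around the origin bounds $\enorm{v} \leq 1/\delta$ via $\iprod{v}{u} = 1$ and $v \in \nfcone{\supp f}{u}$.

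The only substantive step is the collapse of $\gbase \leq g \leq f$ to equality at a contact point in $\supp f$; once this is in place, the normal cones align along $\lifting{g} \subseteq \lifting{f}$ and the rest is routine bookkeeping on top of the previous lemma.
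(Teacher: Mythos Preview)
Your argument is correct and is exactly the ``direct consequence'' the paper has in mind: apply \Href{Lemma}{lem:sufficient_condition_locstar_and_bounded} to $g=\logconc{\gbase}$, then use $\lifting g\subseteq\lifting f$ to get the Fr\'echet normal-cone inclusion $\nfcone{\lifting f}{\upthing u}\subseteq\nfcone{\lifting g}{\upthing u}$ at a common point, which transfers the star-like inequality and embeds the non-flat part of $\contactset{f}{\gbase}$ into a bounded slice of $\contactset{g}{g}$; the flat part is controlled by $\nfcone{\supp f}{u}$ and the inner ball in $\supp f$, just as you say.

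Your parenthetical observation is more than cosmetic and worth stating explicitly: the corollary as printed omits the hypothesis $\gbase\leq f$, and without it the first assertion is actually false. In $\Re^1$, take $\gbase=e^{-2}\chi_{[-1,1]}$ (so $\logconc{\gbase}=\gbase$, $m=2$, and $-\ln\gbase(0)=2<m+1$) and $f(x)=e^{-2|x-2|}$. Then $\contactpoint{f}{\gbase}=\{1\}$, while at $u=1$ one has $p=-2\in\partial(-\ln f)(1)$ with $\iprod{p}{u}=-2<-1$, so by \Href{Lemma}{lem:starlike_criteria} the set $\{1\}$ is not a \wlocstar$f$. The inclusion $\gbase\leq f$ (hence $g\leq f$ by the log-concavity of $f$) is precisely what makes the sandwich $\gbase\leq g\leq f$ collapse at contact points and the normal-cone transfer go through; it holds in every place the corollary is invoked in the paper.
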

%\begin{proof}
%Since $\supp \logconc{\gbase}$ is compact and by convexity, 
%$
%\inf\limits_{x \in \supp g} g(x) > 0.
%$
%The statement follows from \Href{Lemma}{lem:sufficient_condition_locstar_and_bounded}.
%\end{proof}
\begin{remark}
If $\logconc{\gbase}$ is of compact support in the assertion of \Href{Corollary}{cor:suffiient_cond_starlike+bounded}, then for any function $f$ such that the set
$S = \contactpoint{f}{\gbase} \cap \supp \gbase$ is non-empty, the infimum of $\gbase$ taken over $S$ is strictly greater than zero.
\end{remark}

\subsection{Taking the reduced set of contact pair yields no loss}

We recall that in the assertions of our theorems formulated in the Introduction,
we consider the full set of contact pairs, $\contactsetnr{f}{g}$, and not the \emph{reduced} set $\contactset{f}{g}$. The following simple observation allows us to do so.

\begin{lem}[The vertical component of $\upthing{v}$ may be ignored at a zero]\label{lem:flattened_normal_at_zero}
Let $f \colon \Red \to [0, +\infty)$ be an upper semi-continuous log-concave 
function, and $u \in \cl{\supp f}$ be such that $f(u) = 0$. Let $\upthing{v} 
= (v, \nu) \in \nfcone{\lifting{f}}{\upthing{u}}$, where $\upthing{u}=(u, 
f(u))$,  be such that $\iprod{\upthing{v}}{\upthing{u}} = 1$.
 Then  $v \in \nfcone{\supp f}{u}$, and
 \[
 \contactopjohn{\upthing{u}}{\upthing{v}} = \contactopjohn{\upthing{u}}{(v,0)} 
 \quad  \text{and} \quad  \contactoplowner{\upthing{u}}{\upthing{v}} = \contactoplowner{\upthing{u}}{(v,0)}.
 \]
\end{lem}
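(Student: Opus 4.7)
The plan is to verify the three assertions directly from the definitions of the normal cone and the extended contact operators, using that $f(u)=0$ forces $\upthing{u}=(u,0)$, so that any term carrying the vertical component $\mu=f(u)$ of $\upthing{u}$ vanishes.

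First, for the inclusion $v\in\nfcone{\supp f}{u}$: since $f(u)=0$ and $u\in\cl{\supp f}$, the point $u$ lies on the boundary of $\supp f$ (it cannot be interior, as $f>0$ on $\supp f$). I can then apply \Href{Lemma}{lem:nfcone_at_zero}, which identifies
\[
\projond\!\parenth{\nfcone{\lifting{f}}{\upthing{u}}}=\nfcone{\supp f}{u}.
\]
Since $(v,\nu)\in\nfcone{\lifting{f}}{\upthing{u}}$, projecting onto the first $d$ coordinates yields $v\in\nfcone{\supp f}{u}$. The normalization condition $\iprod{\upthing{v}}{\upthing{u}}=1$ specializes to $\iprod{v}{u}=1$, which is internally consistent but not needed for this step.

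Second, for the identities of the contact operators, I just unwind the two definitions. By \Href{Definition}{dfn:contactop}, with $\upthing{u}=(u,\mu)=(u,0)$ and $\upthing{v}=(v,\nu)$,
\[
\contactopjohn{\upthing{u}}{\upthing{v}}=\parenth{(u\otimes v)\oplus\mu\nu,\,v}=\parenth{(u\otimes v)\oplus 0,\,v},
\]
and this expression is identical to $\contactopjohn{\upthing{u}}{(v,0)}$, since replacing $\nu$ by $0$ changes only the scalar $\mu\nu=0$. Analogously, by \Href{Definition}{dfn:contactoplowner},
\[
\contactoplowner{\upthing{u}}{\upthing{v}}=\parenth{(v\otimes u)\oplus\mu\nu,\,\mu\nu u}=\parenth{(v\otimes u)\oplus 0,\,0},
\]
which again coincides with $\contactoplowner{\upthing{u}}{(v,0)}$. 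Hence both operator identities hold.

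There is essentially no obstacle here: the lemma is a direct consequence of the definitions together with \Href{Lemma}{lem:nfcone_at_zero}. The only subtlety worth flagging is that the statement is precisely what allows us, in the proofs of the main theorems, to replace the full contact set $\contactsetnr{f}{g}$ by its reduced version $\contactset{f}{g}$ at those zero contact points where flat normals might otherwise force unbounded vertical components of $\upthing{v}$.
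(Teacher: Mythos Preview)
Your proof is correct and matches the paper's approach: the paper presents this lemma as a ``simple observation'' without a formal proof, and your argument---invoking \Href{Lemma}{lem:nfcone_at_zero} for the normal-cone inclusion and then directly unwinding Definitions~\ref{dfn:contactop} and~\ref{dfn:contactoplowner} with $\mu=f(u)=0$---is exactly the intended verification.
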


Thus, if no contact pairs from $\contactsetnr{f}{g}$ (resp., $\contactsetnr{\loglego{f}}{\loglego{g}}$) satisfy the equations in 
\Href{Theorem}{thm:john_condition_general} (resp., 
\Href{Theorem}{thm:lowner_condition_general}), then one can consider only horizontal normals at the zero level when studying separation of the set of extended contact operators from the point $\parenth{\id_{d} \oplus s, 0}$ in $\MMbar$.

A more geometric, less algebraic explanation of this fact is that in the proof of   
\Href{Theorem}{thm:john_condition_general} (resp., 
\Href{Theorem}{thm:lowner_condition_general}), a point  $ \upthing{u}=(u, 0)$ remains in 
$\Red$ under the corresponding homotopy. That is, the last coordinate of the 
normal vector does not play a role.

\section{Radially symmetric functions}\label{sec:radially_symmetric}

%\subsection{Radially symmetric functions}\label{subsec:radially_symmetric}
We call a function $f$ on $\Red$ \emph{radially symmetric},
if it is of the form $f(x) = F(\enorm{x})$, where $F$ is a function on 
$[0,+\infty)$. Clearly, the sets of positions and positive positions  of  a radially symmetric function coincide. Consequently, the identities 
\eqref{eq:functional_glmp} and \eqref{eq:functional_glmp-lowner} are the necessary and sufficient condition in \Href{Theorem}{thm:john_condition_general} and 
\Href{Theorem}{thm:lowner_condition_general}, respectively. This way, the 
results of \cite{ivanov2022functional}  can be recovered from  \Href{Theorem}{thm:john_condition_general}.

\section{The \texorpdfstring{$q$}{q}-concave case}\label{sec:qconcave}

\subsection{Immediate corollaries of Theorems~\ref{thm:john_condition_general} and \ref{thm:lowner_condition_general}}

Fix $s > 0$ and $q > 0$.
If a function $g \colon\Red\to[0,+\infty)$ is a proper $q$-concave function, then
$\lifting{g^q}$ is a compact convex set with non-empty interior in $\Redp$, which yields the following.

\begin{lem}\label{lem:qpowerok}
Let  $g \colon\Red\to[0,+\infty)$ be a proper $q$-concave function containing the origin in the interior of the support. Then $\supp{g^q}$ is a \locstar $g^q$ and 
the set $\contactset{g^q}{g^q}$ is bounded. 
\end{lem}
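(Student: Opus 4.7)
The plan is to work entirely with $K := \lifting{g^q}$, which, as the paragraph preceding the lemma already notes, is a compact convex subset of $\Redp$ with nonempty interior. The preliminary step is to verify that $0 \in \interior K$: since the origin belongs to the interior of $\supp g = \supp g^q$ and $g^q(0)>0$, and since $K$ is symmetric about $\Red$, it contains a product neighborhood $\delta\ball{d}\times[-\eta,\eta]$ for suitable $\delta,\eta>0$. Hence $K$ is a convex body with the origin in its interior, and one can choose $r>0$ with $r\ball{d+1}\subset K$.

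The next step is to derive both assertions from a single elementary estimate. Fix any $\upthing{u}\in\bd K$ and any nonzero $\upthing{v}\in\nfcone{K}{\upthing{u}}$. Because $K$ is convex, its Fr\'echet normal cone coincides with the usual convex-analytic normal cone by \eqref{eq:normal_cone_dual_to_poscone}, so $\iprod{\upthing{v}}{x-\upthing{u}}\le 0$ for every $x\in K$. Plugging in $x=r\upthing{v}/\enorm{\upthing{v}}\in K$ yields the key inequality
\[
\iprod{\upthing{v}}{\upthing{u}} \;\ge\; r\enorm{\upthing{v}} \;>\; 0.
\]

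Applied to $\upthing{u}=(u,g^q(u))$ for an arbitrary $u\in\supp g^q$, this inequality is exactly the statement that $\supp g^q$ is a \wlocstar$g^q$. For the boundedness of the reduced contact set $\contactset{g^q}{g^q}$, the same inequality combined with the normalization $\iprod{\upthing{u}}{\upthing{v}}=1$ forces $\enorm{\upthing{v}}\le 1/r$, while $\upthing{u}\in\essgraph{g^q}\subset K$ is uniformly bounded by compactness of $K$; this handles the branch where $g^q(u)>0$. For the remaining branch, where $g^q(u)=0$ and $\upthing{v}=(v,0)$ is horizontal, the same computation carried out in $\Redp$ with $\upthing{u}=(u,0)\in K\cap\Red$ again delivers $\enorm{v}\le 1/r$ and $\enorm{u}$ bounded, because the inclusion $r\ball{d+1}\subset K$ applies regardless of the last coordinate.

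I do not anticipate a genuine obstacle here. The only mildly delicate point is the two-branch structure of the reduced contact set, where flat zeros on $\bd\supp g$ might a priori cause trouble; but since $K$ is a bona fide convex body with $0$ in its interior, the uniform bound $\enorm{\upthing{v}}\le 1/r$ is insensitive to whether the last coordinate of $\upthing{u}$ is positive or zero, so both branches of $\contactset{g^q}{g^q}$ are controlled simultaneously by the same estimate.
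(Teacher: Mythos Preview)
Your proof is correct and is essentially the same approach the paper takes: the paper simply asserts that the lemma follows from $\lifting{g^q}$ being a compact convex body in $\Redp$, and you have spelled out the standard reason why (origin in the interior forces $\iprod{\upthing{u}}{\upthing{v}}\ge r\enorm{\upthing{v}}>0$, and the normalization then bounds $\enorm{\upthing{v}}$).
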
  
Thus, we have the following results.
\begin{cor}\label{cor:john_cond-q-concave}
Fix $s>0$, and let $f,g \colon\Red\to[0,+\infty)$ be two proper log-concave functions.
Additionally, assume  $g$ is $q$-concave with some $q>0$ and contains the origin in the interior of its support.
\begin{enumerate}
		\item
			If $h=g$  is  a local maximizer in  John $s$-problem \eqref{eq:john_problem_intro},
			then there exist contact pairs 
			$(\tilde{u}_1,\tilde{v}_1)$, $\dots$,  
			$(\tilde{u}_m,\tilde{v}_m)$ $\in \contactset{f^q}{g^q}$
			and positive weights $\tilde{c}_1,\dots,\tilde{c}_m$  satisfying
			\begin{equation}\label{eq:functional_glmp_concave}
	\sum_{i=1}^{m} \tilde{c}_i {u}_i \otimes {v}^\prime_i = 
	\id_{d}, \quad 
	\sum_{i=1}^{m} \tilde{c}_i f^q(u_i){\nu}^{\prime}_i =  \frac{s}{q}
		\quad\text{and}\quad
		\sum_{i=1}^{m} \tilde{c}_i {v}^\prime_i=0,
\end{equation}
where $\tilde{u}_i=(u_i, f^q(u_i))$ and $\tilde{v}_i=(v^\prime_i,\nu^\prime_i)$.
		\item
			If there exist contact pairs and positive weights  satisfying  equation \eqref{eq:functional_glmp_concave},
			then $g$ is a global maximizer in  Positive position John $s$-problem \eqref{eq:john_problem_pos}.
	\end{enumerate}
\end{cor}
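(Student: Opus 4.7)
The plan is to reduce the statement to \Href{Theorem}{thm:john_condition_general} by passing from $(f,g)$ to the pair $(f^q, g^q)$ and from the exponent $s$ to $s/q$. The key bijection is $h\mapsto h^q$: a position $h(x)=\alpha g(Ax+a)$ of $g$ corresponds to $h^q(x)=\alpha^q g^q(Ax+a)$, and the underlying parameter change $(A\oplus\alpha,a)\mapsto(A\oplus\alpha^q,a)$ is a homeomorphism of $\MM$ that preserves $\MM^+$. Since $h\leq f$ is equivalent to $h^q\leq f^q$ and $\int h^s=\int(h^q)^{s/q}$, this bijection identifies local (respectively global) maximizers in the John $s$-problem for $(f,g)$ with local (respectively global) maximizers in the John $(s/q)$-problem for $(f^q, g^q)$, and analogously for the positive position variants.

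Before invoking the theorem I would verify its hypotheses for $\gbase = g^q$ with $f^q$ playing the role of $f$. Being $q$-concave, $g^q$ is concave on its support, hence log-concave, so $\logconc{g^q}=g^q$. A short argument rules out unbounded support for proper $q$-concave functions: on any ray contained in $\supp g$, the non-negative concave function $g^q$ must either be constant (giving infinite integral) or reach zero at a finite point (contradicting the ray lying inside $\supp g$), so $\supp g$ is bounded; together with the hypothesis that the origin lies in $\interior\supp g$, this gives that $g^q$ satisfies the Basic Assumptions. \Href{Lemma}{lem:qpowerok} then yields that $\supp g^q$ is a \wlocstar$g^q$ and that $\contactset{g^q}{g^q}$ is bounded. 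At every common contact point $u$ one has $g^q(u)=f^q(u)$ and the inclusion $\nfcone{\lifting{f^q}}{(u,f^q(u))}\subseteq\nfcone{\lifting{g^q}}{(u,g^q(u))}$ (because $g^q\leq f^q$ with equality there), which forces $\contactpoint{f^q}{g^q}$ to be a \wlocstar$f^q$ and gives $\contactset{f^q}{g^q}\subseteq\contactset{g^q}{g^q}$, hence bounded.

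With the hypotheses in place, applying \Href{Theorem}{thm:john_condition_general} to $(f^q, g^q)$ with exponent $s/q$ produces contact pairs $(\tilde u_i,\tilde v_i)\in\contactset{f^q}{g^q}$ and positive weights $\tilde c_i$ satisfying exactly the three identities of \eqref{eq:functional_glmp_concave}; the scalar identity reads $\sum\tilde c_i f^q(u_i)\nu_i'=s/q$ precisely because the exponent has been renormalized. Conversely, any such decomposition certifies that $g^q$ is a global maximizer in the Positive position John $(s/q)$-problem for $(f^q, g^q)$. Translating both implications through the bijection $h\mapsto h^q$ yields the corollary. The only step requiring genuine care is the transfer of the \wlocstar and boundedness hypotheses from $g^q$ to $f^q$, and this is handled by the normal-cone inclusion at contact points; the rest is bookkeeping with the reparametrization.
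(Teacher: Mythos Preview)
Your proposal is correct and follows essentially the same approach as the paper: pass to the John $(s/q)$-problem for $(f^q,g^q)$, invoke \Href{Lemma}{lem:qpowerok} to check the hypotheses of \Href{Theorem}{thm:john_condition_general}, and apply it. Your write-up simply fills in more detail than the paper's one-line proof---in particular the explicit reparametrization $(A\oplus\alpha,a)\mapsto(A\oplus\alpha^q,a)$ and the transfer of the star-like and boundedness conditions from $g^q$ to $f^q$ via the normal-cone inclusion, which the paper records separately at the start of \Href{Section}{sec:boundedcontactpairs}.
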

\begin{proof}
We consider the corresponding John $\frac{s}{q}$-problem for functions $f^q$ and $g^q$, and by \Href{Lemma}{lem:qpowerok} we may apply \Href{Theorem}{thm:john_condition_general}.
\end{proof}
\begin{cor}\label{cor:lowner_cond-q-concave}
Fix $s>0$, and let $f,g \colon\Red\to[0,+\infty)$ be two proper log-concave functions.
Assume that $\loglego{g}$ is $q$-concave with some $q>0$, and contains the origin in the interior of its support.
Then the following hold.
	\begin{enumerate}
		\item
			If $h=g$ is a local minimizer in  L\"owner $s$-problem \eqref{eq:lowner_problem_intro} for $f$ and $g$,
			then there exist contact pairs 
			$(\tilde{u}_1,\tilde{v}_1), \dots,  
			(\tilde{u}_m,\tilde{v}_m) \in 
			\contactset{\loglego{\parenth{g^q}}}{\loglego{\parenth{f^q}}}$
			and positive weights $\tilde{c}_1,\ldots,\tilde{c}_m$ such that
		\begin{equation}\label{eq:functional-glmp-lowner_concave}
		\sum_{i=1}^{m} \tilde{c}_i {v}^\prime_i \otimes {u}^\prime_i = 
		 \id_{d}, \quad 
		 \sum_{i=1}^{m} \tilde{c}_i \loglego{\parenth{g^q}}(u_i^\prime) \cdot \nu_i^\prime =  \frac{s}{q}.
					\quad\text{and}\quad
					\sum_{i=1}^{m} \tilde{c}_i\loglego{\parenth{g^q}}(u_i^\prime) \cdot \nu_i^\prime u_i^\prime=0,
        \end{equation} 
        where $\tilde{u}_i=(u_i^\prime, \loglego{\parenth{g^q}}(u_i^\prime))$ and $\tilde{v}_i=(v_i^\prime,\nu_i^\prime)$.
		\item
			If there exist contact pairs and positive weights satisfying equation \eqref{eq:functional_glmp-lowner},
			then $h=g$ is a global maximizer in Positive position L\"owner $s$-problem \eqref{eq:lowner_problem_pos} for $f$ and $g$.
	\end{enumerate}
\end{cor}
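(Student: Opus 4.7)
The plan is to mirror the proof of Corollary \ref{cor:john_cond-q-concave} and reduce to Theorem \ref{thm:lowner_condition_general} via the substitution $(f, g, s) \mapsto (f^q, g^q, s/q)$. The map $h \mapsto h^q$ is a bijection between $\funpos{g}$ and $\funpos{g^q}$ that preserves the constraint $f \leq h \Leftrightarrow f^q \leq h^q$ and transforms the objective via $\int h^s = \int (h^q)^{s/q}$, so the L\"owner $s$-problem for $(f, g)$ is equivalent to the L\"owner $s/q$-problem for $(f^q, g^q)$, and local/global minimizers correspond under this bijection.

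The heart of the argument is the verification of the hypotheses of Theorem \ref{thm:lowner_condition_general} for $(f^q, g^q)$. Log-concavity and properness of $f^q$ and $g^q$ are immediate (finite integral via the exponential decay bound \eqref{eq:proper_log-concave_bound}). The key computation is the polar--power identity
\[
\loglego{\parenth{g^q}}(y) = \parenth{\loglego{g}(y/q)}^q,
\]
which follows from the Legendre identity $(q\psi)^*(y) = q\psi^*(y/q)$ applied to $\psi = -\ln g$. Since $\loglego{g}$ is $q$-concave by hypothesis, $(\loglego{g})^q$ is concave on its support, and so $\loglego{\parenth{g^q}}$, being a rescaling of $(\loglego{g})^q$, is also concave on its support, i.e.\ $1$-concave. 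The Basic Assumptions on $\loglego{g}$ transfer to $\loglego{\parenth{g^q}}$ through the scaling $y \mapsto qy$ appearing in the identity.

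Applying Lemma \ref{lem:qpowerok} to $\loglego{\parenth{g^q}}$ with exponent $1$ now gives that $\supp{\loglego{\parenth{g^q}}}$ is a \wlocstar$\loglego{\parenth{g^q}}$ and that $\contactset{\loglego{\parenth{g^q}}}{\loglego{\parenth{g^q}}}$ is bounded. Since $f \leq g$ yields $\loglego{\parenth{g^q}} \leq \loglego{\parenth{f^q}}$, at any common contact point $\upthing{u}$ the inclusion $\nfcone{\lifting{\loglego{\parenth{f^q}}}}{\upthing{u}} \subseteq \nfcone{\lifting{\loglego{\parenth{g^q}}}}{\upthing{u}}$ shows that $\contactpoint{\loglego{\parenth{f^q}}}{\loglego{\parenth{g^q}}}$ is a \wlocstar$\loglego{\parenth{f^q}}$ and that $\contactset{\loglego{\parenth{f^q}}}{\loglego{\parenth{g^q}}}$ is a subset of $\contactset{\loglego{\parenth{g^q}}}{\loglego{\parenth{g^q}}}$, hence bounded. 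All hypotheses of Theorem \ref{thm:lowner_condition_general} now hold for $(f^q, g^q)$ with parameter $s/q$, and that theorem produces contact pairs and positive weights satisfying equation \eqref{eq:functional_glmp-lowner} with $s$ replaced by $s/q$; by the symmetry of the reduced contact-pair set at contact points, this is precisely equation \eqref{eq:functional-glmp-lowner_concave}. The sufficient direction follows by the same reduction. The only mildly delicate point is the polar--power identity together with the observation that the $q$-concavity of $\loglego{g}$ passes to $1$-concavity of $\loglego{\parenth{g^q}}$; once this is pinned down, everything else is a direct application of Lemma \ref{lem:qpowerok} and Theorem \ref{thm:lowner_condition_general}.
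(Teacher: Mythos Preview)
Your proposal is correct and follows essentially the same route as the paper's proof: reduce to the L\"owner $s/q$-problem for $(f^q,g^q)$ via the polar--power identity $\loglego{(g^q)}(y) = (\loglego{g}(y/q))^q$, observe that $\loglego{(g^q)}$ is $1$-concave, and invoke Lemma~\ref{lem:qpowerok} to verify the hypotheses of Theorem~\ref{thm:lowner_condition_general}. You supply considerably more detail than the paper's three-line argument (the explicit transfer of the star-like and boundedness conditions via the normal-cone inclusion, and the symmetry $\contactset{\cdot}{\cdot}=\contactset{\cdot}{\cdot}$ handling the order of arguments), but the underlying strategy is identical.
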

\begin{proof}
Observe that 
\[
\loglego{\parenth{g^{q}}}(y) = \parenth{\loglego{g}\!\parenth{\frac{y}{q}}}^{q}
\]
for any  $q > 0.$ Hence, $\loglego{\parenth{g^q}}$ is $1$-concave.
 We consider the corresponding L\"owner $\frac{s}{q}$-problem for functions $f^q$ and $g^q$, and by \Href{Lemma}{lem:qpowerok} we may apply \Href{Theorem}{thm:lowner_condition_general}.
\end{proof}

We note that in these two corollaries, we choose the origin arbitrarily inside the interior of the support of corresponding functions. %This freedom comes at a cost. 
To obtain the theorems formulated in the Introduction, we need a more subtle argument.

\subsection{Taking power and algebraic identities involving the contact pairs}

In this subsection, we discuss how replacing $f$ and $g$ by $f^q$ and $g^q$ changes the form of identities \eqref{eq:functional_glmp} and \eqref{eq:functional_glmp-lowner}.

\begin{lem}\label{lem:equiv_john_cond_power}
Fix $s > 0$ and $ q \in ( 0, 1]$. Let $f\colon\Red \to [0,+\infty)$ be a proper log-concave function,
and let $\gbase \colon \Red \to [0, \infty)$ be an upper semi-continuous function 
such that 
\begin{itemize}
\item $\logconc{\gbase}$ satisfies our Basic Assumptions (see page~\pageref{assumptions:basic}); 
\item the set of contact points $\contactpoint{f}{\gbase}$ is a \wlocstar$f$.
\end{itemize}
Set $\upthing{u}_i = (u_i, f(u_i))$ and $\tilde{u}_i = (u_i, f^q(u_i)).$
Then the following assertions are equivalent:
\begin{itemize}
\item  There are contact pairs $(\upthing{u}_1,\upthing{v}_1), \dots,  
			(\upthing{u}_m,\upthing{v}_m) \in \contactset{f}{\gbase}$
			and positive weights $c_1,\ldots,c_m$ that satisfy the identities in 
			\eqref{eq:functional_glmp}.
\item  There are contact pairs $(\tilde{u}_1,\tilde{v}_1), \dots,  
			(\tilde{u}_m, \tilde{v}_m) \in \contactset{f^q}{\gbase^q}$
			and positive weights $\tilde{c}_1,\ldots,\tilde{c}_m$  that satisfy the identities in 
			\eqref{eq:functional_glmp_concave}.
\end{itemize} 
\end{lem}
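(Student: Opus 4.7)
The equivalence is algebraic: the goal is to produce an explicit bijection between tuples $\{(\upthing{u}_i,\upthing{v}_i),c_i\}$ satisfying \eqref{eq:functional_glmp} for $(f,\gbase)$ and tuples $\{(\tilde{u}_i,\tilde{v}_i),\tilde{c}_i\}$ satisfying \eqref{eq:functional_glmp_concave} for $(f^q,\gbase^q)$. The engine is the identity $\partial(-\log f^q)=q\,\partial\psi$ (where $\psi:=-\log f$), combined with the subgradient parametrization of unit-paired non-horizontal normals to liftings provided by \Href{Lemma}{lem:v_via_grad}.

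At each non-horizontal contact pair $(\upthing{u}_i,\upthing{v}_i)\in\contactset{f}{\gbase}$, \Href{Lemma}{lem:v_via_grad} yields some $p_i\in\partial\psi(u_i)$ with $\lambda_i:=1+\iprod{p_i}{u_i}>0$ such that $\upthing{v}_i=(p_i,1/f(u_i))/\lambda_i$. I set $p_i':=qp_i\in\partial(q\psi)(u_i)$, $\mu_i:=1+\iprod{p_i'}{u_i}=1+q\iprod{p_i}{u_i}$, and define $\tilde{u}_i:=(u_i,f^q(u_i))$, $\tilde{v}_i:=(p_i',1/f^q(u_i))/\mu_i$. The positivity $\mu_i>0$ is where the hypothesis $q\in(0,1]$ enters: if $\iprod{p_i}{u_i}\ge 0$ then $\mu_i\ge 1$, while if $\iprod{p_i}{u_i}\in(-1,0)$ then $q\iprod{p_i}{u_i}>-q\ge-1$. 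By \Href{Lemma}{lem:v_via_grad} this places $\tilde{v}_i\in\nfcone{\lifting{f^q}}{\tilde{u}_i}$ with $\iprod{\tilde{v}_i}{\tilde{u}_i}=1$. The membership $\tilde{v}_i\in\nfcone{\lifting{\gbase^q}}{\tilde{u}_i}$ follows by pushing $\upthing{v}_i\in\nfcone{\lifting{\gbase}}{\upthing{u}_i}$ forward under the local diffeomorphism $(x,y)\mapsto(x,y^q)$ on $\Red\times(0,\infty)$, whose Jacobian at $\upthing{u}_i$ produces the rescaling $(v_i,\nu_i)\mapsto(v_i,\nu_i f(u_i)^{1-q}/q)$, which is the positive multiple $\mu_i/(q\lambda_i)$ of $\tilde{v}_i$. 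Horizontal pairs (with $f(u_i)=0$) are preserved unchanged: by \Href{Lemma}{lem:flattened_normal_at_zero} and $\supp f=\supp f^q$, I take $(\tilde{u}_i,\tilde{v}_i)=((u_i,0),(v_i,0))$.

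Setting $\tilde{c}_i:=c_i\mu_i/(q\lambda_i)$ at non-horizontal pairs and $\tilde{c}_i:=c_i$ at horizontal ones gives $\tilde{c}_i>0$. A direct computation yields $\tilde{c}_i v'_i=c_i v_i$ and $\tilde{c}_i u_i\otimes v'_i=c_i u_i\otimes v_i$, so the first and third identities of \eqref{eq:functional_glmp} and \eqref{eq:functional_glmp_concave} match term by term; moreover $\tilde{c}_i f^q(u_i)\nu'_i=\tilde{c}_i/\mu_i=c_i/(q\lambda_i)=(1/q)\,c_i f(u_i)\nu_i$ at non-horizontal pairs (both sides vanish at horizontal ones), so $\sum_i c_i f(u_i)\nu_i=s$ if and only if $\sum_i\tilde{c}_i f^q(u_i)\nu'_i=s/q$. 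The inverse assignment substitutes $p_i\mapsto p'_i/q$ and swaps the roles of $\lambda_i$ and $\mu_i$; its well-definedness (the need for $\lambda_i>0$) is supplied by the \wlocstar$f$ hypothesis, which via \Href{Lemma}{lem:starlike_criteria} forces $\iprod{p}{u}>-1$ for every $p\in\partial\psi(u)$ at every contact point. The only genuinely asymmetric steps in the argument are the use of $q\le 1$ in the forward direction and of the \wlocstar$f$ hypothesis in the backward one; the rest is direct substitution.
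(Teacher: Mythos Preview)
Your argument is correct and follows the same route as the paper: parametrize non-horizontal normals by subgradients via \Href{Lemma}{lem:v_via_grad}, use $\partial(-\ln f^q)=q\,\partial(-\ln f)$, and set $\tilde c_i=c_i\mu_i/(q\lambda_i)$---precisely the paper's substitution. One small imprecision: you identify ``horizontal'' with ``$f(u_i)=0$'', but a pair with $\nu_i=0$ can also occur at a boundary point of $\supp f$ where $f(u_i)>0$; there the correct choice is $\tilde u_i=(u_i,f^q(u_i))$ rather than $(u_i,0)$, while $\tilde v_i=(v_i,0)$ and $\tilde c_i=c_i$ still work and the algebraic identities are unchanged (this is how the paper handles the case $\nu_i=0$ uniformly). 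Your local-diffeomorphism justification that $\tilde v_i\in\nfcone{\lifting{\gbase^q}}{\tilde u_i}$ is in fact more explicit than what the paper writes, since \Href{Lemma}{lem:v_via_grad} applies to $f$ but not directly to the possibly non-log-concave $\gbase$.
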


\begin{proof}
Fix $\upthing{u}_i = (u_i, f(u_i)) \in \essgraph{f}$ and
 $\upthing{v}_i = (v_i, \nu_i) \in \nfcone{\lifting{f}}{\upthing{u}_i}$  with 
 $\iprod{\upthing{u}_i}{\upthing{v}_i} = 1.$

We consider two cases.

\emph{Case of a horizontal normal:} 
Assume that $\nu_i =0$.  
By Lemmas~\ref{lem:nfcone_at_zero} and \ref{lem:ncone_flat_normal}, the following assertion are equivalent.
\begin{itemize}
\item $\parenth{\upthing{u}_i, \upthing{v}_i} \in \contactset{f}{\gbase};$
\item $\parenth{\tilde{u}_i, \tilde{v}_i} \in \contactset{f^q}{\gbase^q},$ 
where $\tilde{u}_i =(u_i, f^q(u_i))$ and $\tilde{v}_i = \upthing{v}_i = (v_i, 0)$.
\end{itemize} 
Finally, 
\[ \contactopjohn{\upthing{u}_i}{\upthing{v}_i} =
\parenth{u_i \otimes v_i\oplus  f(u_i) \nu_i, v_i } = 
 \parenth{u_i \otimes v_i\oplus  0, v_i }
 = \contactopjohn{\tilde{u}_i}{\tilde{v}_i} \]
We set $\tilde{c}_i = c_i$ in this case.

\emph{Case of a non-horizontal normal:} 
Assume that $\nu_i > 0$. By the definition of $\contactset{f}{\gbase}$, we have $f(u_i) > 0$. 

Since  $\contactpoint{f}{\gbase}$ is a \wlocstar${f}$ 
and by \Href{Lemma}{lem:v_via_grad},
\[
 \upthing{v}_i  =
  \frac{\parenth{p_i, \frac{1}{f(u_i)} }}{1 + \iprod{p_i}{u_i}} 
\]
and $1 + \iprod{p_i}{u_i} > 0 $ for some $p \in \partial (- \ln f)(u_i)$.
Hence, $1 + \iprod{p_i}{u_i} > 0$ and  $1 + q\iprod{p_i}{u_i} > 0.$  
Thus, 
\Href{Lemma}{lem:v_via_grad} implies that the following assertion are equivalent:
 \begin{itemize}
 \item $\parenth{\upthing{u}_i, \upthing{v}_i} \in \contactset{f}{\gbase}$ and 
$
 \upthing{v}_i =  
  \frac{\parenth{p_i, \frac{1}{f(u_i)} }}{1 + \iprod{p_i}{u_i}}
$
for some $p_i \in \partial(- \ln f)(u_i);$
\item  $\parenth{\tilde{u}_i, \tilde{v}_i} \in \contactset{f^q}{\gbase^q},$ 
where  $
 \tilde{v}_i =  
  \frac{\parenth{q p_i, \frac{1}{f^q(u_i)} }}{1 + q \iprod{ p_i}{u_i}}
$ for $p_i \in \partial(- \ln f)(u_i)$
and 
$
\tilde{u}_i = (u_i, f^q(u_i)).
$
 \end{itemize}
By substitution,
\[ 
c_i \contactopjohn{\upthing{u}_i}{\upthing{v}_i} =  
c_i \frac{\parenth{({u}_i \otimes {p}_i) \oplus 1, p_i}}{1 + \iprod{p_i}{u_i}} 
\quad \text{and} \quad
 \tilde{c}_i \contactopjohn{\tilde{u}_i}{\tilde{v}_i} = 
 c_i \frac{\parenth{({u}_i \otimes {p}_i) \oplus \frac{1}{q}, p_i}}{1 + \iprod{p_i}{u_i}} ,
 \]
 where $\tilde{c}_i = \frac{c_i}{q} \frac{1 + q\iprod{p_i}{u_i}}{1 + \iprod{p_i}{u_i}}$.
\end{proof}

\begin{lem}\label{lem:equiv_lowner_cond_power}
Fix $s > 0$ and $ q \in ( 0, 1]$. Let $f,g \colon\Red \to [0,+\infty)$ be be two proper log-concave functions such that
\begin{itemize}
\item $\loglego{g}$ satisfies our Basic Assumptions (see page~\pageref{assumptions:basic}); 
\item the set of contact points $\contactpoint{\loglego{f}}{\loglego{g}}$ is a \wlocstar$\loglego{f}$.
\end{itemize}
Set $\upthing{u}_i = (u_i, \loglego{f}(u_i))$ and 
$\tilde{u}_i = \parenth{q{u_i}, \loglego{(f^q)} \!\parenth{{qu_i}}}.$
Then the following assertions are equivalent:
\begin{itemize}
\item  There are contact pairs $(\upthing{u}_1,\upthing{v}_1), \dots,  
			(\upthing{u}_m,\upthing{v}_m) \in \contactset{\loglego{f}}{\loglego{g}}$
			and positive weights $c_1,\ldots,c_m$ that satisfy the identities in 
			\eqref{eq:functional_glmp-lowner}.
\item  There are contact pairs $(\tilde{u}_1,\tilde{v}_1), \dots,  
			(\tilde{u}_m, \tilde{v}_m) \in \contactset{\loglego{(f^q)}}{\loglego{(g^q)}}$
			and positive weights $\tilde{c}_1,\ldots,\tilde{c}_m$  that satisfy the identities in 
			\eqref{eq:functional-glmp-lowner_concave}.
\end{itemize} 
\end{lem}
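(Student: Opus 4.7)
The proof will mirror that of \Href{Lemma}{lem:equiv_john_cond_power}, but adapted to the L\"owner setting; the essential new ingredient is that the polar transform interacts with the power map via the scaling $y\mapsto qy$ on supports. Concretely, I will use the identity $\loglego{(f^q)}(y) = \parenth{\loglego{f}(y/q)}^{q}$ (already used in the proof of \Href{Corollary}{cor:lowner_cond-q-concave}) and the analogous one for $g$, which gives $\supp\loglego{(f^q)} = q\supp\loglego{f}$ and shows that the bijection between contact points of the two problems is $u \leftrightarrow qu$, matching the parameterization in the statement.

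Setting $\psi = -\ln\loglego{f}$ and $\tilde\psi = -\ln\loglego{(f^q)}$, a direct chain-rule-style computation on convex functions shows $\tilde\psi(y) = q\psi(y/q)$, and consequently $\partial\tilde\psi(qu) = \partial\psi(u)$. This is the key transfer statement that lets us match subgradients on the two sides. Given this, I split into two cases exactly as in the proof of \Href{Lemma}{lem:equiv_john_cond_power}:

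\emph{Non-horizontal normal case} ($\loglego{f}(u_i)>0$). By \Href{Lemma}{lem:v_via_grad}, $(\upthing{u}_i,\upthing{v}_i)\in\contactset{\loglego{f}}{\loglego{g}}$ is equivalent to the existence of $p_i\in\partial\psi(u_i)$ with $1+\iprod{p_i}{u_i}>0$ and $\upthing{v}_i = \frac{(p_i,\,1/\loglego{f}(u_i))}{1+\iprod{p_i}{u_i}}$; and dually, since the same $p_i$ lies in $\partial\tilde\psi(qu_i)$ and $1+\iprod{p_i}{qu_i}>0$ whenever $q\in(0,1]$, the pair $(\tilde u_i,\tilde v_i)\in\contactset{\loglego{(f^q)}}{\loglego{(g^q)}}$ is parameterized by $\tilde v_i = \frac{(p_i,\,1/\loglego{(f^q)}(qu_i))}{1+q\iprod{p_i}{u_i}}$. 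A direct substitution into \Href{Definition}{dfn:contactoplowner} then gives
\[
c_i\contactoplowner{\upthing{u}_i}{\upthing{v}_i}
 = \frac{c_i}{1+\iprod{p_i}{u_i}}\parenth{(p_i\otimes u_i)\oplus 1,\; u_i},
\]
\[
\tilde c_i\contactoplowner{\tilde u_i}{\tilde v_i}
 = \frac{\tilde c_i}{1+q\iprod{p_i}{u_i}}\parenth{(qp_i\otimes u_i)\oplus 1,\; qu_i},
\]
so the choice $\tilde c_i = \frac{c_i}{q}\cdot\frac{1+q\iprod{p_i}{u_i}}{1+\iprod{p_i}{u_i}}$ makes the first and third block-coordinates of these operators agree and rescales the middle scalar by a factor of $1/q$, which is exactly what is needed to turn $s$ on the right of \eqref{eq:functional_glmp-lowner} into $s/q$ on the right of \eqref{eq:functional-glmp-lowner_concave}.

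\emph{Horizontal normal case} ($\loglego{f}(u_i)=0$). Here $u_i\in\bd\supp\loglego{f}$, $\upthing{v}_i=(v_i,0)$ with $v_i\in\nfcone{\supp\loglego{f}}{u_i}$ and $\iprod{u_i}{v_i}=1$ by the definition of the reduced contact set. Since $\supp\loglego{(f^q)}=q\supp\loglego{f}$, the normal cone at $qu_i$ coincides with $\nfcone{\supp\loglego{f}}{u_i}$, so $\tilde v_i = (v_i/q,0)$ is the unique horizontal outer normal with $\iprod{qu_i}{v_i/q}=1$, and setting $\tilde c_i = c_i$ one checks $\tilde c_i\contactoplowner{\tilde u_i}{\tilde v_i} = c_i\contactoplowner{\upthing u_i}{\upthing v_i}$ because $\loglego{f}(u_i)=\loglego{(f^q)}(qu_i)=0$ kills the vertical block. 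Assembling the two cases gives the claimed equivalence in both directions, simply by reading the change-of-variables $\{c_i\}\leftrightarrow\{\tilde c_i\}$ forwards or backwards. The main source of friction is purely bookkeeping: keeping the factors of $q$ straight simultaneously in the three block-coordinates of the L\"owner contact operator, and matching the horizontal and non-horizontal cases with a single consistent choice of weights; conceptually there is nothing beyond \Href{Lemma}{lem:v_via_grad} and the polar/power identity above.
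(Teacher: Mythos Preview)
Your proof is correct and follows essentially the same route as the paper's: the key identity $\loglego{(f^q)}(y)=(\loglego{f}(y/q))^q$ together with the subgradient transfer $\partial\tilde\psi(qu)=\partial\psi(u)$, followed by the same case split and the same substitutions $\tilde v_i=(v_i/q,0)$, $\tilde c_i=c_i$ in the horizontal case and $\tilde c_i=\frac{c_i}{q}\cdot\frac{1+q\iprod{p_i}{u_i}}{1+\iprod{p_i}{u_i}}$ in the non-horizontal case. Your write-up in fact supplies the detailed verification of the L\"owner contact operators that the paper's proof only sketches.
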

\begin{proof}
Observe that 
\[
\loglego{\parenth{f^{q}}}(y) = \parenth{\loglego{f}\!\parenth{\frac{y}{q}}}^{q}
\]
for any  $q > 0.$
The rest of the proof is similar to that of the previous lemma. We omit the details.

The corresponding substitutions are as follows: 
$\tilde{v}_i = \parenth{\frac{v_i}{q}, 0}$  and $\tilde{c}_i = c_i$ in the case 
$\nu_i= 0$, and  
$\tilde{v}_i = \frac{\parenth{p_i, \frac{1}{\parenth{\loglego{f}(q_i)}^q} }}{1 + q \iprod{ p_i}{u_i}}
$ for $p_i \in \partial(- \ln \loglego{f})(u_i)$ and   $
  \tilde{c}_i = 
 \frac{c_i}{q} 
  \frac{1 + q\iprod{p_i}{u_i}}{1 + \iprod{p_i}{u_i}}
$ in the case
 $\nu_i > 0$.
\end{proof}

\subsection{Taking power and the boundedness of contact pairs} 

Fix $q \in (0,1]$. If one considers $f^{q}$ and $g^q$ instead of $f$ and $g$,
then the corresponding liftings are ``more'' concave, which implicitly implies that the corresponding problem is easier to solve. In some sense, this is indeed true. 
\Href{Lemma}{lem:starlike_criteria} ensures that if $\contactpoint{f}{g}$ is \wlocstar$f$,
then $\contactpoint{f^q}{g^q}$ is \wlocstar$f^q$. The following result shows that taking power of the functions does not destroy the boundedness of contact pairs.

\begin{lem}[Replacing $g$ by $g^{\alpha}$ and boundedness]\label{lem:power_of_bounded}
Let $g \colon \Red \to [0, +\infty)$ be a proper log-concave function such that the set 
$\contactset{g}{g}$ is bounded. Then for any $\alpha \in (0,1],$ the function 
$g^{\alpha}$ is a proper log-concave function and  the set 
$\contactset{g^\alpha}{g^\alpha}$ is bounded.
\end{lem}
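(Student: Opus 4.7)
The plan is to first verify that $g^{\alpha}$ is a proper log-concave function, then to set up an explicit correspondence between the contact pairs of $g$ and of $g^{\alpha}$ via the subgradient characterization from \Href{Section}{sec:normalcone_subdifferential}, and to transfer boundedness through the key scaling inequality $1 + \alpha\iprod{p}{u} \geq \alpha(1 + \iprod{p}{u})$ valid for $\alpha \in (0,1]$ when $1 + \iprod{p}{u} > 0$.

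That $g^{\alpha}$ is proper log-concave is immediate: $-\ln g^{\alpha} = -\alpha \ln g$ is convex and lower semi-continuous for $\alpha>0$, so $g^{\alpha}$ is upper semi-continuous and log-concave, and the decay bound \eqref{eq:proper_log-concave_bound} applied to $g$ gives $g^{\alpha}(x) \leq \Theta^{\alpha} e^{-\alpha\nu \enorm{x}}$, ensuring that $g^{\alpha}$ has finite positive integral on $\supp g^{\alpha} = \supp g$.

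Next, I would describe $\contactset{g^{\alpha}}{g^{\alpha}}$ via subgradients. Horizontal normals at zeros of $g^{\alpha}$ coincide with those of $g$ by \Href{Lemma}{lem:ncone_flat_normal} (and \Href{Lemma}{lem:flattened_normal_at_zero}), since the two functions share the same support; hence such pairs transfer identically between the two contact sets. For non-horizontal normals, \Href{Lemma}{lem:v_via_grad} gives $\upthing{v} = \frac{(p, 1/g(u))}{1 + \iprod{p}{u}}$ for some $p \in \partial(-\ln g)(u)$ with $1 + \iprod{p}{u} > 0$ to produce a contact pair of $g$; applying the same lemma to $g^{\alpha}$, using $\partial(-\ln g^{\alpha})(u) = \alpha\,\partial(-\ln g)(u)$, produces the corresponding candidate
\[
\tilde{\upthing{v}} \;=\; \frac{(\alpha p,\, 1/g(u)^{\alpha})}{1 + \alpha\iprod{p}{u}},
\]
which is well-defined because $1 + \alpha\iprod{p}{u} = (1-\alpha) + \alpha(1+\iprod{p}{u}) > 0$. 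The scaling inequality above then yields $\norm{\tilde{v}} \leq \norm{v}$, and
\[
g(u)^{\alpha}\bigl(1 + \alpha\iprod{p}{u}\bigr) \;\geq\; \alpha\, g(u)^{\alpha-1} \cdot g(u)\bigl(1+\iprod{p}{u}\bigr) \;=\; \frac{\alpha\, g(u)^{\alpha-1}}{\nu},
\]
so the last coordinate $\tilde{\nu}$ is bounded in terms of $\nu$ and $g(u)$. The hypothesis that $\contactset{g}{g}$ is bounded supplies uniform upper bounds on $\norm{u}$, $g(u)$, $\norm{v}$, and $\nu$; combined with $g(u)^{\alpha-1} \geq (\sup g(u))^{\alpha-1}$ (since $\alpha - 1 \leq 0$), this uniformly bounds $(u, \tilde{\upthing v})$.

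The main obstacle is handling those non-horizontal contact pairs of $g^{\alpha}$ whose underlying $p \in \partial(-\ln g)(u)$ satisfies $1 + \iprod{p}{u} \leq 0$ while $1 + \alpha\iprod{p}{u} > 0$, which can occur for $\alpha<1$ and have no direct preimage in $\contactset{g}{g}$. For these, the subgradient inequality $\iprod{p}{u} \geq -\ln g(u) + \ln g(0)$ together with $\iprod{p}{u} \leq -1$ forces $g(u) \geq e\,g(0)$, confining $u$ to the compact superlevel set $\{g \geq e\,g(0)\}$, which lies in the interior of $\supp g$. The local boundedness of $\partial(-\ln g)$ on compact subsets of this interior, via \Href{Lemma}{lem:subdif_lipchitz}, together with the lower bound on $g(0)$ coming from the contact pair at the origin in $\contactset{g}{g}$ (which must itself be bounded), then yields uniform control on $\norm{p}$ and a uniform positive lower bound on $1 + \alpha\iprod{p}{u}$, completing the proof.
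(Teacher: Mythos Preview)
Your main line of argument---splitting into horizontal versus non-horizontal normals, passing to the subgradient description via \Href{Lemma}{lem:v_via_grad}, and bounding through the scaling inequality $1+\alpha\iprod{p}{u}\ge\alpha(1+\iprod{p}{u})$---is exactly the paper's approach, and your estimates $|\tilde v|\le |v|$ and $\tilde\nu\le \alpha^{-1}(\sup g)^{1-\alpha}\nu$ are equivalent to the paper's bound $|\upthing{v}_\alpha|\le (1+L/\alpha)\,|\upthing{v}|$.

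Where you diverge is in flagging the ``obstacle'' case $1+\iprod{p}{u}\le 0<1+\alpha\iprod{p}{u}$, which the paper simply asserts away by writing $1+\alpha\iprod{p}{u}>\alpha(1+\iprod{p}{u})>0$. Your proposed treatment of this case, however, does not close. First, the appeal to ``the contact pair at the origin in $\contactset{g}{g}$'' to secure $g(0)>0$ is circular: such a pair (with $\iprod{\upthing u}{\upthing v}=1$ at $\upthing u=(0,g(0))$) exists only if $g(0)>0$ already, and the lemma as stated does not assume $0\in\supp g$. Second, even granting $g(0)>0$, the Lipschitz bound from \Href{Lemma}{lem:subdif_lipchitz} gives only $|\iprod{p}{u}|\le LR$ on the compact superlevel set, which yields no uniform \emph{positive} lower bound on $1+\alpha\iprod{p}{u}$ once $\alpha LR\ge 1$; the denominator could still tend to $0$ and $\tilde{\upthing v}$ blow up. The clean fix---implicitly what the paper leans on, since all applications sit inside the Basic Assumptions---is to observe that under $0\in\interior(\supp g)$ the obstacle case is actually void: if $1+\iprod{p_0}{u_0}\le 0$ for some $u_0\in\supp g$, restrict $\psi$ to the segment $[0,u_0]$; the connected graph of the one-dimensional subdifferential carries the continuous function $(t,q)\mapsto 1+tq$ from the value $1$ at $t=0$ down to a nonpositive value at $t=1$, so it vanishes somewhere, and approaching that zero from the positive side produces contact pairs of $g$ with $1+\iprod{p_n}{u_n}\to 0^+$ and numerators bounded away from $0$, contradicting boundedness of $\contactset{g}{g}$.
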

\begin{proof}
We only need to show that if for all $u\in\cl{\supp g}$, we take all possible normals $\upthing{v}=(v,\nu)$ to $\lifting{g^\alpha}$ at $\upthing{u}=(u,g(u))$, then we obtain a bounded set. We fix a $u$.
In the case when $\nu=0$, by \Href{Lemma}{lem:ncone_flat_normal}, $\lifting{g}$ and $\lifting{g^\alpha}$ have the same set of horizontal normals at $u$. Thus, we may assume that $\nu>0$, and hence $u\in\supp g$.

Using  \Href{Lemma}{lem:v_via_grad}, we see that 
 $v = \frac{p}{1 + \iprod{p}{u}}$ for some non-zero $p \in \partial \psi(u)$ and 
 $1 + \iprod{p}{u} > 0$, where $g=e^{-\psi}$.
Using  \Href{Lemma}{lem:v_via_grad} again, we see that  
$\upthing{v}_\alpha = (v, \nu_\alpha)\in  \nfcone{\lifting{g^\alpha}}{(u, g^\alpha(u))}$ with some $\nu_\alpha > 0$ if and only if
\[
\upthing{v}_\alpha = 
\frac{\parenth{ \alpha p, \frac{1}{g^\alpha(u)} }}{1 + \alpha\iprod{ p}{u}}
\] 
for some $p \in \partial \psi(u).$
Note $1 + \alpha \iprod{p}{u} > \alpha (1 + \iprod{p}{u}) > 0.$ 
Since $g$ is bounded, there is a constant $L>0$ such that
\[
\frac{1}{g^{\alpha}(u)} \leq \frac{L}{g(u)}
\]
for all $u \in \supp{g}.$ 
Thus, 
\[
\enorm{\upthing{v}_\alpha} \leq \parenth{1 + \frac{L}{\alpha}}
\enorm{\frac{\parenth{p, \frac{1}{g(u)}}}{1 + \iprod{p}{u}}}.
\] 
By \Href{Lemma}{lem:v_via_grad}, the latter vector is in $\nfcone{\lifting{g}}{f}$, and the lemma follows.
\end{proof} 

Using  \Href{Lemma}{lem:power_of_bounded} and 
 \Href{Lemma}{lem:starlike_criteria}, we see that if the functions $f,\gbase\colon\Red\to[0,+\infty)$ satisfy the assumptions of \Href{Theorem}{thm:john_condition_general}, then the functions $f^\alpha,\gbase^\alpha \colon \Red\to[0,+\infty)$ satisfy it as well for any $\alpha \in (0,1].$
It means that the John conditions in the corresponding problems are fulfilled simultaneously, which is ensured by \Href{Lemma}{lem:equiv_john_cond_power}.

Similarly, using  \Href{Lemma}{lem:power_of_bounded} and 
 \Href{Lemma}{lem:starlike_criteria}, we see that if the functions $f, g\colon\Red\to[0,+\infty)$ satisfy the assumptions of \Href{Theorem}{thm:lowner_condition_general}, then the functions $f^\alpha,g^\alpha \colon \Red\to[0,+\infty)$  with the same $c$ satisfy it as well for any $\alpha \in (0,1].$
It means that the L\"owner conditions in the corresponding problems are fulfilled simultaneously, which is ensured by \Href{Lemma}{lem:equiv_lowner_cond_power}.

\section{Proofs of the results presented in the Introduction}\label{sec:introproofs}

\begin{proof}[Proof of \Href{Theorem}{thm:john_intro}]
The proof mostly repeats the argument we used for \Href{Theorem}{thm:lowner_intro} in \Href{Section}{sec:radially_symmetric}.
Since $f$ takes only positive values and the support of $g$ is bounded, we see that
\[
\inf\limits_{x \in\ \contactpoint{f}{g}\ \cap\ \supp g} g (x) > 0.
\]
By \Href{Lemma}{lem:sufficient_condition_locstar_and_bounded}, we have that 
$\contactpoint{f}{g}$ is a \wlocstar$f$ and $\contactset{f}{g}$ is bounded. 
Thus, the assumptions on functions in \Href{Theorem}{thm:john_condition_general} are fulfilled. The necessity of  condition \eqref{eq:functional_glmp_intro} follows from this and \Href{Lemma}{lem:v_via_grad}. 
By the argument from \Href{Section}{sec:radially_symmetric} for radially symmetric,
 function the sufficiency of the corresponding conditions follows as well.
\end{proof}

\begin{proof}[Proof of \Href{Theorem}{thm:john_intro-concave}]
If $q > 1,$ then $g$ is $1$-concave. Since $\lifting{g}$ is a compact convex set in $\Redp$, the assumptions on the functions in \Href{Theorem}{thm:john_condition_general} are fulfilled. The necessity of condition \eqref{eq:functional_glmp_concave_intro} follows.

Assume that $q \in (0,1].$ Then
by \Href{Lemma}{lem:sufficient_condition_locstar_and_bounded}, 
$\contactpoint{f}{g}$ is a \wlocstar$f$,  
and $\contactset{f^q}{g^q}$ is bounded, since $\lifting{g^q}$ is a compact convex set in $\Redp$. 
Thus, there are  contact pairs 
			$(\tilde{u}_1,\tilde{v}_1)$, $\dots$,  
			$(\tilde{u}_m,\tilde{v}_m)$ $\in \contactset{f^q}{g^q}$
			and positive weights $\tilde{c}_1,\dots,\tilde{c}_m$  satisfying
		\eqref{eq:functional_glmp_concave}. 
		The necessity of condition \eqref{eq:functional_glmp_concave_intro} follows from \Href{Lemma}{lem:equiv_john_cond_power}.

By the argument from \Href{Section}{sec:radially_symmetric} for radially symmetric,
 function the sufficiency of the corresponding conditions follows as well.
\end{proof}

\begin{proof}[Proof of \Href{Theorem}{thm:lowner_intro}]
Since $\loglego{f}$ takes only positive values and the support of $\loglego{g}$ is bounded, we see that
\[
\inf\limits_{x \in\ \contactpoint{\loglego{f}}{\loglego{g}}\ \cap\ \supp \loglego{g}} \loglego{g} (x) > 0.
\]
By \Href{Lemma}{lem:sufficient_condition_locstar_and_bounded}, we have that 
$\contactpoint{\loglego{f}}{\loglego{g}}$ is a \wlocstar$f$ and $\contactset{\loglego{f}}{\loglego{g}}$ is bounded. 
Thus, the assumptions on functions in \Href{Theorem}{thm:lowner_condition_general} are fulfilled. The necessity of  condition \eqref{eq:functional_glmp_lowner_intro} follows from this and \Href{Lemma}{lem:v_via_grad}. 
By the argument at the beginning of the present section, the sufficiency of the corresponding conditions follows as well.
\end{proof}

\begin{proof}[Proof of \Href{Theorem}{thm:lowner_intro-concave}]
If $q > 1,$ then $\loglego{g}$ is $1$-concave. Since $\lifting{\loglego{g}}$ is a compact convex set in $\Redp$, the assumptions on functions in \Href{Theorem}{thm:lowner_condition_general} are fulfilled.  The necessity of condition \eqref{eq:functional_glmp-lowner-conc_intro} follows.

Assume now $q \in (0,1].$ Then
by \Href{Lemma}{lem:sufficient_condition_locstar_and_bounded}, 
$\contactpoint{\loglego{f}}{\loglego{g}}$ is a \wlocstar$\loglego{f}$,  
and $\contactset{\loglego{(f^q)}}{\loglego{(g^q)}}$ is bounded by convexity. 
Thus, there are  contact pairs 
			$(\tilde{u}_1,\tilde{v}_1)$, $\dots$,  
			$(\tilde{u}_m,\tilde{v}_m)$ $\in \contactset{f^q}{g^q}$
			and positive weights $\tilde{c}_1,\dots,\tilde{c}_m$  satisfying
		\eqref{eq:functional-glmp-lowner_concave}. 
		The necessity of condition \eqref{eq:functional_glmp-lowner-conc_intro} follows from \Href{Lemma}{lem:equiv_lowner_cond_power}.

By the argument from \Href{Section}{sec:radially_symmetric} for radially symmetric,
 function the sufficiency of the corresponding conditions follows as well.
\end{proof}

\section{Discussion}\label{sec:corollaries_and_discussion}

\subsection{Convex sets as a special case}

Observe that \Href{Theorem}{thm:GLMP} immediately follows from
\Href{Theorem}{thm:john_condition_general} by setting $f = \chi_{L}$ and $\gbase 
= \chi_{\operatorname{ext}{K}}$ therein.

\subsection{Equivalence of the John and the L\"owner problems}\label{sec:johnlownerequivalence}
Let $f,g\colon\Red\to[0,+\infty)$ satisfy the assumptions of \Href{Theorem}{thm:lowner_condition_general}. 
Additionally, let $f$ satisfy Basic assumptions \eqref{assumptions:basic}, $\contactpoint{f}{g}$ be a \wlocstar$g$, and  $\contactset{f}{g}$ be bounded.
Clearly, if $g$ is a local minimizer in the L\"owner $s$-problem \eqref{eq:lowner_problem_intro}, then $f$ is a local maximizer in the John $s$-problem \eqref{eq:john_problem_intro} for $g$ and $f$. Hence, both  \Href{Theorem}{thm:lowner_condition_general} and  \Href{Theorem}{thm:john_condition_general} are applicable in this case. We show that the corresponding versions of 
\eqref{eq:john_problem_intro} and \eqref{eq:lowner_problem_intro} are equivalent in this case.

Since $\supp{\loglego{g}}$ is bounded, we have that $\supp g = \supp \loglego{f} = \R^d$. Therefore,  there are no ``flat'' contact points neither for the pair $f$ and $g,$ nor for $\loglego{f}$ and $\loglego{g}$. More precisely,
if  $\parenth{\upthing{u}, \upthing{v}}$ with $\upthing{u} = (u, f(u))$  and $\upthing{v} = (v, \nu)$ belongs to  $\contactset{f}{g}$ (resp. $\contactset{\loglego{g}}{\loglego{f}}$)
then $\nu > 0$ and $f(u) > 0$  (resp. $\loglego{f}(u) > 0$).
In this case,  \Href{Lemma}{lem:v_via_grad} yields 
 \[
 \contactopjohn{\upthing{u}}{\upthing{v}} =  
\frac{\parenth{({u} \otimes {p}) \oplus 1, p}}{1 + \iprod{p}{u}} 
 \]
for some  $p \in \partial{\parenth{- \ln f}}(u) \subset \partial{\parenth{- \ln g}}(u).$ 
By the properties of the subdifferential listed in \Href{Lemma}{lem:subdif_basics}, 
\[
 u \in \partial{\parenth{- \ln \loglego{g}}}(p) \subset \partial{\parenth{- \ln \loglego{f}}}(p)\] 
 and
$ \loglego{g}(p) =  \loglego{f}(p) =  \frac{e^{- \iprod{p}{u}}}{f(u)}.$ 
Hence, the pair $\parenth{\upthing{u}, \upthing{v}}$ belongs to $\contactset{f}{g}$ 
if and only if the pair 
$\parenth{\tilde{u}, \tilde{v}}$ belongs to $\contactset{\loglego{g}}{\loglego{f}},$
where 
$\tilde{u} = (p, \loglego{g}(p))$  and 
$\tilde{v} =  \frac{\parenth{u, \frac{1}{\loglego{g}(p)} }}{1 + \iprod{p}{u}}.$
By direct calculations,
 \[
 \contactopjohn{\upthing{u}}{\upthing{v}} =  
 \contactoplowner{\tilde{u}}{\tilde{v}}. 
 \]
The desired equivalence follows.

\subsection{The fixed center John and L\"owner problems -- no translation}
Our extended contact operator $(A \oplus \alpha, a)\in\MM$ consists of two parts: 
the operator part $A \oplus \alpha$ and the translation part $a$.
In essence, only the rightmost equation in \eqref{eq:john_problem_intro} (resp.  \eqref{eq:lowner_problem_intro}) is responsible for the translation (shifting) of the function. If $A$ is a non-singular matrix of order $d$ and $\alpha > 0$,
then $\lifting{\alpha g(Ax)}$ is the linear image of $\lifting{g}$.

We will say that we consider the John or the L\"owner $s$-problem (resp., 
Positive John/L\"owner $s$-problem)  
\emph{with fixed center} if we maximize or minimize over $\funposfc{g}$ (resp., $\funpposfc{g}$), where
$\funposfc{g}=\{\alpha g(Ax+a)\st (A\oplus\alpha, a)\in\MM_{f.c.}\}$
and
$\funpposfc{g}=\{\alpha g(Ax+a)\st (A\oplus\alpha, a)\in\MM_{f.c.}^+\}$,
where $\MM_{f.c.} = \left\{(A \oplus \alpha, 0) \st (A \oplus \alpha, 0) \in \MM \right\}$  and 
$\MM_{f.c.}^{+} = \left\{(A \oplus \alpha, 0) \st (A \oplus \alpha, 0) \in \MM^{+} \right\}$.

\begin{thm}[Fixed center John condition]\label{thm:john-condition-fixed_center}
Let the functions $f,\gbase\colon\Red\to[0,+\infty)$ satisfy the assumptions of \Href{Theorem}{thm:john_condition_general}.
Set $g =\logconc{\gbase}$. Then the following hold.
	\begin{enumerate}
		\item
			If $h=g$  is  a local maximizer in the John $s$-problem with fixed center,
				then there exist contact pairs 
			$(\upthing{u}_1,\upthing{v}_1), \dots,  
			(\upthing{u}_m,\upthing{v}_m) \in \contactset{f}{\gbase}$
			and positive weights $c_1,\ldots,c_m$ such that
		\begin{equation}\label{eq:functional_glmp_fixed_center}
		\sum_{i=1}^{m} c_i {u}_i \otimes {v}_i = 
		 \id_{d} 				
		 	\quad\text{and}\quad 
		 \sum_{i=1}^{m} c_i f(u_i)\nu_i =  s
				\end{equation}
        where $\upthing{u}_i=(u_i, f(u_i))$ and $\upthing{v}_i=(v_i,\nu_i)$.
		\item
			If there exist contact pairs and positive weights  satisfying  equation \eqref{eq:functional_glmp_fixed_center},
			then $h=g$ is a global maximizer in the Positive John $s$-problem with fixed center. 
	\end{enumerate}
\end{thm}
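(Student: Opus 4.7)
The plan is to re-run the proof of \Href{Theorem}{thm:john_condition_general} in a restricted setting. The only role the translation component $a\in\Red$ of an extended contact operator $(A\oplus\alpha,a)\in\MMbar$ plays is to parametrize shifts of the position of $g$; when we forbid shifts (fixed-center setting), it is natural to work in the subspace $\MMbar_{f.c.}=\{(A\oplus\alpha,0)\in\MMbar\}$. Let $\pi\colon\MMbar\to\MMbar_{f.c.}$ denote the canonical projection $(A\oplus\alpha,a)\mapsto(A\oplus\alpha,0)$. The identities in \eqref{eq:functional_glmp_fixed_center} are exactly equivalent to saying that $\frac{1}{d+s}(\sid,0)$ lies in the convex hull of $\pi(\contactopsetjohn{f}{\gbase})$ inside $\MMbar_{f.c.}$; the extra third equation $\sum c_iv_i=0$ from \eqref{eq:functional_glmp} disappears precisely because we are separating in this smaller space rather than in $\MMbar$. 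The trace argument of \Href{Lemma}{lem:separation_John_problem} still shows $\sum c_i=d+s$, so the separation takes place on a genuine affine hyperplane, and \Href{Lemma}{lem:compact_contact_operator} ensures that $\pi(\contactopsetjohn{f}{\gbase})$ is compact in $\MMbar_{f.c.}$.

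For the necessary direction, suppose no contact pairs satisfy \eqref{eq:functional_glmp_fixed_center}. Then, by compactness and convex separation in $\MMbar_{f.c.}$, there exists $(H\oplus\gamma)\in\Re^{d\times d}\oplus\Re$ such that, viewing $(H\oplus\gamma,0)\in\MMbar$, we have $\iprod{(H\oplus\gamma,0)}{(\sid,0)}>0$ and $\iprod{(H\oplus\gamma,0)}{\contactopjohn{\upthing{u}}{\upthing{v}}}<0$ for every $(\upthing{u},\upthing{v})\in\contactset{f}{\gbase}$. Using the translation-free perturbation
\[
g_t(x)=(1+t\gamma)\,g\!\parenth{\parenth{\id_d+tH}^{-1}x},
\]
which lies in $\funposfc{g}$ for all small $t\geq0$, the implication \eqref{item:int_admissible_linearized_in_primal}$\Rightarrow$\eqref{item:int_gammaadmissible} of \Href{Lemma}{lem:john_separation_integral} produces $\int g_t^s>\int g^s$, while the implication \eqref{item:admissible_linearized_in_primal}$\Rightarrow$\eqref{item:gammaadmissible} of \Href{Theorem}{thm:admissiblelinearized_john} (applied with $h=0$) yields $g_t\leq f$ for small $t>0$. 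This contradicts local maximality in the fixed-center John $s$-problem.

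For the sufficient direction, suppose $h=g$ is not a global maximizer in the fixed-center Positive John $s$-problem. Then there exist a positive definite matrix $A$ and $\gamma\in\Re$ such that $g_1(x)=e^{\gamma}g(A^{-1}x)$ satisfies $g_1\leq f$ and $\int g_1^s>\int g^s$. Mirroring the sufficiency argument of \Href{Theorem}{thm:john_condition_general} with the translation parameter set identically to zero throughout (a slight contraction along the $\id_d$ direction gives the necessary room in the interior of $\lifting{f}$), we obtain $H=A-\id_d$ and $(H\oplus\gamma,0)\in\MMbar$ with $\iprod{(H\oplus\gamma,0)}{(\sid,0)}>0$ and, via the implication \eqref{item:gammaadmissible}$\Rightarrow$\eqref{item:admissible_linearized_in_primalweak} of \Href{Theorem}{thm:admissiblelinearized_john}, $\iprod{(H\oplus\gamma,0)}{\contactopjohn{\upthing{u}}{\upthing{v}}}\leq0$ for every $(\upthing{u},\upthing{v})\in\contactset{f}{\gbase}$. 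Since $(H\oplus\gamma,0)$ has vanishing translation component, this separation takes place entirely within $\MMbar_{f.c.}$, which precludes \eqref{eq:functional_glmp_fixed_center} and completes the contrapositive. The main technical point — and the only place one must be a little careful — is to verify that the arguments of Lemmas \ref{lem:john_separation_integral} and \Href{Theorem}{thm:admissiblelinearized_john}, which are stated for arbitrary curves in $\MM$, are unaffected when specialized to curves in $\MM_{f.c.}$ (that is, $h=0$, $a_t\equiv0$); this is immediate from inspection of their proofs.
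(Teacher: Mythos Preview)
Your proposal is correct and follows essentially the same approach as the paper's own sketch: both restrict the separation argument of \Href{Lemma}{lem:separation_John_problem} to the subspace $\MMbar_{f.c.}$, obtain a separating functional of the form $(H\oplus\gamma,0)$, and then rerun the perturbation and admissibility arguments of \Href{Lemma}{lem:john_separation_integral} and \Href{Theorem}{thm:admissiblelinearized_john} with $h=0$. Your write-up is in fact more detailed than the paper's brief sketch, and the use of the projection $\pi$ to make explicit why the third equation in \eqref{eq:functional_glmp} drops out is a clean way to organize the point.
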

\begin{proof}[Sketch of the proof]
In \Href{Lemma}{lem:separation_John_problem}, 
we consider the linear span of $\MM_{f.c.},$ which is a subspace of $\MMbar.$
By a standard separation argument, 
 there are no contact pairs of $f$ and $\gbase$ and positive weights satisfying equation \eqref{eq:functional_glmp_fixed_center} if and only if, a linear hyperplane 
 with normal of the form $(H \oplus \gamma, 0)$ strongly separates the set of contact operators and $(\id \oplus s, 0).$  
 The rest of the proof coincides with that of \Href{Theorem}{thm:john_condition_general}.
\end{proof}

Similarly, we have the following.

\begin{thm}[Fixed center L\"owner condition]\label{thm:lowner_condition_fixed_center}
Let the functions $f,\gbase\colon\Red\to[0,+\infty)$ satisfy the assumptions of \Href{Theorem}{thm:lowner_condition_general}.
Then the following hold.
	\begin{enumerate}
		\item\label{item:lowner_fixed_center}
			If $h=g$ is a local minimizer in the L\"owner $s$-problem with fixed center,
			then there exist contact pairs 
			$(\upthing{u}_1,\upthing{v}_1), \dots,  
			(\upthing{u}_m,\upthing{v}_m) \in \contactset{\loglego{g}}{\loglego{f}}$
			and positive weights $c_1,\ldots,c_m$ such that
		\begin{equation}\label{eq:functional_glmp_lowner_fixed_c}
		\sum_{i=1}^{m} c_i {v}_i \otimes {u}_i = 
		 \id_{d}, \quad 
		 \sum_{i=1}^{m} c_i \loglego{g}(u_i) \cdot \nu_i =  s.
        \end{equation} 
        where $\upthing{u}_i=(u_i, \loglego{g}(u_i))$ and $\upthing{v}_i=(v_i,\nu_i)$.
		\item
			If there exist contact pairs and positive weights satisfying equation \eqref{eq:functional_glmp_lowner_fixed_c},
			then $h=g$ is a global maximizer in the Positive position L\"owner $s$-problem with fixed center.
	\end{enumerate}
\end{thm}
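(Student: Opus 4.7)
The plan is to repeat the argument used for Theorem~\ref{thm:lowner_condition_general} almost verbatim, while restricting all separating hyperplanes and all perturbation curves to the linear subspace
\[
L = \{(H \oplus \gamma, 0) \st H \in \Re^{d \times d},\ \gamma \in \Re\} \subset \MMbar,
\]
which is precisely $\spann{\MM_{f.c.}}$. This is exactly the modification sketched for Theorem~\ref{thm:john-condition-fixed_center}, transported to the L\"owner side.

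First, I would establish a fixed-center analogue of Lemma~\ref{lem:separation_Lowner_problem}. Observe that the translation part $\mu\nu u$ of $\contactoplowner{\upthing{u}}{\upthing{v}}$ is automatically annihilated when paired with an element of $L$, so pairing against $L$ probes exactly the two identities in \eqref{eq:functional_glmp_lowner_fixed_c}. Taking the trace of the first identity and adding the scalar identity still yields $\sum c_i = d+s$, so the system \eqref{eq:functional_glmp_lowner_fixed_c} fails to have a solution exactly when the point $\tfrac{1}{d+s}(\sid, 0)$ lies outside the convex hull of $\contactopsetlowner{\loglego{f}}{\loglego{g}}$ \emph{inside the affine hyperplane} $\{(\mathcal{A}, a) \in \MMbar \st \tr \mathcal{A} = 1\}$ cut down by $L$. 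By Lemma~\ref{lem:compact_contact_operatorLowner} this contact operator set is compact, so a standard separation argument inside $L$ gives a strict version: no such contact pairs and weights exist if and only if there is $(H \oplus \gamma, 0) \in L$ with
\[
\iprod{(H \oplus \gamma, 0)}{(\sid, 0)} > 0
\quad\text{and}\quad
\iprod{(H \oplus \gamma, 0)}{\contactoplowner{\upthing{u}}{\upthing{v}}} < 0
\]
for all $(\upthing{u},\upthing{v}) \in \contactset{\loglego{f}}{\loglego{g}}$; a non-strict variant holds analogously.

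For assertion \eqref{item:lowner_fixed_center}, assume that $g$ is a local minimizer but \eqref{eq:functional_glmp_lowner_fixed_c} fails. Picking the strictly separating $(H \oplus \gamma, 0)$ from above, define for small $t > 0$
\[
g_t(x) = \tfrac{1}{1 + t\gamma}\, g\!\left(\transpose{(\id_d - tH)^{-1}}\, x\right),
\]
which lies in $\funposfc{g}$. The curve $\Gamma(t) = ((\id_d - tH)^{-1} \oplus (1+t\gamma), 0)$ has right derivative $(H \oplus \gamma, 0)$ at $t=0$. The implication \eqref{item:int_lowner_admissible_linearized_in_primal}$\Rightarrow$\eqref{item:int_lowner_gammaadmissible} of Lemma~\ref{lem:lowner_separation_integral} yields $\int g_t^s < \int g^s$, and the implication \eqref{item:lowner_admissible_linearized_in_primal}$\Rightarrow$\eqref{item:lowner_admissible} of Theorem~\ref{thm:admissiblelinearized_lowner} yields $f \leq g_t$, for all sufficiently small $t$; this contradicts local minimality. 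For assertion (2), assume instead that $g$ is not a global minimizer in the Positive position L\"owner $s$-problem with fixed center, producing a positive definite $A$ and $\gamma \in \Re$ with $g_1(x) = e^{-\gamma} g(Ax)$ satisfying $f \leq g_1$ and $\int g_1^s < \int g^s$. With $H = A - \id_d$ and $h = 0$ throughout, I would run the $\delta$-perturbation argument of the proof of Theorem~\ref{thm:lowner_condition_general}\eqref{item:glmp-lowner-implies-global-minimum}: outer interpolation (Lemma~\ref{lem:outer-interpolation}) together with the implication \eqref{item:int_lowner_gammaadmissible}$\Rightarrow$\eqref{item:int_lowner_admissible_linearized_in_primalweak} of Lemma~\ref{lem:lowner_separation_integral} produces $\iprod{(H \oplus \gamma, 0)}{(\sid, 0)} \geq \delta s > 0$, while the implication \eqref{item:lowner_admissible}$\Rightarrow$\eqref{item:lowner_admissible_linearized_in_primalweak} of Theorem~\ref{thm:admissiblelinearized_lowner} applied to the unshifted homotopy $g_t(x) = e^{t\gamma} g((\id_d + tH) x)$ yields $\iprod{(H \oplus \gamma, 0)}{\contactoplowner{\upthing{u}}{\upthing{v}}} \leq 0$ on all contact pairs. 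Together these produce the $(H \oplus \gamma, 0) \in L$ forbidden by the fixed-center separation lemma, contradicting the assumed existence of the contact pairs.

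The step I expect to require the most care is the first paragraph: namely, verifying that restricting the separating functional to lie in the proper subspace $L$ still yields \emph{strict} inequality on the compact set of contact operators, so that one may legitimately invoke the (a)$\Rightarrow$(b) implications in Lemma~\ref{lem:lowner_separation_integral} and Theorem~\ref{thm:admissiblelinearized_lowner}. This reduces, via Lemma~\ref{lem:compact_contact_operatorLowner} and the trace identity, to strict separation of a point from a compact convex set by a hyperplane in the finite-dimensional subspace $L$, which is routine; once that is in place all remaining ingredients are black-boxed from the proof of Theorem~\ref{thm:lowner_condition_general}.
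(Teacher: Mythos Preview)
Your proposal is correct and follows essentially the same approach the paper intends: the paper proves Theorem~\ref{thm:lowner_condition_fixed_center} by the single word ``Similarly'' after sketching Theorem~\ref{thm:john-condition-fixed_center}, and your write-up is precisely that sketch transported to the L\"owner side --- restrict the separation to the subspace $L=\spann{\MM_{f.c.}}$, then rerun the proof of Theorem~\ref{thm:lowner_condition_general} with $h=0$ throughout. The only imprecision is phrasing: the contact operators $\contactoplowner{\upthing{u}}{\upthing{v}}$ do not themselves lie in $L$ (their translation part $\mu\nu u$ need not vanish), so when you speak of their convex hull ``inside'' $L$ you really mean the convex hull of their orthogonal projections to $L$; since you already observed that pairing with $L$ kills the translation part, this is harmless.
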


We note that in the case of fixed center problems, the corresponding conditions, 
that is, equations \eqref{eq:functional_glmp_fixed_center} and 
\eqref{eq:functional_glmp_lowner_fixed_c} coincide whenever both theorems are 
applicable.

\subsection*{Acknowledgement} We thank Alexander Litvak for the many discussions on Theorem~\ref{thm:GLMP}. Igor Tsiutsiurupa participated in the early stage of this project.
To our deep regret, Igor chose another road for his life and stopped working with us.

\bibliographystyle{amsalpha} 
\bibliography{uvolit}

\end{document}